\newtheorem{lem}{Lemma}[section]
\newtheorem{theo}[lem]{Theorem}
\newtheorem{cor}[lem]{Corollary}
\newtheorem{rem}[lem]{Remark}
\newcommand{\Abb}[5]{\begin{array}{ccccc}#1&:&#2&\longrightarrow&#3\\{}&{}&#4&\longmapsto&#5\end{array}}
\def\bs{\boldsymbol}
\def\mr{\mathring}
\def\ol{\overline}
\def\To{\longrightarrow}
\def\incl{\hookrightarrow}
\DeclareMathOperator{\reals}{\mathbb{R}}
\DeclareMathOperator{\nat}{\mathbb{N}}
\def\ga{\Gamma}
\def\gat{\ga_{\!t}}
\def\gan{\ga_{\!n}}
\def\om{\Omega}
\def\rt{\reals^{3}}
\DeclareMathOperator{\RT}{\mathbb{RT}}
\def\Pone{\mathbb{P}^{1}}
\DeclareMathOperator{\mcB}{\mathcal{B}}
\DeclareMathOperator{\mcH}{\mathcal{H}}
\DeclareMathOperator{\sfL}{\mathsf{L}}
\DeclareMathOperator{\sfH}{\mathsf{H}}
\DeclareMathOperator{\mbH}{\bs{\mathsf{H}}}
\DeclareMathOperator{\sfC}{\mathsf{C}}
\newcommand{\Harm}[2]{\mcH^{#1}_{#2}}
\renewcommand{\L}[2]{\sfL^{#1}_{#2}}
\renewcommand{\H}[2]{\sfH^{#1}_{#2}}
\newcommand{\bH}[2]{\mbH^{#1}_{#2}}
\newcommand{\C}[2]{\sfC^{#1}_{#2}}
\newcommand{\B}[1]{\mcB^{#1}}
\newcommand{\vB}[2]{B^{#1}_{#2}}
\newcommand{\eps}{\varepsilon}
\renewcommand{\S}{\mathbb{S}}
\newcommand{\T}{\mathbb{T}}
\newcommand{\PotP}{\mathcal{P}}
\newcommand{\PotQ}{\mathcal{Q}}
\newcommand{\PotN}{\mathcal{N}}
\DeclareMathOperator{\Lin}{Lin}
\DeclareMathOperator{\A}{A}
\DeclareMathOperator{\p}{\partial}
\DeclareMathOperator{\id}{id}
\DeclareMathOperator{\sym}{sym}
\DeclareMathOperator{\skw}{skw}
\DeclareMathOperator{\tr}{tr}
\DeclareMathOperator{\dev}{dev}
\DeclareMathOperator{\spn}{spn}
\DeclareMathOperator{\ed}{d}
\DeclareMathOperator{\grad}{grad}
\DeclareMathOperator{\rot}{rot}
\DeclareMathOperator{\divergence}{div}
\def\div{\divergence}
\DeclareMathOperator{\Grad}{Grad}
\DeclareMathOperator{\bGrad}{\bs\Grad}
\DeclareMathOperator{\Rot}{Rot}
\DeclareMathOperator{\bRot}{\bs\Rot}
\DeclareMathOperator{\Div}{Div}
\DeclareMathOperator{\bDiv}{\bs\Div}
\DeclareMathOperator{\symGrad}{symGrad}
\DeclareMathOperator{\devGrad}{devGrad}
\DeclareMathOperator{\devRot}{devRot}
\DeclareMathOperator{\symRot}{symRot}
\DeclareMathOperator{\Gradgrad}{Gradgrad}
\DeclareMathOperator{\bGradgrad}{\bs\Gradgrad}
\DeclareMathOperator{\RotRot}{RotRot}
\DeclareMathOperator{\divDiv}{divDiv}
\DeclareMathOperator{\bdivDiv}{\bs\divDiv}
\newcommand{\RotS}{\Rot_{\S}}
\newcommand{\DivS}{\Div_{\S}}
\newcommand{\DivT}{\Div_{\T}}
\newcommand{\symRotT}{\symRot_{\T}}
\newcommand{\RotRottS}{\RotRot_{\S}^{\!\top}}
\newcommand{\divDivS}{\divDiv_{\S}}
\newcommand{\SGradgrad}{\prescript{}{\S}{\Gradgrad}}
\newcommand{\TGrad}{\prescript{}{\T}{\Grad}}
\newcommand{\TRotS}{\prescript{}{\T}{\Rot}_{\S}}
\newcommand{\SRotT}{\prescript{}{\S}{\Rot}_{\T}}
\newcommand{\divDivSgat}{\divDiv_{\S,\gat}}
\newcommand{\rdivDivSgat}{\mr\divDiv_{\S,\gat}}
\newcommand{\divDivSgan}{\divDiv_{\S,\gan}}
\newcommand{\DivTgat}{\Div_{\T,\gat}}
\newcommand{\rDivTgat}{\mr\Div_{\T,\gat}}
\newcommand{\DivTgan}{\Div_{\T,\gan}}
\newcommand{\SGradgradgat}{\prescript{}{\S}{\Gradgrad}_{\gat}}
\newcommand{\rSGradgradgat}{\prescript{}{\S}{\mr\Gradgrad}_{\gat}}
\newcommand{\TGradgat}{\prescript{}{\T}{\Grad}_{\gat}}
\newcommand{\rTGradgat}{\prescript{}{\T}{\mr\Grad}_{\gat}}
\newcommand{\TGradgan}{\prescript{}{\T}{\Grad}_{\gan}}
\newcommand{\TRotSgat}{\prescript{}{\T}{\Rot}_{\S,\gat}}
\newcommand{\rTRotSgat}{\prescript{}{\T}{\mr\Rot}_{\S,\gat}}
\newcommand{\TRotSgan}{\prescript{}{\T}{\Rot}_{\S,\gan}}
\newcommand{\SRotTgat}{\prescript{}{\S}{\Rot}_{\T,\gat}}
\newcommand{\rSRotTgat}{\prescript{}{\S}{\mr\Rot}_{\T,\gat}}
\newcommand{\SRotTgan}{\prescript{}{\S}{\Rot}_{\T,\gan}}
\newcommand{\bdivDivSgat}{\bdivDiv_{\S,\gat}}
\newcommand{\bdivDivSgan}{\bdivDiv_{\S,\gan}}
\newcommand{\bDivTgat}{\bDiv_{\T,\gat}}
\newcommand{\bDivTgan}{\bDiv_{\T,\gan}}
\newcommand{\SbGradgradgat}{\prescript{}{\S}{\bGradgrad}_{\gat}}
\newcommand{\SbGradgradgan}{\prescript{}{\S}{\bGradgrad}_{\gan}}
\newcommand{\TbGradgat}{\prescript{}{\T}{\bGrad}_{\gat}}
\newcommand{\TbGradgan}{\prescript{}{\T}{\bGrad}_{\gan}}
\newcommand{\TbRotSgat}{\prescript{}{\T}{\bRot}_{\S,\gat}}
\newcommand{\TbRotSgan}{\prescript{}{\T}{\bRot}_{\S,\gan}}
\newcommand{\SbRotTgat}{\prescript{}{\S}{\bRot}_{\T,\gat}}
\newcommand{\SbRotTgan}{\prescript{}{\S}{\bRot}_{\T,\gan}}
\newcommand{\norm}[1]{|#1|}
\newcommand{\scp}[2]{\langle#1,#2\rangle}
\title[Hilbert Complexes with Mixed Boundary Conditions -- Part 3: Biharmonic Complexes]
{Hilbert Complexes with Mixed Boundary Conditions\\
Part 3: Biharmonic Complexes}
\author{Dirk Pauly}
\author{Michael Schomburg}
\address{Institut f\"ur Analysis, Technische Universit\"at Dresden, Germany}
\email[Dirk Pauly]{dirk.pauly@tu-dresden.de}
\email[Michael Schomburg]{michael.schomburg@tu-dresden.de}
\keywords{regular potentials, regular decompositions, compact embeddings,
Hilbert complexes, mixed boundary conditions, biharmonic complex}
\subjclass{58Axx, 58Jxx, 35A23, 35Q61}
\date{\today; {\it Corresponding Author}: Dirk Pauly}
\thanks{{\it Acknowledgements.} We thank the anonymous referee for careful reading 
and helpful suggestions to improve the quality of the paper}
\thanks{{\it Conflict of interest statement.} This work does not have any conflicts of interest}
\thanks{{\it Funding Statement.} There are no funders to report for this submission}
\begin{document}

%%%%%%%%%%%%%%%%%%%%%%%%%%%%%%%%%%%%%%%%%%%%%%%%%%%%%%%%%%%%%%%%%%%%%%%%%%%%%%%%%%%%%%%%%%%%%%%%%%%%%

\begin{abstract}
We show that the biharmonic Hilbert complex
with mixed boundary conditions on bounded strong Lipschitz domains
is closed and compact. The crucial results are compact embeddings 
which follow by abstract arguments using functional analysis
together with particular regular decompositions.
Higher Sobolev order results are also proved.
This paper extends recent results on the de Rham and elasticity Hilbert complexes
with mixed boundary conditions from \cite{PS2021b,PS2021d}
and results on the biharmonic Hilbert complex 
with empty or full boundary conditions from \cite{PZ2020a}.
\end{abstract}

%%%%%%%%%%%%%%%%%%%%%%%%%%%%%%%%%%%%%%%%%%%%%%%%%%%%%%%%%%%%%%%%%%%%%%%%%%%%%%%%%%%%%%%%%%%%%%%%%%%%%

\maketitle
\setcounter{tocdepth}{3}
\tableofcontents

%%%%%%%%%%%%%%%%%%%%%%%%%%%%%%%%%%%%%%%%%%%%%%%%%%%%%%%%%%%%%%%%%%%%%%%%%%%%%%%%%%%%%%%%%%%%%%%%%%%%%

\section{Introduction}
\label{sec:intro}%

In \cite{PS2021b} we investigated the de Rham Hilbert complex
with mixed boundary conditions on bounded strong Lipschitz domains
\begin{equation*}
\def\arrowlength{5ex}
\def\arrowdistance{0}
\begin{tikzcd}[column sep=\arrowlength]
\cdots 
\arrow[r, rightarrow, shift left=\arrowdistance, "\cdots"] 
& 
\L{q-1,2}{}(\om) 
\ar[r, rightarrow, shift left=\arrowdistance, "\ed^{q-1}"] 
& 
[1em]
\L{q,2}{}(\om) 
\arrow[r, rightarrow, shift left=\arrowdistance, "\ed^{q}"] 
& 
\L{q+1,2}{}(\om) 
\arrow[r, rightarrow, shift left=\arrowdistance, "\cdots"] 
&
\cdots,
\end{tikzcd}
\end{equation*}
whose 3D-version for vector proxies reads
\begin{equation*}
\def\arrowlength{5ex}
\def\arrowdistance{0}
\begin{tikzcd}[column sep=\arrowlength]
\cdots 
\arrow[r, rightarrow, shift left=\arrowdistance, "\cdots"] 
& 
\L{2}{}(\om) 
\ar[r, rightarrow, shift left=\arrowdistance, "\ed^{0}\widehat{=}\grad"] 
& 
[2.5em]
\L{2}{}(\om)
\arrow[r, rightarrow, shift left=\arrowdistance, "\ed^{1}\widehat{=}\rot"] 
& 
[2.5em]
\L{2}{}(\om)
\arrow[r, rightarrow, shift left=\arrowdistance, "\ed^{2}\widehat{=}\div"] 
& 
[2.5em]
\L{2}{}(\om)
\arrow[r, rightarrow, shift left=\arrowdistance, "\cdots"] 
&
\cdots.
\end{tikzcd}
\end{equation*}
In \cite{PS2021d} we extended our studies and results to the elasticity complex
\begin{equation*}
\def\arrowlength{5ex}
\def\arrowdistance{0}
\begin{tikzcd}[column sep=\arrowlength]
\cdots 
\arrow[r, rightarrow, shift left=\arrowdistance, "\cdots"] 
& 
\L{2}{}(\om) 
\ar[r, rightarrow, shift left=\arrowdistance, "\symGrad"] 
& 
[2.5em]
\L{2}{\S}(\om)
\arrow[r, rightarrow, shift left=\arrowdistance, "\RotRottS"] 
& 
[2.5em]
\L{2}{\S}(\om)
\arrow[r, rightarrow, shift left=\arrowdistance, "\DivS"] 
& 
[1em]
\L{2}{}(\om)
\arrow[r, rightarrow, shift left=\arrowdistance, "\cdots"] 
&
\cdots.
\end{tikzcd}
\end{equation*}

In this contribution, the third part of the series,
we shall investigate the two biharmonic Hilbert complexes with mixed boundary conditions
on a bounded strong Lipschitz domain $\om\subset\rt$
\begin{equation*}
\def\arrowlength{5ex}
\def\arrowdistance{0}
\begin{tikzcd}[column sep=\arrowlength]
\cdots 
\arrow[r, rightarrow, shift left=\arrowdistance, "\cdots"] 
& 
\L{2}{}(\om) 
\ar[r, rightarrow, shift left=\arrowdistance, "\Gradgrad"] 
& 
[2.5em]
\L{2}{\S}(\om)
\arrow[r, rightarrow, shift left=\arrowdistance, "\RotS"] 
& 
[2.5em]
\L{2}{\T}(\om)
\arrow[r, rightarrow, shift left=\arrowdistance, "\DivT"] 
& 
[1em]
\L{2}{}(\om)
\arrow[r, rightarrow, shift left=\arrowdistance, "\cdots"] 
&
\cdots,
\end{tikzcd}
\end{equation*}
\begin{equation*}
\def\arrowlength{5ex}
\def\arrowdistance{0}
\begin{tikzcd}[column sep=\arrowlength]
\cdots 
\arrow[r, rightarrow, shift left=\arrowdistance, "\cdots"] 
& 
\L{2}{}(\om) 
\ar[r, rightarrow, shift left=\arrowdistance, "\devGrad"] 
& 
[2.5em]
\L{2}{\T}(\om)
\arrow[r, rightarrow, shift left=\arrowdistance, "\symRotT"] 
& 
[2.5em]
\L{2}{\S}(\om)
\arrow[r, rightarrow, shift left=\arrowdistance, "\divDivS"] 
& 
[1em]
\L{2}{}(\om)
\arrow[r, rightarrow, shift left=\arrowdistance, "\cdots"] 
&
\cdots.
\end{tikzcd}
\end{equation*}
Note that these two complexes are formally dual (adjoint) to each other.

As explained in detail in \cite{PS2021b,PS2021d},
all these Hilbert complexes share the same geometric structure 
\begin{equation*}
\def\arrowlength{5ex}
\def\arrowdistance{0}
\begin{tikzcd}[column sep=\arrowlength]
\cdots 
\arrow[r, rightarrow, shift left=\arrowdistance, "\cdots"] 
& 
\H{}{0} 
\ar[r, rightarrow, shift left=\arrowdistance, "\A_{0}"] 
& 
\H{}{1}
\arrow[r, rightarrow, shift left=\arrowdistance, "\A_{1}"] 
& 
\H{}{2}
\arrow[r, rightarrow, shift left=\arrowdistance, "\cdots"] 
&
\cdots,
\end{tikzcd}
\qquad
R(\A_{0})\subset N(\A_{1}),
\end{equation*}
where $\A_{0}$ and $\A_{1}$ are densely defined and closed (unbounded) linear operators
between Hilbert spaces $\H{}{\ell}$.
The corresponding domain Hilbert complex is denoted by
\begin{equation*}
\def\arrowlength{5ex}
\def\arrowdistance{0}
\begin{tikzcd}[column sep=\arrowlength]
\cdots 
\arrow[r, rightarrow, shift left=\arrowdistance, "\cdots"] 
& 
D(\A_{0})
\ar[r, rightarrow, shift left=\arrowdistance, "\A_{0}"] 
& 
D(\A_{1})
\arrow[r, rightarrow, shift left=\arrowdistance, "\A_{1}"] 
& 
\H{}{2}
\arrow[r, rightarrow, shift left=\arrowdistance, "\cdots"] 
&
\cdots.
\end{tikzcd}
\end{equation*}

The goal of this article is to show that the previous biharmonic Hilbert complexes are compact,
which is proved by using regular decompositions of the domains of definition 
of the respective operators as crucial tool.
We shall follow in close lines the rationale from \cite{PS2021b,PS2021d}.
Along the way we show the existence of regular potentials and decompositions,
compact embeddings, Helmholtz decompositions, 
closed ranges, Friedrichs/Poincar\'e type estimates, 
and bases of the corresponding cohomology groups 
(generalised Dirichlet/Neumann tensors).
Due to the similarity of results
we shall only state those which are most important.
In the appendix we will present some of the crucial proofs
which differ from the proofs of the previously investigated complexes.

\section{Biharmonic Complexes I}
\label{bih:sec:bih1}%

Throughout this paper, let
$\om\subset\rt$ be a \emph{bounded strong Lipschitz domain}
with boundary $\ga$, decomposed into two parts $\gat$ and $\gan:=\ga\setminus\ol{\gat}$
with some \emph{relatively open and strong Lipschitz boundary part} $\gat\subset\ga$.
More precisely, we assume generally that $(\om,\gat)$ is a \emph{bounded strong Lipschitz pair}.
We shall consequently use the notations, methods, and results from 
our corresponding papers for the de Rham complex \cite{PS2021b},
for the elasticity complex \cite{PZ2021a,PS2021d},
and for the biharmonic complexes \cite{PZ2020a}.
In particular, we recall \cite[Section 2, Section 3]{PS2021b}
including the notion of \emph{extendable domains}.
The standard Lebesgue and Sobolev spaces (scalar or tensor valued)
are denoted by $\L{2}{}(\om)$ and $\H{k}{}(\om)$ with $k\in\nat_{0}$.

We recall that weak and strong boundary conditions coincide
for the standard Sobolev spaces with mixed boundary conditions, i.e.,
\begin{align}
\label{eq:weakeqstrongsimple}
\bH{k}{\gat}(\om)=\H{k}{\gat}(\om),
\end{align}
cf. \cite[Lemma 3.2, Theorem 4.6]{PS2021b}.
Below we shall show that ``\emph{strong $=$ weak}'' holds generally also
for the biharmonic complex.
Note that $\H{k}{\emptyset}(\om)=\H{k}{}(\om)$ and $\H{0}{\gat}(\om)=\L{2}{}(\om)$.

We introduce as usual $\Grad$, $\Rot$, and $\Div$ as `row-wise' incarnations 
of the classical operators $\grad$, $\rot$, and $\div$
from the de Rham complex.

\subsection{Operators}
\label{bih:sec:bihop}%

Let
$\Gradgrad$, $\Rot$, $\Div$, $\devGrad$, $\symRot$, and $\divDiv$
be realised as densely defined (unbounded) linear operators
\begin{align*}
\rSGradgradgat:D(\rSGradgradgat)\subset\L{2}{}(\om)&\to\L{2}{\S}(\om);
&
u&\mapsto\Gradgrad u,\\
\rTRotSgat:D(\rTRotSgat)\subset\L{2}{\S}(\om)&\to\L{2}{\T}(\om);
&
S&\mapsto\Rot S,\\
\rDivTgat:D(\rDivTgat)\subset\L{2}{\T}(\om)&\to\L{2}{}(\om);
&
T&\mapsto\Div T,\\
\rTGradgat:D(\rTGradgat)\subset\L{2}{}(\om)&\to\L{2}{\T}(\om);
&
v&\mapsto\devGrad v,\\
\rSRotTgat:D(\rSRotTgat)\subset\L{2}{\T}(\om)&\to\L{2}{\S}(\om);
&
T&\mapsto\symRot T,\\
\rdivDivSgat:D(\rdivDivSgat)\subset\L{2}{\S}(\om)&\to\L{2}{}(\om);
&
S&\mapsto\divDiv S,
\end{align*}
where
$\sym S:=\frac{1}{2}(S+S^{\top})$ and 
$\dev T:=T-\frac{1}{3}(\tr T)\id$,
with domains of definition
\begin{align*}
D(\rSGradgradgat)&:=\C{\infty}{\gat}(\om),
&
D(\rTRotSgat)&:=\C{\infty}{\S,\gat}(\om),
&
D(\rDivTgat)&:=\C{\infty}{\T,\gat}(\om),\\
D(\rTGradgat)&:=\C{\infty}{\gat}(\om),
&
D(\rSRotTgat)&:=\C{\infty}{\T,\gat}(\om),
&
D(\rdivDivSgat)&:=\C{\infty}{\S,\gat}(\om),
\end{align*}
satisfying the complex properties
\begin{align*}
\rTRotSgat\,\rSGradgradgat&\subset0,
&
\rDivTgat\,\rTRotSgat\subset0,\\
\rSRotTgat\,\rTGradgat&\subset0,
&
\rdivDivSgat\,\rSRotTgat\subset0.
\end{align*}
For elementary properties of these operators see, e.g., \cite{PZ2021a},
in particular, we have a collection of formulas presented in Lemma \ref{bih:lem:PZformulalem}.
Here, we introduce the Lebesgue Hilbert spaces
and the test spaces of symmetric and deviatoric tensor fields
\begin{align*}
\L{2}{\S}(\om)
&:=\big\{S\in\L{2}{}(\om):\skw S=0\big\},
&
\C{\infty}{\S,\gat}(\om)
&:=\C{\infty}{\gat}(\om)\cap\L{2}{\S}(\om),\\
\L{2}{\T}(\om)
&:=\big\{S\in\L{2}{}(\om):\tr T=0\big\},
&
\C{\infty}{\T,\gat}(\om)
&:=\C{\infty}{\gat}(\om)\cap\L{2}{\T}(\om),
\end{align*}
respectively. We get the first and second biharmonic complexes on smooth tensor fields
\begin{equation*}
\def\arrowlength{5ex}
\def\arrowdistance{0}
\begin{tikzcd}[column sep=\arrowlength]
\cdots 
\arrow[r, rightarrow, shift left=\arrowdistance, "\cdots"] 
& 
\L{2}{}(\om) 
\ar[r, rightarrow, shift left=\arrowdistance, "\rSGradgradgat"] 
& 
[2.5em]
\L{2}{\S}(\om)
\arrow[r, rightarrow, shift left=\arrowdistance, "\rTRotSgat"] 
& 
[2.5em]
\L{2}{\T}(\om)
\arrow[r, rightarrow, shift left=\arrowdistance, "\rDivTgat"] 
& 
[1em]
\L{2}{}(\om)
\arrow[r, rightarrow, shift left=\arrowdistance, "\cdots"] 
&
\cdots,
\end{tikzcd}
\end{equation*}
\begin{equation*}
\def\arrowlength{5ex}
\def\arrowdistance{0}
\begin{tikzcd}[column sep=\arrowlength]
\cdots 
\arrow[r, rightarrow, shift left=\arrowdistance, "\cdots"] 
& 
\L{2}{}(\om) 
\ar[r, rightarrow, shift left=\arrowdistance, "\rTGradgat"] 
& 
[2.5em]
\L{2}{\T}(\om)
\arrow[r, rightarrow, shift left=\arrowdistance, "\rSRotTgat"] 
& 
[2.5em]
\L{2}{\S}(\om)
\arrow[r, rightarrow, shift left=\arrowdistance, "\rdivDivSgat"] 
& 
[1em]
\L{2}{}(\om)
\arrow[r, rightarrow, shift left=\arrowdistance, "\cdots"] 
&
\cdots.
\end{tikzcd}
\end{equation*}
For a more algebraically structured introduction of the latter operators
suggested by Rainer Picard see Appendix \ref{bih:app:sec:revop}.
The closures 
\begin{align*}
\SGradgradgat&:=\ol{\rSGradgradgat},
&
\TRotSgat&:=\ol{\rTRotSgat},
&
\DivTgat&:=\ol{\rDivTgat},\\
\TGradgat&:=\ol{\rTGradgat},
&
\SRotTgat&:=\ol{\rSRotTgat},
&
\divDivSgat&:=\ol{\rdivDivSgat}
\end{align*}
and Hilbert space adjoints 
are given by the densely defined and closed linear operators
\begin{align*}
\SGradgradgat:D(\SGradgradgat)\subset\L{2}{}(\om)&\to\L{2}{\S}(\om);
&
u&\mapsto\Gradgrad u,\\
\SGradgradgat^{*}=\bdivDivSgan:D(\bdivDivSgan)\subset\L{2}{\S}(\om)&\to\L{2}{}(\om);
&
S&\mapsto\divDiv S,\\
\TRotSgat:D(\TRotSgat)\subset\L{2}{\S}(\om)&\to\L{2}{\T}(\om);
&
S&\mapsto\Rot S,\\
\TRotSgat^{*}=\SbRotTgan:D(\SbRotTgan)\subset\L{2}{\T}(\om)&\to\L{2}{\S}(\om);
&
T&\mapsto\symRot T,\\
\DivTgat:D(\DivTgat)\subset\L{2}{\T}(\om)&\to\L{2}{}(\om);
&
T&\mapsto\Div T,\\
\DivTgat^{*}=-\TbGradgan:D(\TbGradgan)\subset\L{2}{}(\om)&\to\L{2}{\T}(\om);
&
v&\mapsto-\devGrad v,\\
\TGradgat:D(\TGradgat)\subset\L{2}{}(\om)&\to\L{2}{\T}(\om);
&
v&\mapsto\devGrad v,\\
\TGradgat^{*}=-\bDivTgan:D(\bDivTgan)\subset\L{2}{\T}(\om)&\to\L{2}{}(\om);
&
T&\mapsto-\Div T,\\
\SRotTgat:D(\SRotTgat)\subset\L{2}{\T}(\om)&\to\L{2}{\S}(\om);
&
T&\mapsto\symRot T,\\
\SRotTgat^{*}=\TbRotSgan:D(\TbRotSgan)\subset\L{2}{\S}(\om)&\to\L{2}{\T}(\om);
&
S&\mapsto\Rot S,\\
\divDivSgat:D(\divDivSgat)\subset\L{2}{\S}(\om)&\to\L{2}{}(\om);
&
S&\mapsto\divDiv S,\\
\divDivSgat^{*}=\SbGradgradgan:D(\SbGradgradgan)\subset\L{2}{}(\om)&\to\L{2}{\S}(\om);
&
u&\mapsto\Gradgrad u,
\end{align*}
with domains of definition
\begin{align*}
D(\SGradgradgat)&=\H{}{\gat}(\Gradgrad,\om),
&
D(\bdivDivSgan)&=\bH{}{\S,\gan}(\divDiv,\om),\\
D(\TRotSgat)&=\H{}{\S,\gat}(\Rot,\om),
&
D(\SbRotTgan)&=\bH{}{\T,\gan}(\symRot,\om),\\
D(\DivTgat)&=\H{}{\T,\gat}(\Div,\om),
&
D(\TbGradgan)&=\bH{}{\gan}(\devGrad,\om),\\
D(\TGradgat)&=\H{}{\gat}(\devGrad,\om),
&
D(\bDivTgan)&=\bH{}{\T,\gan}(\Div,\om),\\
D(\SRotTgat)&=\H{}{\T,\gat}(\symRot,\om),
&
D(\TbRotSgan)&=\bH{}{\S,\gan}(\Rot,\om),\\
D(\divDivSgat)&=\H{}{\S,\gat}(\divDiv,\om),
&
D(\SbGradgradgan)&=\bH{}{\gan}(\Gradgrad,\om).
\end{align*}
We shall introduce the latter Sobolev spaces in the next section.

\subsection{Sobolev Spaces}
\label{bih:sec:sobolev}%

Let
\begin{align*}
\H{}{}(\Gradgrad,\om)
&:=\big\{u\in\L{2}{}(\om):\Gradgrad u\in\L{2}{}(\om)\big\},\\
\H{}{\S}(\Rot,\om)
&:=\big\{S\in\L{2}{\S}(\om):\Rot S\in\L{2}{}(\om)\big\},\\
\H{}{\T}(\Div,\om)
&:=\big\{T\in\L{2}{\T}(\om):\Div T\in\L{2}{}(\om)\big\},\\
\H{}{}(\devGrad,\om)
&:=\big\{v\in\L{2}{}(\om):\devGrad v\in\L{2}{}(\om)\big\},\\
\H{}{\T}(\symRot,\om)
&:=\big\{T\in\L{2}{\T}(\om):\symRot T\in\L{2}{}(\om)\big\},\\
\H{}{\S}(\divDiv,\om)
&:=\big\{S\in\L{2}{\S}(\om):\divDiv S\in\L{2}{}(\om)\big\}.
\end{align*}
Note that $S\in\H{}{\S}(\Rot,\om)$ implies $\Rot S\in\L{2}{\T}(\om)$,
cf.~Lemma \ref{bih:lem:PZformulalem},
and that we have by Ne\v{c}as' inequality 
and a Korn type inequality for $\dev$ the regularities 
\begin{align}
\label{eq:necasreg}
\H{}{}(\Gradgrad,\om)=\H{2}{}(\om),\qquad
\H{}{}(\devGrad,\om)=\H{1}{}(\om)
\end{align}
with equivalent norms, see, e.g., 
\cite[Lemma 8.2]{PW2021a} and \cite[Lemma 3.2]{PZ2020a}.
Moreover, we define boundary conditions in the \emph{strong sense} 
as closures of respective test fields, i.e.,
\begin{align*}
\H{}{\gat}(\Gradgrad,\om)
&:=\ol{\C{\infty}{\gat}(\om)}^{\H{2}{}(\om)}
=\H{2}{\gat}(\om),\\
\H{}{\S,\gat}(\Rot,\om)
&:=\ol{\C{\infty}{\S,\gat}(\om)}^{\H{}{\S}(\Rot,\om)},\\
\H{}{\T,\gat}(\Div,\om)
&:=\ol{\C{\infty}{\T,\gat}(\om)}^{\H{}{\T}(\Div,\om)},\\
\H{}{\gat}(\devGrad,\om)
&:=\ol{\C{\infty}{\gat}(\om)}^{\H{1}{}(\om)}
=\H{1}{\gat}(\om),\\
\H{}{\T,\gat}(\symRot,\om)
&:=\ol{\C{\infty}{\T,\gat}(\om)}^{\H{}{\T}(\symRot,\om)},\\
\H{}{\S,\gat}(\divDiv,\om)
&:=\ol{\C{\infty}{\S,\gat}(\om)}^{\H{}{\S}(\divDiv,\om)}.
\end{align*}
For $\gat=\emptyset$ we may skip the index $\emptyset$
which is justified by density.
Spaces with vanishing differential operator coincide with kernels
and are denoted by an additional index $0$ 
at the lower right corner, e.g., 
$$\H{}{\S,\gat,0}(\Rot,\om)=N(\TRotSgat),\qquad
\H{}{\T,\gat,0}(\Div,\om)=N(\DivTgat).$$

We need also the Sobolev spaces with boundary conditions defined 
in the \emph{weak sense}, i.e.,
\begin{align*}
\bH{}{\gat}(\Gradgrad,\om)
&:=\big\{u\in\H{2}{}(\om):
\scp{\Gradgrad u}{\Phi}_{\L{2}{\S}(\om)}=\scp{u}{\divDiv\Phi}_{\L{2}{}(\om)}\\
&\hspace{60mm}\forall\,\Phi\in\C{\infty}{\S,\gan}(\om)\big\},\\
\bH{}{\S,\gat}(\Rot,\om)
&:=\big\{S\in\H{}{\S}(\Rot,\om):
\scp{\Rot S}{\Psi}_{\L{2}{\T}(\om)}=\scp{S}{\symRot\Psi}_{\L{2}{\S}(\om)}\\
&\hspace{60mm}\forall\,\Psi\in\C{\infty}{\T,\gan}(\om)\big\},\\
\bH{}{\T,\gat}(\Div,\om)
&:=\big\{T\in\H{}{\T}(\Div,\om):
\scp{\Div T}{\phi}_{\L{2}{}(\om)}=-\scp{T}{\devGrad\phi}_{\L{2}{\T}(\om)}\\
&\hspace{60mm}\forall\,\phi\in\C{\infty}{\gan}(\om)\big\},\\
\bH{}{\gat}(\devGrad,\om)
&:=\big\{v\in\H{1}{}(\om):
\scp{\devGrad v}{\Psi}_{\L{2}{\T}(\om)}=-\scp{v}{\Div\Psi}_{\L{2}{}(\om)}\\
&\hspace{60mm}\forall\,\Psi\in\C{\infty}{\T,\gan}(\om)\big\},\\
\bH{}{\T,\gat}(\symRot,\om)
&:=\big\{T\in\H{}{\T}(\symRot,\om):
\scp{\symRot T}{\Phi}_{\L{2}{\S}(\om)}=\scp{T}{\Rot\Phi}_{\L{2}{\T}(\om)}\\
&\hspace{60mm}\forall\,\Phi\in\C{\infty}{\S,\gan}(\om)\big\},\\
\bH{}{\S,\gat}(\divDiv,\om)
&:=\big\{S\in\H{}{\S}(\divDiv,\om):
\scp{\divDiv S}{\phi}_{\L{2}{}(\om)}=\scp{S}{\Gradgrad\phi}_{\L{2}{\S}(\om)}\\
&\hspace{60mm}\forall\,\phi\in\C{\infty}{\gan}(\om)\big\}.
\end{align*}
Note that ``\emph{strong $\subset$ weak}'' holds, i.e., 
$\H{}{\cdots}(\cdots,\om)\subset\bH{}{\cdots}(\cdots,\om)$, e.g., 
$$\H{}{\S,\gat}(\Rot,\om)\subset\bH{}{\S,\gat}(\Rot,\om),\qquad
\H{}{\T,\gat}(\Div,\om)\subset\bH{}{\T,\gat}(\Div,\om),$$
and that the complex properties hold in both the strong and the weak case, e.g.,
$$\devGrad\H{}{\gat}(\om)\subset\H{}{\T,\gat,0}(\symRot,\om),\qquad
\Rot\bH{}{\S,\gat}(\Rot,\om)\subset\bH{}{\T,\gat,0}(\Div,\om),$$
which follows immediately by the definitions.
In Remark \ref{bih:rem:weakeqstrongbih} below we comment on the question 
whether ``\emph{strong $=$ weak}'' holds in general.

\subsection{Higher Order Sobolev Spaces}
\label{bih:sec:highsobolev}%

For $k\in\nat_{0}$ we define higher order Sobolev spaces by
\begin{align*}
\H{k}{\S}(\om)
&:=\H{k}{}(\om)\cap\L{2}{\S}(\om),\\
\H{k}{\T}(\om)
&:=\H{k}{}(\om)\cap\L{2}{\T}(\om),\\
\H{k}{\S,\gat}(\om)
&:=\ol{\C{\infty}{\S,\gat}(\om)}^{\H{k}{}(\om)}
=\H{k}{\gat}(\om)\cap\L{2}{\S}(\om),\\
\H{k}{\T,\gat}(\om)
&:=\ol{\C{\infty}{\T,\gat}(\om)}^{\H{k}{}(\om)}
=\H{k}{\gat}(\om)\cap\L{2}{\T}(\om),\\
\H{k}{}(\Gradgrad,\om)
&:=\big\{u\in\H{k}{}(\om):\Gradgrad u\in\H{k}{}(\om)\big\},\\
\H{k}{\gat}(\Gradgrad,\om)
&:=\big\{u\in\H{k}{\gat}(\om)\cap\H{}{\gat}(\Gradgrad,\om):\Gradgrad u\in\H{k}{\gat}(\om)\big\},\\
\H{k}{\S}(\Rot,\om)
&:=\big\{S\in\H{k}{\S}(\om):\Rot S\in\H{k}{}(\om)\big\},\\
\H{k}{\S,\gat}(\Rot,\om)
&:=\big\{S\in\H{k}{\gat}(\om)\cap\H{}{\S,\gat}(\Rot,\om):\Rot S\in\H{k}{\gat}(\om)\big\},\\
\H{k}{\T}(\Div,\om)
&:=\big\{T\in\H{k}{\T}(\om):\Div T\in\H{k}{}(\om)\big\},\\
\H{k}{\T,\gat}(\Div,\om)
&:=\big\{T\in\H{k}{\gat}(\om)\cap\H{}{\T,\gat}(\Div,\om):\Div T\in\H{k}{\gat}(\om)\big\},\\
\H{k}{}(\devGrad,\om)
&:=\big\{v\in\H{k}{}(\om):\devGrad v\in\H{k}{}(\om)\big\},\\
\H{k}{\gat}(\devGrad,\om)
&:=\big\{v\in\H{k}{\gat}(\om)\cap\H{}{\gat}(\devGrad,\om):\devGrad v\in\H{k}{\gat}(\om)\big\},\\
\H{k}{\T}(\symRot,\om)
&:=\big\{T\in\H{k}{\T}(\om):\symRot T\in\H{k}{}(\om)\big\},\\
\H{k}{\T,\gat}(\symRot,\om)
&:=\big\{T\in\H{k}{\gat}(\om)\cap\H{}{\T,\gat}(\symRot,\om):\symRot T\in\H{k}{\gat}(\om)\big\},\\
\H{k}{\S}(\divDiv,\om)
&:=\big\{S\in\H{k}{\S}(\om):\divDiv S\in\H{k}{}(\om)\big\},\\
\H{k}{\S,\gat}(\divDiv,\om)
&:=\big\{S\in\H{k}{\gat}(\om)\cap\H{}{\S,\gat}(\divDiv,\om):\divDiv S\in\H{k}{\gat}(\om)\big\}.
\end{align*}
For the first reading we recommend to only regard the case $k=0$
from  Section \ref{bih:sec:sobolev}.

Note that, e.g., for the latter $\divDiv$-Sobolev spaces,
we have $\H{k}{\S,\emptyset}(\divDiv,\om)=\H{k}{\S}(\divDiv,\om)$
and $\H{0}{\S,\emptyset}(\divDiv,\om)=\H{}{\S}(\divDiv,\om)$
as well as $\H{0}{\S,\gat}(\divDiv,\om)=\H{}{\S,\gat}(\divDiv,\om)$.
For $\gat\neq\emptyset$ it holds
$$\H{k}{\S,\gat}(\divDiv,\om)
=\big\{S\in\H{k}{\S,\gat}(\om):\divDiv S\in\H{k}{\gat}(\om)\big\},\qquad
k\geq2,$$
but for $\gat\neq\emptyset$ and $k=0$ and $k=1$
\begin{align*}
\H{0}{\S,\gat}(\divDiv,\om)
&=\H{}{\S,\gat}(\divDiv,\om)\\
&\subsetneq\big\{S\in\underbrace{\H{0}{\S,\gat}(\om)}_{=\L{2}{\S}(\om)}:
\divDiv S\in\underbrace{\H{0}{\gat}(\om)}_{=\L{2}{}(\om)}\big\}
=\H{}{\S}(\divDiv,\om),\\
\H{1}{\S,\gat}(\divDiv,\om)
&\subsetneq\big\{S\in\H{1}{\S,\gat}(\om):\divDiv S\in\H{1}{\gat}(\om)\big\},
\end{align*}
respectively. As before, we introduce the kernels
\begin{align*}
\H{k}{\S,\gat,0}(\divDiv,\om)
&:=\H{k}{\gat}(\om)\cap\H{}{\S,\gat,0}(\divDiv,\om)
=\H{k}{\S,\gat}(\divDiv,\om)\cap\H{}{\S,0}(\divDiv,\om)\\
&\;=\big\{S\in\H{k}{\S,\gat}(\divDiv,\om):\divDiv S=0\big\}.
\end{align*}
The corresponding remarks and definitions extend also
to the $\H{k}{\gat}(\Gradgrad,\om)$, $\H{k}{\S,\gat}(\Rot,\om)$, $\H{k}{\T,\gat}(\Div,\om)$,  
$\H{k}{\gat}(\devGrad,\om)$, and $\H{k}{\T,\gat}(\symRot,\om)$-spaces.
In particular, we have for $\gat\neq\emptyset$ and $k\geq1$ and, e.g.,
$\H{k}{\S,\gat}(\Rot,\om)$, the observations 
\begin{align*}
\H{k}{\S,\gat}(\Rot,\om)
&=\big\{S\in\H{k}{\S,\gat}(\om):\Rot S\in\H{k}{\gat}(\om)\big\},\\
\H{0}{\S,\gat}(\Rot,\om)
&=\H{}{\S,\gat}(\Rot,\om)
\subsetneq\big\{S\in\underbrace{\H{0}{\S,\gat}(\om)}_{=\L{2}{\S}(\om)}:
\Rot S\in\underbrace{\H{0}{\gat}(\om)}_{=\L{2}{}(\om)}\big\}
=\H{}{\S}(\Rot,\om),\\
\H{k}{\S,\gat,0}(\Rot,\om)
&=\H{k}{\gat}(\om)\cap\H{}{\S,\gat,0}(\Rot,\om)
=\H{k}{\S,\gat}(\Rot,\om)\cap\H{}{\S,0}(\Rot,\om)\\
&=\big\{S\in\H{k}{\S,\gat}(\Rot,\om):\Rot S=0\big\}.
\end{align*}

Analogously, we define the Sobolev spaces 
$\bH{k}{\gat}(\Gradgrad,\om)$, $\bH{k}{\S,\gat}(\Rot,\om)$, $\bH{k}{\T,\gat}(\Div,\om)$,  
$\bH{k}{\gat}(\devGrad,\om)$, $\bH{k}{\T,\gat}(\symRot,\om)$, and $\bH{k}{\S,\gat}(\divDiv,\om)$
using the respective Sobolev spaces with weak boundary conditions $\bH{\cdots}{\cdots}(\cdots,\om)$
in the definitions, e.g.,
\begin{align*}
\bH{k}{\T,\gat}(\symRot,\om)
&:=\big\{T\in\bH{k}{\gat}(\om)\cap\bH{}{\T,\gat}(\symRot,\om):\symRot T\in\bH{k}{\gat}(\om)\big\}\\
&\;=\big\{T\in\H{k}{\gat}(\om)\cap\bH{}{\T,\gat}(\symRot,\om):\symRot T\in\H{k}{\gat}(\om)\big\},
\end{align*} 
where we have used \eqref{eq:weakeqstrongsimple}.
Note that again ``\emph{strong $\subset$ weak}'' holds, i.e., 
$\H{\cdots}{\cdots}(\cdots,\om)\subset\bH{\cdots}{\cdots}(\cdots,\om)$, e.g., 
$\H{k}{\S,\gat}(\Rot,\om)\subset\bH{k}{\S,\gat}(\Rot,\om)$, 
and that the complex properties hold in both the strong and the weak case, e.g.,
$$\Gradgrad\H{k+2}{\gat}(\om)\subset\H{k}{\S,\gat,0}(\Rot,\om),\qquad
\symRot\bH{k}{\T,\gat}(\symRot,\om)\subset\bH{k}{\S,\gat,0}(\divDiv,\om).$$

In the forthcoming sections we shall also investigate
whether indeed ``\emph{strong $=$ weak}'' holds.
We start with a simple implication from \eqref{eq:weakeqstrongsimple}.

\begin{cor}
\label{bih:cor:weakeqstrongderham}
$\bH{k}{\S,\gat}(\om)=\H{k}{\S,\gat}(\om)$ and 
$\bH{k}{\T,\gat}(\om)=\H{k}{\T,\gat}(\om)$, i.e.,
weak and strong boundary conditions coincide
for the standard Sobolev spaces of symmetric and deviatoric tensor fields 
with mixed boundary conditions, respectively.
\end{cor}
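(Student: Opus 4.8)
The plan is to deduce the statement directly from the already established scalar/full‑tensor identity \eqref{eq:weakeqstrongsimple}, i.e.\ $\bH{k}{\gat}(\om)=\H{k}{\gat}(\om)$, via a short chain of inclusions. By definition one has $\H{k}{\S,\gat}(\om)=\H{k}{\gat}(\om)\cap\L{2}{\S}(\om)$ and $\H{k}{\T,\gat}(\om)=\H{k}{\gat}(\om)\cap\L{2}{\T}(\om)$, and the weak spaces $\bH{k}{\S,\gat}(\om)$, $\bH{k}{\T,\gat}(\om)$ are defined analogously, with the weak boundary conditions in place of the strong ones. Since ``strong $\subset$ weak'' holds for the standard Sobolev spaces (smooth test fields satisfy the defining integration‑by‑parts identities and the weak spaces are closed), it suffices to establish the reverse inclusions $\bH{k}{\S,\gat}(\om)\subset\H{k}{\S,\gat}(\om)$ and $\bH{k}{\T,\gat}(\om)\subset\H{k}{\T,\gat}(\om)$.

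For this I would prove $\bH{k}{\S,\gat}(\om)\subset\bH{k}{\gat}(\om)\cap\L{2}{\S}(\om)$ and $\bH{k}{\T,\gat}(\om)\subset\bH{k}{\gat}(\om)\cap\L{2}{\T}(\om)$; then \eqref{eq:weakeqstrongsimple} turns the right‑hand sides into $\H{k}{\gat}(\om)\cap\L{2}{\S}(\om)=\H{k}{\S,\gat}(\om)$ and $\H{k}{\gat}(\om)\cap\L{2}{\T}(\om)=\H{k}{\T,\gat}(\om)$, finishing the argument. The inclusion into $\L{2}{\S}(\om)$ (resp.\ $\L{2}{\T}(\om)$) is built into the definition of the weak symmetric (resp.\ deviatoric) space, so the only real point is to check that a field satisfying the weak boundary conditions tested merely against symmetric (resp.\ deviatoric) test tensors from $\C{\infty}{\S,\gan}(\om)$ (resp.\ $\C{\infty}{\T,\gan}(\om)$) already satisfies them against all test tensors from $\C{\infty}{\gan}(\om)$, hence belongs to $\bH{k}{\gat}(\om)$.

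This last step, the only one requiring a (routine) computation, is where the algebraic structure of $\sym$ and $\dev$ enters: $\sym$ and $\dev=\id-\tfrac13(\tr\,\cdot\,)\id$ are constant‑coefficient linear projections on $\rt\otimes\rt$ commuting with every partial derivative $\p^{\alpha}$, and $\p^{\alpha}$ maps symmetric fields to symmetric fields and trace‑free fields to trace‑free fields. Thus, for $S\in\L{2}{\S}(\om)$ and an arbitrary $\Phi\in\C{\infty}{\gan}(\om)$, $\scp{\p^{\alpha}S}{\Phi}=\scp{\p^{\alpha}S}{\sym\Phi}$ and $\scp{S}{\p^{\alpha}\Phi}=\scp{S}{\sym\p^{\alpha}\Phi}=\scp{S}{\p^{\alpha}\sym\Phi}$ with $\sym\Phi\in\C{\infty}{\S,\gan}(\om)$, so the weak identity for symmetric test tensors propagates to all test tensors; replacing $\sym$ by $\dev$ handles the deviatoric case verbatim. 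I do not expect a genuine obstacle here: the corollary is essentially a bookkeeping consequence of \eqref{eq:weakeqstrongsimple} together with the fact that intersecting with $\L{2}{\S}(\om)$ or $\L{2}{\T}(\om)$ commutes with taking weak/strong boundary conditions.
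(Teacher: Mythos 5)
Your proof is correct and is essentially the argument the paper intends: the identity follows from the scalar/full‑tensor case \eqref{eq:weakeqstrongsimple} by intersecting with $\L{2}{\S}(\om)$, resp.\ $\L{2}{\T}(\om)$. The extra detail you supply — that testing the weak boundary condition against only $\C{\infty}{\S,\gan}(\om)$ (resp.\ $\C{\infty}{\T,\gan}(\om)$) is equivalent to testing against all of $\C{\infty}{\gan}(\om)$, since $\sym$ and $\dev$ are pointwise constant‑coefficient projections commuting with every $\p^{\alpha}$ and preserving the support/cut‑off near $\gan$ — is exactly the small bookkeeping step the paper leaves implicit, and it is also what underlies the displayed identification $\H{k}{\S,\gat}(\om)=\H{k}{\gat}(\om)\cap\L{2}{\S}(\om)$ on the strong side.
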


As in \eqref{eq:necasreg} and with Corollary \ref{bih:cor:weakeqstrongderham}
we get the following.

\begin{lem}[higher order weak and strong partial boundary conditions coincide]
\label{bih:lem:weakeqstrongbih}
\mbox{}
\begin{itemize}
\item[\bf(i)]
For $k\geq0$ it holds
\begin{align*}
\H{k}{\gat}(\devGrad,\om)&=\H{k+1}{\gat}(\om)=\bH{k+1}{\gat}(\om),\\
\H{k}{\gat}(\Gradgrad,\om)&=\H{k+2}{\gat}(\om)=\bH{k+2}{\gat}(\om).
\end{align*}
\item[\bf(ii)]
For $k\geq1$ it holds
\begin{align*}
\bH{k}{\gat}(\devGrad,\om)
&=\big\{v\in\H{k}{\gat}(\om):\devGrad v\in\H{k}{\gat}(\om)\big\}
=\H{k}{\gat}(\devGrad,\om)
=\H{k+1}{\gat}(\om),\\
\bH{k}{\S,\gat}(\Rot,\om)
&=\big\{S\in\H{k}{\S,\gat}(\om):\Rot S\in\H{k}{\gat}(\om)\big\}
=\H{k}{\S,\gat}(\Rot,\om),\\
\bH{k}{\T,\gat}(\symRot,\om)
&=\big\{T\in\H{k}{\T,\gat}(\om):\symRot T\in\H{k}{\gat}(\om)\big\}
=\H{k}{\T,\gat}(\symRot,\om),\\
\bH{k}{\T,\gat}(\Div,\om)
&=\big\{T\in\H{k}{\T,\gat}(\om):\Div T\in\H{k}{\gat}(\om)\big\}
=\H{k}{\T,\gat}(\Div,\om).
\end{align*}
\item[\bf(iii)]
For $k\geq2$ it holds
\begin{align*}
\bH{k}{\gat}(\Gradgrad,\om)
&=\big\{u\in\H{k}{\gat}(\om):\Gradgrad u\in\H{k}{\gat}(\om)\big\}
=\H{k}{\gat}(\Gradgrad,\om),\\
\bH{k}{\S,\gat}(\divDiv,\om)
&=\big\{S\in\H{k}{\S,\gat}(\om):\divDiv S\in\H{k}{\gat}(\om)\big\}
=\H{k}{\S,\gat}(\divDiv,\om).
\end{align*}
\end{itemize}
\end{lem}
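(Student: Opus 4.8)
The plan is to reduce the whole statement to the regularity identities \eqref{eq:necasreg}, the coincidences of weak and strong standard boundary conditions \eqref{eq:weakeqstrongsimple} and Corollary \ref{bih:cor:weakeqstrongderham}, and the extendability machinery of \cite[Section 2, Section 3]{PS2021b}: on a bounded strong Lipschitz pair $(\om,\gat)$ a scalar, vector, or tensor field belongs to the strong Sobolev space $\H{k}{\gat}(\om)$ (with the appropriate $\S$- or $\T$-decoration) if and only if its extension by zero across $\gat$ belongs to $\H{k}{}(\hat\om)$ for a suitable extended domain $\hat\om\supset\om$ having $\gat$ in its interior.

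\textbf{Part (i).} For $k=0$ both identities hold by the very definitions together with $\H{}{\gat}(\devGrad,\om)=\H{1}{\gat}(\om)$ and $\H{}{\gat}(\Gradgrad,\om)=\H{2}{\gat}(\om)$. Let $k\geq1$. The inclusions ``$\supseteq$'' are clear: $\devGrad$, resp.\ $\Gradgrad$, maps $\C{\infty}{\gat}(\om)$ continuously $\H{k+1}{}(\om)\to\H{k}{}(\om)$, resp.\ $\H{k+2}{}(\om)\to\H{k}{}(\om)$, into $\C{\infty}{\T,\gat}(\om)$, resp.\ $\C{\infty}{\S,\gat}(\om)$, hence into the corresponding $\H{k}{}(\om)$-closures, while $\H{k+1}{\gat}(\om)\subset\H{1}{\gat}(\om)$ and $\H{k+2}{\gat}(\om)\subset\H{2}{\gat}(\om)$. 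For ``$\subseteq$'' consider e.g.\ $v\in\H{k}{\gat}(\devGrad,\om)$, so $v,\devGrad v\in\H{k}{\gat}(\om)$. By \eqref{eq:weakeqstrongsimple} and Corollary \ref{bih:cor:weakeqstrongderham} the zero-extensions satisfy $\tilde v,\widetilde{\devGrad v}\in\H{k}{}(\hat\om)$, and since $\tilde v\in\H{1}{}(\hat\om)$ its weak gradient is the zero-extension of $\grad v$, whence $\devGrad\tilde v=\widetilde{\devGrad v}\in\H{k}{}(\hat\om)$. Applying the identity $\H{}{}(\devGrad,\hat\om)=\H{1}{}(\hat\om)$ (the analogue of \eqref{eq:necasreg} on $\hat\om$, or simply interior Korn near $\gat$ combined with \eqref{eq:necasreg} on $\om$) to every partial derivative of $\tilde v$ of order $\leq k$ yields $\tilde v\in\H{k+1}{}(\hat\om)$, i.e.\ $v\in\H{k+1}{\gat}(\om)$; the remaining equality with $\bH{k+1}{\gat}(\om)$ is \eqref{eq:weakeqstrongsimple}. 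The argument for $\Gradgrad$ is literally the same, now using the defining condition $u\in\H{}{\gat}(\Gradgrad,\om)=\H{2}{\gat}(\om)$ to guarantee $\Gradgrad\tilde u=\widetilde{\Gradgrad u}$ and the $\H{}{}(\Gradgrad,\cdot)=\H{2}{}(\cdot)$ half of \eqref{eq:necasreg}.

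\textbf{Parts (ii) and (iii).} The key point is that the defining membership ``$\in\H{}{\cdots}(\cdots,\om)$'' is automatic at the stated Sobolev order. For a first order operator $\A\in\{\devGrad,\Rot,\symRot,\Div\}$ and $k\geq1$ we have $\H{k}{\cdots,\gat}(\om)\subset\H{1}{\cdots,\gat}(\om)\subset\H{}{\cdots,\gat}(\A,\om)$, the last inclusion by density of the corresponding test fields in $\H{1}{\cdots,\gat}(\om)$ together with the bounded embedding $\H{1}{}(\om)\hookrightarrow\H{}{}(\A,\om)$; for $\A\in\{\Gradgrad,\divDiv\}$ and $k\geq2$ we have $\H{k}{\cdots,\gat}(\om)\subset\H{2}{\cdots,\gat}(\om)\subset\H{}{\cdots,\gat}(\A,\om)$, using $\H{}{\gat}(\Gradgrad,\om)=\H{2}{\gat}(\om)$ and the analogous density statement for $\divDiv$. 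Hence the strong space $\H{k}{\cdots,\gat}(\A,\om)$ equals the natural space $\{u\in\H{k}{\cdots,\gat}(\om):\A u\in\H{k}{\cdots,\gat}(\om)\}$. The weak space $\bH{k}{\cdots,\gat}(\A,\om)$ is reduced to the same set: the graph-closure condition is again automatic, and every occurrence of $\bH{k}{\cdots,\gat}(\om)$ is replaced by $\H{k}{\cdots,\gat}(\om)$ via \eqref{eq:weakeqstrongsimple} and Corollary \ref{bih:cor:weakeqstrongderham} (here one uses $\Rot S\in\L{2}{\T}(\om)$ and $\symRot T\in\L{2}{\S}(\om)$ so that the images lie in the correct tensor class). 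Finally the equalities ``$=\H{k+1}{\gat}(\om)$'' in the $\devGrad$-lines are precisely part (i).

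I expect the main obstacle to be the boundary bookkeeping inside part (i), namely verifying that $\devGrad$, resp.\ $\Gradgrad$, of the zero-extension really is the zero-extension of $\devGrad v$, resp.\ $\Gradgrad u$ --- i.e.\ that the $\H{1}{\gat}(\om)$-, resp.\ $\H{2}{\gat}(\om)$-boundary condition annihilates all distributional surface contributions on $\gat$ --- and that the extended domain can be chosen so that Ne\v{c}as' inequality and the $\dev$-Korn inequality are available near $\gat$. This is exactly the place where the ``strong $=$ weak'' results \eqref{eq:weakeqstrongsimple} and Corollary \ref{bih:cor:weakeqstrongderham} and the extendable-domain construction from \cite{PS2021b} are genuinely needed, beyond abstract functional analysis.
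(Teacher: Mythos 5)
Your proposal is correct and follows the same route the paper indicates. The paper does not give a written proof of this lemma; it states that the result follows ``as in \eqref{eq:necasreg} and with Corollary \ref{bih:cor:weakeqstrongderham}'', i.e., from the Ne\v{c}as/Korn-type identities $\H{}{}(\Gradgrad,\om)=\H{2}{}(\om)$, $\H{}{}(\devGrad,\om)=\H{1}{}(\om)$ together with strong $=$ weak for the standard Sobolev spaces. Your argument makes precisely that precise: for part (i) you use the characterization of $\H{k}{\gat}(\om)$ via zero extension to a fattened strong Lipschitz domain $\hat\om$, observe that the boundary condition $v\in\H{1}{\gat}(\om)$ (resp.\ $u\in\H{2}{\gat}(\om)$) forces $\devGrad\tilde v=\widetilde{\devGrad v}$ (resp.\ $\Gradgrad\tilde u=\widetilde{\Gradgrad u}$), and then bootstrap with the Korn/Ne\v{c}as identity on $\hat\om$ applied to $\p^{\alpha}\tilde v$ for $|\alpha|\le k$; for parts (ii)--(iii) you correctly note that the defining intersection with the zero-order space $\H{}{\cdots,\gat}(\A,\om)$ (resp.\ $\bH{}{\cdots,\gat}(\A,\om)$) is automatic once $k\ge 1$ (resp.\ $k\ge 2$), since $\H{m}{\cdots,\gat}(\om)\hookrightarrow\H{}{\cdots,\gat}(\A,\om)$ for $m$ equal to the order of $\A$ by density of the test fields, after which the strong and weak higher-order spaces reduce to the same set via \eqref{eq:weakeqstrongsimple} and Corollary \ref{bih:cor:weakeqstrongderham}. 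The bootstrap in part (i) is sound because for $|\alpha|=k$ one has $\p^{\alpha}\tilde v\in\L{2}{}(\hat\om)$ and $\devGrad\p^{\alpha}\tilde v=\p^{\alpha}\devGrad\tilde v\in\L{2}{}(\hat\om)$, hence $\p^{\alpha}\tilde v\in\H{1}{}(\hat\om)$, giving $\tilde v\in\H{k+1}{}(\hat\om)$ and therefore $v\in\H{k+1}{\gat}(\om)=\bH{k+1}{\gat}(\om)$. In short: no gap, and no genuinely different route --- you have simply supplied the details the paper leaves implicit.
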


\begin{rem}[weak and strong partial boundary conditions coincide]
\label{bih:rem:weakeqstrongbih}
In \cite{PZ2020a,PW2021a} we could prove the corresponding results ``\emph{strong $=$ weak}'' 
for the whole two biharmonic complexes
but only with empty or full boundary conditions ($\ga_{t}=\emptyset$ or $\ga_{t}=\ga$).
Therefore, in these special cases, the adjoints are well-defined on the spaces with strong boundary conditions as well.

Lemma \ref{bih:lem:weakeqstrongbih} shows that
for higher values of $k$ indeed ``\emph{strong $=$ weak}'' holds.
Thus to show ``\emph{strong $=$ weak}'' in general 
we only have to prove that equality holds in the remains cases
$k=0$ and $k=1$, i.e.,
we only have to show
\begin{align*}
\bH{}{\gat}(\devGrad,\om)
&\subset\H{}{\gat}(\devGrad,\om),
&
\bH{}{\gat}(\Gradgrad,\om)
&\subset\H{}{\gat}(\Gradgrad,\om),\\
\bH{}{\T,\gat}(\Div,\om)
&\subset\H{}{\T,\gat}(\Div,\om),
&
\bH{1}{\gat}(\Gradgrad,\om)
&\subset\H{1}{\gat}(\Gradgrad,\om),\\
\bH{}{\S,\gat}(\Rot,\om)
&\subset\H{}{\S,\gat}(\Rot,\om),
&
\bH{}{\S,\gat}(\divDiv,\om)
&\subset\H{}{\S,\gat}(\divDiv,\om),\\
\bH{}{\T,\gat}(\symRot,\om)
&\subset\H{}{\T,\gat}(\symRot,\om),
&
\bH{1}{\S,\gat}(\divDiv,\om)
&\subset\H{1}{\S,\gat}(\divDiv,\om).
\end{align*}

The most delicate situation appears due to the second order nature of $\divDivS$.
In Corollary \ref{bih:cor:weakstrongbih} we shall show 
using regular decompositions that these results 
(weak and strong boundary conditions coincide for the biharmonic complexes for all $k\geq0$)
indeed hold true.
\end{rem}

\subsection{More Sobolev Spaces}
\label{bih:sec:sobolevmore}%

For $k\in\nat$ we introduce also slightly less regular higher order Sobolev spaces by
\begin{align*}
\H{k,k-1}{\gat}(\Gradgrad,\om)
&:=\big\{u\in\H{k}{\gat}(\om)\cap\H{}{\gat}(\Gradgrad,\om):\Gradgrad u\in\H{k-1}{\gat}(\om)\big\},\\
\bH{k,k-1}{\gat}(\Gradgrad,\om)
&:=\big\{u\in\bH{k}{\gat}(\om)\cap\bH{}{\gat}(\Gradgrad,\om):\Gradgrad u\in\bH{k-1}{\gat}(\om)\big\},\\
\H{k,k-1}{\S,\gat}(\divDiv,\om)
&:=\big\{S\in\H{k}{\gat}(\om)\cap\H{}{\S,\gat}(\divDiv,\om):\divDiv S\in\H{k-1}{\gat}(\om)\big\},\\
\bH{k,k-1}{\S,\gat}(\divDiv,\om)
&:=\big\{S\in\bH{k}{\gat}(\om)\cap\bH{}{\S,\gat}(\divDiv,\om):\divDiv S\in\bH{k-1}{\gat}(\om)\big\},
\end{align*}
and we extend all conventions of our notations.
These spaces can be ignored at the first reading.

We have for the kernels of $\divDivS$
\begin{align*}
\H{k,k-1}{\S,\gat,0}(\divDiv,\om)
&=\H{k}{\S,\gat,0}(\divDiv,\om),
&
\bH{k,k-1}{\S,\gat,0}(\divDiv,\om)
&=\bH{k}{\S,\gat,0}(\divDiv,\om),
\end{align*}
and by Ne\v{c}as' inequality, cf.~\eqref{eq:necasreg},
\begin{align*}
\H{k,k-1}{\gat}(\Gradgrad,\om)
&=\H{k+1}{\gat}(\om)
=\bH{k+1}{\gat}(\om)
\subset\bH{k,k-1}{\gat}(\Gradgrad,\om).
\end{align*}
The intersection with $\H{}{\gat}(\Gradgrad,\om)$, $\bH{}{\gat}(\Gradgrad,\om)$ 
and $\H{}{\S,\gat}(\divDiv,\om)$, $\bH{}{\S,\gat}(\divDiv,\om)$, respectively, 
is only needed if $k=1$.
As before, we observe 
$\H{k,k-1}{\S,\gat}(\divDiv,\om)\subset\bH{k,k-1}{\S,\gat}(\divDiv,\om)$,
i.e., ``\emph{strong $\subset$ weak}'',
and in both cases (weak and strong) the complex properties hold, e.g.,
$\Gradgrad\H{k,k-1}{\gat}(\Gradgrad,\om)\subset\H{k-1}{\S,\gat,0}(\Rot,\om)$.

Similar to Lemma \ref{bih:lem:weakeqstrongbih} we have the following.

\begin{lem}[higher order weak and strong partial boundary conditions coincide]
\label{bih:lem:weakeqstrongela2}
For $k\geq2$
{\small
\begin{align*}
\bH{k,k-1}{\gat}(\Gradgrad,\om)
&=\big\{u\in\H{k}{\gat}(\om):\Gradgrad u\in\H{k-1}{\gat}(\om)\big\}
=\H{k,k-1}{\gat}(\Gradgrad,\om)
=\H{k+1}{\gat}(\om),\\
\bH{k,k-1}{\S,\gat}(\divDiv,\om)
&=\big\{S\in\H{k}{\S,\gat}(\om):\divDiv S\in\H{k-1}{\gat}(\om)\big\}
=\H{k,k-1}{\S,\gat}(\divDiv,\om).
\end{align*}}
\end{lem}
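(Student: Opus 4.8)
The plan is to reduce Lemma \ref{bih:lem:weakeqstrongela2} to Lemma \ref{bih:lem:weakeqstrongbih}(iii) and the simple equality \eqref{eq:weakeqstrongsimple}. First I would settle the three chains of identities one at a time, beginning with the $\Gradgrad$-case. Since $k\geq2$, Ne\v{c}as' inequality \eqref{eq:necasreg} gives $\H{}{}(\Gradgrad,\om)=\H{2}{}(\om)$ with equivalent norms, so any $u$ with $u\in\H{k}{\gat}(\om)$ and $\Gradgrad u\in\H{k-1}{\gat}(\om)$ automatically satisfies $u\in\H{k+1}{\gat}(\om)$; conversely $u\in\H{k+1}{\gat}(\om)$ has $\Gradgrad u\in\H{k-1}{\gat}(\om)$. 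This already identifies the bracketed set $\{u\in\H{k}{\gat}(\om):\Gradgrad u\in\H{k-1}{\gat}(\om)\}$ with $\H{k+1}{\gat}(\om)$, and the same computation with the weak spaces, using $\bH{k+1}{\gat}(\om)=\H{k+1}{\gat}(\om)$ from \eqref{eq:weakeqstrongsimple}, collapses $\bH{k,k-1}{\gat}(\Gradgrad,\om)$ onto the same space. The definition of $\H{k,k-1}{\gat}(\Gradgrad,\om)$ carries an extra intersection with $\H{}{\gat}(\Gradgrad,\om)$ (resp. $\bH{}{\gat}(\Gradgrad,\om)$), but this intersection is only active for $k=1$ — for $k\geq2$ it is automatic from $u\in\H{k}{\gat}(\om)\subset\H{2}{}(\om)=\H{}{}(\Gradgrad,\om)$ — so all four spaces coincide.

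For the $\divDivS$-case the argument is structurally the same but the regularity input is different: here there is no Ne\v{c}as-type gain, so the set $\{S\in\H{k}{\S,\gat}(\om):\divDiv S\in\H{k-1}{\gat}(\om)\}$ genuinely sits strictly between $\H{k+1}{\S,\gat}(\divDiv,\om)$ and cannot be simplified further. The nontrivial claim is the identification of this set with both $\H{k,k-1}{\S,\gat}(\divDiv,\om)$ and $\bH{k,k-1}{\S,\gat}(\divDiv,\om)$. The inclusion $\H{k,k-1}{\S,\gat}(\divDiv,\om)\subset\bH{k,k-1}{\S,\gat}(\divDiv,\om)$ is ``strong $\subset$ weak'' and holds by definition; the inclusion $\bH{k,k-1}{\S,\gat}(\divDiv,\om)\subset\{S\in\H{k}{\S,\gat}(\om):\divDiv S\in\H{k-1}{\gat}(\om)\}$ follows because $\bH{k}{\S,\gat}(\om)=\H{k}{\S,\gat}(\om)$ and $\bH{k-1}{\gat}(\om)=\H{k-1}{\gat}(\om)$ by Corollary \ref{bih:cor:weakeqstrongderham} and \eqref{eq:weakeqstrongsimple}, so the weak-space definition already forces $S$ and $\divDiv S$ into the strong standard Sobolev spaces, and for $k\geq2$ the extra intersection with $\bH{}{\S,\gat}(\divDiv,\om)$ is superseded.

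The one genuinely substantial step is the reverse inclusion $\{S\in\H{k}{\S,\gat}(\om):\divDiv S\in\H{k-1}{\gat}(\om)\}\subset\H{k,k-1}{\S,\gat}(\divDiv,\om)$, i.e. producing, from membership in the standard Sobolev spaces with partial boundary conditions, membership in the \emph{closure-defined} space $\H{k,k-1}{\S,\gat}(\divDiv,\om)$ (and in particular in $\H{}{\S,\gat}(\divDiv,\om)$). This is exactly where I expect the main obstacle to lie, and the natural route is to mimic the proof of Lemma \ref{bih:lem:weakeqstrongbih}(iii): given such an $S$, approximate it in $\H{k}{}(\om)$ by fields in $\C{\infty}{\S,\gat}(\om)$ using the definition of $\H{k}{\S,\gat}(\om)$ as a closure, and observe that for such smooth approximants $\divDiv$ is a continuous operator $\H{k}{}(\om)\to\H{k-1}{}(\om)$, so the approximation is simultaneously good in the $\H{k,k-1}{\S,\gat}(\divDiv,\om)$-graph norm; since $\divDiv S\in\H{k-1}{\gat}(\om)$ by hypothesis, the limit lands in the correct closure. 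For $k\geq2$ no intersection with $\H{}{\S,\gat}(\divDiv,\om)$ is needed beyond what this already gives, since $S\in\H{2}{\S,\gat}(\om)$ with $\divDiv S\in\L{2}{}(\om)$ is trivially in $\H{}{\S}(\divDiv,\om)$ and the boundary condition is inherited from the $\H{k}{}(\om)$-closure. Assembling the two cases yields the stated chains of equalities.
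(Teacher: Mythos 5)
Your overall strategy is the right one and matches what the paper intends (reduction to \eqref{eq:weakeqstrongsimple}, Corollary~\ref{bih:cor:weakeqstrongderham}, Ne\v{c}as, and an approximation argument for the $\divDivS$-part), but there are two slips in the $\divDivS$-argument that need fixing.

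First, the continuity claim ``$\divDiv$ is a continuous operator $\H{k}{}(\om)\to\H{k-1}{}(\om)$'' is wrong: $\divDiv$ is a second-order differential operator, so it maps $\H{k}{}(\om)$ continuously into $\H{k-2}{}(\om)$, not $\H{k-1}{}(\om)$. The argument still works, but for a different reason than you state: you only need $\divDiv S_n\to\divDiv S$ in $\L{2}{}(\om)$, and this does hold because $k\geq2$ gives $\H{k-2}{}(\om)\hookrightarrow\L{2}{}(\om)$. (The hypothesis $k\geq2$ is used exactly here.)

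Second, the framing around ``landing in the correct closure'' is off. The space $\H{k,k-1}{\S,\gat}(\divDiv,\om)$ is \emph{not} defined as a closure; by the paper's definition in Section~\ref{bih:sec:sobolevmore} it is the set of $S$ satisfying three membership conditions: $S\in\H{k}{\gat}(\om)$, $S\in\H{}{\S,\gat}(\divDiv,\om)$, and $\divDiv S\in\H{k-1}{\gat}(\om)$. Two of these are given by hypothesis; the only thing the approximation argument needs to establish is $S\in\H{}{\S,\gat}(\divDiv,\om)$, which is the closure of $\C{\infty}{\S,\gat}(\om)$ in the \emph{zero-order} graph norm $\norm{\cdot}_{\L{2}{\S}(\om)}+\norm{\divDiv\cdot}_{\L{2}{}(\om)}$. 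There is therefore no need to control the approximation in any ``$\H{k,k-1}{\S,\gat}(\divDiv,\om)$-graph norm,'' and it is fortunate that you do not actually need the false $\H{k-1}{}$-continuity: all you need is that the $\H{k}{}(\om)$-approximants $S_n\in\C{\infty}{\S,\gat}(\om)$ satisfy $S_n\to S$ in $\L{2}{\S}(\om)$ and $\divDiv S_n\to\divDiv S$ in $\L{2}{}(\om)$, which holds by $\divDiv\colon\H{k}{}(\om)\to\H{k-2}{}(\om)\hookrightarrow\L{2}{}(\om)$. Your final sentence (``$S\in\H{2}{\S,\gat}(\om)$ with $\divDiv S\in\L{2}{}(\om)$ is trivially in $\H{}{\S}(\divDiv,\om)$ and the boundary condition is inherited from the $\H{k}{}(\om)$-closure'') is in fact the clean version of this argument and supersedes the preceding muddled paragraph. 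With these corrections the proof is complete and aligns with the paper's implicit proof, which parallels that of Lemma~\ref{bih:lem:weakeqstrongbih}.
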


\subsection{Some Biharmonic Complexes}
\label{bih:sec:bihcomplexes}%

By definition we have densely defined and closed (unbounded) linear operators 
defining six dual pairs 
\begin{align*}
(\SGradgradgat,\SGradgradgat^{*})
&=(\SGradgradgat,\bdivDivSgan),\\
(\TRotSgat,\TRotSgat^{*})
&=(\TRotSgat,\SbRotTgan),\\
(\DivTgat,\DivTgat^{*})
&=(\DivTgat,-\TbGradgan),\\
(\TGradgat,\TGradgat^{*})
&=(\TGradgat,-\bDivTgan),\\
(\SRotTgat,\SRotTgat^{*})
&=(\SRotTgat,\TbRotSgan),\\
(\divDivSgat,\divDivSgat^{*})
&=(\divDivSgat,\SbGradgradgan).
\end{align*}
\cite[Remark 2.5, Remark 2.6]{PS2021b} show the complex properties 
\begin{align*}
\TRotSgat\SGradgradgat&\subset0,
&
\DivTgat\TRotSgat&\subset0,\\
\SRotTgat\TGradgat&\subset0,
&
\divDivSgat\SRotTgat&\subset0,\\
\bdivDivSgan\SbRotTgan&\subset0,
&
-\SbRotTgan\TbGradgan&\subset0,\\
-\bDivTgan\TbRotSgan&\subset0,
&
\TbRotSgan\SbGradgradgan&\subset0.
\end{align*}
Hence we get the two primal and dual biharmonic Hilbert complexes
\begin{equation}
\label{bih:bihcomplex1a}
%\footnotesize
%\scriptsize
%\tiny
\def\arrowlength{16ex}
\def\arrowdistance{.8}
\begin{tikzcd}[column sep=\arrowlength]
\cdots
\arrow[r, rightarrow, shift left=\arrowdistance, "\cdots"] 
\arrow[r, leftarrow, shift right=\arrowdistance, "\cdots"']
& 
[-5em]
\L{2}{}(\om) 
\ar[r, rightarrow, shift left=\arrowdistance, "\SGradgradgat"] 
\ar[r, leftarrow, shift right=\arrowdistance, "\bdivDivSgan"']
&
[-1em]
\L{2}{\S}(\om) 
\ar[r, rightarrow, shift left=\arrowdistance, "\TRotSgat"] 
\ar[r, leftarrow, shift right=\arrowdistance, "\SbRotTgan"']
& 
[-1em]
\L{2}{\T}(\om) 
\arrow[r, rightarrow, shift left=\arrowdistance, "\DivTgat"] 
\arrow[r, leftarrow, shift right=\arrowdistance, "-\TbGradgan"']
& 
[-1em]
\L{2}{}(\om) 
\arrow[r, rightarrow, shift left=\arrowdistance, "\cdots"] 
\arrow[r, leftarrow, shift right=\arrowdistance, "\cdots"']
&
[-5em]
\cdots,
\end{tikzcd}
\end{equation}
\begin{equation}
\label{bih:bihcomplex1b}
%\footnotesize
%\scriptsize
%\tiny
\def\arrowlength{16ex}
\def\arrowdistance{.8}
\begin{tikzcd}[column sep=\arrowlength]
\cdots
\arrow[r, rightarrow, shift left=\arrowdistance, "\cdots"] 
\arrow[r, leftarrow, shift right=\arrowdistance, "\cdots"']
& 
[-5em]
\L{2}{}(\om) 
\ar[r, rightarrow, shift left=\arrowdistance, "\TGradgat"] 
\ar[r, leftarrow, shift right=\arrowdistance, "-\bDivTgan"']
&
[-1em]
\L{2}{\T}(\om) 
\ar[r, rightarrow, shift left=\arrowdistance, "\SRotTgat"] 
\ar[r, leftarrow, shift right=\arrowdistance, "\TbRotSgan"']
& 
[-1em]
\L{2}{\S}(\om) 
\arrow[r, rightarrow, shift left=\arrowdistance, "\divDivSgat"] 
\arrow[r, leftarrow, shift right=\arrowdistance, "\SbGradgradgan"']
& 
[-1em]
\L{2}{}(\om) 
\arrow[r, rightarrow, shift left=\arrowdistance, "\cdots"] 
\arrow[r, leftarrow, shift right=\arrowdistance, "\cdots"']
&
[-5em]
\cdots.
\end{tikzcd}
\end{equation}
The long primal and dual biharmonic Hilbert complexes,
cf.~\cite[(12)]{PS2021b}, read
\begin{equation}
\label{bih:bihcomplex2a}
\footnotesize
%\scriptsize
%\tiny
\def\arrowlength{16ex}
\def\arrowdistance{.8}
\begin{tikzcd}[column sep=\arrowlength]
\Pone_{\gat}
\arrow[r, rightarrow, shift left=\arrowdistance, "\iota_{\Pone_{\gat}}"] 
\arrow[r, leftarrow, shift right=\arrowdistance, "\pi_{\Pone_{\gat}}"']
& 
[-2em]
\L{2}{}(\om) 
\ar[r, rightarrow, shift left=\arrowdistance, "\SGradgradgat"] 
\ar[r, leftarrow, shift right=\arrowdistance, "\bdivDivSgan"']
&
[-1em]
\L{2}{\S}(\om) 
\ar[r, rightarrow, shift left=\arrowdistance, "\TRotSgat"] 
\ar[r, leftarrow, shift right=\arrowdistance, "\SbRotTgan"']
& 
[0em]
\L{2}{\T}(\om) 
\arrow[r, rightarrow, shift left=\arrowdistance, "\DivTgat"] 
\arrow[r, leftarrow, shift right=\arrowdistance, "-\TbGradgan"']
& 
[-1em]
\L{2}{}(\om) 
\arrow[r, rightarrow, shift left=\arrowdistance, "\pi_{\RT_{\gan}}"] 
\arrow[r, leftarrow, shift right=\arrowdistance, "\iota_{\RT_{\gan}}"']
&
[-2em]
\RT_{\gan}
\end{tikzcd}
\end{equation}
\begin{equation}
\label{bih:bihcomplex2b}
\footnotesize
%\scriptsize
%\tiny
\def\arrowlength{16ex}
\def\arrowdistance{.8}
\begin{tikzcd}[column sep=\arrowlength]
\RT_{\gat}
\arrow[r, rightarrow, shift left=\arrowdistance, "\iota_{\RT_{\gat}}"] 
\arrow[r, leftarrow, shift right=\arrowdistance, "\pi_{\RT_{\gat}}"']
& 
[-2em]
\L{2}{}(\om) 
\ar[r, rightarrow, shift left=\arrowdistance, "\TGradgat"] 
\ar[r, leftarrow, shift right=\arrowdistance, "-\bDivTgan"']
&
[-1em]
\L{2}{\T}(\om) 
\ar[r, rightarrow, shift left=\arrowdistance, "\SRotTgat"] 
\ar[r, leftarrow, shift right=\arrowdistance, "\TbRotSgan"']
& 
[0em]
\L{2}{\S}(\om) 
\arrow[r, rightarrow, shift left=\arrowdistance, "\divDivSgat"] 
\arrow[r, leftarrow, shift right=\arrowdistance, "\SbGradgradgan"']
& 
[-1em]
\L{2}{}(\om) 
\arrow[r, rightarrow, shift left=\arrowdistance, "\pi_{\Pone_{\gan}}"] 
\arrow[r, leftarrow, shift right=\arrowdistance, "\iota_{\Pone_{\gan}}"']
&
[-2em]
\Pone_{\gan}
\end{tikzcd}
\end{equation}
with the additional complex properties 
\begin{align*}
R(\iota_{\RT_{\gat}})&=N(\TGradgat)=\RT_{\gat},
&
\ol{R(\DivTgan)}&=\RT_{\gat}^{\bot_{\L{2}{}(\om)}},\\
R(\iota_{\Pone_{\gat}})&=N(\SGradgradgat)=\Pone_{\gat},
&
\ol{R(\divDivSgan)}&=(\Pone_{\gat})^{\bot_{\L{2}{}(\om)}},
\end{align*}
where
\begin{align*}
\RT_{\Sigma}
=\begin{cases}
\{0\}&\text{if }\Sigma\neq\emptyset,\\
\RT&\text{if }\Sigma=\emptyset,
\end{cases}\qquad\text{with}\qquad
\RT:=\big\{\reals^{3}\ni x\mapsto ax+q:a\in\reals,\,q\in\reals^{3}\big\},\\
\Pone_{\Sigma}
=\begin{cases}
\{0\}&\text{if }\Sigma\neq\emptyset,\\
\Pone&\text{if }\Sigma=\emptyset,
\end{cases}\qquad\text{with}\qquad
\Pone:=\big\{\reals^{3}\ni x\mapsto q\cdot x+a:a\in\reals,\,q\in\reals^{3}\big\}
\end{align*}
denote the global Raviart-Thomas fields 
and the global polynomials of degree less or equal to $1$ in $\om$, respectively.
We have $\dim\RT=\dim\Pone=4$.
Note that, e.g., by Lemma \ref{bih:lem:weakeqstrongbih} (i) it holds
$$N(\SGradgradgat)
=\big\{u\in\H{2}{\gat}(\om):\Gradgrad u=0\big\}.$$

More generally, in addition to \eqref{bih:bihcomplex2a} and \eqref{bih:bihcomplex2b},
we shall discuss for $k\in\nat_{0}$ the higher Sobolev order 
(long primal and formally dual) biharmonic Hilbert complexes
(omitting $\om$ in the notation)

\begin{equation*}
\footnotesize
%\scriptsize
%\tiny
\def\arrowlength{16ex}
\def\arrowdistance{0}
\begin{tikzcd}[column sep=\arrowlength]
\Pone_{\gat}
\arrow[r, rightarrow, shift left=\arrowdistance, "\iota_{\Pone_{\gat}}"] 
& 
[-3em]
\H{k}{\gat}
\ar[r, rightarrow, shift left=\arrowdistance, "\SGradgradgat^{k}"] 
&
[-1em]
\H{k}{\S,\gat}
\ar[r, rightarrow, shift left=\arrowdistance, "\TRotSgat^{k}"] 
& 
[-1em]
\H{k}{\T,\gat}
\arrow[r, rightarrow, shift left=\arrowdistance, "\DivTgat^{k}"] 
& 
[-3em]
\H{k}{\gat}
\arrow[r, rightarrow, shift left=\arrowdistance, "\pi_{\RT_{\gan}}"] 
&
[-3em]
\RT_{\gan},
\end{tikzcd}
\end{equation*}
\vspace*{-4mm}
\begin{equation*}
\footnotesize
%\scriptsize
%\tiny
\def\arrowlength{16ex}
\def\arrowdistance{0}
\begin{tikzcd}[column sep=\arrowlength]
\Pone_{\gat}
\arrow[r, leftarrow, shift right=\arrowdistance, "\pi_{\Pone_{\gat}}"]
& 
[-3em]
\H{k}{\gan}
\ar[r, leftarrow, shift right=\arrowdistance, "\bdivDivSgan^{k}"]
&
[-2em]
\H{k}{\S,\gan}
\ar[r, leftarrow, shift right=\arrowdistance, "\SbRotTgan^{k}"]
& 
[0em]
\H{k}{\T,\gan}
\arrow[r, leftarrow, shift right=\arrowdistance, "-\TbGradgan^{k}"]
& 
[0em]
\H{k}{\gan}
\arrow[r, leftarrow, shift right=\arrowdistance, "\iota_{\RT_{\gan}}"]
&
[-3em]
\RT_{\gan}
\end{tikzcd}
\end{equation*}
and
\begin{equation*}
\footnotesize
%\scriptsize
%\tiny
\def\arrowlength{16ex}
\def\arrowdistance{0}
\begin{tikzcd}[column sep=\arrowlength]
\RT_{\gat}
\arrow[r, rightarrow, shift left=\arrowdistance, "\iota_{\RT_{\gat}}"] 
& 
[-3em]
\H{k}{\gat}
\ar[r, rightarrow, shift left=\arrowdistance, "\TGradgat^{k}"] 
&
[-1em]
\H{k}{\T,\gat}
\ar[r, rightarrow, shift left=\arrowdistance, "\SRotTgat^{k}"] 
& 
[-1em]
\H{k}{\S,\gat}
\arrow[r, rightarrow, shift left=\arrowdistance, "\divDivSgat^{k}"] 
& 
[-3em]
\H{k}{\gat}
\arrow[r, rightarrow, shift left=\arrowdistance, "\pi_{\Pone_{\gan}}"] 
&
[-3em]
\Pone_{\gan},
\end{tikzcd}
\end{equation*}
\vspace*{-4mm}
\begin{equation*}
\footnotesize
%\scriptsize
%\tiny
\def\arrowlength{16ex}
\def\arrowdistance{0}
\begin{tikzcd}[column sep=\arrowlength]
\RT_{\gat}
\arrow[r, leftarrow, shift right=\arrowdistance, "\pi_{\RT_{\gat}}"]
& 
[-3em]
\H{k}{\gan}
\ar[r, leftarrow, shift right=\arrowdistance, "-\bDivTgan^{k}"]
&
[-2em]
\H{k}{\T,\gan}
\ar[r, leftarrow, shift right=\arrowdistance, "\TbRotSgan^{k}"]
& 
[0em]
\H{k}{\S,\gan}
\arrow[r, leftarrow, shift right=\arrowdistance, "\SbGradgradgan^{k}"]
& 
[0em]
\H{k}{\gan}
\arrow[r, leftarrow, shift right=\arrowdistance, "\iota_{\Pone_{\gan}}"]
&
[-3em]
\Pone_{\gan}
\end{tikzcd}
\end{equation*}
with associated domain complexes
\begin{equation*}
\footnotesize
%\scriptsize
%\tiny
\def\arrowlength{16ex}
\def\arrowdistance{0}
\begin{tikzcd}[column sep=\arrowlength]
\Pone_{\gat}
\arrow[r, rightarrow, shift left=\arrowdistance, "\iota_{\Pone_{\gat}}"] 
& 
[-4em]
\H{k}{\gat}(\Gradgrad)
\ar[r, rightarrow, shift left=\arrowdistance, "\SGradgradgat^{k}"] 
&
[-1em]
\H{k}{\S,\gat}(\Rot)
\ar[r, rightarrow, shift left=\arrowdistance, "\TRotSgat^{k}"] 
& 
[-1em]
\H{k}{\T,\gat}(\Div)
\arrow[r, rightarrow, shift left=\arrowdistance, "\DivTgat^{k}"] 
& 
[-3em]
\H{k}{\gat}
\arrow[r, rightarrow, shift left=\arrowdistance, "\pi_{\RT_{\gan}}"] 
&
[-3em]
\RT_{\gan},
\end{tikzcd}
\end{equation*}
\vspace*{-4mm}
\begin{equation*}
\footnotesize
%\scriptsize
%\tiny
\def\arrowlength{16ex}
\def\arrowdistance{0}
\begin{tikzcd}[column sep=\arrowlength]
\Pone_{\gat}
\arrow[r, leftarrow, shift right=\arrowdistance, "\pi_{\Pone_{\gat}}"]
& 
[-4em]
\H{k}{\gan}
\ar[r, leftarrow, shift right=\arrowdistance, "\bdivDivSgan^{k}"]
&
[-2em]
\bH{k}{\S,\gan}(\divDiv)
\ar[r, leftarrow, shift right=\arrowdistance, "\SbRotTgan^{k}"]
& 
[-2em]
\bH{k}{\T,\gan}(\symRot)
\arrow[r, leftarrow, shift right=\arrowdistance, "-\TbGradgan^{k}"]
& 
[-2em]
\bH{k}{\gan}(\devGrad)
\arrow[r, leftarrow, shift right=\arrowdistance, "\iota_{\RT_{\gan}}"]
&
[-3em]
\RT_{\gan}
\end{tikzcd}
\end{equation*}
and
\begin{equation*}
\footnotesize
%\scriptsize
%\tiny
\def\arrowlength{16ex}
\def\arrowdistance{0}
\begin{tikzcd}[column sep=\arrowlength]
\RT_{\gat}
\arrow[r, rightarrow, shift left=\arrowdistance, "\iota_{\RT_{\gat}}"] 
& 
[-3em]
\H{k}{\gat}(\devGrad)
\ar[r, rightarrow, shift left=\arrowdistance, "\TGradgat^{k}"] 
&
[-2em]
\H{k}{\T,\gat}(\symRot)
\ar[r, rightarrow, shift left=\arrowdistance, "\SRotTgat^{k}"] 
& 
[-2em]
\H{k}{\S,\gat}(\divDiv)
\arrow[r, rightarrow, shift left=\arrowdistance, "\divDivSgat^{k}"] 
& 
[-2em]
\H{k}{\gat}
\arrow[r, rightarrow, shift left=\arrowdistance, "\pi_{\Pone_{\gan}}"] 
&
[-4em]
\Pone_{\gan},
\end{tikzcd}
\end{equation*}
\vspace*{-4mm}
\begin{equation*}
\footnotesize
%\scriptsize
%\tiny
\def\arrowlength{16ex}
\def\arrowdistance{0}
\begin{tikzcd}[column sep=\arrowlength]
\RT_{\gat}
\arrow[r, leftarrow, shift right=\arrowdistance, "\pi_{\RT_{\gat}}"]
& 
[-3em]
\H{k}{\gan}
\ar[r, leftarrow, shift right=\arrowdistance, "-\bDivTgan^{k}"]
&
[-2em]
\bH{k}{\T,\gan}(\Div)
\ar[r, leftarrow, shift right=\arrowdistance, "\TbRotSgan^{k}"]
& 
[-2em]
\bH{k}{\S,\gan}(\Rot)
\arrow[r, leftarrow, shift right=\arrowdistance, "\SbGradgradgan^{k}"]
& 
[0em]
\bH{k}{\gan}(\Gradgrad)
\arrow[r, leftarrow, shift right=\arrowdistance, "\iota_{\Pone_{\gan}}"]
&
[-4em]
\Pone_{\gan}.
\end{tikzcd}
\end{equation*}
Additionally, for $k\geq1$ we will also discuss the following variants of the biharmonic complexes 
\begin{equation*}
\footnotesize
%\scriptsize
%\tiny
\def\arrowlength{16ex}
\def\arrowdistance{0}
\begin{tikzcd}[column sep=\arrowlength]
\Pone_{\gat}
\arrow[r, rightarrow, shift left=\arrowdistance, "\iota_{\Pone_{\gat}}"] 
& 
[-3em]
\H{k}{\gat}
\ar[r, rightarrow, shift left=\arrowdistance, "\SGradgradgat^{k,k-1}"] 
&
[-1em]
\H{k-1}{\S,\gat}
\ar[r, rightarrow, shift left=\arrowdistance, "\TRotSgat^{k-1}"] 
& 
[-1em]
\H{k-1}{\T,\gat}
\arrow[r, rightarrow, shift left=\arrowdistance, "\DivTgat^{k-1}"] 
& 
[-3em]
\H{k-1}{\gat}
\arrow[r, rightarrow, shift left=\arrowdistance, "\pi_{\RT_{\gan}}"] 
&
[-3em]
\RT_{\gan},
\end{tikzcd}
\end{equation*}
\vspace*{-4mm}
\begin{equation*}
\footnotesize
%\scriptsize
%\tiny
\def\arrowlength{16ex}
\def\arrowdistance{0}
\begin{tikzcd}[column sep=\arrowlength]
\Pone_{\gat}
\arrow[r, leftarrow, shift right=\arrowdistance, "\pi_{\Pone_{\gat}}"]
& 
[-3em]
\H{k-1}{\gan}
\ar[r, leftarrow, shift right=\arrowdistance, "\bdivDivSgan^{k,k-1}"]
&
[-2em]
\H{k}{\S,\gan}
\ar[r, leftarrow, shift right=\arrowdistance, "\SbRotTgan^{k}"]
& 
[0em]
\H{k}{\T,\gan}
\arrow[r, leftarrow, shift right=\arrowdistance, "-\TbGradgan^{k}"]
& 
[0em]
\H{k}{\gan}
\arrow[r, leftarrow, shift right=\arrowdistance, "\iota_{\RT_{\gan}}"]
&
[-3em]
\RT_{\gan}
\end{tikzcd}
\end{equation*}
and
\begin{equation*}
\footnotesize
%\scriptsize
%\tiny
\def\arrowlength{16ex}
\def\arrowdistance{0}
\begin{tikzcd}[column sep=\arrowlength]
\RT_{\gat}
\arrow[r, rightarrow, shift left=\arrowdistance, "\iota_{\RT_{\gat}}"] 
& 
[-3em]
\H{k}{\gat}
\ar[r, rightarrow, shift left=\arrowdistance, "\TGradgat^{k}"] 
&
[-1em]
\H{k}{\T,\gat}
\ar[r, rightarrow, shift left=\arrowdistance, "\SRotTgat^{k}"] 
& 
[-1em]
\H{k}{\S,\gat}
\arrow[r, rightarrow, shift left=\arrowdistance, "\divDivSgat^{k,k-1}"] 
& 
[-3em]
\H{k-1}{\gat}
\arrow[r, rightarrow, shift left=\arrowdistance, "\pi_{\Pone_{\gan}}"] 
&
[-3em]
\Pone_{\gan},
\end{tikzcd}
\end{equation*}
\vspace*{-4mm}
\begin{equation*}
\footnotesize
%\scriptsize
%\tiny
\def\arrowlength{16ex}
\def\arrowdistance{0}
\begin{tikzcd}[column sep=\arrowlength]
\RT_{\gat}
\arrow[r, leftarrow, shift right=\arrowdistance, "\pi_{\RT_{\gat}}"]
& 
[-3em]
\H{k-1}{\gan}
\ar[r, leftarrow, shift right=\arrowdistance, "-\bDivTgan^{k-1}"]
&
[-2em]
\H{k-1}{\T,\gan}
\ar[r, leftarrow, shift right=\arrowdistance, "\TbRotSgan^{k-1}"]
& 
[0em]
\H{k-1}{\S,\gan}
\arrow[r, leftarrow, shift right=\arrowdistance, "\SbGradgradgan^{k,k-1}"]
& 
[0em]
\H{k}{\gan}
\arrow[r, leftarrow, shift right=\arrowdistance, "\iota_{\Pone_{\gan}}"]
&
[-3em]
\Pone_{\gan}
\end{tikzcd}
\end{equation*}
with associated domain complexes
\begin{equation*}
\footnotesize
%\scriptsize
%\tiny
\def\arrowlength{16ex}
\def\arrowdistance{0}
\begin{tikzcd}[column sep=\arrowlength]
\Pone_{\gat}
\arrow[r, rightarrow, shift left=\arrowdistance, "\iota_{\Pone_{\gat}}"] 
& 
[-4em]
\H{k,k-1}{\gat}(\Gradgrad)
\ar[r, rightarrow, shift left=\arrowdistance, "\SGradgradgat^{k,k-1}"] 
&
[-1em]
\H{k-1}{\S,\gat}(\Rot)
\ar[r, rightarrow, shift left=\arrowdistance, "\TRotSgat^{k-1}"] 
& 
[-2em]
\H{k-1}{\T,\gat}(\Div)
\arrow[r, rightarrow, shift left=\arrowdistance, "\DivTgat^{k-1}"] 
& 
[-3em]
\H{k-1}{\gat}
\arrow[r, rightarrow, shift left=\arrowdistance, "\pi_{\RT_{\gan}}"] 
&
[-3em]
\RT_{\gan},
\end{tikzcd}
\end{equation*}
\vspace*{-4mm}
\begin{equation*}
\footnotesize
%\scriptsize
%\tiny
\def\arrowlength{16ex}
\def\arrowdistance{0}
\begin{tikzcd}[column sep=\arrowlength]
\Pone_{\gat}
\arrow[r, leftarrow, shift right=\arrowdistance, "\pi_{\Pone_{\gat}}"]
& 
[-4em]
\H{k-1}{\gan}
\ar[r, leftarrow, shift right=\arrowdistance, "\bdivDivSgan^{k,k-1}"]
&
[-2em]
\bH{k,k-1}{\S,\gan}(\divDiv)
\ar[r, leftarrow, shift right=\arrowdistance, "\SbRotTgan^{k}"]
& 
[-2em]
\bH{k}{\T,\gan}(\symRot)
\arrow[r, leftarrow, shift right=\arrowdistance, "-\TbGradgan^{k}"]
& 
[-2em]
\bH{k}{\gan}(\devGrad)
\arrow[r, leftarrow, shift right=\arrowdistance, "\iota_{\RT_{\gan}}"]
&
[-4em]
\RT_{\gan}
\end{tikzcd}
\end{equation*}
and
\begin{equation*}
\footnotesize
%\scriptsize
%\tiny
\def\arrowlength{16ex}
\def\arrowdistance{0}
\begin{tikzcd}[column sep=\arrowlength]
\RT_{\gat}
\arrow[r, rightarrow, shift left=\arrowdistance, "\iota_{\RT_{\gat}}"] 
& 
[-4em]
\H{k}{\gat}(\devGrad)
\ar[r, rightarrow, shift left=\arrowdistance, "\TGradgat^{k}"] 
&
[-2em]
\H{k}{\T,\gat}(\symRot)
\ar[r, rightarrow, shift left=\arrowdistance, "\SRotTgat^{k}"] 
& 
[-2em]
\H{k,k-1}{\S,\gat}(\divDiv)
\arrow[r, rightarrow, shift left=\arrowdistance, "\divDivSgat^{k,k-1}"] 
& 
[-2em]
\H{k-1}{\gat}
\arrow[r, rightarrow, shift left=\arrowdistance, "\pi_{\Pone_{\gan}}"] 
&
[-4em]
\Pone_{\gan},
\end{tikzcd}
\end{equation*}
\vspace*{-4mm}
\begin{equation*}
\footnotesize
%\scriptsize
%\tiny
\def\arrowlength{16ex}
\def\arrowdistance{0}
\begin{tikzcd}[column sep=\arrowlength]
\RT_{\gat}
\arrow[r, leftarrow, shift right=\arrowdistance, "\pi_{\RT_{\gat}}"]
& 
[-3em]
\H{k-1}{\gan}
\ar[r, leftarrow, shift right=\arrowdistance, "-\bDivTgan^{k-1}"]
&
[-2em]
\bH{k-1}{\T,\gan}(\Div)
\ar[r, leftarrow, shift right=\arrowdistance, "\TbRotSgan^{k-1}"]
& 
[-2em]
\bH{k-1}{\S,\gan}(\Rot)
\arrow[r, leftarrow, shift right=\arrowdistance, "\SbGradgradgan^{k,k-1}"]
& 
[0em]
\bH{k}{\gan}(\Gradgrad)
\arrow[r, leftarrow, shift right=\arrowdistance, "\iota_{\Pone_{\gan}}"]
&
[-4em]
\Pone_{\gan}.
\end{tikzcd}
\end{equation*}

Here we have introduced the densely defined and closed linear operators
\begin{align*}
\SGradgradgat^{k}:D(\SGradgradgat^{k})\subset\H{k}{\gat}(\om)&\to\H{k}{\S,\gat}(\om);
&
u&\mapsto\Gradgrad u,\\
\bdivDivSgan^{k}:D(\bdivDivSgan^{k})\subset\H{k}{\S,\gan}(\om)&\to\H{k}{\gan}(\om);
&
S&\mapsto\divDiv S,\\
\TRotSgat^{k}:D(\TRotSgat^{k})\subset\H{k}{\S,\gat}(\om)&\to\H{k}{\T,\gat}(\om);
&
S&\mapsto\Rot S,\\
\SbRotTgan^{k}:D(\SbRotTgan^{k})\subset\H{k}{\T,\gan}(\om)&\to\H{k}{\S,\gan}(\om);
&
T&\mapsto\symRot T,\\
\DivTgat^{k}:D(\DivTgat^{k})\subset\H{k}{\T,\gat}(\om)&\to\H{k}{\gat}(\om);
&
T&\mapsto\Div T,\\
\TbGradgan^{k}:D(\TbGradgan^{k})\subset\H{k}{\gan}(\om)&\to\H{k}{\T,\gan}(\om);
&
v&\mapsto\devGrad v,\\
\TGradgat^{k}:D(\TGradgat^{k})\subset\H{k}{\gat}(\om)&\to\H{k}{\T,\gat}(\om);
&
v&\mapsto\devGrad v,\\
\bDivTgan^{k}:D(\bDivTgan^{k})\subset\H{k}{\T,\gan}(\om)&\to\H{k}{\gan}(\om);
&
T&\mapsto\Div T,\\
\SRotTgat^{k}:D(\SRotTgat^{k})\subset\H{k}{\T,\gat}(\om)&\to\H{k}{\S,\gat}(\om);
&
T&\mapsto\symRot T,\\
\TbRotSgan^{k}:D(\TbRotSgan^{k})\subset\H{k}{\S,\gan}(\om)&\to\H{k}{\T,\gan}(\om);
&
S&\mapsto\Rot S,\\
\divDivSgat^{k}:D(\divDivSgat^{k})\subset\H{k}{\S,\gat}(\om)&\to\H{k}{\gat}(\om);
&
S&\mapsto\divDiv S,\\
\SbGradgradgan^{k}:D(\SbGradgradgan^{k})\subset\H{k}{\gan}(\om)&\to\H{k}{\S,\gan}(\om);
&
u&\mapsto\Gradgrad u,
\end{align*}
with domains of definition
\begin{align*}
D(\SGradgradgat^{k})&=\H{k}{\gat}(\Gradgrad,\om),
&
D(\bdivDivSgan^{k})&=\bH{k}{\S,\gan}(\divDiv,\om),\\
D(\TRotSgat^{k})&=\H{k}{\S,\gat}(\Rot,\om),
&
D(\SbRotTgan^{k})&=\bH{k}{\T,\gan}(\symRot,\om),\\
D(\DivTgat^{k})&=\H{k}{\T,\gat}(\Div,\om),
&
D(\TbGradgan^{k})&=\bH{k}{\gan}(\devGrad,\om),\\
D(\TGradgat^{k})&=\H{k}{\gat}(\devGrad,\om),
&
D(\bDivTgan^{k})&=\bH{k}{\T,\gan}(\Div,\om),\\
D(\SRotTgat^{k})&=\H{k}{\T,\gat}(\symRot,\om),
&
D(\TbRotSgan^{k})&=\bH{k}{\S,\gan}(\Rot,\om),\\
D(\divDivSgat^{k})&=\H{k}{\S,\gat}(\divDiv,\om),
&
D(\SbGradgradgan^{k})&=\bH{k}{\gan}(\Gradgrad,\om).
\end{align*}
Moreover,
\begin{align*}
\SGradgradgat^{k,k-1}:D(\SGradgradgat^{k,k-1})\subset\H{k}{\gat}(\om)&\to\H{k-1}{\S,\gat}(\om);
&
u&\mapsto\Gradgrad u,\\
\SbGradgradgan^{k,k-1}:D(\SbGradgradgan^{k,k-1})\subset\H{k}{\gan}(\om)&\to\H{k-1}{\S,\gan}(\om);
&
u&\mapsto\Gradgrad u,\\
\divDivSgat^{k,k-1}:D(\divDivSgat^{k,k-1})\subset\H{k}{\S,\gat}(\om)&\to\H{k-1}{\gat}(\om);
&
S&\mapsto\divDiv S,\\
\bdivDivSgan^{k,k-1}:D(\bdivDivSgan^{k,k-1})\subset\H{k}{\S,\gan}(\om)&\to\H{k-1}{\gan}(\om);
&
S&\mapsto\divDiv S,
\end{align*}
with domains of definition
\begin{align*}
D(\SGradgradgat^{k,k-1})&=\H{k,k-1}{\gat}(\Gradgrad,\om),
&
D(\divDivSgat^{k,k-1})&=\H{k,k-1}{\S,\gat}(\divDiv,\om),\\
D(\SbGradgradgan^{k,k-1})&=\bH{k,k-1}{\gan}(\Gradgrad,\om),
&
D(\bdivDivSgan^{k,k-1})&=\bH{k,k-1}{\S,\gan}(\divDiv,\om).
\end{align*}

\subsection{Dirichlet/Neumann Fields}
\label{bih:sec:dirneu}%

We also introduce the cohomology spaces of 
biharmonic Dirichlet/Neumann tensor fields (generalised harmonic tensors)
\begin{align*}
\Harm{}{\S,\gat,\gan,\eps}(\om)
&:=N(\TRotSgat)\cap N(\divDivSgan\eps)
=\H{}{\S,\gat,0}(\Rot,\om)\cap\eps^{-1}\H{}{\S,\gan,0}(\divDiv,\om),\\
\Harm{}{\T,\gan,\gat,\mu}(\om)
&:=N(\SRotTgan)\cap N(\DivTgat\mu)
=\H{}{\T,\gan,0}(\symRot,\om)\cap\mu^{-1}\H{}{\T,\gat,0}(\Div,\om).
\end{align*}
Here, $\eps:\L{2}{\S}(\om)\to\L{2}{\S}(\om)$
is a symmetric and positive 
topological isomorphism (symmetric and positive bijective bounded linear operator),
which introduces a new inner product
$$\scp{\,\cdot\,}{\,\cdot\,}_{\L{2}{\S,\eps}(\om)}
:=\scp{\eps\,\cdot\,}{\,\cdot\,}_{\L{2}{\S}(\om)},$$
where $\L{2}{\S,\eps}(\om):=\L{2}{\S}(\om)$ (as linear space)
equipped with the inner product $\scp{\,\cdot\,}{\,\cdot\,}_{\L{2}{\S,\eps}(\om)}$.
Such \emph{weights} $\eps$ and also $\mu:\L{2}{\T}(\om)\to\L{2}{\T}(\om)$ 
are called \emph{admissible}.
Typical examples are given by
symmetric, $\L{\infty}{}$-bounded, and uniformly positive definite tensor fields
$\eps,\mu:\om\to\reals^{(3\times3)\times(3\times3)}$
with appropriate algebraic properties.

\section{Biharmonic Complexes II}
\label{bih:sec:bih2}%

\subsection{Regular Potentials and Decompositions I}
\label{bih:sec:regpotdeco1}%

\subsubsection{Extendable Domains}
\label{bih:sec:regpotdecoextdom}%

The next theorem is a crucial result.
Its proof is based on \cite[Theorem 3.10]{PZ2020a}, where the stated results 
for $\gat=\ga$ and $\gat=\emptyset$ have been shown,
and the arguments used in, e.g., \cite[Lemma 4.4]{PS2021b}
for partial boundary conditions.
See Appendix \ref{sec:someproof} for a detailed proof.

\begin{theo}[regular potential operators for extendable domains]
\label{bih:highorderregpotextdombih}
Let $(\om,\gat)$ be an extendable bounded strong Lipschitz pair
and let $k\geq0$. Then there exist bounded linear regular potential operators
\begin{align*}
\PotP_{\SGradgrad,\gat}^{k}:
\bH{k}{\S,\gat,0}(\Rot,\om)
&\To\H{k+2}{\gat}(\om)\cap\H{k+2}{}(\rt),\\
\PotP_{\TRotS,\gat}^{k}:
\bH{k}{\T,\gat,0}(\Div,\om)
&\To\H{k+1}{\S,\gat}(\om)\cap\H{k+1}{}(\rt),\\
\PotP_{\DivT,\gat}^{k}:
\H{k}{\gat}(\om)\cap(\RT_{\gan})^{\bot_{\L{2}{}(\om)}}
&\To\H{k+1}{\T,\gat}(\om)\cap\H{k+1}{}(\rt),\\
\PotP_{\TGrad,\gat}^{k}:
\bH{k}{\T,\gat,0}(\symRot,\om)
&\To\H{k+1}{\gat}(\om)\cap\H{k+1}{}(\rt),\\
\PotP_{\SRotT,\gat}^{k}:
\bH{k}{\S,\gat,0}(\divDiv,\om)
&\To\H{k+1}{\T,\gat}(\om)\cap\H{k+1}{}(\rt),\\
\PotP_{\divDivS,\gat}^{k}:
\H{k}{\gat}(\om)\cap(\Pone_{\gan})^{\bot_{\L{2}{}(\om)}}
&\To\H{k+2}{\S,\gat}(\om)\cap\H{k+2}{}(\rt).
\end{align*}
In particular, $\PotP_{\dots}^{\dots}$ are right inverses for 
$\SGradgrad$, $\TRotS$, $\DivT$, $\TGrad$, $\SRotT$, and $\divDivS$, respectively, i.e.,
\begin{align*}
\Gradgrad\PotP_{\SGradgrad,\gat}^{k}
&=\id_{\bH{k}{\S,\gat,0}(\Rot,\om)},
&
\devGrad\PotP_{\TGrad,\gat}^{k}
&=\id_{\bH{k}{\T,\gat,0}(\symRot,\om)},\\
\Rot\PotP_{\TRotS,\gat}^{k}
&=\id_{\bH{k}{\T,\gat,0}(\Div,\om)},
&
\symRot\PotP_{\SRotT,\gat}^{k}
&=\id_{\bH{k}{\S,\gat,0}(\divDiv,\om)},\\
\Div\PotP_{\DivT,\gat}^{k}
&=\id_{\H{k}{\gat}(\om)\cap(\RT_{\gan})^{\bot_{\L{2}{}(\om)}}},
&
\divDiv\PotP_{\divDivS,\gat}^{k}
&=\id_{\H{k}{\gat}(\om)\cap(\Pone_{\gan})^{\bot_{\L{2}{}(\om)}}}.
\end{align*}
Without loss of generality, $\PotP_{\dots}^{\dots}$ 
map to tensor fields with a fixed compact support in $\rt$.
\end{theo}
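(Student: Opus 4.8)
The plan is to reduce the construction of all six regular potential operators for the pair $(\om,\gat)$ with partial boundary conditions to the already-known cases $\gat=\emptyset$ and $\gat=\ga$ from \cite[Theorem 3.10]{PZ2020a}, using the extension/restriction technique employed for the de Rham complex in \cite[Lemma 4.4]{PS2021b}. Since $(\om,\gat)$ is an extendable bounded strong Lipschitz pair, there exists a larger bounded strong Lipschitz domain $\widetilde\om\supset\om$ together with a bounded extension operator that extends the relevant Sobolev fields on $\om$ (with the weak boundary condition on $\gat$) to fields on $\widetilde\om$ (with the full boundary condition on $\widetilde\ga$, or none, depending on which of the two complexes/boundary parts we are in) while preserving the kernel property of the differential operator and the Sobolev order. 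I would treat one operator in detail, say $\PotP_{\SGradgrad,\gat}^{k}$, and then indicate that the remaining five are handled identically.

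The key steps, in order: First, given $S\in\bH{k}{\S,\gat,0}(\Rot,\om)$, extend $S$ by the extension operator to $\widetilde S\in\bH{k}{\S,\widetilde\gat,0}(\Rot,\widetilde\om)$ on the extended domain, where now the boundary part is either empty or full, so that the result of \cite[Theorem 3.10]{PZ2020a} applies; note one must check that the extension respects $\Rot\widetilde S=0$ and stays in the symmetric-tensor space with the correct Sobolev regularity $\H{k}{}$. Second, apply the known regular potential operator on $\widetilde\om$ to obtain $\widetilde u\in\H{k+2}{}(\widetilde\om)$ (even in $\H{k+2}{}(\rt)$, extending by zero or using the compact-support normalisation) with $\Gradgrad\widetilde u=\widetilde S$ and, crucially, $\widetilde u$ carrying the appropriate boundary condition on $\widetilde\gat\supset\gat$. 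Third, restrict $\widetilde u$ back to $\om$: set $\PotP_{\SGradgrad,\gat}^{k}S:=\widetilde u|_{\om}$. Then $\Gradgrad(\widetilde u|_{\om})=\widetilde S|_{\om}=S$, the field lies in $\H{k+2}{}(\om)$ with the boundary condition on $\gat$ inherited from $\widetilde\gat$ (here one uses that boundary conditions localize, i.e. a field vanishing near $\widetilde\gat$ in particular vanishes near $\gat$, together with \eqref{eq:weakeqstrongsimple} and Lemma \ref{bih:lem:weakeqstrongbih} to pass between weak and strong traces), and boundedness of the composition follows from boundedness of extension, potential, and restriction. The compact-support normalisation in $\rt$ is obtained by multiplying with a fixed cutoff function that is $1$ on $\om$; since the operators already map to $\H{k+2}{}(\rt)$, this only changes the output by a bounded operator and preserves the right-inverse property on $\om$.

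The main obstacle I expect is matching the boundary conditions correctly through the extension–restriction sandwich. On $\om$ we only have the \emph{weak} boundary condition on $\gat$ (the domains of the adjoints are defined weakly), whereas the known result on $\widetilde\om$ is stated for the classical empty/full case; one must arrange the extended domain and its boundary decomposition so that the portion of $\widetilde\ga$ meeting $\gat$ is handled by the ``full'' case while the portion meeting $\gan$ is free, and then verify that the regular potential produced on $\widetilde\om$ genuinely inherits a boundary condition compatible with $\gat$ after restriction — this is exactly the delicate localization argument from \cite[Lemma 4.4]{PS2021b}, now complicated by the second-order operators $\Gradgrad$ and $\divDiv$ (so one needs traces of $u$ \emph{and} of $\grad u$ to match on $\gat$). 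A secondary technical point is the Korn/Ne\v{c}as regularity identities \eqref{eq:necasreg}, which must be invoked to identify $\H{}{}(\Gradgrad,\cdot)$ with $\H{2}{}(\cdot)$ and $\H{}{}(\devGrad,\cdot)$ with $\H{1}{}(\cdot)$ so that the target spaces $\H{k+2}{\gat}(\om)$ and $\H{k+1}{\gat}(\om)$ are the right ones; and one should double-check the deviatoric/trace-free and symmetric constraints are preserved by all three maps, which is immediate since extension and restriction act pointwise on tensor components and the algebraic constraints $\skw S=0$, $\tr T=0$ are pointwise.
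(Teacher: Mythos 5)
The proposal captures the correct broad strategy (extend to the topologically trivial $\widetilde\om=\om\cup\widehat\om$, apply the empty/full-boundary potential result from \cite[Theorem 3.10]{PZ2020a}, restrict back), but the mechanism by which the resulting potential acquires the boundary condition on $\gat$ is described incorrectly, and this is exactly the non-trivial step. You write that after applying the known potential operator on $\widetilde\om$ one obtains $\widetilde u$ ``carrying the appropriate boundary condition on $\widetilde\gat\supset\gat$'', so that the restriction $\widetilde u|_{\om}$ automatically inherits a trace condition on $\gat$. This is not what the cited theorem gives: the potential on $\widetilde\om$ (or on $\rt$) is produced \emph{without any boundary condition at all}, since one has reduced to the case $\gat=\emptyset$ on the extended domain. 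There is no $\widetilde\gat$ on which $\widetilde u$ vanishes, and a restriction of a generic $\H{k+2}{}(\rt)$-field to $\om$ has no reason to lie in $\H{k+2}{\gat}(\om)$. The actual argument in the paper proceeds differently: one extends $S$ by \emph{zero} through $\gat$, so $\widetilde S$ vanishes identically on $\widehat\om$; the potential $\widetilde u$ therefore solves $\Gradgrad\widetilde u=0$ on the (topologically trivial) set $\widehat\om$, hence agrees there with a polynomial $p\in\Pone$; modifying $p$ by a cutoff far outside $\widetilde\om$ and subtracting, one arrives at $u=\widetilde u-\widetilde p\in\H{k+2}{}(\rt)$ with $u|_{\widehat\om}=0$, which is what places $u$ in $\H{k+2}{\gat}(\om)$. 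Your proposal omits this kernel-field subtraction entirely, and without it the boundary condition simply does not materialize.

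A second, related gap is the claim that ``the remaining five are handled identically''. They are not. For the middle operators $\TRotS$, $\DivT$, $\SRotT$, $\divDivS$, the field one obtains on $\widehat\om$ after the first potential step is not a polynomial but a general kernel field (e.g.\ $\widetilde S|_{\widehat\om}\in\H{k+1}{\S,0}(\Rot,\widehat\om)$), and one needs to apply the potential theorem a \emph{second} time on $\widehat\om$ to write this kernel field as $\Gradgrad\widetilde u$ (resp.\ $\Rot\widetilde S$, $\devGrad\widetilde v$, $\symRot\widetilde T$), and then subtract that from the global potential to make it vanish on $\widehat\om$. This nested application is essential, it changes the Sobolev bookkeeping, and it is qualitatively different from the cutoff-and-subtract-a-polynomial step used for $\SGradgrad$ and $\TGrad$. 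Finally, minor but worth flagging: the paper uses Stein's/Calderon's extension only to pass from $\widetilde\om$ to $\rt$, not to extend the data $S$; the extension of $S$ is by zero through $\gat$, which is what the definition of ``extendable pair'' is tailored to, and is what makes $\widetilde S$ vanish on $\widehat\om$ — the hinge of the whole argument.
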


\begin{rem}
Note that $\A_{n}\PotP_{\A_{n}}=\id_{R(\A_{n})}$ is a general property 
of a (bounded regular) potential operator $\PotP_{\A_{n}}:R(\A_{n})\to\H{+}{n}$
with $\H{+}{n}\subset D(\A_{n})$, cf.~\cite[Section 2.3]{PS2021b}.
\end{rem}

As a simple consequence of the complex properties,
the general results for regular potentials and decompositions 
from, e.g., \cite[Section 2.3]{PS2021b}, and  Theorem \ref{bih:highorderregpotextdombih} 
we obtain a few corollaries.

\begin{cor}[regular potentials for extendable domains]
\label{bih:highorderregpotextdombihcor}
Let $(\om,\gat)$ be an extendable bounded strong Lipschitz pair
and let $k\geq0$. Then the regular potentials representations 
\begin{align*}
\bH{k}{\S,\gat,0}(\Rot,\om)
=\H{k}{\S,\gat,0}(\Rot,\om)
&=\Gradgrad\H{k}{\gat}(\Gradgrad,\om)
=\Gradgrad\H{k+2}{\gat}(\om)\\
&=\Gradgrad\H{k+1,k}{\gat}(\Gradgrad,\om)\\
&=R(\SGradgradgat^{k})
=R(\SGradgradgat^{k+1,k}),\\
\bH{k}{\T,\gat,0}(\Div,\om)
=\H{k}{\T,\gat,0}(\Div,\om)
&=\Rot\H{k}{\S,\gat}(\Rot,\om)
=\Rot\H{k+1}{\S,\gat}(\om)\\
&=R(\TRotSgat^{k}),\\
\H{k}{\gat}(\om)\cap(\RT_{\gan})^{\bot_{\L{2}{}(\om)}}
&=\Div\H{k}{\T,\gat}(\Div,\om)
=\Div\H{k+1}{\T,\gat}(\om)\\
&=R(\DivTgat^{k}),\\
\bH{k}{\T,\gat,0}(\symRot,\om)
=\H{k}{\T,\gat,0}(\symRot,\om)
&=\devGrad\H{k}{\gat}(\devGrad,\om)
=\devGrad\H{k+1}{\gat}(\om)\\
&=R(\TGradgat^{k}),\\
\bH{k}{\S,\gat,0}(\divDiv,\om)
=\H{k}{\S,\gat,0}(\divDiv,\om)
&=\symRot\H{k}{\T,\gat}(\symRot,\om)
=\symRot\H{k+1}{\T,\gat}(\om)\\
&=R(\SRotTgat^{k}),\\
\H{k}{\gat}(\om)\cap(\Pone_{\gan})^{\bot_{\L{2}{}(\om)}}
&=\divDiv\H{k}{\S,\gat}(\divDiv,\om)
=\divDiv\H{k+2}{\S,\gat}(\om)\\
&=\divDiv\H{k+1,k}{\S,\gat}(\divDiv,\om)\\
&=R(\divDivSgat^{k})
=R(\divDivSgat^{k+1,k})
\end{align*}
hold, and the potentials can be chosen such that they depend continuously on the data.
In particular, the latter spaces are closed subspaces of 
$\H{k}{\S}(\om)$, $\H{k}{\T}(\om)$, and $\H{k}{}(\om)$, respectively.
\end{cor}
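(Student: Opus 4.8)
The plan is to derive Corollary \ref{bih:highorderregpotextdombihcor} purely formally from Theorem \ref{bih:highorderregpotextdombih} together with the complex properties and the general abstract machinery for regular potentials recorded in \cite[Section 2.3]{PS2021b}. First I would fix one of the six rows, say the $\SGradgrad$-row, and establish the chain of inclusions
\begin{align*}
\Gradgrad\H{k+2}{\gat}(\om)
\subset\Gradgrad\H{k+1,k}{\gat}(\Gradgrad,\om)
\subset\Gradgrad\H{k}{\gat}(\Gradgrad,\om)
\subset R(\SGradgradgat^{k})
\subset\H{k}{\S,\gat,0}(\Rot,\om)
\subset\bH{k}{\S,\gat,0}(\Rot,\om).
\end{align*}
The first three inclusions are immediate from the definitions of the Sobolev spaces in Sections \ref{bih:sec:highsobolev} and \ref{bih:sec:sobolevmore}; the fourth is the definition of the range of $\SGradgradgat^{k}$; the penultimate one is the complex property $\Gradgrad\H{k}{\gat}(\Gradgrad,\om)\subset\H{k}{\S,\gat,0}(\Rot,\om)$ stated in Section \ref{bih:sec:highsobolev} (noting that $\Gradgrad u$ automatically lies in the kernel of $\Rot$ by Lemma \ref{bih:lem:PZformulalem}); and the last is ``strong $\subset$ weak''. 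To close this circle of inclusions into a chain of equalities, I would invoke the regular potential operator $\PotP_{\SGradgrad,\gat}^{k}$ from Theorem \ref{bih:highorderregpotextdombih}: given $S\in\bH{k}{\S,\gat,0}(\Rot,\om)$, the field $u:=\PotP_{\SGradgrad,\gat}^{k}S$ lies in $\H{k+2}{\gat}(\om)$ and satisfies $\Gradgrad u=S$, so $S\in\Gradgrad\H{k+2}{\gat}(\om)$; this shows the outermost space is contained in the innermost, forcing all the spaces in between to coincide. The continuous dependence of the potentials is exactly the boundedness of $\PotP_{\SGradgrad,\gat}^{k}$, and closedness of the common space in $\H{k}{\S}(\om)$ follows since $\id=\Gradgrad\circ\PotP_{\SGradgrad,\gat}^{k}$ exhibits it as the image of a bounded projection (or directly: a closed subspace because it equals $R(\SGradgradgat^{k})$ with $\SGradgradgat^{k}$ possessing a bounded right inverse, hence closed range — this is the abstract lemma in \cite[Section 2.3]{PS2021b}).

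The remaining five rows I would treat identically, each time pairing the relevant complex property (e.g. $\Rot\H{k}{\S,\gat}(\Rot,\om)\subset\H{k}{\T,\gat,0}(\Div,\om)$, $\devGrad\H{k}{\gat}(\devGrad,\om)\subset\H{k}{\T,\gat,0}(\symRot,\om)$, $\symRot\H{k}{\T,\gat}(\symRot,\om)\subset\H{k}{\S,\gat,0}(\divDiv,\om)$, and the two $\Div$/$\divDiv$ rows onto $(\RT_{\gan})^{\bot}$ and $(\Pone_{\gan})^{\bot}$) with the corresponding potential operator $\PotP_{\TRotS,\gat}^{k}$, $\PotP_{\DivT,\gat}^{k}$, $\PotP_{\TGrad,\gat}^{k}$, $\PotP_{\SRotT,\gat}^{k}$, $\PotP_{\divDivS,\gat}^{k}$. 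In the two $\Div$-type rows, the orthogonality condition against $\RT_{\gan}$ (resp. $\Pone_{\gan}$) encodes precisely the kernel of the next operator in the long complex \eqref{bih:bihcomplex2a}, \eqref{bih:bihcomplex2b}; surjectivity of $\Div$ (resp. $\divDiv$) onto this orthogonal complement is again read off from the right-inverse property in Theorem \ref{bih:highorderregpotextdombih}. The intermediate equalities involving $\H{k+1}{\S,\gat}(\om)$, $\H{k+1}{\T,\gat}(\om)$, $\H{k+2}{\S,\gat}(\om)$ etc. follow because the potential operators map into these higher-regularity spaces, so the range over the larger domain equals the range over the smaller one.

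I do not expect a genuine obstacle here: the corollary is essentially bookkeeping once Theorem \ref{bih:highorderregpotextdombih} is in hand. The only point requiring mild care is the identification ``$\bH{k}{\cdots,0}=\H{k}{\cdots,0}$'' for the kernels appearing on the left-hand sides — i.e. that weak and strong partial boundary conditions agree on the kernel spaces. This is not automatic from Lemmas \ref{bih:lem:weakeqstrongbih} and \ref{bih:lem:weakeqstrongela2} alone for $k=0,1$, but it does follow from the same potential argument: if $S\in\bH{k}{\S,\gat,0}(\Rot,\om)$ then $S=\Gradgrad\PotP_{\SGradgrad,\gat}^{k}S$ with $\PotP_{\SGradgrad,\gat}^{k}S\in\H{k+2}{\gat}(\om)=\H{k}{\gat}(\Gradgrad,\om)$, so $S\in\Gradgrad\H{k}{\gat}(\Gradgrad,\om)\subset\H{k}{\S,\gat,0}(\Rot,\om)$, giving the reverse of ``strong $\subset$ weak'' for free. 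Thus the same single application of the theorem simultaneously yields the potential representation, the strong/weak equality on kernels, the closedness, and the continuous dependence, and the proof amounts to writing this out once and remarking ``analogously'' for the other five rows.
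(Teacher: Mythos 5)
Your argument is correct, and it is essentially the proof the paper has in mind — the paper simply states the corollary as a consequence of Theorem~\ref{bih:highorderregpotextdombih}, the complex properties, and the abstract machinery of \cite[Section 2.3]{PS2021b}, without spelling out the bookkeeping. Your circular chain of inclusions, closed by applying $\PotP_{\dots}^{k}$ to a field in the largest (weak, kernel) space to land it back in the smallest one, is exactly the right mechanism, and your observation that this single application simultaneously yields the potential representation, the ``weak $=$ weak kernel $\subset$ strong kernel'' direction, the closedness of the ranges (since a bounded right inverse forces closed range), and the continuous dependence of potentials is all accurate.

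One very minor remark: in the $\SGradgrad$-row the three domain spaces $\H{k+2}{\gat}(\om)$, $\H{k+1,k}{\gat}(\Gradgrad,\om)$, and $\H{k}{\gat}(\Gradgrad,\om)$ are already pairwise equal by Ne\v{c}as' inequality (cf.~\eqref{eq:necasreg} and \eqref{bih:Gradgradkkmoregformula}), so the first two ``inclusions'' you list are in fact equalities even before invoking the potential operator; but listing them as inclusions and closing the loop is harmless and keeps the argument uniform across all six rows, which is worthwhile since in the $\divDivS$-row the corresponding domain spaces $\H{k+2}{\S,\gat}(\om)\subset\H{k+1,k}{\S,\gat}(\divDiv,\om)\subset\H{k}{\S,\gat}(\divDiv,\om)$ are genuinely nested.
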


\begin{cor}[regular decompositions for extendable domains]
\label{bih:highorderregdecoextdombihcor}
Let $(\om,\gat)$ be an extendable bounded strong Lipschitz pair
and let $k\geq0$. Then the bounded regular decompositions 
\begin{align*}
\bH{k}{\S,\gat}(\Rot,\om)
&=\H{k+1}{\S,\gat}(\om)
+\Gradgrad\H{k+2}{\gat}(\om)
=R(\PotP_{\TRotS,\gat}^{k})
\dotplus\H{k}{\S,\gat,0}(\Rot,\om)\\
&=R(\PotP_{\TRotS,\gat}^{k})
\dotplus\Gradgrad\H{k+2}{\gat}(\om)\\
&=R(\PotP_{\TRotS,\gat}^{k})
\dotplus\Gradgrad R(\PotP_{\SGradgrad,\gat}^{k}),\\
\bH{k}{\T,\gat}(\Div,\om)
&=\H{k+1}{\T,\gat}(\om)
+\Rot\H{k+1}{\S,\gat}(\om)
=R(\PotP_{\DivT,\gat}^{k})
\dotplus\H{k}{\T,\gat,0}(\Div,\om)\\
&=R(\PotP_{\DivT,\gat}^{k})
\dotplus\Rot\H{k+1}{\S,\gat}(\om)\\
&=R(\PotP_{\DivT,\gat}^{k})
\dotplus\Rot R(\PotP_{\TRotS,\gat}^{k}),\\
\bH{k}{\T,\gat}(\symRot,\om)
&=\H{k+1}{\T,\gat}(\om)
+\devGrad\H{k+1}{\gat}(\om)
=R(\PotP_{\SRotT,\gat}^{k})
\dotplus\H{k}{\T,\gat,0}(\symRot,\om)\\
&=R(\PotP_{\SRotT,\gat}^{k})
\dotplus\devGrad\H{k+1}{\gat}(\om)\\
&=R(\PotP_{\SRotT,\gat}^{k})
\dotplus\devGrad R(\PotP_{\TGrad,\gat}^{k}),\\
\bH{k}{\S,\gat}(\divDiv,\om)
&=\H{k+2}{\S,\gat}(\om)
+\symRot\H{k+1}{\T,\gat}(\om)
=R(\PotP_{\divDivS,\gat}^{k})
\dotplus\H{k}{\S,\gat,0}(\divDiv,\om)\\
&=R(\PotP_{\divDivS,\gat}^{k})
\dotplus\symRot\H{k+1}{\T,\gat}(\om)\\
&=R(\PotP_{\divDivS,\gat}^{k})
\dotplus\symRot R(\PotP_{\SRotT,\gat}^{k})
\end{align*}
hold with bounded linear regular decomposition operators
\begin{align*}
\PotQ_{\TRotS,\gat}^{k,1}:=\PotP_{\TRotS,\gat}^{k}\Rot:
\bH{k}{\S,\gat}(\Rot,\om)&\to\H{k+1}{\S,\gat}(\om),\\
\PotQ_{\TRotS,\gat}^{k,0}:=\PotP_{\SGradgrad,\gat}^{k}(1-\PotQ_{\TRotS,\gat}^{k,1}):
\bH{k}{\S,\gat}(\Rot,\om)&\to\H{k+2}{\gat}(\om),\\
\PotQ_{\DivT,\gat}^{k,1}:=\PotP_{\DivT,\gat}^{k}\Div:
\bH{k}{\T,\gat}(\Div,\om)&\to\H{k+1}{\T,\gat}(\om),\\
\PotQ_{\DivT,\gat}^{k,0}:=\PotP_{\TRotS,\gat}^{k}(1-\PotQ_{\DivT,\gat}^{k,1}):
\bH{k}{\T,\gat}(\Div,\om)&\to\H{k+1}{\S,\gat}(\om),\\
\PotQ_{\SRotT,\gat}^{k,1}:=\PotP_{\SRotT,\gat}^{k}\symRot:
\bH{k}{\T,\gat}(\symRot,\om)&\to\H{k+1}{\T,\gat}(\om),\\
\PotQ_{\SRotT,\gat}^{k,0}:=\PotP_{\TGrad,\gat}^{k}(1-\PotQ_{\SRotT,\gat}^{k,1}):
\bH{k}{\T,\gat}(\symRot,\om)&\to\H{k+1}{\gat}(\om),\\
\PotQ_{\divDivS,\gat}^{k,1}:=\PotP_{\divDivS,\gat}^{k}\divDiv:
\bH{k}{\S,\gat}(\divDiv,\om)&\to\H{k+2}{\S,\gat}(\om),\\
\PotQ_{\divDivS,\gat}^{k,0}:=\PotP_{\SRotT,\gat}^{k}(1-\PotQ_{\divDivS,\gat}^{k,1}):
\bH{k}{\S,\gat}(\divDiv,\om)&\to\H{k+1}{\T,\gat}(\om)
\end{align*}
satisfying
\begin{align*}
\PotQ_{\TRotS,\gat}^{k,1}
+\Gradgrad\PotQ_{\TRotS,\gat}^{k,0}
&=\id_{\bH{k}{\S,\gat}(\Rot,\om)},\\
\PotQ_{\DivT,\gat}^{k,1}
+\Rot\PotQ_{\DivT,\gat}^{k,0}
&=\id_{\bH{k}{\T,\gat}(\Div,\om)},\\
\PotQ_{\SRotT,\gat}^{k,1}
+\devGrad\PotQ_{\SRotT,\gat}^{k,0}
&=\id_{\bH{k}{\T,\gat}(\symRot,\om)},\\
\PotQ_{\divDivS,\gat}^{k,1}
+\symRot\PotQ_{\divDivS,\gat}^{k,0}
&=\id_{\bH{k}{\S,\gat}(\divDiv,\om)}.
\end{align*}
\end{cor}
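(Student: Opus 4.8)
The plan is to deduce Corollary \ref{bih:highorderregdecoextdombihcor} directly from Theorem \ref{bih:highorderregpotextdombih} and Corollary \ref{bih:highorderregpotextdombihcor} together with the abstract machinery for regular decompositions recalled from \cite[Section 2.3]{PS2021b}. The situation fits the abstract pattern perfectly: we have a complex $\A_{0}\colon D(\A_{0})\to D(\A_{1})\to\cdots$ (one of the biharmonic complexes, at Sobolev level $k$) and for each relevant operator $\A_{n}$ a bounded regular \emph{potential} operator $\PotP_{\A_{n}}\colon R(\A_{n})\to\H{+}{n}$ with $\H{+}{n}\subset D(\A_{n})$ and $\A_{n}\PotP_{\A_{n}}=\id_{R(\A_{n})}$, the existence of which is exactly the content of Theorem \ref{bih:highorderregpotextdombih} and whose ranges $R(\A_{n})=R(\A_{n}^{k})$ are described in Corollary \ref{bih:highorderregpotextdombihcor}. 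The abstract lemma from \cite{PS2021b} then yields, for each element $x\in D(\A_{1})$ (weak domain at level $k$), the splitting $x=\PotP_{\A_{1}}\A_{1}x+(x-\PotP_{\A_{1}}\A_{1}x)$ where the first summand lies in $\H{+}{1}$ (the ``more regular'' space) and the second summand lies in $N(\A_{1})$ since $\A_{1}\PotP_{\A_{1}}\A_{1}x=\A_{1}x$. Iterating once more via $\PotP_{\A_{0}}$ on the kernel part (using $N(\A_{1})=R(\A_{0})$ from Corollary \ref{bih:highorderregpotextdombihcor}, which is the place where the complex property and closed range enter) gives the refined decomposition with the potential of the previous operator.

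Concretely, I would treat the four displayed decompositions one at a time but note they are four instances of the same argument. Take for definiteness the $\bH{k}{\S,\gat}(\Rot,\om)$ row. Set $\PotQ_{\TRotS,\gat}^{k,1}:=\PotP_{\TRotS,\gat}^{k}\Rot$; this is bounded $\bH{k}{\S,\gat}(\Rot,\om)\to\H{k+1}{\S,\gat}(\om)$ by boundedness of $\PotP_{\TRotS,\gat}^{k}$ from Theorem \ref{bih:highorderregpotextdombih} (its domain is $\bH{k}{\T,\gat,0}(\Div,\om)$, and indeed $\Rot S$ lies there for $S\in\bH{k}{\S,\gat}(\Rot,\om)$ because of the weak complex property recalled in Section \ref{bih:sec:sobolev}). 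For $S\in\bH{k}{\S,\gat}(\Rot,\om)$ put $\widetilde S:=S-\PotQ_{\TRotS,\gat}^{k,1}S$. Then $\Rot\widetilde S=\Rot S-\Rot\PotP_{\TRotS,\gat}^{k}\Rot S=\Rot S-\Rot S=0$ using the right-inverse property $\Rot\PotP_{\TRotS,\gat}^{k}=\id$; moreover $\widetilde S\in\bH{k}{\S,\gat}(\Rot,\om)$, hence $\widetilde S\in\H{k}{\S,\gat,0}(\Rot,\om)=\bH{k}{\S,\gat,0}(\Rot,\om)$, where the equality of weak and strong kernel spaces is furnished by Corollary \ref{bih:highorderregpotextdombihcor}. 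Applying the second potential, $\widetilde S=\Gradgrad\PotP_{\SGradgrad,\gat}^{k}\widetilde S$ with $\PotP_{\SGradgrad,\gat}^{k}\widetilde S\in\H{k+2}{\gat}(\om)$, which yields $\PotQ_{\TRotS,\gat}^{k,0}=\PotP_{\SGradgrad,\gat}^{k}(1-\PotQ_{\TRotS,\gat}^{k,1})$ as stated, the identity $\PotQ_{\TRotS,\gat}^{k,1}+\Gradgrad\PotQ_{\TRotS,\gat}^{k,0}=\id$, and all four equalities in that row: the sum decomposition $\H{k+1}{\S,\gat}(\om)+\Gradgrad\H{k+2}{\gat}(\om)$, the direct decomposition $R(\PotP_{\TRotS,\gat}^{k})\dotplus\H{k}{\S,\gat,0}(\Rot,\om)$, and the two variants obtained by replacing $\H{k}{\S,\gat,0}(\Rot,\om)$ and $\H{k+2}{\gat}(\om)$ by $\Gradgrad\H{k+2}{\gat}(\om)$ and $\Gradgrad R(\PotP_{\SGradgrad,\gat}^{k})$ respectively, using the range representations of Corollary \ref{bih:highorderregpotextdombihcor}. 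For directness of the sum $R(\PotP_{\TRotS,\gat}^{k})\dotplus\H{k}{\S,\gat,0}(\Rot,\om)$ one observes that $\Rot$ is injective on $R(\PotP_{\TRotS,\gat}^{k})$ (if $\Rot\PotP_{\TRotS,\gat}^{k}g=0$ then $g=\Rot\PotP_{\TRotS,\gat}^{k}g=0$), so the kernel intersects the potential range only in $0$.

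The remaining three rows ($\bH{k}{\T,\gat}(\Div,\om)$, $\bH{k}{\T,\gat}(\symRot,\om)$, $\bH{k}{\S,\gat}(\divDiv,\om)$) go through verbatim with the obvious substitutions of operators and potentials, the only slightly more delicate case being $\divDivS$, which is second order: there $\PotP_{\divDivS,\gat}^{k}$ maps into $\H{k+2}{\S,\gat}(\om)$ and $\PotQ_{\divDivS,\gat}^{k,0}=\PotP_{\SRotT,\gat}^{k}(1-\PotQ_{\divDivS,\gat}^{k,1})$ maps into $\H{k+1}{\T,\gat}(\om)$, but the structural argument is unchanged because all needed ingredients (boundedness, right-inverse property, weak$=$strong for the kernels, range representation $\H{k}{\S,\gat,0}(\divDiv,\om)=\symRot\H{k}{\T,\gat}(\symRot,\om)$) are already available from the two preceding results. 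I do not anticipate a genuine obstacle here; the only point requiring care is bookkeeping — making sure each $1-\PotQ^{k,1}$ lands in the domain of the next potential and that the ``weak'' input spaces $\bH{k}{\cdots}(\cdots,\om)$ are used consistently (the potentials of Theorem \ref{bih:highorderregpotextdombih} are defined on weak kernel spaces, and the weak complex property ensures the differentials of weak-domain fields land there). Everything else is the standard splitting lemma applied four times.
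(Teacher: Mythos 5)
Your proposal is correct and follows the same route the paper intends: the corollary is an instance of the abstract splitting lemma from \cite[Section 2.3]{PS2021b}, applied with the bounded regular potential operators of Theorem \ref{bih:highorderregpotextdombih} and the range/kernel identifications of Corollary \ref{bih:highorderregpotextdombihcor}, exactly as the remark following the corollary in the paper makes explicit for general $\PotP_{\A_{n-1}}$, $\PotP_{\A_{n}}$. The directness argument via injectivity of $\A_n$ on $R(\PotP_{\A_n})$ and the replacement of the kernel by $\Gradgrad\H{k+2}{\gat}(\om)$ (resp.\ $\Gradgrad R(\PotP_{\SGradgrad,\gat}^{k})$) via the range identities are also exactly the intended steps.
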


\begin{rem}
Note that for (bounded linear) potential operators $\PotP_{\A_{n-1}}$ and $\PotP_{\A_{n}}$ the identity
\begin{align*}
\PotQ_{\A_{n}}^{1}+\A_{n-1}\PotQ_{\A_{n}}^{0}&=\id_{D(\A_{n})}\quad\text{with}
&
\PotQ_{\A_{n}}^{1}:=\PotP_{\A_{n}}\A_{n}:D(\A_{n})&\to\H{+}{n},\\
&&
\PotQ_{\A_{n}}^{0}:=\PotP_{\A_{n-1}}(1-\PotQ_{\A_{n}}^{1}):D(\A_{n})&\to\H{+}{n-1}
\end{align*}
is a general structure of a (bounded) regular decomposition. Moreover:
\begin{itemize}
\item[\bf(i)]
$R(\PotQ_{\A_{n}}^{1})=R(\PotP_{\A_{n}})$ 
and $R(\PotQ_{\A_{n}}^{0})=R(\PotP_{\A_{n-1}})$.
\item[\bf(ii)]
$N(\A_{n})$ is invariant under $\PotQ_{\A_{n}}^{1}$,
as $\A_{n}=\A_{n}\PotQ_{\A_{n}}^{1}$ holds by the complex property.
\item[\bf(iii)]
$\PotQ_{\A_{n}}^{1}$ and $\A_{n-1}\PotQ_{\A_{n}}^{0}=1-\PotQ_{\A_{n}}^{1}$
are projections.
\item[\bf(iv)]
There exists $c>0$ such that for all $x\in D(\A_{n})$
$$\norm{\PotQ_{\A_{n}}^{1}x}_{\H{+}{n}}\leq c\norm{\A_{n}x}_{\H{}{n+1}}.$$
\item[\bf(iv')]
In particular, $\PotQ_{\A_{n}}^{1}|_{N(\A_{n})}=0$.
\end{itemize}
\end{rem}

\begin{cor}[weak and strong partial boundary conditions coincide for extendable domains]
\label{bih:weakstrongextdombihcor}
Let $(\om,\gat)$ be an extendable bounded strong Lipschitz pair
and let $k\geq0$. Then weak and strong boundary conditions coincide, i.e.,
\begin{align*}
\bH{k}{\gat}(\Gradgrad,\om)
&=\H{k}{\gat}(\Gradgrad,\om)
=\H{k+2}{\gat}(\om)
=\bH{k+2}{\gat}(\om),\\
\bH{k}{\S,\gat}(\Rot,\om)
&=\H{k}{\S,\gat}(\Rot,\om),\\
\bH{k}{\T,\gat}(\Div,\om)
&=\H{k}{\T,\gat}(\Div,\om),\\
\bH{k}{\gat}(\devGrad,\om)
&=\H{k}{\gat}(\devGrad,\om)
=\H{k+1}{\gat}(\om)
=\bH{k+1}{\gat}(\om),\\
\bH{k}{\T,\gat}(\symRot,\om)
&=\H{k}{\T,\gat}(\symRot,\om),\\
\bH{k}{\S,\gat}(\divDiv,\om)
&=\H{k}{\S,\gat}(\divDiv,\om).
\end{align*}
\end{cor}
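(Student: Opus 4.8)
The plan is to deduce all six equalities from the regular potential representations already established in Corollary \ref{bih:highorderregpotextdombihcor} (which itself rests only on Theorem \ref{bih:highorderregpotextdombih}), using the standard ``\emph{strong $=$ weak}'' mechanism: since ``\emph{strong $\subset$ weak}'' always holds, it suffices to prove the reverse inclusions. The first and fourth lines are already known from Lemma \ref{bih:lem:weakeqstrongbih}(i) (Ne\v{c}as/Korn regularity forces $\bH{}{\gat}(\Gradgrad,\om)=\H{2}{}(\om)$ and $\bH{}{\gat}(\devGrad,\om)=\H{1}{}(\om)$ with the corresponding strong spaces equal to them as well), so the real content is the four lines for $\Rot$, $\Div$, $\symRot$, and $\divDiv$, and the hardest of these is $\divDivS$ because of its second-order nature, as flagged in Remark \ref{bih:rem:weakeqstrongbih}.

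First I would treat one representative case in detail, say $\bH{k}{\S,\gat}(\Rot,\om)\subset\H{k}{\S,\gat}(\Rot,\om)$. Let $S\in\bH{k}{\S,\gat}(\Rot,\om)$. Apply the bounded regular decomposition operators from Corollary \ref{bih:highorderregdecoextdombihcor}: write $S=\PotQ_{\TRotS,\gat}^{k,1}S+\Gradgrad\PotQ_{\TRotS,\gat}^{k,0}S$, where the first summand lies in $R(\PotP_{\TRotS,\gat}^{k})\subset\H{k+1}{\S,\gat}(\om)$ and the second in $\Gradgrad\H{k+2}{\gat}(\om)$. Both summands are \emph{strongly} in $\H{k}{\S,\gat}(\Rot,\om)$: the first because $\H{k+1}{\S,\gat}(\om)\subset\H{k}{\S,\gat}(\Rot,\om)$ by the higher-order Sobolev embeddings collected in Section \ref{bih:sec:highsobolev}, and the second because $\Gradgrad\H{k+2}{\gat}(\om)=\Gradgrad\H{k}{\gat}(\Gradgrad,\om)\subset\H{k}{\S,\gat,0}(\Rot,\om)\subset\H{k}{\S,\gat}(\Rot,\om)$ is part of the complex property in the strong setting. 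Hence $S\in\H{k}{\S,\gat}(\Rot,\om)$. The point is that the decomposition operators only use the potentials $\PotP_{\dots}^{\dots}$ and the differential $\Rot$, all of which are available on the \emph{weak} space, while their \emph{outputs} land in spaces already known to carry strong boundary conditions; this is exactly the argument of \cite[Lemma 4.4]{PS2021b} adapted to the biharmonic setting.

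The cases of $\bH{k}{\T,\gat}(\Div,\om)$ and $\bH{k}{\T,\gat}(\symRot,\om)$ go through verbatim with the corresponding operators $\PotQ_{\DivT,\gat}^{k,j}$ and $\PotQ_{\SRotT,\gat}^{k,j}$. For $\bH{k}{\S,\gat}(\divDiv,\om)$ — the delicate one — I would use $S=\PotQ_{\divDivS,\gat}^{k,1}S+\symRot\PotQ_{\divDivS,\gat}^{k,0}S$ with $\PotQ_{\divDivS,\gat}^{k,1}S\in R(\PotP_{\divDivS,\gat}^{k})\subset\H{k+2}{\S,\gat}(\om)\subset\H{k}{\S,\gat}(\divDiv,\om)$ and $\symRot\PotQ_{\divDivS,\gat}^{k,0}S\in\symRot\H{k+1}{\T,\gat}(\om)=\symRot\H{k}{\T,\gat}(\symRot,\om)\subset\H{k}{\S,\gat,0}(\divDiv,\om)$; the main obstacle to watch is making sure that applying $\PotP_{\divDivS,\gat}^{k}\divDiv$ to an element of the weak space is legitimate and that the identity $\PotQ_{\divDivS,\gat}^{k,1}+\symRot\PotQ_{\divDivS,\gat}^{k,0}=\id$ indeed holds on all of $\bH{k}{\S,\gat}(\divDiv,\om)$, which follows because $\PotP_{\divDivS,\gat}^{k}$ is defined on $\H{k}{\gat}(\om)\cap(\Pone_{\gan})^{\bot}$ and $\divDiv S$ lies there by the weak-boundary-condition defining property of $\bH{k}{\S,\gat}(\divDiv,\om)$ together with $\divDiv\Pone_{\gan}=0$. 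Finally, once the four differential-operator lines are in place, the equalities $\H{k}{\gat}(\Gradgrad,\om)=\H{k+2}{\gat}(\om)=\bH{k+2}{\gat}(\om)$ and $\H{k}{\gat}(\devGrad,\om)=\H{k+1}{\gat}(\om)=\bH{k+1}{\gat}(\om)$ are recorded by quoting Lemma \ref{bih:lem:weakeqstrongbih}(i), completing the proof. The expected main obstacle is purely bookkeeping: confirming that every ``$\subset$'' used for the summands is an already-proved strong-space embedding and that no circularity sneaks in through the potential operators' domains.
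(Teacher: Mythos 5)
Your argument for the four middle lines ($\Rot$, $\Div$, $\symRot$, $\divDiv$) is correct and is exactly the paper's intended route: decompose an element of the weak space via Corollary \ref{bih:highorderregdecoextdombihcor} into a regular piece in $\H{k+1}{\cdots,\gat}(\om)$ (or $\H{k+2}{\S,\gat}(\om)$ for $\divDivS$) plus an image of the previous operator; both summands lie in the strong space by the higher-regularity embedding and the strong complex property, respectively.

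There is, however, a genuine gap in your treatment of the $\Gradgrad$ and $\devGrad$ lines. You claim these ``are already known from Lemma \ref{bih:lem:weakeqstrongbih}(i),'' citing Ne\v cas/Korn to write ``$\bH{}{\gat}(\Gradgrad,\om)=\H{2}{}(\om)$.'' This conflates $\H{}{}(\Gradgrad,\om)=\H{2}{}(\om)$ from \eqref{eq:necasreg} with the boundary-condition space $\bH{}{\gat}(\Gradgrad,\om)$. Lemma \ref{bih:lem:weakeqstrongbih}(i) only states $\H{k}{\gat}(\Gradgrad,\om)=\H{k+2}{\gat}(\om)=\bH{k+2}{\gat}(\om)$ and the analogue for $\devGrad$; it says nothing about the spaces $\bH{k}{\gat}(\Gradgrad,\om)$ and $\bH{k}{\gat}(\devGrad,\om)$. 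The inclusions $\bH{}{\gat}(\devGrad,\om)\subset\H{}{\gat}(\devGrad,\om)$, $\bH{}{\gat}(\Gradgrad,\om)\subset\H{}{\gat}(\Gradgrad,\om)$, and $\bH{1}{\gat}(\Gradgrad,\om)\subset\H{1}{\gat}(\Gradgrad,\om)$ are precisely listed in Remark \ref{bih:rem:weakeqstrongbih} as the still-open cases (parts (ii) and (iii) of Lemma \ref{bih:lem:weakeqstrongbih} only cover $k\geq1$ and $k\geq2$ respectively), so they cannot simply be quoted as known. Since these are the spaces that do not admit a nontrivial regular decomposition, a different argument is needed, and this is where extendability comes in: for $v\in\bH{}{\gat}(\devGrad,\om)$, the weak boundary condition allows the extension $\widetilde v$ of $v$ by zero through $\gat$ to the extended domain $\widetilde\om$ to satisfy $\devGrad\widetilde v\in\L{2}{\T}(\widetilde\om)$; Korn for $\widetilde\om$ then yields $\widetilde v\in\H{1}{}(\widetilde\om)$, hence the trace of $v$ on $\gat$ vanishes and $v\in\H{1}{\gat}(\om)=\H{}{\gat}(\devGrad,\om)$. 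The same argument with Ne\v cas in place of Korn handles $\bH{}{\gat}(\Gradgrad,\om)$ and $\bH{1}{\gat}(\Gradgrad,\om)$. You should insert this extension-by-zero step to close the proof.
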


Similar versions of Corollary \ref{bih:highorderregdecoextdombihcor} 
and Corollary \ref{bih:weakstrongextdombihcor}
are available for the non-standard Sobolev spaces
of the form $\H{k,k-1}{\dots}(\cdots,\om)$,
cf.~Section \ref{bih:sec:sobolevmore}.
Note that 
\begin{align}
\label{bih:Gradgradkkmoregformula}
\bH{k,k-1}{\gat}(\Gradgrad,\om)=\H{k+1}{\gat}(\om)
\end{align}
as
$\bH{k,k-1}{\gat}(\Gradgrad,\om)
\subset\bH{k-1}{\gat}(\Gradgrad,\om)
=\H{k+1}{\gat}(\om)
\subset\bH{k,k-1}{\gat}(\Gradgrad,\om)$.

\begin{cor}[Corollary \ref{bih:highorderregdecoextdombihcor} and Corollary \ref{bih:weakstrongextdombihcor}
for non-standard Sobolev spaces]
\label{bih:highorderregdecoextdombihcornonstandard}
Let $(\om,\gat)$ be an extendable bounded strong Lipschitz pair
and let $k\geq1$. Then the bounded regular decompositions 
\begin{align*}
\bH{k,k-1}{\S,\gat}(\divDiv,\om)
&=\H{k+1}{\S,\gat}(\om)
+\symRot\H{k+1}{\T,\gat}(\om)
=R(\PotP_{\divDivS,\gat}^{k-1})
\dotplus\H{k}{\S,\gat,0}(\divDiv,\om)\\
&=R(\PotP_{\divDivS,\gat}^{k-1})
\dotplus\symRot\H{k+1}{\T,\gat}(\om)\\
&=R(\PotP_{\divDivS,\gat}^{k-1})
\dotplus\symRot R(\PotP_{\SRotT,\gat}^{k})
=\H{k,k-1}{\S,\gat}(\divDiv,\om)
\end{align*}
hold with bounded linear regular decomposition operators
\begin{align*}
\PotQ_{\divDivS,\gat}^{k,k-1,1}:=\PotP_{\divDivS,\gat}^{k-1}\divDiv:
\bH{k,k-1}{\S,\gat}(\divDiv,\om)&\to\H{k+1}{\S,\gat}(\om),\\
\PotQ_{\divDivS,\gat}^{k,k-1,0}:=\PotP_{\SRotT,\gat}^{k}(1-\PotQ_{\divDivS,\gat}^{k,k-1,1}):
\bH{k,k-1}{\S,\gat}(\divDiv,\om)&\to\H{k+1}{\T,\gat}(\om)
\end{align*}
satisfying
$\PotQ_{\divDivS,\gat}^{k,k-1,1}
+\symRot\PotQ_{\divDivS,\gat}^{k,k-1,0}
=\id_{\bH{k,k-1}{\S,\gat}(\divDiv,\om)}$.
In particular, weak and strong boundary conditions coincide also for the non-standard Sobolev spaces.
\end{cor}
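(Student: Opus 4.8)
The plan is to run the argument of Corollary \ref{bih:highorderregdecoextdombihcor} almost verbatim, only replacing the potential operator $\PotP_{\divDivS,\gat}^{k}$ by $\PotP_{\divDivS,\gat}^{k-1}$ from Theorem \ref{bih:highorderregpotextdombih} (which is why $k\geq1$ is needed) and keeping $\PotP_{\SRotT,\gat}^{k}$; the $\Gradgrad$-part of the statement is not an issue, since \eqref{bih:Gradgradkkmoregformula} already gives $\bH{k,k-1}{\gat}(\Gradgrad,\om)=\H{k+1}{\gat}(\om)=\H{k,k-1}{\gat}(\Gradgrad,\om)$ and thus ``strong $=$ weak'' there for free. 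So let $S\in\bH{k,k-1}{\S,\gat}(\divDiv,\om)$. By \eqref{eq:weakeqstrongsimple} we have $S\in\H{k}{\S,\gat}(\om)$ and $\divDiv S\in\H{k-1}{\gat}(\om)$, and testing the defining weak $\divDiv$-boundary relation of $S$ against the (at most four-dimensional) space $\Pone_{\gan}$, on which $\Gradgrad$ vanishes, yields $\divDiv S\perp\Pone_{\gan}$ in $\L{2}{}(\om)$. Hence $\divDiv S\in\H{k-1}{\gat}(\om)\cap(\Pone_{\gan})^{\bot_{\L{2}{}(\om)}}$ lies in the domain of $\PotP_{\divDivS,\gat}^{k-1}$, and I set
\[
\PotQ_{\divDivS,\gat}^{k,k-1,1}S:=\PotP_{\divDivS,\gat}^{k-1}\divDiv S\in\H{k+1}{\S,\gat}(\om),\qquad\divDiv\PotQ_{\divDivS,\gat}^{k,k-1,1}S=\divDiv S.
\]

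Next I would verify that the remainder $S-\PotQ_{\divDivS,\gat}^{k,k-1,1}S$ lies in the boundary-conditioned $\divDiv$-kernel at level $k$: it is in $\H{k}{\S,\gat}(\om)$ (both summands are), it is $\divDiv$-free, and — being a difference of a weakly $\gat$-conditioned field and a strongly ($\subset$ weakly) $\gat$-conditioned field, and the weak relations being stable under subtraction — it lies in $\bH{k}{\S,\gat,0}(\divDiv,\om)$. By Corollary \ref{bih:highorderregpotextdombihcor} this equals $\H{k}{\S,\gat,0}(\divDiv,\om)$, so I may apply $\PotP_{\SRotT,\gat}^{k}$ and put
\[
\PotQ_{\divDivS,\gat}^{k,k-1,0}S:=\PotP_{\SRotT,\gat}^{k}\bigl(S-\PotQ_{\divDivS,\gat}^{k,k-1,1}S\bigr)\in\H{k+1}{\T,\gat}(\om),
\]
which satisfies $\symRot\PotQ_{\divDivS,\gat}^{k,k-1,0}S=S-\PotQ_{\divDivS,\gat}^{k,k-1,1}S$ and therefore $\PotQ_{\divDivS,\gat}^{k,k-1,1}+\symRot\PotQ_{\divDivS,\gat}^{k,k-1,0}=\id_{\bH{k,k-1}{\S,\gat}(\divDiv,\om)}$. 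This gives $\bH{k,k-1}{\S,\gat}(\divDiv,\om)\subset\H{k+1}{\S,\gat}(\om)+\symRot\H{k+1}{\T,\gat}(\om)$. The reverse inclusions are elementary: $\divDiv$ maps $\H{k+1}{\S,\gat}(\om)$ into $\H{k-1}{\gat}(\om)$ and $\symRot\H{k+1}{\T,\gat}(\om)\subset\H{k}{\S,\gat,0}(\divDiv,\om)=\H{k,k-1}{\S,\gat,0}(\divDiv,\om)$ by the complex property, so both summands sit in $\H{k,k-1}{\S,\gat}(\divDiv,\om)$. Combining with ``strong $\subset$ weak'' closes the loop:
\[
\H{k,k-1}{\S,\gat}(\divDiv,\om)=\H{k+1}{\S,\gat}(\om)+\symRot\H{k+1}{\T,\gat}(\om)=\bH{k,k-1}{\S,\gat}(\divDiv,\om),
\]
which is exactly ``strong $=$ weak'' for these non-standard spaces. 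Directness of $R(\PotP_{\divDivS,\gat}^{k-1})\dotplus\H{k}{\S,\gat,0}(\divDiv,\om)$ is immediate since $\divDiv\PotP_{\divDivS,\gat}^{k-1}=\id$ forces any element of the intersection to have vanishing $\divDiv$ and hence vanishing potential datum; the remaining identifications $R(\PotP_{\divDivS,\gat}^{k-1})\dotplus\symRot\H{k+1}{\T,\gat}(\om)=R(\PotP_{\divDivS,\gat}^{k-1})\dotplus\symRot R(\PotP_{\SRotT,\gat}^{k})$ follow from $\symRot R(\PotP_{\SRotT,\gat}^{k})=\H{k}{\S,\gat,0}(\divDiv,\om)=\symRot\H{k+1}{\T,\gat}(\om)$, cf.~Corollary \ref{bih:highorderregpotextdombihcor}.

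The only point requiring care — the ``main obstacle'', such as it is in this routine argument — is the bookkeeping of the partial boundary condition on the remainder term: one must be certain that subtracting the regular potential piece $\PotQ_{\divDivS,\gat}^{k,k-1,1}S$ keeps the result inside the $\gat$-boundary-conditioned $\divDiv$-kernel so that $\PotP_{\SRotT,\gat}^{k}$ is applicable. This is handled precisely by arguing first in the weak sense, where the defining test relations are manifestly stable under linear combinations, and then invoking the already-established identity $\bH{k}{\S,\gat,0}(\divDiv,\om)=\H{k}{\S,\gat,0}(\divDiv,\om)$ from Corollary \ref{bih:highorderregpotextdombihcor} (equivalently the $k=0$ case of Corollary \ref{bih:weakstrongextdombihcor}). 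All boundedness and continuous-dependence assertions, as well as the corresponding statements for $\H{k,k-1}{\gat}(\Gradgrad,\om)$ and $\bH{k,k-1}{\gat}(\Gradgrad,\om)$, are then inherited directly from the boundedness of the potential operators in Theorem \ref{bih:highorderregpotextdombih} and from \eqref{bih:Gradgradkkmoregformula}, exactly as in the proofs of Corollary \ref{bih:highorderregdecoextdombihcor} and Lemma \ref{bih:lem:weakeqstrongela2}.
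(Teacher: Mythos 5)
The paper does not print a proof of this corollary; it is left as a routine consequence of Theorem \ref{bih:highorderregpotextdombih} and the general regular-decomposition template from \cite[Section 2.3]{PS2021b}. Your proof is exactly that template executed explicitly: shift the $\divDiv$-potential operator down by one order to $\PotP_{\divDivS,\gat}^{k-1}$ because $\divDiv S$ only carries $\H{k-1}{\gat}$-regularity, verify that $\divDiv S\perp\Pone_{\gan}$ by testing the weak boundary relation against constants and linear polynomials, subtract, observe that the remainder sits in $\bH{k}{\S,\gat,0}(\divDiv,\om)=\H{k}{\S,\gat,0}(\divDiv,\om)$, and apply $\PotP_{\SRotT,\gat}^{k}$. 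This is the intended argument and it is correct. The one step you state somewhat informally — that $S-\PotQ_{\divDivS,\gat}^{k,k-1,1}S$ actually satisfies the weak $\divDiv$-boundary relation — deserves the explicit observation that $\PotQ_{\divDivS,\gat}^{k,k-1,1}S\in\H{k+1}{\S,\gat}(\om)\subset\H{2}{\S,\gat}(\om)\subset\H{}{\S,\gat}(\divDiv,\om)\subset\bH{}{\S,\gat}(\divDiv,\om)$ for $k\geq1$, after which stability of the weak test relation under subtraction is immediate; but this is a matter of presentation, not a gap.
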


Recall the Hilbert complexes and cohomology groups from Section \ref{bih:sec:bihcomplexes} and Section \ref{bih:sec:dirneu}.

\begin{theo}[closed and exact Hilbert complexes for extendable domains]
\label{bih:theo:closedhilcom}
Let $(\om,\gat)$ be an extendable bounded strong Lipschitz pair
and let $k\geq0$. Both biharmonic domain complexes 
\begin{equation*}
\footnotesize
%\scriptsize
%\tiny
\def\arrowlength{16ex}
\def\arrowdistance{0}
\begin{tikzcd}[column sep=\arrowlength]
\Pone_{\gat}
\arrow[r, rightarrow, shift left=\arrowdistance, "\iota_{\Pone_{\gat}}"] 
& 
[-4em]
\H{k+2}{\gat}
\ar[r, rightarrow, shift left=\arrowdistance, "\SGradgradgat^{k}"] 
&
[-1em]
\H{k}{\S,\gat}(\Rot)
\ar[r, rightarrow, shift left=\arrowdistance, "\TRotSgat^{k}"] 
& 
[-1em]
\H{k}{\T,\gat}(\Div)
\arrow[r, rightarrow, shift left=\arrowdistance, "\DivTgat^{k}"] 
& 
[-3em]
\H{k}{\gat}
\arrow[r, rightarrow, shift left=\arrowdistance, "\pi_{\RT_{\gan}}"] 
&
[-3em]
\RT_{\gan},
\end{tikzcd}
\end{equation*}
\vspace*{-4mm}
\begin{equation*}
\footnotesize
%\scriptsize
%\tiny
\def\arrowlength{16ex}
\def\arrowdistance{0}
\begin{tikzcd}[column sep=\arrowlength]
\Pone_{\gat}
\arrow[r, leftarrow, shift right=\arrowdistance, "\pi_{\Pone_{\gat}}"]
& 
[-4em]
\H{k}{\gan}
\ar[r, leftarrow, shift right=\arrowdistance, "\divDivSgan^{k}"]
&
[-2em]
\H{k}{\S,\gan}(\divDiv)
\ar[r, leftarrow, shift right=\arrowdistance, "\SRotTgan^{k}"]
& 
[-2em]
\H{k}{\T,\gan}(\symRot)
\arrow[r, leftarrow, shift right=\arrowdistance, "-\TGradgan^{k}"]
& 
[-2em]
\H{k+1}{\gan}
\arrow[r, leftarrow, shift right=\arrowdistance, "\iota_{\RT_{\gan}}"]
&
[-3em]
\RT_{\gan}
\end{tikzcd}
\end{equation*}
and, for $k\geq1$,
\begin{equation*}
\footnotesize
%\scriptsize
%\tiny
\def\arrowlength{16ex}
\def\arrowdistance{0}
\begin{tikzcd}[column sep=\arrowlength]
\Pone_{\gat}
\arrow[r, rightarrow, shift left=\arrowdistance, "\iota_{\Pone_{\gat}}"] 
& 
[-4em]
\H{k+1}{\gat}
\ar[r, rightarrow, shift left=\arrowdistance, "\SGradgradgat^{k,k-1}"] 
&
[-1em]
\H{k-1}{\S,\gat}(\Rot)
\ar[r, rightarrow, shift left=\arrowdistance, "\TRotSgat^{k-1}"] 
& 
[-2em]
\H{k-1}{\T,\gat}(\Div)
\arrow[r, rightarrow, shift left=\arrowdistance, "\DivTgat^{k-1}"] 
& 
[-3em]
\H{k-1}{\gat}
\arrow[r, rightarrow, shift left=\arrowdistance, "\pi_{\RT_{\gan}}"] 
&
[-3em]
\RT_{\gan},
\end{tikzcd}
\end{equation*}
\vspace*{-4mm}
\begin{equation*}
\footnotesize
%\scriptsize
%\tiny
\def\arrowlength{16ex}
\def\arrowdistance{0}
\begin{tikzcd}[column sep=\arrowlength]
\Pone_{\gat}
\arrow[r, leftarrow, shift right=\arrowdistance, "\pi_{\Pone_{\gat}}"]
& 
[-4em]
\H{k-1}{\gan}
\ar[r, leftarrow, shift right=\arrowdistance, "\divDivSgan^{k,k-1}"]
&
[-2em]
\H{k,k-1}{\S,\gan}(\divDiv)
\ar[r, leftarrow, shift right=\arrowdistance, "\SRotTgan^{k}"]
& 
[-2em]
\H{k}{\T,\gan}(\symRot)
\arrow[r, leftarrow, shift right=\arrowdistance, "-\TGradgan^{k}"]
& 
[-2em]
\H{k+1}{\gan}
\arrow[r, leftarrow, shift right=\arrowdistance, "\iota_{\RT_{\gan}}"]
&
[-4em]
\RT_{\gan}
\end{tikzcd}
\end{equation*}
are exact and closed Hilbert complexes.
In particular, all ranges are closed, 
all cohomology groups (Dirichlet/Neumann fields) are trivial,
and the operators from Theorem \ref{bih:highorderregpotextdombih}
are associated bounded regular potential operators.
\end{theo}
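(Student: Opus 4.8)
The plan is to harvest the theorem from the material already assembled for extendable domains, with essentially no new analysis. Recall that a domain Hilbert complex is \emph{closed} when all ranges of the participating operators are closed, and \emph{exact} when, in addition, the range of each operator equals the kernel of the next, so that all cohomology groups vanish. The complex properties $\TRotSgat^{k}\SGradgradgat^{k}\subset0$, $\DivTgat^{k}\TRotSgat^{k}\subset0$ (and their duals) hold by construction, cf.\ Section \ref{bih:sec:bihcomplexes}, so only the reverse inclusions, i.e.\ exactness, have to be checked spot by spot.

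First I would treat the primal standard-order complex. At the interior spots, Corollary \ref{bih:highorderregpotextdombihcor} already supplies the needed identities,
\begin{align*}
R(\SGradgradgat^{k})
&=\Gradgrad\H{k+2}{\gat}(\om)
=\H{k}{\S,\gat,0}(\Rot,\om)
=N(\TRotSgat^{k}),\\
R(\TRotSgat^{k})
&=\Rot\H{k+1}{\S,\gat}(\om)
=\H{k}{\T,\gat,0}(\Div,\om)
=N(\DivTgat^{k}),
\end{align*}
together with closedness of these ranges inside $\H{k}{\S}(\om)$ and $\H{k}{\T}(\om)$. At the left end, $N(\SGradgradgat^{k})=\{u\in\H{k+2}{\gat}(\om):\Gradgrad u=0\}=\Pone\cap\H{k+2}{\gat}(\om)=\Pone_{\gat}=R(\iota_{\Pone_{\gat}})$, using Lemma \ref{bih:lem:weakeqstrongbih}(i) and that an affine function lying in $\H{2}{\gat}(\om)$ vanishes when $\gat\neq\emptyset$ while $\Pone\subset\H{k+2}{}(\om)$ when $\gat=\emptyset$. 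At the right end, $R(\DivTgat^{k})=\H{k}{\gat}(\om)\cap(\RT_{\gan})^{\bot_{\L{2}{}(\om)}}=N(\pi_{\RT_{\gan}})$, again by Corollary \ref{bih:highorderregpotextdombihcor}. Hence this complex is closed and exact, its Dirichlet/Neumann fields are trivial, and the operators $\PotP_{\SGradgrad,\gat}^{k}$, $\PotP_{\TRotS,\gat}^{k}$, $\PotP_{\DivT,\gat}^{k}$ from Theorem \ref{bih:highorderregpotextdombih} are associated bounded regular potentials, by the right-inverse property stated there together with the fact that they map into the respective domain spaces.

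For the dual (formally adjoint) complex I would proceed in the same way, first using Corollary \ref{bih:weakstrongextdombihcor} to identify all weak-boundary-condition domain spaces $\bH{k}{\dots}(\cdots,\om)$ with the strong ones, so that the second halves of the representations in Corollary \ref{bih:highorderregpotextdombihcor} --- $\H{k}{\T,\gat,0}(\symRot,\om)=R(\TGradgat^{k})$, $\H{k}{\S,\gat,0}(\divDiv,\om)=R(\SRotTgat^{k})$, and $\H{k}{\gat}(\om)\cap(\Pone_{\gan})^{\bot_{\L{2}{}(\om)}}=R(\divDivSgat^{k})$ --- give exactness at the interior spots, while $N(\TGradgat^{k})=\RT_{\gat}$ and $R(\divDivSgat^{k})=N(\pi_{\Pone_{\gan}})$ settle the ends; alternatively one may deduce the dual statement from the primal one via the abstract fact that a Hilbert complex is closed, respectively exact, if and only if its adjoint is, cf.\ \cite[Section 2.3]{PS2021b}. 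Finally, for the two non-standard complexes ($k\geq1$) I would invoke Corollary \ref{bih:highorderregdecoextdombihcornonstandard} and \eqref{bih:Gradgradkkmoregformula}, noting $\H{k,k-1}{\S,\gat,0}(\divDiv,\om)=\H{k}{\S,\gat,0}(\divDiv,\om)$ and $\bH{k,k-1}{\gat}(\Gradgrad,\om)=\H{k+1}{\gat}(\om)$, so that $R(\SGradgradgat^{k,k-1})=\H{k-1}{\S,\gat,0}(\Rot,\om)$ and $R(\divDivSgat^{k,k-1})=\H{k-1}{\gat}(\om)\cap(\Pone_{\gan})^{\bot_{\L{2}{}(\om)}}$, all remaining spots coinciding with those of the order-$(k-1)$ standard complex.

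The only genuine obstacle is bookkeeping: one must match, across all four complexes, each spot with the correct kernel, the correct range, and the correct potential operator, and verify that the polynomial and Raviart--Thomas end pieces ($\Pone_{\gat}$, $\RT_{\gan}$, $\RT_{\gat}$, $\Pone_{\gan}$) indeed lie in the relevant $\H{k}{\dots}(\om)$-spaces and that the terminal projections $\pi$ have exactly the expected kernels. No new analytic estimate is required --- everything has been front-loaded into Theorem \ref{bih:highorderregpotextdombih} and its corollaries, and the present theorem is their formal consequence.
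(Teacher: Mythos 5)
Your proposal is correct and takes precisely the route the paper intends: Theorem \ref{bih:theo:closedhilcom} is a book-keeping consequence of Corollary \ref{bih:highorderregpotextdombihcor}, which already supplies both closedness of the ranges and the exactness identities at the interior spots (e.g.\ $R(\SGradgradgat^{k})=\H{k}{\S,\gat,0}(\Rot,\om)=N(\TRotSgat^{k})$), together with the end-point identifications $N(\SGradgradgat^{k})=\Pone_{\gat}$, $R(\DivTgat^{k})=N(\pi_{\RT_{\gan}})$ and their $\gan$-counterparts, and with Corollary \ref{bih:highorderregdecoextdombihcornonstandard} and \eqref{bih:Gradgradkkmoregformula} for the $(k,k-1)$-complexes; the paper itself leaves this implicit, so your write-up fills in exactly the intended argument. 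One small caution: your ``alternatively'' remark --- deducing the $\gan$-complex from the $\gat$-complex by the abstract fact that a Hilbert complex is closed/exact iff its adjoint is --- is only literally applicable at the $\L{2}{}$-level ($k=0$); for $k\geq1$ the $\gan$-operators between $\H{k}{}$-spaces are not the Hilbert adjoints of the $\gat$-operators, so stick with your primary route of applying Corollary \ref{bih:highorderregpotextdombihcor} directly with $\gan$ in place of $\gat$.
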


\subsubsection{General Strong Lipschitz Domains}
\label{bih:sec:regpotdecogendom}%

From now on we drop the additional condition ``extendable domain'', thus
$(\om,\gat)$ is a bounded strong Lipschitz pair.

\begin{lem}[cutting lemma]
\label{bih:lem:cutlem}
Let $\varphi\in\C{\infty}{}(\rt)$ and let $k\geq0$.
\begin{itemize}
\item[\bf(i)]
If $S\in\bH{k}{\S,\gat}(\Rot,\om)$, then
$\varphi S\in\bH{k}{\S,\gat}(\Rot,\om)$ and 
$\Rot(\varphi S)=\varphi\Rot S-S\spn\grad\varphi$.
\item[\bf(ii)]
If $T\in\bH{k}{\T,\gat}(\Div,\om)$, then
$\varphi T\in\bH{k}{\T,\gat}(\Div,\om)$ and 
$\Div(\varphi T)=\varphi\Div T+ T\grad\varphi$.
\item[\bf(iii)]
If $T\in\bH{k}{\T,\gat}(\symRot,\om)$, then
$\varphi T\in\bH{k}{\T,\gat}(\symRot,\om)$ and\\
$\symRot(\varphi T)=\varphi\symRot T-\sym(T\spn\grad\varphi)$.
\item[\bf(iv)]
If $k\geq1$ and $S\in\bH{k,k-1}{\S,\gat}(\divDiv,\om)$, then 
$\varphi S\in\bH{k,k-1}{\S,\gat}(\divDiv,\om)$ and 
\begin{align*}
\divDiv(\varphi S)
=\varphi\divDiv S
+2\grad\varphi\cdot\Div S
+\Grad\grad\varphi:S.
\end{align*}
In particular, this holds for $S\in\bH{k}{\S,\gat}(\divDiv,\om)$.
Note that $\,\cdot\,$ and $\,:\,$ denotes the point-wise scalar product 
for vectors fields and tensor (matrix) fields, respectively.
\end{itemize}
\end{lem}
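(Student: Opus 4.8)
The plan is to reduce everything to the classical Leibniz (product) rules for the underlying first- and second-order differential operators, combined with the fact that multiplication by a fixed $\varphi\in\C{\infty}{}(\rt)$ preserves the weak boundary conditions. For the regularity part, i.e.\ $\varphi S\in\H{k}{\S}(\om)$ whenever $S\in\H{k}{\S}(\om)$ (and likewise for the deviatoric and scalar spaces), this is immediate: multiplication by a smooth function with bounded derivatives is a bounded operator on every $\H{k}{}(\om)$, and it clearly commutes with the pointwise algebraic constraints $\skw\,\cdot\,=0$ and $\tr\,\cdot\,=0$. So the only genuine content is (a) verifying the stated product formulas for the differential operators on smooth fields, and (b) checking that the weak boundary condition is inherited.

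For (a), I would first record the row-wise product rules. For a vector field $v$ and scalar $\varphi$ one has $\grad(\varphi v)=\varphi\grad v+v\otimes\grad\varphi$ (in matrix form $v\,(\grad\varphi)^{\!\top}$, which I will write as $v\grad\varphi$ as the paper does), and $\rot(\varphi v)=\varphi\rot v-v\times\grad\varphi$, the latter encoded via $\spn$ as $-v\spn\grad\varphi$; applying these row-wise gives (i) and the $\Rot$-identity. For the divergence of a tensor, $\Div(\varphi T)=\varphi\Div T+T\grad\varphi$ is just the scalar product rule applied row-wise, giving (ii). Formula (iii) for $\symRot$ follows from (i) by symmetrising: $\symRot T=\sym\Rot T$, and $\sym(\varphi\Rot T)=\varphi\symRot T$, $\sym(-T\spn\grad\varphi)$ is the remaining term. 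The second-order formula (iv) is the delicate one. Writing $\divDiv S=\div\Div S$ and using (ii) together with the vector product rule for $\div$, one computes
\begin{align*}
\divDiv(\varphi S)
&=\div\big(\varphi\Div S+S\grad\varphi\big)\\
&=\varphi\div\Div S+\grad\varphi\cdot\Div S+\div(S\grad\varphi).
\end{align*}
For the last term, $\div(S\grad\varphi)=(\Div S)\cdot\grad\varphi+S:\Grad\grad\varphi$ by the product rule (here $S$ is symmetric, so $S:\Grad\grad\varphi=S:(\Grad\grad\varphi)^{\!\top}$ and no symmetrisation issue arises), and collecting the two $\Div S\cdot\grad\varphi$ terms yields exactly $\varphi\divDiv S+2\grad\varphi\cdot\Div S+\Grad\grad\varphi:S$. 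One should be slightly careful that $S\in\bH{k,k-1}{\S,\gat}(\divDiv,\om)$ only guarantees $\divDiv S\in\H{k-1}{\gat}(\om)$, not that $\Div S$ itself lies in any weighted space; but the decomposition $S\in\H{k}{\S}(\om)$ already gives $\Div S\in\H{k-1}{}(\om)$, so all three terms on the right-hand side lie in $\H{k-1}{}(\om)$, confirming $\varphi S\in\H{k,k-1}{\S}(\divDiv,\om)$ with the asserted right regularity. The final sentence of (iv) follows since $\bH{k}{\S,\gat}(\divDiv,\om)\subset\bH{k,k-1}{\S,\gat}(\divDiv,\om)$ and then the right-hand side gains the extra derivative.

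For (b), the weak boundary condition is an identity of the form $\scp{\A u}{\Phi}=\pm\scp{u}{\A^{*}\Phi}$ tested against all $\Phi\in\C{\infty}{\dots,\gan}(\om)$. Given $u$ satisfying this and the product formula $\A(\varphi u)=\varphi\A u+(\text{lower-order terms in }u)$, one tests $\A(\varphi u)$ against $\Phi$, uses that $\varphi\Phi\in\C{\infty}{\dots,\gan}(\om)$ as a legitimate test field (multiplication by $\varphi$ preserves the support away from $\ol{\gan}$ — actually it may enlarge support, but $\C{\infty}{\dots,\gan}(\om)$ consists of restrictions of $\C{\infty}{}$ fields vanishing near $\gan$, a class stable under multiplication by $\varphi\in\C{\infty}{}(\rt)$), and moves $\varphi$ across the $\L{2}{}$-pairing; the lower-order terms are handled by the corresponding first-order product rule, which for the already-established cases (i)–(iii) is exactly the inductive input, and for (iv) reduces to the weak $\Div$- and $\Gradgrad$-conditions that $u=S$ also enjoys by virtue of the chain of weak conditions built into the space. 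I expect step (b) for case (iv) to be the main obstacle, since one has to be careful which weak conditions are actually encoded in $\bH{k,k-1}{\S,\gat}(\divDiv,\om)$ versus which are merely consequences; the cleanest route is to first prove (i)–(iii), then exploit that $\divDiv=\div\Div$ factors through spaces whose weak boundary conditions are already under control, so that (iv) becomes a bookkeeping composition of the first-order cases rather than a fresh integration by parts. A reference to \cite{PZ2020a,PW2021a} for the $\gat\in\{\emptyset,\ga\}$ versions of these product rules can shorten the computational part considerably.
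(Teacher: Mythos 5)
Your part (a) is correct: the Leibniz rules you write down are the standard row-wise incarnations, and the $\divDiv$ computation (splitting $\divDiv = \div\Div$ and collecting the two $\grad\varphi\cdot\Div S$ terms) is exactly right, with the symmetry of $S$ used to identify $\Div S^{\top}$ with $\Div S$. The regularity propagation (multiplication by $\varphi\in\C{\infty}{}$ is bounded on $\H{k}{}$ and preserves $\skw=0$, $\tr=0$) is also fine, and your remark that $S\in\H{k}{\S}(\om)$ already gives $\Div S\in\H{k-1}{}(\om)$ is the right reason why all three terms in (iv) land in $\H{k-1}{}(\om)$.

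The gap is in part (b), where you treat the cross-term cancellation as bookkeeping. It is the actual content. Testing $\scp{\A(\varphi u)}{\Phi}$ against $\varphi\Phi$ and expanding $\A^{*}(\varphi\Phi)$ by the product rule, you are left with a sum of two lower-order cross terms that must vanish identically, and this does not follow from "the first-order product rule" alone. For (i)--(iii) this residue is, e.g., $\scp{S}{\sym(\Psi\spn\grad\varphi)}+\scp{S\,\spn\grad\varphi}{\Psi}$, and it vanishes because of a pointwise algebraic identity: for symmetric $S$ and any $\Psi$, $\scp{S}{\Psi A}+\scp{SA}{\Psi}=0$ whenever $A$ is skew-symmetric, applied with $A=\spn\grad\varphi$. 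That observation is the proof of (b) in the first-order cases, not a side remark. For (iv) the residue contains $\scp{\grad\varphi\cdot\Div S}{\phi}$, and there is no purely algebraic cancellation; you must integrate by parts once more in the intermediate variable. That integration by parts is legitimate precisely because $k\geq1$ forces $S\in\H{1}{\gat}(\om)$, so $S$ has a vanishing trace on $\gat$, while $\phi\grad\varphi$ vanishes near $\gan$, making the boundary term on $\ga$ drop. This, not the "factoring $\divDiv$ through already controlled spaces" picture, is the mechanism: the intermediate operator $\Div$ acting on symmetric tensors is not part of the biharmonic complex, and its weak boundary behaviour is nowhere established in the space $\bH{k,k-1}{\S,\gat}(\divDiv,\om)$; what saves you is the strong Sobolev regularity $\H{1}{\gat}(\om)$ of $S$. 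Finally, note that the adjoint operator for (iv) is $\Gradgrad$, whose Leibniz rule $\Gradgrad(\varphi\phi)=\varphi\Gradgrad\phi+2\sym\big((\grad\varphi)(\grad\phi)^{\top}\big)+\phi\Gradgrad\varphi$ is not among (i)--(iii) and has to be computed separately for the test side; without it the expansion of $\scp{S}{\Gradgrad(\varphi\phi)}$ cannot be matched against the right-hand side.
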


We proceed by showing crucial regular decompositions for the biharmonic complexes
extending the results of Corollary \ref{bih:highorderregdecoextdombihcor}
and Corollary \ref{bih:highorderregdecoextdombihcornonstandard}
to our general setting.
The proof is based on Corollary \ref{bih:highorderregdecoextdombihcor}
together with a partition of unity.

\begin{lem}[regular decompositions]
\label{bih:lem:highorderregdecobih}
Let $k\geq0$. Then the bounded regular decompositions
\begin{align*}
\bH{k}{\S,\gat}(\Rot,\om)
&=\H{k+1}{\S,\gat}(\om)
+\Gradgrad\H{k+2}{\gat}(\om),\\
\bH{k}{\T,\gat}(\Div,\om)
&=\H{k+1}{\T,\gat}(\om)
+\Rot\H{k+1}{\S,\gat}(\om),\\
\bH{k}{\T,\gat}(\symRot,\om)
&=\H{k+1}{\T,\gat}(\om)
+\devGrad\H{k+1}{\gat}(\om),\\
\bH{k}{\S,\gat}(\divDiv,\om)
&=\H{k+2}{\S,\gat}(\om)
+\symRot\H{k+1}{\T,\gat}(\om)
\intertext{and, for $k\geq1$, the non-standard bounded regular decompositions}
\bH{k}{\S,\gat}(\divDiv,\om)
\subset\bH{k,k-1}{\S,\gat}(\divDiv,\om)
&=\H{k+1}{\S,\gat}(\om)
+\symRot\H{k+1}{\T,\gat}(\om)
\end{align*}
hold with bounded linear regular decomposition operators
\begin{align*}
\PotQ_{\TRotS,\gat}^{k,1}:
\bH{k}{\S,\gat}(\Rot,\om)&\to\H{k+1}{\S,\gat}(\om),
&
\PotQ_{\TRotS,\gat}^{k,0}:
\bH{k}{\S,\gat}(\Rot,\om)&\to\H{k+2}{\gat}(\om),\\
\PotQ_{\DivT,\gat}^{k,1}:
\bH{k}{\T,\gat}(\Div,\om)&\to\H{k+1}{\T,\gat}(\om),
&
\PotQ_{\DivT,\gat}^{k,0}:
\bH{k}{\T,\gat}(\Div,\om)&\to\H{k+1}{\S,\gat}(\om),\\
\PotQ_{\SRotT,\gat}^{k,1}:
\bH{k}{\T,\gat}(\symRot,\om)&\to\H{k+1}{\T,\gat}(\om),
&
\PotQ_{\SRotT,\gat}^{k,0}:
\bH{k}{\T,\gat}(\symRot,\om)&\to\H{k+1}{\gat}(\om),\\
\PotQ_{\divDivS,\gat}^{k,1}:
\bH{k}{\S,\gat}(\divDiv,\om)&\to\H{k+2}{\S,\gat}(\om),
&
\PotQ_{\divDivS,\gat}^{k,0}:
\bH{k}{\S,\gat}(\divDiv,\om)&\to\H{k+1}{\T,\gat}(\om),\\
\PotQ_{\divDivS,\gat}^{k,k-1,1}:
\bH{k,k-1}{\S,\gat}(\divDiv,\om)&\to\H{k+1}{\S,\gat}(\om),
&
\PotQ_{\divDivS,\gat}^{k,k-1,0}:
\bH{k,k-1}{\S,\gat}(\divDiv,\om)&\to\H{k+1}{\T,\gat}(\om)
\end{align*}
satisfying
\begin{align*}
\PotQ_{\TRotS,\gat}^{k,1}
+\Gradgrad\PotQ_{\TRotS,\gat}^{k,0}
&=\id_{\bH{k}{\S,\gat}(\Rot,\om)},\\
\PotQ_{\DivT,\gat}^{k,1}
+\Rot\PotQ_{\DivT,\gat}^{k,0}
&=\id_{\bH{k}{\T,\gat}(\Div,\om)},\\
\PotQ_{\SRotT,\gat}^{k,1}
+\devGrad\PotQ_{\SRotT,\gat}^{k,0}
&=\id_{\bH{k}{\T,\gat}(\symRot,\om)},\\
\PotQ_{\divDivS,\gat}^{k,1}
+\symRot\PotQ_{\divDivS,\gat}^{k,0}
&=\id_{\bH{k}{\S,\gat}(\divDiv,\om)},\\
\PotQ_{\divDivS,\gat}^{k,k-1,1}
+\symRot\PotQ_{\divDivS,\gat}^{k,k-1,0}
&=\id_{\bH{k,k-1}{\S,\gat}(\divDiv,\om)},\qquad
k\geq1.
\end{align*}
It holds 
$\Rot\PotQ_{\TRotS,\gat}^{k,1}=\TbRotSgat^{k}$,
$\Div\PotQ_{\DivT,\gat}^{k,1}=\bDivTgat^{k}$, and
$\symRot\PotQ_{\SRotT,\gat}^{k,1}=\SbRotTgat^{k}$
and thus 
$\bH{k}{\S,\gat,0}(\Rot,\om)$, 
$\bH{k}{\T,\gat,0}(\Div,\om)$, and
$\bH{k}{\T,\gat,0}(\symRot,\om)$
are invariant under 
$\PotQ_{\TRotS,\gat}^{k,1}$,
$\PotQ_{\DivT,\gat}^{k,1}$, and
$\PotQ_{\SRotT,\gat}^{k,1}$, respectively.
Analogously, we have
$\divDiv\PotQ_{\divDivS,\gat}^{k,1}=\bdivDivSgat^{k}$
and $\divDiv\PotQ_{\divDivS,\gat}^{k,k-1,1}=\bdivDivSgat^{k,k-1}$
and thus $\bH{k}{\S,\gat,0}(\divDiv,\om)$ 
is invariant under $\PotQ_{\divDivS,\gat}^{k,1}$
and $\PotQ_{\divDivS,\gat}^{k,k-1,1}$, respectively.
\end{lem}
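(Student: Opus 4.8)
The plan is to derive the general bounded strong Lipschitz pair case from the extendable case of Corollary \ref{bih:highorderregdecoextdombihcor} and Corollary \ref{bih:highorderregdecoextdombihcornonstandard} by a partition of unity, the cutting Lemma \ref{bih:lem:cutlem} supplying the commutator terms that occur when multiplying by a cut-off function; this is the rationale already used in \cite{PS2021b,PS2021d}. First I would cover the compact set $\ol{\om}$ by finitely many open balls $B_{1},\dots,B_{m}$ for which the localised pairs $(\om\cap B_{j},\gat_{j})$, with $\gat_{j}:=(\gat\cap B_{j})\cup(\om\cap\p B_{j})$ the boundary part $\gat\cap B_{j}$ enlarged by the artificial interface $\om\cap\p B_{j}$, are \emph{extendable}: interior balls give $\om\cap B_{j}=B_{j}$; balls centred in the relative interior of $\gat$ resp.\ $\gan$ give a bi-Lipschitz image of a subgraph domain carrying the corresponding homogeneous boundary part; and balls centred on $\ol{\gat}\cap\ol{\gan}$ are extendable precisely because $(\om,\gat)$ is a strong Lipschitz \emph{pair}. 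Fixing $\varphi_{1},\dots,\varphi_{m}\in\C{\infty}{}(\rt)$ with $\operatorname{supp}\varphi_{j}\Subset B_{j}$ and $\sum_{j}\varphi_{j}=1$ on a neighbourhood of $\ol{\om}$, everything will hinge on the trivial identities $\grad\sum_{j}\varphi_{j}=0$ and $\Grad\grad\sum_{j}\varphi_{j}=0$ on $\om$.

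Next I would treat the model case $S\in\bH{k}{\S,\gat}(\Rot,\om)$. By Lemma \ref{bih:lem:cutlem}(i), $\varphi_{j}S\in\bH{k}{\S,\gat}(\Rot,\om)$ with $\Rot(\varphi_{j}S)=\varphi_{j}\Rot S-S\spn\grad\varphi_{j}$, and since $\varphi_{j}S$ vanishes near $\p B_{j}$ it may be read as an element of $\bH{k}{\S,\gat_{j}}(\Rot,\om\cap B_{j})$ on the extendable pair. Corollary \ref{bih:highorderregdecoextdombihcor} then yields $\varphi_{j}S=R_{j}+\Gradgrad u_{j}$ with $R_{j}\in\H{k+1}{\S,\gat_{j}}(\om\cap B_{j})$, $u_{j}\in\H{k+2}{\gat_{j}}(\om\cap B_{j})$ depending continuously on $\varphi_{j}S$; because the strong condition on $\gat_{j}\supset\om\cap\p B_{j}$ permits extension by zero, $R_{j}$ and $u_{j}$ extend to $\widetilde R_{j}\in\H{k+1}{\S,\gat}(\om)$ and $\widetilde u_{j}\in\H{k+2}{\gat}(\om)$. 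Summing gives $S=\sum_{j}\widetilde R_{j}+\Gradgrad\sum_{j}\widetilde u_{j}$, i.e.\ the asserted decomposition with bounded operators $\PotQ_{\TRotS,\gat}^{k,1}$ and $\PotQ_{\TRotS,\gat}^{k,0}$, the bound coming from finiteness of the cover. The identity $\Rot\PotQ_{\TRotS,\gat}^{k,1}=\TbRotSgat^{k}$ — hence the invariance of $\bH{k}{\S,\gat,0}(\Rot,\om)$ under $\PotQ_{\TRotS,\gat}^{k,1}$ — follows from $\Rot\Gradgrad=0$ and $\sum_{j}\bigl(\varphi_{j}\Rot S-S\spn\grad\varphi_{j}\bigr)=\Rot S-S\spn\grad\sum_{j}\varphi_{j}=\Rot S$ on $\om$.

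The $\bH{k}{\T,\gat}(\Div,\om)$- and $\bH{k}{\T,\gat}(\symRot,\om)$-statements would be obtained verbatim from Lemma \ref{bih:lem:cutlem}(ii),(iii) — whose commutators $T\grad\varphi_{j}$, $\sym(T\spn\grad\varphi_{j})$ are of order zero in the field, so no Sobolev order is lost — together with Corollary \ref{bih:highorderregdecoextdombihcor}, producing $\Div\PotQ_{\DivT,\gat}^{k,1}=\bDivTgat^{k}$, $\symRot\PotQ_{\SRotT,\gat}^{k,1}=\SbRotTgat^{k}$ and the respective kernel invariances. For $\bH{k}{\S,\gat}(\divDiv,\om)$ and $\bH{k,k-1}{\S,\gat}(\divDiv,\om)$ with $k\ge1$ the same scheme goes through via Lemma \ref{bih:lem:cutlem}(iv): the commutator $2\grad\varphi_{j}\cdot\Div S+\Grad\grad\varphi_{j}:S$ then lies in $\H{k-1}{}(\om)$, one localises on the extendable pairs, invokes Corollary \ref{bih:highorderregdecoextdombihcor}/\ref{bih:highorderregdecoextdombihcornonstandard}, extends by zero and sums, and the commutator cancels because both $\grad\sum_{j}\varphi_{j}$ and $\Grad\grad\sum_{j}\varphi_{j}$ vanish on $\om$; this yields $\divDiv\PotQ_{\divDivS,\gat}^{k,1}=\bdivDivSgat^{k}$, $\divDiv\PotQ_{\divDivS,\gat}^{k,k-1,1}=\bdivDivSgat^{k,k-1}$ and the invariance of $\bH{k}{\S,\gat,0}(\divDiv,\om)$.

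The genuine obstacle is the \emph{standard} decomposition $\bH{}{\S,\gat}(\divDiv,\om)=\H{2}{\S,\gat}(\om)+\symRot\H{1}{\T,\gat}(\om)$ at $k=0$ — precisely the case flagged as delicate in Remark \ref{bih:rem:weakeqstrongbih} because of the second order nature of $\divDivS$. Here $S$ satisfies only $\divDiv S\in\L{2}{}(\om)$, so Lemma \ref{bih:lem:cutlem}(iv) is unavailable ($\divDiv(\varphi_{j}S)$ acquires the genuinely $\H{-1}{}$-term $2\div(S\grad\varphi_{j})$) and $S$ cannot be localised directly. This case requires a separate argument — as for the corresponding second order operator in the elasticity complex \cite{PS2021d} — exploiting the already established $\bH{k,k-1}{\S,\gat}(\divDiv,\om)$-decomposition (valid for $k\ge1$) together with the boundedness of the regular $\divDiv$-potential into $\H{2}{\S,\gat}(\om)$ — note that $\divDiv S\,\bot\,\Pone_{\gan}$ is automatic, either trivially when $\gan\neq\emptyset$ or forced by the weak boundary condition tested against degree $\le1$ polynomials when $\gan=\emptyset$ — to extract via a suitable approximation/limiting argument an $\H{2}{\S,\gat}(\om)$-part $S_{0}$ with $\divDiv S_{0}=\divDiv S$, after which the residual divergence-free field $S-S_{0}\in\bH{}{\S,\gat,0}(\divDiv,\om)$ is represented as $\symRot$ of an $\H{1}{\T,\gat}(\om)$-field. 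Keeping the density step, the $\Pone_{\gat}$/$\Pone_{\gan}$ bookkeeping, and the homogeneous boundary part $\gat$ consistent through all these localisations is where the main technical effort lies; everything else is the routine partition-of-unity machinery of \cite{PS2021b,PS2021d}.
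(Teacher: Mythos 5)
Your overall strategy -- cover $\ol\om$ by finitely many balls producing extendable local pairs, apply Corollary~\ref{bih:highorderregdecoextdombihcor}/\ref{bih:highorderregdecoextdombihcornonstandard} on each patch, extend by zero, and sum -- is the right one, and for the $\Rot$, $\Div$, $\symRot$ decompositions and the non-standard $\divDiv$ decomposition ($k\geq1$) your cut-first-then-decompose order works fine, since the relevant commutators in Lemma~\ref{bih:lem:cutlem}(i)--(iv) lose no Sobolev order in those cases. The invariance assertions $\Rot\PotQ_{\TRotS,\gat}^{k,1}=\TbRotSgat^{k}$ etc.\ then follow exactly as you say.

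The genuine gap is in how you handle $\bH{}{\S,\gat}(\divDiv,\om)$ at $k=0$. You correctly identify that your ``cut first'' order forces you to invoke Lemma~\ref{bih:lem:cutlem}(iv) on $\varphi_j S$, which is unavailable when $\divDiv S$ is only $\L{2}{}$, and you then hand-wave towards a separate density/approximation argument that is never pinned down and would involve a genuinely different proof technique. The paper's proof avoids this obstruction entirely by reversing the order: it restricts first ($S|_{\om_\ell}\in\bH{k}{\S,\ga_{t,\ell}}(\divDiv,\om_\ell)$ holds by definition, without any cutting lemma), decomposes $S|_{\om_\ell}=S_{\ell,1}+\symRot T_{\ell,0}$ via Corollary~\ref{bih:highorderregdecoextdombihcor} on the extendable patch, and only then multiplies by $\varphi_\ell$, distributing $\varphi_\ell$ through $\symRot T_{\ell,0}$ using Lemma~\ref{bih:lem:cutlem}(iii) (the first-order $\symRot$ commutator), not (iv). The resulting $S_\ell$ has only $\H{k+1}{}$ regularity rather than $\H{k+2}{}$, so one only reaches $S\in\H{k+1}{\S,\gat}(\om)+\symRot\H{k+1}{\T,\gat}(\om)$; but this places $S$ in the non-standard space $\bH{k+1,k}{\S,\gat}(\divDiv,\om)$, to which the already-established non-standard decomposition (at level $k+1\geq1$) applies, giving
\begin{align*}
S&\in\H{k+2}{\S,\gat}(\om)+\symRot\H{k+2}{\T,\gat}(\om)+\symRot\H{k+1}{\T,\gat}(\om)
=\H{k+2}{\S,\gat}(\om)+\symRot\H{k+1}{\T,\gat}(\om),
\end{align*}
all steps linear and bounded. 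So the $k=0$ case requires no separate density argument, no explicit $\Pone_{\gan}$ bookkeeping, and no ``extraction by limiting''; it follows from the same partition-of-unity mechanism by a two-step bootstrap once the non-standard decomposition at order one is in hand. Your remaining write-up would be accepted after replacing the vague final paragraph with this decompose-then-cut ordering and the bootstrap through $\bH{k+1,k}{\S,\gat}(\divDiv,\om)$.
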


Corollary \ref{bih:weakstrongextdombihcor} and \eqref{bih:Gradgradkkmoregformula}
are generalised to the following important result.

\begin{cor}[weak and strong partial boundary conditions coincide]
\label{bih:cor:weakstrongbih}
Let $k\geq0$. Weak and strong boundary conditions coincide, i.e.,
\begin{align*}
\bH{k}{\gat}(\Gradgrad,\om)
&=\H{k}{\gat}(\Gradgrad,\om)
=\H{k+2}{\gat}(\om)
=\bH{k+2}{\gat}(\om),\\
\bH{k,k-1}{\gat}(\Gradgrad,\om)
&=\H{k,k-1}{\gat}(\Gradgrad,\om)
=\H{k+1}{\gat}(\om)
=\bH{k+1}{\gat}(\om),\qquad
k\geq1,\\
\bH{k}{\S,\gat}(\Rot,\om)
&=\H{k}{\S,\gat}(\Rot,\om),\\
\bH{k}{\T,\gat}(\Div,\om)
&=\H{k}{\T,\gat}(\Div,\om),\\
\bH{k}{\gat}(\devGrad,\om)
&=\H{k}{\gat}(\devGrad,\om)
=\H{k+1}{\gat}(\om)
=\bH{k+1}{\gat}(\om),\\
\bH{k}{\T,\gat}(\symRot,\om)
&=\H{k}{\T,\gat}(\symRot,\om),\\
\bH{k}{\S,\gat}(\divDiv,\om)
&=\H{k}{\S,\gat}(\divDiv,\om),\\
\bH{k,k-1}{\S,\gat}(\divDiv,\om)
&=\H{k,k-1}{\S,\gat}(\divDiv,\om),\qquad
k\geq1.
\end{align*}
In particular, we have $\SbGradgradgat^{k}=\SGradgradgat^{k}$, $\TbRotSgat^{k}=\TRotSgat^{k}$,
$\bDivTgat^{k}=\DivTgat^{k}$, $\TbGradgat^{k}=\TGradgat^{k}$, $\SbRotTgat^{k}=\SRotTgat^{k}$,
$\bdivDivSgat^{k}=\divDivSgat^{k}$, as well as, for $k\geq1$, 
$\SbGradgradgat^{k,k-1}=\SGradgradgat^{k,k-1}$ and $\bdivDivSgat^{k,k-1}=\divDivSgat^{k,k-1}$.
\end{cor}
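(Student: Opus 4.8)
The plan is to reduce the global ``strong $=$ weak'' statement to the regular decomposition results just established, following the rationale of \cite{PS2021b,PS2021d}. The inclusions ``strong $\subset$ weak'' have already been observed, so in every line only the reverse inclusion ``weak $\subset$ strong'' has to be proved. Moreover, by Lemma \ref{bih:lem:weakeqstrongbih}, Lemma \ref{bih:lem:weakeqstrongela2}, and \eqref{bih:Gradgradkkmoregformula} the equalities involving $\Gradgrad$ and $\devGrad$ (and the higher-order $\divDiv$-spaces with $k\geq 2$) are already known; what genuinely remains is to show
$$\bH{k}{\S,\gat}(\Rot,\om)\subset\H{k}{\S,\gat}(\Rot,\om),\quad
\bH{k}{\T,\gat}(\Div,\om)\subset\H{k}{\T,\gat}(\Div,\om),$$
$$\bH{k}{\T,\gat}(\symRot,\om)\subset\H{k}{\T,\gat}(\symRot,\om),\quad
\bH{k}{\S,\gat}(\divDiv,\om)\subset\H{k}{\S,\gat}(\divDiv,\om),$$
and the non-standard variant $\bH{k,k-1}{\S,\gat}(\divDiv,\om)\subset\H{k,k-1}{\S,\gat}(\divDiv,\om)$ for $k\geq 1$.

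\textbf{Key steps.}
First I would treat, say, the $\Rot$-case as the model. Let $S\in\bH{k}{\S,\gat}(\Rot,\om)$. Apply the regular decomposition of Lemma \ref{bih:lem:highorderregdecobih}, writing
$$S=\PotQ_{\TRotS,\gat}^{k,1}S+\Gradgrad\PotQ_{\TRotS,\gat}^{k,0}S,$$
with $\PotQ_{\TRotS,\gat}^{k,1}S\in\H{k+1}{\S,\gat}(\om)$ and $\PotQ_{\TRotS,\gat}^{k,0}S\in\H{k+2}{\gat}(\om)$. The second summand lies in $\H{k}{\S,\gat}(\Rot,\om)$ by the complex property ($\Gradgrad\H{k+2}{\gat}(\om)\subset\H{k}{\S,\gat,0}(\Rot,\om)$, which is part of the strong-boundary-condition complex), and the first summand lies in $\H{k+1}{\S,\gat}(\om)\subset\H{k}{\S,\gat}(\Rot,\om)$ provided $\Rot\PotQ_{\TRotS,\gat}^{k,1}S\in\H{k}{\gat}(\om)$. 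That last point is exactly what the final sentence of Lemma \ref{bih:lem:highorderregdecobih} gives: $\Rot\PotQ_{\TRotS,\gat}^{k,1}=\TbRotSgat^{k}$, so $\Rot\PotQ_{\TRotS,\gat}^{k,1}S=\Rot S\in\bH{k}{\T,\gat,0}(\Div,\om)\subset\H{k}{\gat}(\om)$ — here one should note $\Rot S$ carries weak $\Div$-boundary conditions, which by the $\Div$-case (treated first, or simultaneously by the same argument) coincide with strong ones; since $\PotQ_{\TRotS,\gat}^{k,1}S\in\H{k+1}{\S,\gat}(\om)$ is an $\H{k+1}{}$-field \emph{with} strong $\gat$-boundary condition and its $\Rot$ lies in $\H{k}{\gat}(\om)$, it belongs to $\H{k}{\S,\gat}(\Rot,\om)$. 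Hence $S\in\H{k}{\S,\gat}(\Rot,\om)$. The same template handles $\Div$, $\symRot$, and both $\divDiv$-variants verbatim, each time using the corresponding decomposition identity and the corresponding ``$\A\PotQ^{1}=\overline{\A}$'' statement from Lemma \ref{bih:lem:highorderregdecobih}, together with the complex property placing the potential term in the correct kernel with strong boundary conditions. The final ``In particular'' sentence about operator equalities $\SbGradgradgat^{k}=\SGradgradgat^{k}$ etc.\ is then immediate, since these operators are defined on the weak and strong domains, which now agree, with the same action.

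\textbf{Main obstacle.}
The delicate point — as the authors already flag in Remark \ref{bih:rem:weakeqstrongbih} — is the second-order operator $\divDiv_{\S}$, and in particular disentangling the two $\divDiv$-spaces $\bH{k}{\S,\gat}(\divDiv,\om)$ and $\bH{k,k-1}{\S,\gat}(\divDiv,\om)$ when $k\in\{0,1\}$, where the intersection with the first-order space $\bH{}{\S,\gat}(\divDiv,\om)$ in the definition cannot be dropped and the naive ``$S\in\H{k}{\S,\gat}$ with $\divDiv S\in\H{k}{\gat}$'' description fails. The resolution is that Lemma \ref{bih:lem:highorderregdecobih} is stated precisely for these spaces (the $k\geq1$ non-standard decomposition and the identity $\divDiv\PotQ_{\divDivS,\gat}^{k,k-1,1}=\bdivDivSgat^{k,k-1}$), so the decomposition argument still closes — one just has to be careful to feed the regular potential $\PotP_{\SRotT,\gat}^{k}$ (resp.\ $\PotP_{\SRotT,\gat}^{k-1}$ in the non-standard case) the symmetric-$\Rot$-closed field $\symRot\PotQ_{\divDivS,\gat}^{k,0}S$, and to invoke the already-proved $\symRot$-equality to upgrade its boundary condition from weak to strong before applying $\symRot^{-1}$. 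With that bookkeeping in place, every line of the corollary follows from one application of the matching decomposition identity.
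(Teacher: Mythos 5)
Your argument is correct and follows the paper's own route: both deduce the corollary from the regular decomposition of Lemma~\ref{bih:lem:highorderregdecobih}, observing that every summand of the decomposition already lies in the corresponding \emph{strong} Sobolev space, so that the weak space is sandwiched inside the strong one; the paper simply proves the lemma and the corollary jointly in Appendix~\ref{sec:someproof} by partition of unity and the cutting lemma. One remark on your ``main obstacle'' paragraph: the worry about circularity, the operator identity $\Rot\PotQ_{\TRotS,\gat}^{k,1}=\TbRotSgat^{k}$, and the ``apply $\symRot^{-1}$'' step for $\divDiv$ are all unnecessary, since the inclusions $\H{k+1}{\S,\gat}(\om)\subset\H{k}{\S,\gat}(\Rot,\om)$ and $\H{k+2}{\S,\gat}(\om)\subset\H{k}{\S,\gat}(\divDiv,\om)$ (and likewise $\symRot\H{k+1}{\T,\gat}(\om)\subset\H{k}{\S,\gat,0}(\divDiv,\om)$, etc.) hold automatically by density: if $S_n\in\C{\infty}{\S,\gat}(\om)$ approximate $S$ in $\H{k+1}{}(\om)$, then $\Rot S_n\in\C{\infty}{\T,\gat}(\om)$ approximate $\Rot S$ in $\H{k}{}(\om)$, and similarly for all the other operators, so each decomposition piece lands in the strong space directly.
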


For a detailed proof of Lemma \ref{bih:lem:highorderregdecobih} and Corollary \ref{bih:cor:weakstrongbih}
see Appendix \ref{sec:someproof}.

\subsection{Mini FA-ToolBox}
\label{bih:sec:fatb}%

\subsubsection{Zero Order Mini FA-ToolBox}
\label{bih:sec:zofatb}%

Recall Section \ref{bih:sec:dirneu} and let $\eps$, $\mu$ be admissible.
In Section \ref{bih:sec:bihop} (for $\eps=\mu=\id$) we have seen that
the densely defined and closed linear operators
\begin{align*}
\A_{-1}=\iota_{\Pone_{\gat}}:\Pone_{\gat}&\to\L{2}{}(\om);
&
p&\mapsto p,\\
\A_{0}=\SGradgradgat:\H{2}{\gat}(\om)\subset\L{2}{}(\om)&\to\L{2}{\S,\eps}(\om);
&
u&\mapsto\Gradgrad u,\\
\A_{1}=\mu^{-1}\TRotSgat:\H{}{\S,\gat}(\Rot,\om)\subset\L{2}{\S,\eps}(\om)&\to\L{2}{\T,\mu}(\om);
&
S&\mapsto\mu^{-1}\Rot S,\\
\A_{2}=\DivTgat\mu:\mu^{-1}\H{}{\T,\gat}(\Div,\om)\subset\L{2}{\T,\mu}(\om)&\to\L{2}{}(\om);
&
T&\mapsto\Div\mu T,\\
\A_{3}=\iota_{\RT_{\gan}}^{*}:\L{2}{}(\om)&\to\RT_{\gan};
&
q&\mapsto\pi_{\RT_{\gan}}q,\\
\A_{-1}^{*}=\iota_{\Pone_{\gat}}^{*}:\L{2}{}(\om)&\to\Pone_{\gat};
&
p&\mapsto\pi_{\Pone_{\gat}}p,\\
\A_{0}^{*}=\divDivSgan\eps:\eps^{-1}\H{}{\S,\gan}(\divDiv,\om)\subset\L{2}{\S,\eps}(\om)&\to\L{2}{}(\om);
&
S&\mapsto\divDiv\eps S,\\
\A_{1}^{*}=\eps^{-1}\SRotTgan:\H{}{\T,\gan}(\symRot,\om)\subset\L{2}{\T,\mu}(\om)&\to\L{2}{\S,\eps}(\om);
&
T&\mapsto\eps^{-1}\symRot T,\\
\A_{2}^{*}=-\TGradgan:\H{1}{\gan}(\om)\subset\L{2}{}(\om)&\to\L{2}{\T,\mu}(\om);
&
v&\mapsto-\devGrad v,\\
\A_{3}^{*}=\iota_{\RT_{\gan}}:\RT_{\gan}&\to\L{2}{}(\om);
&
q&\mapsto q,
\end{align*}
where we have used Corollary \ref{bih:cor:weakstrongbih},
build the long primal and dual elasticity Hilbert complex
\begin{equation}
\label{bih:bihcomplex5}
\footnotesize
%\scriptsize
%\tiny
\def\arrowlength{16ex}
\def\arrowdistance{.8}
\begin{tikzcd}[column sep=\arrowlength]
\Pone_{\gat}
\arrow[r, rightarrow, shift left=\arrowdistance, "\A_{-1}=\iota_{\Pone_{\gat}}"] 
\arrow[r, leftarrow, shift right=\arrowdistance, "\A_{-1}^{*}\cong\pi_{\Pone_{\gat}}"']
& 
[-2em]
\L{2}{}(\om) 
\ar[r, rightarrow, shift left=\arrowdistance, "\A_{0}=\SGradgradgat"] 
\ar[r, leftarrow, shift right=\arrowdistance, "\A_{0}^{*}=\divDivSgan\eps"']
&
[-1em]
\L{2}{\S,\eps}(\om) 
\ar[r, rightarrow, shift left=\arrowdistance, "\A_{1}=\mu^{-1}\TRotSgat"] 
\ar[r, leftarrow, shift right=\arrowdistance, "\A_{1}^{*}=\eps^{-1}\SRotTgan"']
& 
[0em]
\L{2}{\T,\mu}(\om) 
\arrow[r, rightarrow, shift left=\arrowdistance, "\A_{2}=\DivTgat\mu"] 
\arrow[r, leftarrow, shift right=\arrowdistance, "\A_{2}^{*}=-\TGradgan"']
& 
[-1em]
\L{2}{}(\om) 
\arrow[r, rightarrow, shift left=\arrowdistance, "\A_{3}\cong\pi_{\RT_{\gan}}"] 
\arrow[r, leftarrow, shift right=\arrowdistance, "\A_{3}^{*}=\iota_{\RT_{\gan}}"']
&
[-2em]
\RT_{\gan},
\end{tikzcd}
\end{equation}
cf. \eqref{bih:bihcomplex2a}. Note that
\begin{align*}
\iota_{\Pone_{\gat}}\A_{-1}^{*}
=\iota_{\Pone_{\gat}}\iota_{\Pone_{\gat}}^{*}
=\pi_{\Pone_{\gat}}:\L{2}{}(\om)&\to\L{2}{}(\om),\\
\iota_{\RT_{\gan}}\A_{3}
=\iota_{\RT_{\gan}}\iota_{\RT_{\gan}}^{*}
=\pi_{\RT_{\gan}}:\L{2}{}(\om)&\to\L{2}{}(\om)
\end{align*}
are the actual projectors onto $\Pone_{\gat}$ and $\RT_{\gan}$, respectively.

\begin{theo}[compact embeddings]
\label{bih:theo:cptembzeroorder}
The embeddings
\begin{align*}
D(\A_{1})\cap D(\A_{0}^{*})
=\H{}{\S,\gat}(\Rot,\om)\cap\eps^{-1}\H{}{\S,\gan}(\divDiv,\om)
&\incl\L{2}{\S,\eps}(\om),\\
D(\A_{2})\cap D(\A_{1}^{*})
=\mu^{-1}\H{}{\T,\gat}(\Div,\om)\cap\H{}{\T,\gan}(\symRot,\om)
&\incl\L{2}{\T,\mu}(\om)
\end{align*}
are compact. Moreover, the compactness does not depend on $\eps$ or $\mu$.
\end{theo}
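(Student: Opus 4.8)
The plan is to run the standard Weck/Picard compactness scheme for Hilbert complexes, reducing the compactness of each intersection embedding to the bounded regular decompositions of Lemma~\ref{bih:lem:highorderregdecobih} (with $k=0$), Rellich's selection theorem on the bounded strong Lipschitz domain $\om$, and the duality of the operators. Since the two embeddings in Theorem~\ref{bih:theo:cptembzeroorder} are formally dual to one another, it suffices to treat the first one, $\H{}{\S,\gat}(\Rot,\om)\cap\eps^{-1}\H{}{\S,\gan}(\divDiv,\om)\incl\L{2}{\S,\eps}(\om)$, in detail; the second follows by the obvious relabelling $\Gradgrad\leftrightarrow\devGrad$, $\Rot\leftrightarrow\symRot$, $\divDiv\leftrightarrow\Div$, $\eps\leftrightarrow\mu$, $\gat\leftrightarrow\gan$, using that $(\om,\gan)$ is a bounded strong Lipschitz pair as well, so that Lemma~\ref{bih:lem:highorderregdecobih} applies with $\gat$ replaced by $\gan$. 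The admissible weights will enter the estimates only through the equivalence of weighted and unweighted norms, so compactness holds for every admissible $\eps,\mu$ simultaneously; this is the asserted independence of the weights (alternatively one invokes the abstract weight-independence statement from \cite{PS2021b}).

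Concretely I would argue as follows. Let $(S_n)$ be bounded in $\H{}{\S,\gat}(\Rot,\om)\cap\eps^{-1}\H{}{\S,\gan}(\divDiv,\om)$, so that $(S_n)$, $(\Rot S_n)$ and $(\divDiv\eps S_n)$ are bounded in $\L{2}{}(\om)$. By Lemma~\ref{bih:lem:highorderregdecobih} decompose
\[
S_n=\PotQ_{\TRotS,\gat}^{0,1}S_n+\Gradgrad\PotQ_{\TRotS,\gat}^{0,0}S_n=:\widetilde S_n+\Gradgrad u_n,
\]
with $(\widetilde S_n)$ bounded in $\H{1}{\S,\gat}(\om)$ and $(u_n)$ bounded in $\H{2}{\gat}(\om)=\H{}{\gat}(\Gradgrad,\om)$; in particular $(\Gradgrad u_n)=(S_n-\widetilde S_n)$ is bounded in $\L{2}{\S}(\om)$. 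Since $\H{1}{}(\om)\incl\L{2}{}(\om)$ is compact, passing to a subsequence (not relabelled) we may assume $\widetilde S_n\to\widetilde S$ in $\L{2}{\S}(\om)$ and $u_n\to u$ in $\L{2}{}(\om)$. It then remains to show that $(\Gradgrad u_n)$ is Cauchy in $\L{2}{\S}(\om)$. With $w_{nm}:=u_n-u_m\in\H{}{\gat}(\Gradgrad,\om)$, the symmetry of $\eps$, the decomposition of $S_n$, and the fact that $(\SGradgradgat,\divDivSgan\eps)$ is a dual pair,
\begin{align*}
\norm{\Gradgrad w_{nm}}_{\L{2}{\S,\eps}(\om)}^{2}
&=\scp{\eps\Gradgrad w_{nm}}{S_n-S_m}_{\L{2}{\S}(\om)}-\scp{\eps\Gradgrad w_{nm}}{\widetilde S_n-\widetilde S_m}_{\L{2}{\S}(\om)}\\
&=\scp{w_{nm}}{\divDiv\eps(S_n-S_m)}_{\L{2}{}(\om)}-\scp{\eps\Gradgrad w_{nm}}{\widetilde S_n-\widetilde S_m}_{\L{2}{\S}(\om)}.
\end{align*}
The first summand is bounded by $\norm{w_{nm}}_{\L{2}{}(\om)}\norm{\divDiv\eps(S_n-S_m)}_{\L{2}{}(\om)}\to0$, because $(u_n)$ converges in $\L{2}{}(\om)$ while $(\divDiv\eps S_n)$ is bounded; the second summand is bounded by $\norm{\eps\Gradgrad w_{nm}}_{\L{2}{}(\om)}\norm{\widetilde S_n-\widetilde S_m}_{\L{2}{}(\om)}\to0$, because $(\Gradgrad w_{nm})$ is bounded while $(\widetilde S_n)$ converges in $\L{2}{\S}(\om)$. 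Hence $(\Gradgrad u_n)$ is Cauchy in $\L{2}{\S,\eps}(\om)$, thus in $\L{2}{\S}(\om)$, and therefore $S_n=\widetilde S_n+\Gradgrad u_n$ converges in $\L{2}{\S}(\om)$, which proves compactness of the first embedding.

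For the second embedding the same reasoning applies after replacing the $\Rot$-decomposition by $\bH{}{\T,\gan}(\symRot,\om)=\H{1}{\T,\gan}(\om)+\devGrad\H{1}{\gan}(\om)$ from Lemma~\ref{bih:lem:highorderregdecobih} (writing $T_n=\widetilde T_n+\devGrad v_n$ with $(\widetilde T_n)$ bounded in $\H{1}{\T,\gan}(\om)$ and $(v_n)$ bounded in $\H{1}{\gan}(\om)$), and using the adjoint identity $\scp{\devGrad v}{\mu R}_{\L{2}{\T}(\om)}=-\scp{v}{\Div\mu R}_{\L{2}{}(\om)}$, valid for $v\in\H{}{\gan}(\devGrad,\om)$ and $R\in\mu^{-1}\H{}{\T,\gat}(\Div,\om)$, to control the $\devGrad v_n$-part via the bounded sequence $(\Div\mu T_n)$.

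Given that the regular decompositions of Lemma~\ref{bih:lem:highorderregdecobih} are already available, this argument is essentially routine; the one point that requires care is the legitimacy of the duality pairing $\scp{\Gradgrad v}{\eps R}_{\L{2}{\S}(\om)}=\scp{v}{\divDiv\eps R}_{\L{2}{}(\om)}$ for $v\in\H{}{\gat}(\Gradgrad,\om)$ and $R\in\eps^{-1}\H{}{\S,\gan}(\divDiv,\om)$. This needs ``weak $=$ strong'' for the second-order operator $\divDivS$ (Corollary~\ref{bih:cor:weakstrongbih}), so that $\SGradgradgat$ and $\divDivSgan\eps$ genuinely form a dual pair, together with the compatibility of the admissible weight $\eps$ with $\H{}{\S,\gan}(\divDiv,\om)$, which is covered by the cutting Lemma~\ref{bih:lem:cutlem}. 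The real mathematical content of the compactness statement is therefore already contained in the bounded regular decompositions, whose proof (partition of unity plus the extendable-domain case of Corollary~\ref{bih:highorderregdecoextdombihcor}) carries the geometric weight.
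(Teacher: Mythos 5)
Your proof is correct and it follows the same strategy as the paper's: reduce via the bounded regular decomposition of Lemma~\ref{bih:lem:highorderregdecobih} (for $k=0$), use Rellich on the two regular components, and close the gap on the $\Gradgrad$-part via the duality pairing with $\divDiv$. The only difference is that the paper condenses the last two steps into a citation of the abstract FA-ToolBox lemma (\cite[Lemma 2.22]{PS2021b}, \cite[Corollary 2.12]{PZ2021a}), whereas you have spelled out the proof of that lemma inline; also, for the second embedding the paper decomposes $D(\A_2)=\mu^{-1}\H{}{\T,\gat}(\Div,\om)$ while you decompose $D(\A_1^*)=\H{}{\T,\gan}(\symRot,\om)$ — the two choices are interchangeable by the symmetry of the abstract compactness lemma. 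One small remark on the weight independence: the justification ``only through equivalence of norms'' is slightly imprecise, since $\eps^{-1}\H{}{\S,\gan}(\divDiv,\om)$ is a different set for different $\eps$, not just a re-normed version of the same one; however this is harmless because your argument is carried out directly for a general admissible $\eps$ (the regular decomposition, Rellich, and the duality identity all hold with $\eps$ in place), so the compactness is established uniformly in $\eps,\mu$ without invoking a separate abstract weight-independence result.
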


See Appendix \ref{sec:someproof} for a proof.

\begin{rem}[compact embeddings]
\label{bih:rem:cptembzeroorder}
The embeddings 
$$D(\A_{0})\cap D(\A_{-1}^{*})
=D(\A_{0})
=\H{2}{\gat}(\om)
\incl\L{2}{}(\om),\quad
D(\A_{3})\cap D(\A_{2}^{*})
=D(\A_{2}^{*})
=\H{1}{\gan}(\om)
\incl\L{2}{}(\om)$$
are compact by Rellich's selection theorem.
\end{rem}

\begin{theo}[compact biharmonic complex]
\label{bih:theo:cptembzeroorder2}
The long primal and dual biharmonic Hilbert complex \eqref{bih:bihcomplex5} is compact. 
In particular, the complex is closed.
\end{theo}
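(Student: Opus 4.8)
The plan is to deduce the statement from the abstract FA-ToolBox theory of \cite[Section~2]{PS2021b}: a Hilbert complex is \emph{compact} -- and hence, in particular, closed -- exactly when all the consecutive embeddings $D(\A_{n})\cap D(\A_{n-1}^{*})\incl\H{}{n}$ are compact. Since \eqref{bih:bihcomplex5} has already been identified as a Hilbert complex (all $\A_{n}$ densely defined and closed, complex properties in force), the whole task reduces to verifying these four compactness assertions and then invoking the abstract machinery.

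First I would read off the domains at each spot of \eqref{bih:bihcomplex5}. As $\A_{-1}^{*}=\iota_{\Pone_{\gat}}^{*}\cong\pi_{\Pone_{\gat}}$ and $\A_{3}=\iota_{\RT_{\gan}}^{*}\cong\pi_{\RT_{\gan}}$ are bounded and everywhere defined, one has $D(\A_{-1}^{*})=D(\A_{3})=\L{2}{}(\om)$, whence
\[
D(\A_{0})\cap D(\A_{-1}^{*})=D(\A_{0})=\H{2}{\gat}(\om),\qquad
D(\A_{3})\cap D(\A_{2}^{*})=D(\A_{2}^{*})=\H{1}{\gan}(\om),
\]
both of which embed compactly into $\L{2}{}(\om)$ by Rellich's selection theorem, cf.~Remark \ref{bih:rem:cptembzeroorder}. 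For the two interior spots,
\[
D(\A_{1})\cap D(\A_{0}^{*})=\H{}{\S,\gat}(\Rot,\om)\cap\eps^{-1}\H{}{\S,\gan}(\divDiv,\om),\qquad
D(\A_{2})\cap D(\A_{1}^{*})=\mu^{-1}\H{}{\T,\gat}(\Div,\om)\cap\H{}{\T,\gan}(\symRot,\om),
\]
and these embed compactly by Theorem \ref{bih:theo:cptembzeroorder}, the compactness being insensitive to the admissible weights $\eps$, $\mu$.

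With all four embeddings compact, the abstract theory yields that \eqref{bih:bihcomplex5} is a compact Hilbert complex; consequently all ranges $R(\A_{n})$ are closed, the reduced operators have compact resolvents, the cohomology groups are finite-dimensional, and the associated Friedrichs/Poincar\'e type estimates hold -- closedness being an immediate byproduct. Within this proof there is no genuine obstacle: the only substantial ingredient is Theorem \ref{bih:theo:cptembzeroorder} itself (proved in the appendix), whose argument rests on the regular decompositions of Lemma \ref{bih:lem:highorderregdecobih} together with the compact embeddings already established for the de Rham and elasticity complexes in \cite{PS2021b,PS2021d}; granted that input, the present theorem is a short bookkeeping step inside the FA-ToolBox.
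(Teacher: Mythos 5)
Your proposal is correct and follows exactly the route the paper intends: the two interior compact embeddings are exactly Theorem \ref{bih:theo:cptembzeroorder}, the two boundary ones are Rellich as noted in Remark \ref{bih:rem:cptembzeroorder}, and the abstract FA-ToolBox then delivers compactness and hence closedness of \eqref{bih:bihcomplex5}. There is no substantive deviation from the paper's argument.
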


Let us recall for the densely defined and closed linear operators
$$\A_{n}:D(\A_{n})\subset\H{}{n}\to\H{}{n+1},\qquad
\A_{n}^{*}:D(\A_{n}^{*})\subset\H{}{n+1}\to\H{}{n}$$
the corresponding reduced operators
\begin{align*}
(\A_{n})_{\bot}:=\A_{n}\big|_{N(\A_{n})^{\bot_{\H{}{n}}}}:
D\big((\A_{n})_{\bot}\big)=D(\A_{n})\cap N(\A_{n})^{\bot_{\H{}{n}}}\subset
N(\A_{n})^{\bot_{\H{}{n}}}&\to R(\A_{n}),\\
(\A_{n}^{*})_{\bot}:=\A_{n}^{*}\big|_{N(\A_{n}^{*})^{\bot_{\H{}{n+1}}}}:
D\big((\A_{n}^{*})_{\bot}\big)=D(\A_{n}^{*})\cap N(\A_{n}^{*})^{\bot_{\H{}{n+1}}}\subset
N(\A_{n}^{*})^{\bot_{\H{}{n+1}}}&\to R(\A_{n}^{*}).
\end{align*}
Note that $R(\A_{n})=R\big((\A_{n})_{\bot}\big)=N(\A_{n}^{*})^{\bot_{\H{}{n+1}}}$
and $R(\A_{n}^{*})=R\big((\A_{n}^{*})_{\bot}\big)=N(\A_{n})^{\bot_{\H{}{n}}}$.
Here, we consider 
\begin{align*}
(\A_{0})_{\bot}&=(\SGradgradgat)_{\bot},
&
(\A_{1})_{\bot}&=(\mu^{-1}\TRotSgat)_{\bot},
&
(\A_{2})_{\bot}&=(\DivTgat\mu)_{\bot},\\
(\A_{0}^{*})_{\bot}&=(\divDivSgan\eps)_{\bot},
&
(\A_{1}^{*})_{\bot}&=(\eps^{-1}\SRotTgan)_{\bot},
&
(\A_{2}^{*})_{\bot}&=-(\TGradgan)_{\bot},
\end{align*}
and
\begin{align*}
(\A_{-1})_{\bot}=(\iota_{\Pone_{\gat}})_{\bot}
=\id_{\Pone_{\gat}}:\Pone_{\gat}&\to\Pone_{\gat},\\
(\A_{-1}^{*})_{\bot}\cong(\pi_{\Pone_{\gat}})_{\bot}
=\pi_{\Pone_{\gat}}\big|_{\Pone_{\gat}}
=\id_{\Pone_{\gat}}:\Pone_{\gat}&\to\Pone_{\gat},\\
(\A_{3})_{\bot}\cong(\pi_{\RT_{\gan}})_{\bot}
=\pi_{\RT_{\gan}}\big|_{\RT_{\gan}}
=\id_{\RT_{\gan}}:\RT_{\gan}&\to\RT_{\gan},\\
(\A_{3}^{*})_{\bot}=(\iota_{\RT_{\gan}})_{\bot}
=\id_{\RT_{\gan}}:\RT_{\gan}&\to\RT_{\gan}.
\end{align*}

\cite[Lemma 2.9]{PS2021b} shows:

\begin{theo}[mini FA-ToolBox]
\label{bih:theo:miniFATzeroorder}
For the zero order biharmonic complex it holds:
\begin{itemize}
\item[\bf(i)]
The ranges $R(\SGradgradgat)$, $R(\mu^{-1}\TRotSgat)$, and $R(\DivTgat\mu)$
are closed.
\item[\bf(i')]
The ranges $R(\divDivSgan\eps)$, $R(\eps^{-1}\SRotTgan)$, and $R(\TGradgan)$
are closed.
\item[\bf(ii)]
The inverse operators $(\SGradgradgat)_{\bot}^{-1}$, $(\mu^{-1}\TRotSgat)_{\bot}^{-1}$, 
and $(\DivTgat\mu)_{\bot}^{-1}$ are compact.
\item[\bf(ii')]
The inverse operators $(\divDivSgan\eps)_{\bot}^{-1}$, $(\eps^{-1}\SRotTgan)_{\bot}^{-1}$, 
and $(\TGradgan)_{\bot}^{-1}$ are compact.
\item[\bf(iii)]
The cohomology groups of generalised Dirichlet/Neumann tensor fields
$\Harm{}{\S,\gat,\gan,\eps}(\om)$ and $\Harm{}{\T,\gan,\gat,\mu}(\om)$
Are finite-dimensional.
Moreover, the dimensions do not depend on $\eps$ or $\mu$.
\item[\bf(iv)]
The orthonormal Helmholtz type decompositions
\begin{align*}
\L{2}{\S,\eps}(\om)
&=R(\SGradgradgat)
\oplus_{\L{2}{\S,\eps}(\om)}N(\divDivSgan\eps)\\
&=N(\mu^{-1}\TRotSgat)
\oplus_{\L{2}{\S,\eps}(\om)}R(\eps^{-1}\SRotTgan)\\
&=R(\SGradgradgat)
\oplus_{\L{2}{\S,\eps}(\om)}\Harm{}{\S,\gat,\gan,\eps}(\om)
\oplus_{\L{2}{\S,\eps}(\om)}R(\eps^{-1}\SRotTgan),\\
\L{2}{\T,\mu}(\om)
&=R(\TGradgan)
\oplus_{\L{2}{\T,\mu}(\om)}N(\DivTgat\mu)\\
&=N(\eps^{-1}\SRotTgan)
\oplus_{\L{2}{\T,\mu}(\om)}R(\mu^{-1}\TRotSgat)\\
&=R(\TGradgan)
\oplus_{\L{2}{\T,\mu}(\om)}\Harm{}{\T,\gan,\gat,\mu}(\om)
\oplus_{\L{2}{\T,\mu}(\om)}R(\mu^{-1}\TRotSgat)
\end{align*}
hold.
\item[\bf(v)]
There exist (optimal) $c_{0},c_{1},c_{2}>0$ such that
the Friedrichs/Poincar\'e type estimates 
\begin{align*}
\forall\,u&\in\H{2}{\gat}(\om)\cap(\Pone_{\gat})^{\bot_{\L{2}{}(\om)}}
&
\norm{u}_{\L{2}{}(\om)}&\leq c_{0}\norm{\Gradgrad u}_{\L{2}{\S,\eps}(\om)},\\
\forall\,S&\in\eps^{-1}\H{}{\S,\gan}(\divDiv,\om)\cap R(\SGradgradgat)
&
\norm{S}_{\L{2}{\S,\eps}(\om)}&\leq c_{0}\norm{\divDiv\eps S}_{\L{2}{}(\om)},\\
\forall\,S&\in\H{}{\S,\gat}(\Rot,\om)\cap R(\eps^{-1}\SRotTgan)
&
\norm{S}_{\L{2}{\S,\eps}(\om)}&\leq c_{1}\norm{\mu^{-1}\Rot S}_{\L{2}{\T,\mu}(\om)},\\
\forall\,T&\in\H{}{\T,\gan}(\symRot,\om)\cap R(\mu^{-1}\TRotSgat)
&
\norm{T}_{\L{2}{\T,\mu}(\om)}&\leq c_{1}\norm{\eps^{-1}\symRot T}_{\L{2}{\S,\eps}(\om)},\\
\forall\,T&\in\mu^{-1}\H{}{\T,\gat}(\Div,\om)\cap R(\TGradgan)
&
\norm{T}_{\L{2}{\T,\mu}(\om)}&\leq c_{2}\norm{\Div\mu T}_{\L{2}{}(\om)},\\
\forall\,v&\in\H{1}{\gan}(\om)\cap(\RT_{\gan})^{\bot_{\L{2}{}(\om)}}
&
\norm{v}_{\L{2}{}(\om)}&\leq c_{2}\norm{\devGrad v}_{\L{2}{\T,\mu}(\om)}
\end{align*}
hold.
\item[\bf(vi)]
For all 
$S\in\H{}{\S,\gat}(\Rot,\om)
\cap\eps^{-1}\H{}{\S,\gan}(\divDiv,\om)
\cap\Harm{}{\S,\gat,\gan,\eps}(\om)^{\bot_{\L{2}{\S,\eps}(\om)}}$
it holds
$$\norm{S}_{\L{2}{\S,\eps}(\om)}^2\leq 
c_{1}^2\norm{\mu^{-1}\Rot S}_{\L{2}{\T,\mu}(\om)}^2
+c_{0}^2\norm{\divDiv\eps S}_{\L{2}{}(\om)}^2.$$
\item[\bf(vi')]
For all 
$T\in\H{}{\T,\gan}(\symRot,\om)
\cap\mu^{-1}\H{}{\T,\gat}(\Div,\om)
\cap\Harm{}{\T,\gan,\gat,\mu}(\om)^{\bot_{\L{2}{\T,\mu}(\om)}}$
it holds
$$\norm{T}_{\L{2}{\T,\mu}(\om)}^2\leq 
c_{1}^2\norm{\eps^{-1}\symRot T}_{\L{2}{\S,\eps}(\om)}^2
+c_{2}^2\norm{\Div\mu T}_{\L{2}{}(\om)}^2.$$
\item[\bf(vii)]
$\Harm{}{\S,\gat,\gan,\eps}(\om)=\{0\}$
and $\Harm{}{\T,\gan,\gat,\mu}(\om)=\{0\}$, 
if $(\om,\gat)$ is extendable.
\end{itemize}
\end{theo}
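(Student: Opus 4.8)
The plan is to derive every item as a formal consequence of the abstract Hilbert-complex ``FA-ToolBox'' of \cite[Section 2]{PS2021b}, in particular \cite[Lemma 2.9]{PS2021b}, applied to the long primal and dual biharmonic complex \eqref{bih:bihcomplex5}. The single analytic input is the compactness already in hand: the two inner embeddings $D(\A_1)\cap D(\A_0^{*})\incl\L{2}{\S,\eps}(\om)$ and $D(\A_2)\cap D(\A_1^{*})\incl\L{2}{\T,\mu}(\om)$ are compact by Theorem~\ref{bih:theo:cptembzeroorder}, while the two outer embeddings $D(\A_0)=\H{2}{\gat}(\om)\incl\L{2}{}(\om)$ and $D(\A_2^{*})=\H{1}{\gan}(\om)\incl\L{2}{}(\om)$ are compact by Rellich's selection theorem, cf.\ Remark~\ref{bih:rem:cptembzeroorder}. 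Thus \eqref{bih:bihcomplex5} is a compact Hilbert complex (Theorem~\ref{bih:theo:cptembzeroorder2}), and \cite[Lemma 2.9]{PS2021b} applies verbatim.

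First I would recall the abstract equivalences: for a closed densely defined operator $\A_n$ in a Hilbert complex, closedness of $R(\A_n)$, closedness of $R(\A_n^{*})$, boundedness of the reduced inverse $(\A_n)_{\bot}^{-1}$, and boundedness of $(\A_n^{*})_{\bot}^{-1}$ are all equivalent, with $\norm{(\A_n)_{\bot}^{-1}}=\norm{(\A_n^{*})_{\bot}^{-1}}$; moreover a Friedrichs/Poincar\'e estimate $\norm{x}\le c\,\norm{\A_n x}$ for $x\in D(\A_n)\cap N(\A_n)^{\bot}$ is exactly the boundedness of $(\A_n)_{\bot}^{-1}$ with optimal $c=\norm{(\A_n)_{\bot}^{-1}}$. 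Since compact embeddings force all ranges to be closed and all reduced inverses to be compact, items (i), (i'), (ii), (ii') follow at once, and so do the six Poincar\'e-type inequalities of (v) with the three optimal constants $c_0=\norm{(\SGradgradgat)_{\bot}^{-1}}$, $c_1=\norm{(\mu^{-1}\TRotSgat)_{\bot}^{-1}}$, $c_2=\norm{(\DivTgat\mu)_{\bot}^{-1}}$ (recalling $N(\SGradgradgat)=\Pone_{\gat}$ and $N(\TGradgan)=\RT_{\gan}$). The cohomology groups are by definition $\Harm{}{\S,\gat,\gan,\eps}(\om)=N(\A_1)\cap N(\A_0^{*})$ and $\Harm{}{\T,\gan,\gat,\mu}(\om)=N(\A_2)\cap N(\A_1^{*})$, which are closed subspaces of the compactly embedded intersection domains and hence finite-dimensional; their dimensions do not depend on $\eps,\mu$ because multiplication by an admissible weight is a topological isomorphism carrying the relevant kernels onto one another (as already noted in Theorem~\ref{bih:theo:cptembzeroorder}), which settles (iii). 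The decompositions in (iv) are the orthogonal splittings $\H{}{n}=R(\A_{n-1})\oplus N(\A_{n-1}^{*})=N(\A_n)\oplus R(\A_n^{*})$ together with the refinement $N(\A_n)=R(\A_{n-1})\oplus\big(N(\A_n)\cap N(\A_{n-1}^{*})\big)$, all valid since the ranges are closed.

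For the combined estimates (vi), (vi') I would argue as in \cite{PS2021b}. Take $S$ in the intersection domain with $S\perp\Harm{}{\S,\gat,\gan,\eps}(\om)$ in $\L{2}{\S,\eps}(\om)$ and use the threefold decomposition from (iv) to write $S=S_{R}+S_{H}+S_{N}$ with $S_{R}\in R(\SGradgradgat)$, $S_{H}\in\Harm{}{\S,\gat,\gan,\eps}(\om)$, $S_{N}\in R(\eps^{-1}\SRotTgan)$; the orthogonality hypothesis gives $S_{H}=0$. By the complex properties, $S_{N}\in N(\divDivSgan\eps)$, so $S_{R}=S-S_{N}\in\eps^{-1}\H{}{\S,\gan}(\divDiv,\om)$ with $\divDiv\eps S_{R}=\divDiv\eps S$, and $S_{R}\in N(\TRotSgat)$, so $S_{N}=S-S_{R}\in\H{}{\S,\gat}(\Rot,\om)$ with $\Rot S_{N}=\Rot S$. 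Applying the second estimate of (v) to $S_{R}$ and the third to $S_{N}$ and invoking the Pythagorean identity $\norm{S}_{\L{2}{\S,\eps}(\om)}^{2}=\norm{S_{R}}_{\L{2}{\S,\eps}(\om)}^{2}+\norm{S_{N}}_{\L{2}{\S,\eps}(\om)}^{2}$ yields (vi); (vi') follows by the dual argument on the second complex. Finally, (vii) is a direct consequence of Theorem~\ref{bih:theo:closedhilcom}: for an extendable pair the biharmonic domain complexes are exact, so the cohomology groups vanish for $\eps=\mu=\id$, and by the $\eps,\mu$-independence of their dimensions from (iii) they vanish for all admissible $\eps,\mu$.

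The main obstacle does not live in this theorem: it sits upstream, in Theorem~\ref{bih:theo:cptembzeroorder}/Theorem~\ref{bih:theo:cptembzeroorder2}, whose proofs are where the regular decompositions of Lemma~\ref{bih:lem:highorderregdecobih} together with the de~Rham and Rellich compactness are genuinely used. Granting that compactness, the present statement is essentially a transcription of the abstract toolbox, and the only point requiring a little care is the bookkeeping for (vi), (vi') sketched above --- checking that after the Helmholtz split each summand retains the correct partial boundary condition and the harmonic component is removed by the orthogonality hypothesis.
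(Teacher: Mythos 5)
Your proposal is correct and matches the paper's approach: the paper proves this theorem simply by citing \cite[Lemma 2.9]{PS2021b}, applied to the long complex \eqref{bih:bihcomplex5} once the compact embeddings of Theorem~\ref{bih:theo:cptembzeroorder} and Remark~\ref{bih:rem:cptembzeroorder} are in hand, which is exactly the route you take. Your more explicit unpacking of items (iv)--(vii), and in particular the bookkeeping for (vi)/(vi') via the threefold Helmholtz split, is a faithful elaboration of what the cited abstract lemma provides.
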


\subsubsection{Higher Order Mini FA-ToolBox}
\label{bih:sec:hofatb}%

For simplicity, let $\eps=\mu=\id$.
From Section \ref{bih:sec:bihcomplexes} we recall the 
densely defined and closed higher Sobolev order operators
\begin{align}
\begin{aligned}
\label{bih:defhighorderop}
\SGradgradgat^{k}:\H{k+2}{\gat}(\om)\subset\H{k}{\gat}(\om)&\to\H{k}{\S,\gat}(\om),\\
\SGradgradgat^{k,k-1}:\H{k+1}{\gat}(\om)\subset\H{k}{\gat}(\om)&\to\H{k-1}{\S,\gat}(\om),\qquad
k\geq1,\\
\TRotSgat^{k}:\H{k}{\S,\gat}(\Rot,\om)\subset\H{k}{\S,\gat}(\om)&\to\H{k}{\T,\gat}(\om),\\
\DivTgat^{k}:\H{k}{\T,\gat}(\Div,\om)\subset\H{k}{\T,\gat}(\om)&\to\H{k}{\gat}(\om),\\
\TGradgan^{k}:\H{k+1}{\gan}(\om)\subset\H{k}{\gan}(\om)&\to\H{k}{\T,\gan}(\om),\\
\SRotTgan^{k}:\H{k}{\T,\gan}(\symRot,\om)\subset\H{k}{\T,\gan}(\om)&\to\H{k}{\S,\gan}(\om),\\
\divDivSgan^{k}:\H{k}{\S,\gan}(\divDiv,\om)\subset\H{k}{\S,\gan}(\om)&\to\H{k}{\gan}(\om),\\
\divDivSgan^{k,k-1}:\H{k,k-1}{\S,\gan}(\divDiv,\om)\subset\H{k}{\S,\gan}(\om)&\to\H{k-1}{\gan}(\om),\qquad
k\geq1,
\end{aligned}
\end{align}
building the long biharmonic Hilbert complexes
\begin{equation}
\label{bih:bihcomplex6a}
%\footnotesize
\scriptsize
%\tiny
\def\arrowlength{16ex}
\def\arrowdistance{0}
\begin{tikzcd}[column sep=\arrowlength]
\Pone_{\gat}
\arrow[r, rightarrow, shift left=\arrowdistance, "\iota_{\Pone_{\gat}}"] 
& 
[-4em]
\H{k}{\gat}(\om)
\ar[r, rightarrow, shift left=\arrowdistance, "\SGradgradgat^{k}"] 
&
[-0em]
\H{k}{\S,\gat}(\om)
\ar[r, rightarrow, shift left=\arrowdistance, "\TRotSgat^{k}"] 
& 
[-1em]
\H{k}{\T,\gat}(\om)
\arrow[r, rightarrow, shift left=\arrowdistance, "\DivTgat^{k}"] 
& 
[-3em]
\H{k}{\gat}(\om)
\arrow[r, rightarrow, shift left=\arrowdistance, "\pi_{\RT_{\gan}}"] 
&
[-3em]
\RT_{\gan},\quad k\geq0,
\end{tikzcd}
\end{equation}
\vspace*{-6mm}
\begin{equation}
\label{bih:bihcomplex6b}
%\footnotesize
\scriptsize
%\tiny
\def\arrowlength{16ex}
\def\arrowdistance{0}
\begin{tikzcd}[column sep=\arrowlength]
\Pone_{\gat}
\arrow[r, leftarrow, shift right=\arrowdistance, "\pi_{\Pone_{\gat}}"]
& 
[-4em]
\H{k}{\gan}(\om)
\ar[r, leftarrow, shift right=\arrowdistance, "\divDivSgan^{k}"]
&
[-1em]
\H{k}{\S,\gan}(\om)
\ar[r, leftarrow, shift right=\arrowdistance, "\SRotTgan^{k}"]
& 
[-2em]
\H{k}{\T,\gan}(\om)
\arrow[r, leftarrow, shift right=\arrowdistance, "-\TGradgan^{k}"]
& 
[-2em]
\H{k}{\gan}(\om)
\arrow[r, leftarrow, shift right=\arrowdistance, "\iota_{\RT_{\gan}}"]
&
[-3em]
\RT_{\gan},\quad k\geq0,
\end{tikzcd}
\end{equation}
\vspace*{-6mm}
\begin{equation}
\label{bih:bihcomplex7a}
%\footnotesize
\scriptsize
%\tiny
\def\arrowlength{16ex}
\def\arrowdistance{0}
\begin{tikzcd}[column sep=\arrowlength]
\Pone_{\gat}
\arrow[r, rightarrow, shift left=\arrowdistance, "\iota_{\Pone_{\gat}}"] 
& 
[-4em]
\H{k}{\gat}(\om)
\ar[r, rightarrow, shift left=\arrowdistance, "\SGradgradgat^{k,k-1}"] 
&
[-0em]
\H{k-1}{\S,\gat}(\om)
\ar[r, rightarrow, shift left=\arrowdistance, "\TRotSgat^{k-1}"] 
& 
[-1em]
\H{k-1}{\T,\gat}(\om)
\arrow[r, rightarrow, shift left=\arrowdistance, "\DivTgat^{k-1}"] 
& 
[-3em]
\H{k-1}{\gat}(\om)
\arrow[r, rightarrow, shift left=\arrowdistance, "\pi_{\RT_{\gan}}"] 
&
[-3em]
\RT_{\gan},\quad k\geq1,
\end{tikzcd}
\end{equation}
\vspace*{-6mm}
\begin{equation}
\label{bih:bihcomplex7b}
%\footnotesize
\scriptsize
%\tiny
\def\arrowlength{16ex}
\def\arrowdistance{0}
\begin{tikzcd}[column sep=\arrowlength]
\Pone_{\gat}
\arrow[r, leftarrow, shift right=\arrowdistance, "\pi_{\Pone_{\gat}}"]
& 
[-4em]
\H{k-1}{\gan}(\om)
\ar[r, leftarrow, shift right=\arrowdistance, "\divDivSgan^{k,k-1}"]
&
[-1em]
\H{k}{\S,\gan}(\om)
\ar[r, leftarrow, shift right=\arrowdistance, "\SRotTgan^{k}"]
& 
[-2em]
\H{k}{\T,\gan}(\om)
\arrow[r, leftarrow, shift right=\arrowdistance, "-\TGradgan^{k}"]
& 
[-2em]
\H{k}{\gan}(\om)
\arrow[r, leftarrow, shift right=\arrowdistance, "\iota_{\RT_{\gan}}"]
&
[-3em]
\RT_{\gan},\quad k\geq1,
\end{tikzcd}
\end{equation}

We start with regular representations
implied by Lemma \ref{bih:lem:highorderregdecobih} and Corollary \ref{bih:cor:weakstrongbih}.

\begin{theo}[regular representations and closed ranges]
\label{bih:theo:rangeselawithoutbdpot}
Let $k\geq0$. Then the regular potential representations 
\begin{align*}
R(\SGradgradgat^{k+1,k})
=R(\SGradgradgat^{k})
&=\Gradgrad\H{k}{\gat}(\Gradgrad,\om)
=\Gradgrad\H{k+2}{\gat}(\om)\\
&=\Gradgrad\H{k+1,k}{\gat}(\Gradgrad,\om)\\
&=\H{k}{\S,\gat}(\om)\cap R(\SGradgradgat)\\
&=\H{k}{\S,\gat}(\om)\cap\H{}{\S,\gat,0}(\Rot,\om)\cap\Harm{}{\S,\gat,\gan,\eps}(\om)^{\bot_{\L{2}{\S,\eps}(\om)}}\\
&=\H{k}{\S,\gat,0}(\Rot,\om)\cap\Harm{}{\S,\gat,\gan,\eps}(\om)^{\bot_{\L{2}{\S,\eps}(\om)}},\\
R(\TRotSgat^{k})
&=\Rot\H{k}{\S,\gat}(\Rot,\om)
=\Rot\H{k+1}{\S,\gat}(\om)\\
&=\H{k}{\T,\gat}(\om)\cap R(\TRotSgat)\\
&=\H{k}{\T,\gat}(\om)\cap\H{}{\T,\gat,0}(\Div,\om)\cap\Harm{}{\T,\gan,\gat,\mu}(\om)^{\bot_{\L{2}{\T}(\om)}},\\
&=\H{k}{\T,\gat,0}(\Div,\om)\cap\Harm{}{\T,\gan,\gat,\mu}(\om)^{\bot_{\L{2}{\T}(\om)}},\\
R(\DivTgat^{k})
&=\Div\H{k}{\T,\gat}(\Div,\om)
=\Div\H{k+1}{\T,\gat}(\om)\\
&=\H{k}{\gat}(\om)\cap R(\DivTgat)
=\H{k}{\gat}(\om)\cap(\RT_{\gan})^{\bot_{\L{2}{}(\om)}},\\
R(\TGradgat^{k})
&=\devGrad\H{k}{\gat}(\devGrad,\om)
=\devGrad\H{k+1}{\gat}(\om)\\
&=\H{k}{\T,\gat}(\om)\cap R(\TGradgat)\\
&=\H{k}{\T,\gat}(\om)\cap\H{}{\T,\gat,0}(\symRot,\om)\cap\Harm{}{\T,\gat,\gan,\mu}(\om)^{\bot_{\L{2}{\T,\mu}(\om)}}\\
&=\H{k}{\T,\gat,0}(\symRot,\om)\cap\Harm{}{\T,\gat,\gan,\mu}(\om)^{\bot_{\L{2}{\T,\mu}(\om)}},\\
R(\SRotTgat^{k})
&=\symRot\H{k}{\T,\gat}(\symRot,\om)
=\symRot\H{k+1}{\T,\gat}(\om)\\
&=\H{k}{\S,\gat}(\om)\cap R(\SRotTgat)\\
&=\H{k}{\S,\gat}(\om)\cap\H{}{\S,\gat,0}(\divDiv,\om)\cap\Harm{}{\S,\gan,\gat,\eps}(\om)^{\bot_{\L{2}{\S}(\om)}},\\
&=\H{k}{\S,\gat,0}(\divDiv,\om)\cap\Harm{}{\S,\gan,\gat,\eps}(\om)^{\bot_{\L{2}{\S}(\om)}},\\
R(\divDivSgat^{k+1,k})
=R(\divDivSgat^{k})
&=\divDiv\H{k}{\S,\gat}(\divDiv,\om)
=\divDiv\H{k+2}{\S,\gat}(\om)\\
&=\divDiv\H{k+1,k}{\S,\gat}(\divDiv,\om)\\
&=\H{k}{\gat}(\om)\cap R(\divDivSgat)
=\H{k}{\gat}(\om)\cap(\Pone_{\gan})^{\bot_{\L{2}{}(\om)}}
\end{align*}
hold. In particular, the latter spaces are closed subspaces of 
$\H{k}{\S}(\om)$, $\H{k}{\T}(\om)$, and $\H{k}{}(\om)$, respectively,
and all ranges of the higher Sobolev order operators 
in \eqref{bih:defhighorderop} are closed.
Moroever, the long biharmonic Hilbert complexes \eqref{bih:bihcomplex6a}--\eqref{bih:bihcomplex7b}
are closed.
\end{theo}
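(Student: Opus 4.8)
The plan is to establish, for each of the six operators $\SGradgradgat^{k}$, $\TRotSgat^{k}$, $\DivTgat^{k}$, $\TGradgat^{k}$, $\SRotTgat^{k}$, $\divDivSgat^{k}$ (and the variants $\SGradgradgat^{k+1,k}$, $\divDivSgat^{k+1,k}$), the displayed chain of equalities, after which closedness of all ranges and of the complexes \eqref{bih:bihcomplex6a}--\eqref{bih:bihcomplex7b} follows immediately. For a single operator $\A^{k}$ that follows a complex map $\A'$ with $\A\A'=0$, the chain splits into four parts: (a) the regular potential representations $R(\A^{k})=\A\,(\text{smoother fields})$; (b) the trivial inclusion $R(\A^{k})\subset\H{k}{}(\om)\cap R(\A)$ (where $\H{k}{}(\om)$ is read in the appropriate symmetric, deviatoric, or scalar $\H{k}{}$-space); (c) the identification of $\H{k}{}(\om)\cap R(\A)$ with the $\H{k}{}$-kernel intersected with the orthogonal complement of the Dirichlet/Neumann fields; (d) the non-trivial inclusion $\H{k}{}(\om)\cap R(\A)\subset R(\A^{k})$. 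It suffices to treat the first (primal) complex, the second being handled by verbatim arguments since its operators are the Hilbert space adjoints of those of the first.

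For (a), by Corollary~\ref{bih:cor:weakstrongbih} the domains $D(\A^{k})$ carry no weak/strong ambiguity, and by Lemma~\ref{bih:lem:highorderregdecobih} each decomposes as $(\text{smoother fields})\dotplus\A'(\text{potentials})$; since $\A\A'=0$, applying $\A$ annihilates the second summand. This directly yields $R(\TRotSgat^{k})=\Rot\H{k}{\S,\gat}(\Rot,\om)=\Rot\H{k+1}{\S,\gat}(\om)$ and $R(\DivTgat^{k})=\Div\H{k}{\T,\gat}(\Div,\om)=\Div\H{k+1}{\T,\gat}(\om)$, and together with the higher-order Ne\v{c}as inequality \eqref{eq:necasreg} and the identities for the $\H{k,k-1}{}$-spaces from Section~\ref{bih:sec:sobolevmore} also $R(\SGradgradgat^{k})=R(\SGradgradgat^{k+1,k})=\Gradgrad\H{k+2}{\gat}(\om)$ and $R(\divDivSgat^{k})=R(\divDivSgat^{k+1,k})=\divDiv\H{k+2}{\S,\gat}(\om)$. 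Part (b) is immediate from $D(\A^{k})\subset D(\A)$ together with the fact that the relevant differential operator maps $\H{k}{\gat}$-regular tensors of the appropriate symmetry type to $\H{k}{\gat}$-regular ones. Part (c) follows by inserting the zero-order Helmholtz decompositions of Theorem~\ref{bih:theo:miniFATzeroorder}(iv) (which give, e.g., $R(\TRotSgat)=\H{}{\T,\gat,0}(\Div,\om)\cap\Harm{}{\T,\gan,\gat,\mu}(\om)^{\bot}$ and $R(\DivTgat)=(\RT_{\gan})^{\bot_{\L{2}{}(\om)}}$) and using the elementary identities $\H{k}{\T,\gat}(\om)\cap\H{}{\T,\gat,0}(\Div,\om)=\H{k}{\T,\gat,0}(\Div,\om)$ and their analogues.

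The crux is (d), which I would settle by a finite Sobolev-order bootstrap driven entirely by the regular decompositions of Lemma~\ref{bih:lem:highorderregdecobih}. Take representatively $w\in\H{k}{\T,\gat,0}(\Div,\om)\cap\Harm{}{\T,\gan,\gat,\mu}(\om)^{\bot}$; by the closed-range and Helmholtz statements of Theorem~\ref{bih:theo:miniFATzeroorder} there is $p\in\H{}{\S,\gat}(\Rot,\om)$ with $\Rot p=w$. Now iterate over $j=0,1,\dots,k$: if $p\in\H{j}{\S,\gat}(\om)$ with $\Rot p=w$, then $w\in\H{k}{\T,\gat}(\om)\subset\H{j}{\T,\gat}(\om)$ forces $p\in\H{j}{\S,\gat}(\Rot,\om)=\bH{j}{\S,\gat}(\Rot,\om)$ (Corollary~\ref{bih:cor:weakstrongbih}), and Lemma~\ref{bih:lem:highorderregdecobih} produces $p=p'+\Gradgrad\phi$ with $p'\in\H{j+1}{\S,\gat}(\om)$ and $\Rot p'=\Rot p=w$ (since $\Rot\Gradgrad=0$). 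After these $k+1$ steps $p$ has been replaced by an element of $\H{k+1}{\S,\gat}(\om)\subset D(\TRotSgat^{k})$ mapping onto $w$, i.e.\ $w\in R(\TRotSgat^{k})$; the process terminates precisely because the fixed regularity $\H{k}{}$ of $w$ is exhausted after $k+1$ refinements. The operators $\DivTgat^{k}$, $\TGradgat^{k}$, $\SRotTgat^{k}$ are handled identically with the $\Div$-, $\devGrad$- resp.\ $\symRot$-decompositions, where for $\TGradgat^{k}$ (and dually $\SGradgradgat^{k}$) Korn's resp.\ Ne\v{c}as' inequality reaches full regularity faster; for the second-order operator $\divDivSgat^{k}$ the first refinement gains two Sobolev orders (via the decomposition of $\bH{0}{\S,\gat}(\divDiv,\om)$) and each later one gains one (via the non-standard decompositions of $\bH{j,j-1}{\S,\gat}(\divDiv,\om)$), which is exactly why those non-standard spaces were set up. The delicate bookkeeping point throughout is to check that at every stage the current potential still lies in the domain of the next regular decomposition, which is guaranteed as long as $j\le k$.

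Finally, closedness comes for free. Once $R(\A^{k})=\H{k}{}(\om)\cap R(\A)$ is established and $R(\A)$ is closed in the relevant $\L{2}{}$-space by Theorem~\ref{bih:theo:miniFATzeroorder}(i),(i'), a graph-norm Cauchy sequence in $R(\A^{k})$ converges in $\H{k}{}(\om)$, hence also in $\L{2}{}(\om)$, so its limit lies in $\H{k}{}(\om)\cap R(\A)=R(\A^{k})$; thus all ranges of the operators in \eqref{bih:defhighorderop} are closed. Together with the inherited complex properties and the finite-dimensionality of $\RT_{\gan}$ and $\Pone_{\gat}$, this shows the long biharmonic Hilbert complexes \eqref{bih:bihcomplex6a}--\eqref{bih:bihcomplex7b} are closed.
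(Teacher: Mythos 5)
Your proposal is essentially the paper's argument. You split the chain of equalities into (a) regular potential representations from Lemma \ref{bih:lem:highorderregdecobih} and Corollary \ref{bih:cor:weakstrongbih}, (b)--(c) the containment $R(\A^{k})\subset\H{k}{}(\om)\cap R(\A)$ together with the zero-order Helmholtz identifications of Theorem \ref{bih:theo:miniFATzeroorder}(iv), and (d) the non-trivial reverse inclusion via a finite Sobolev bootstrap driven by the regular decompositions; that is precisely the structure of the paper's proof, which carries it out in detail for $\TRotSgat^{k}$ and $\divDivSgat^{k}$ and notes that the remaining four cases are analogous. Two small inaccuracies in your presentation, neither fatal: for $\divDivSgat^{k}$ the paper gains two Sobolev orders per bootstrap step by repeatedly applying the standard decomposition $\bH{j}{\S,\gat}(\divDiv,\om)=\H{j+2}{\S,\gat}(\om)+\symRot\H{j+1}{\T,\gat}(\om)$, rather than one order per step via the non-standard $\bH{j,j-1}$-spaces as you suggest --- both schemes terminate, but the non-standard spaces actually serve to establish $R(\divDivSgat^{k+1,k})=R(\divDivSgat^{k})$, not to drive the iteration; and the operators of the second biharmonic complex listed in the theorem ($\TGradgat^{k}$, $\SRotTgat^{k}$, $\divDivSgat^{k}$) are not Hilbert space adjoints of those of the first (those adjoints carry $\gan$-subscripts), they are simply handled by the same type of argument. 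Your closing paragraph makes explicit the closedness argument ($R(\A^{k})=\H{k}{}\cap R(\A)$ with $R(\A)$ closed in $\L{2}{}$) that the paper leaves implicit; this is a useful addition.
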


A proof is given in Appendix \ref{sec:someproof}.
Note that in Theorem \ref{bih:theo:rangeselawithoutbdpot} 
we claim nothing about bounded regular potential operators, 
leaving the question of bounded potentials to the next sections,
cf.~Theorem \ref{bih:theo:regpothigherorder}.

The reduced operators corresponding to \eqref{bih:defhighorderop} are
\begin{align*}
(\SGradgradgat^{k})_{\bot}:D\big((\SGradgradgat^{k})_{\bot}\big)\subset
(\Pone_{\gat})^{\bot_{\H{k}{\gat}(\om)}}&\to R(\SGradgradgat^{k}),\\
(\SGradgradgat^{k,k-1})_{\bot}:D\big((\SGradgradgat^{k,k-1})_{\bot}\big)\subset
(\Pone_{\gat})^{\bot_{\H{k}{\gat}(\om)}}&\to R(\SGradgradgat^{k-1}),\quad
k\geq1,\\
(\TRotSgat^{k})_{\bot}:D\big((\TRotSgat^{k})_{\bot}\big)\subset 
N(\TRotSgat^{k})^{\bot_{\H{k}{\S,\gat}(\om)}}&\to R(\TRotSgat^{k}),\\
(\DivTgat^{k})_{\bot}:D\big((\DivTgat^{k})_{\bot}\big)\subset
N(\DivTgat^{k})^{\bot_{\H{k}{\T,\gat}(\om)}}&\to R(\DivTgat^{k}),\\
(\TGradgan^{k})_{\bot}:D\big((\TGradgan^{k})_{\bot}\big)\subset
(\RT_{\gan})^{\bot_{\H{k}{\gan}(\om)}}&\to R(\TGradgan^{k}),\\
(\SRotTgan^{k})_{\bot}:D\big((\SRotTgan^{k})_{\bot}\big)\subset 
N(\SRotTgan^{k})^{\bot_{\H{k}{\T,\gan}(\om)}}&\to R(\SRotTgan^{k}),\\
(\divDivSgan^{k})_{\bot}:D\big((\divDivSgan^{k})_{\bot}\big)\subset
N(\divDivSgan^{k})^{\bot_{\H{k}{\S,\gan}(\om)}}&\to R(\divDivSgan^{k}),\\
(\divDivSgan^{k,k-1})_{\bot}:D\big((\divDivSgan^{k,k-1})_{\bot}\big)\subset
N(\divDivSgan^{k})^{\bot_{\H{k}{\S,\gan}(\om)}}&\to R(\divDivSgan^{k-1}),\quad
k\geq1.
\end{align*}

\cite[Lemma 2.1]{PS2021b} and Theorem \ref{bih:theo:rangeselawithoutbdpot} yield:

\begin{theo}[closed ranges and bounded inverse operators]
\label{bih:theo:clrangebdinvbih}
Let $k\geq0$. Then:
\begin{itemize}
\item[\bf(i)]
$R\big((\SGradgradgat^{k})_{\bot}\big)
=R(\SGradgradgat^{k})
=R(\SGradgradgat^{k+1,k})
=R\big((\SGradgradgat^{k+1,k})_{\bot}\big)$ 
are closed and, equivalently, the inverse operators
\begin{align*}
(\SGradgradgat^{k})_{\bot}^{-1}:R(\SGradgradgat^{k})&\to D\big((\SGradgradgat^{k})_{\bot}\big)\\
\text{resp.}\qquad
(\SGradgradgat^{k})_{\bot}^{-1}:R(\SGradgradgat^{k})&\to D(\SGradgradgat^{k}),\\
(\SGradgradgat^{k+1,k})_{\bot}^{-1}:R(\SGradgradgat^{k})&\to D\big((\SGradgradgat^{k+1,k})_{\bot}\big)\\
\text{resp.}\qquad
(\SGradgradgat^{k+1,k})_{\bot}^{-1}:R(\SGradgradgat^{k})&\to D(\SGradgradgat^{k+1,k})
\end{align*}
are bounded. Equivalently, there is $c>0$ such that for all
$u\in D\big((\SGradgradgat^{k})_{\bot}\big)$ resp. $u\in D\big((\SGradgradgat^{k+1,k})_{\bot}\big)$
$$\norm{u}_{\H{k}{}(\om)}\leq c\norm{\Gradgrad u}_{\H{k}{\S}(\om)}
\quad\text{resp.}\quad
\norm{u}_{\H{k+1}{}(\om)}\leq c\norm{\Gradgrad u}_{\H{k}{\S}(\om)}.$$
\item[\bf(ii)]
$R(\TRotSgat^{k})=R\big((\TRotSgat^{k})_{\bot}\big)$ are closed and, equivalently, 
the inverse operator 
\begin{align*}
(\TRotSgat^{k})_{\bot}^{-1}:R(\TRotSgat^{k})&\to D\big((\TRotSgat^{k})_{\bot}\big)\\
\text{resp.}\qquad
(\TRotSgat^{k})_{\bot}^{-1}:R(\TRotSgat^{k})&\to D(\TRotSgat^{k})
\end{align*}
is bounded. Equivalently, there is $c>0$ such that
for all $S\in D\big((\TRotSgat^{k})_{\bot}\big)$
$$\norm{S}_{\H{k}{\S}(\om)}\leq c\norm{\Rot S}_{\H{k}{\T}(\om)}.$$
\item[\bf(iii)]
$R(\DivTgat^{k})=R\big((\DivTgat^{k})_{\bot}\big)$ are closed and, equivalently, 
the inverse operator 
\begin{align*}
(\DivTgat^{k})_{\bot}^{-1}:R(\DivTgat^{k})&\to D\big((\DivTgat^{k})_{\bot}\big)\\
\text{resp.}\qquad
(\DivTgat^{k})_{\bot}^{-1}:R(\DivTgat^{k})&\to D(\DivTgat^{k})
\end{align*}
is bounded. Equivalently, there is $c>0$ such that
for all $T\in D\big((\DivTgat^{k})_{\bot}\big)$
$$\norm{T}_{\H{k}{\T}(\om)}\leq c\norm{\Div T}_{\H{k}{}(\om)}.$$
\item[\bf(iv)]
$R(\TGradgat^{k})=R\big((\TGradgat^{k})_{\bot}\big)$ are closed and, equivalently, 
the inverse operator 
\begin{align*}
(\TGradgat^{k})_{\bot}^{-1}:R(\TGradgat^{k})&\to D\big((\TGradgat^{k})_{\bot}\big)\\
\text{resp.}\qquad
(\TGradgat^{k})_{\bot}^{-1}:R(\TGradgat^{k})&\to D(\TGradgat^{k})
\end{align*}
is bounded. Equivalently, there is $c>0$ such that
for all $v\in D\big((\TGradgat^{k})_{\bot}\big)$
$$\norm{v}_{\H{k}{}(\om)}\leq c\norm{\devGrad v}_{\H{k}{\T}(\om)}.$$
\item[\bf(v)]
$R(\SRotTgat^{k})=R\big((\SRotTgat^{k})_{\bot}\big)$ are closed and, equivalently, 
the inverse operator 
\begin{align*}
(\SRotTgat^{k})_{\bot}^{-1}:R(\SRotTgat^{k})&\to D\big((\SRotTgat^{k})_{\bot}\big)\\
\text{resp.}\qquad
(\SRotTgat^{k})_{\bot}^{-1}:R(\SRotTgat^{k})&\to D(\SRotTgat^{k})
\end{align*}
is bounded. Equivalently, there is $c>0$ such that
for all $T\in D\big((\SRotTgat^{k})_{\bot}\big)$
$$\norm{T}_{\H{k}{\T}(\om)}\leq c\norm{\symRot T}_{\H{k}{\S}(\om)}.$$
\item[\bf(vi)]
$R\big((\divDivSgat^{k})_{\bot}\big)
=R(\divDivSgat^{k})
=R(\divDivSgat^{k+1,k})
=R\big((\divDivSgat^{k+1,k})_{\bot}\big)$ 
are closed and, equivalently, the inverse operators
\begin{align*}
(\divDivSgat^{k})_{\bot}^{-1}:R(\divDivSgat^{k})&\to D\big((\divDivSgat^{k})_{\bot}\big)\\
\text{resp.}\qquad
(\divDivSgat^{k})_{\bot}^{-1}:R(\divDivSgat^{k})&\to D(\divDivSgat^{k}),\\
(\divDivSgat^{k+1,k})_{\bot}^{-1}:R(\divDivSgat^{k})&\to D\big((\divDivSgat^{k+1,k})_{\bot}\big)\\
\text{resp.}\qquad
(\divDivSgat^{k+1,k})_{\bot}^{-1}:R(\divDivSgat^{k})&\to D(\divDivSgat^{k+1,k})
\end{align*}
are bounded. Equivalently, there is $c>0$ such that for all
$S\in D\big((\divDivSgat^{k})_{\bot}\big)$ resp. $S\in D\big((\divDivSgat^{k+1,k})_{\bot}\big)$
$$\norm{S}_{\H{k}{\S}(\om)}\leq c\norm{\divDiv S}_{\H{k}{}(\om)}
\quad\text{resp.}\quad
\norm{S}_{\H{k+1}{\S}(\om)}\leq c\norm{\divDiv S}_{\H{k}{}(\om)}.$$
\end{itemize}
\end{theo}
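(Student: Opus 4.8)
The plan is to obtain the statement as a purely functional-analytic consequence of the closed range result already contained in Theorem~\ref{bih:theo:rangeselawithoutbdpot}, combined with the abstract toolbox lemma \cite[Lemma 2.1]{PS2021b}. Recall that \cite[Lemma 2.1]{PS2021b} asserts, for a densely defined closed linear operator $A:D(A)\subset X\to Y$ between Hilbert spaces with reduced operator $A_{\bot}:=A|_{D(A)\cap N(A)^{\bot_{X}}}:D(A_{\bot})\to R(A)$, the equivalence of: (a) $R(A)$ is closed in $Y$; (b) $R(A^{*})$ is closed in $X$; (c) $R(A_{\bot})=R(A)$ and the inverse $A_{\bot}^{-1}:R(A)\to D(A_{\bot})$ is bounded, where $D(A_{\bot})$ carries the graph norm; (d) there is $c>0$ with $\norm{x}_{X}\leq c\norm{Ax}_{Y}$ for all $x\in D(A_{\bot})$. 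Only the equivalences (a)$\Leftrightarrow$(c)$\Leftrightarrow$(d) will be used here.

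First I would note that Theorem~\ref{bih:theo:rangeselawithoutbdpot} already states that all ranges of the higher Sobolev order operators in \eqref{bih:defhighorderop} are closed subspaces of $\H{k}{\S}(\om)$, $\H{k}{\T}(\om)$, respectively $\H{k}{}(\om)$, and in addition provides the identities $R(\SGradgradgat^{k})=R(\SGradgradgat^{k+1,k})$ and $R(\divDivSgat^{k})=R(\divDivSgat^{k+1,k})$. Applying \cite[Lemma 2.1]{PS2021b} separately to each of the operators $\SGradgradgat^{k}$, $\SGradgradgat^{k+1,k}$, $\TRotSgat^{k}$, $\DivTgat^{k}$, $\TGradgat^{k}$, $\SRotTgat^{k}$, $\divDivSgat^{k}$, $\divDivSgat^{k+1,k}$ then immediately yields, for each of them, that the reduced range equals the full range and that $A_{\bot}^{-1}$ is bounded onto $D(A_{\bot})$; since $D(A_{\bot})\incl D(A)$ continuously for the graph norm, $A_{\bot}^{-1}$ is then bounded onto $D(A)$ as well. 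This gives parts (ii)--(v) at once. For (i) and (vi) the two listed inverse operators arise because in each case the two operators share the same range (by Theorem~\ref{bih:theo:rangeselawithoutbdpot}) and the same null space ($N(\SGradgradgat^{k})=N(\SGradgradgat^{k+1,k})=\Pone_{\gat}$, respectively the analogous kernel of $\divDivSgat$), so their reduced operators differ only in the norm imposed on the domain side ($\H{k}{}(\om)$ versus $\H{k+1}{}(\om)$, respectively $\H{k}{\S}(\om)$ versus $\H{k+1}{\S}(\om)$). The displayed Friedrichs/Poincar\'e type estimates are then exactly the reformulation (d) of bounded invertibility, using that on $D(A_{\bot})$ the graph norm is controlled by $\norm{A\,\cdot\,}_{Y}$.

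The only points requiring a little care — but no new ideas — are bookkeeping ones: identifying the domains of the ``$k,k-1$''-operators with genuine Sobolev spaces via \eqref{bih:Gradgradkkmoregformula} and Corollary~\ref{bih:cor:weakstrongbih} (this is where the $\H{k+1}{}(\om)$-norm on the left-hand side of the second estimate in (i), respectively the $\H{k+1}{\S}(\om)$-norm in (vi), comes from), and matching the graph norm of each operator in \eqref{bih:defhighorderop} with the norms written in the displayed estimates. Since all the analytic substance — regular potentials, regular decompositions, the coincidence of weak and strong boundary conditions, and the closed range property itself — has already been absorbed into Theorem~\ref{bih:theo:rangeselawithoutbdpot}, there is no genuine obstacle here; the result is essentially a dictionary translation of that theorem through \cite[Lemma 2.1]{PS2021b}.
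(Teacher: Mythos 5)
Your proposal is correct and follows exactly the paper's own route: the theorem is derived by applying the abstract closed-range/bounded-inverse equivalence of \cite[Lemma 2.1]{PS2021b} to the closed ranges established in Theorem~\ref{bih:theo:rangeselawithoutbdpot}. The bookkeeping you flag (graph norms versus Sobolev norms, the shared range and kernel of the $k$ and $k{+}1,k$ variants) is indeed the only thing to check, and you handle it correctly.
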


\begin{lem}[Schwarz' lemma]
\label{bih:lem:scharzlemma}
Let $0\leq|\alpha|\leq k$.
\begin{itemize}
\item[\bf(i)]
If $S\in\H{k}{\S,\gat}(\Rot,\om)$ then
$\p^{\alpha}S\in\H{}{\S,\gat}(\Rot,\om)$ 
and $\Rot\p^{\alpha}S=\p^{\alpha}\Rot S$.
\item[\bf(ii)]
If $T\in\H{k}{\T,\gat}(\Div,\om)$ then
$\p^{\alpha}T\in\H{}{\T,\gat}(\Div,\om)$ and $\Div\p^{\alpha}T=\p^{\alpha}\Div T$.
\item[\bf(iii)]
If $T\in\H{k}{\T,\gat}(\symRot,\om)$ then 
$\p^{\alpha}T\in\H{}{\T,\gat}(\symRot,\om)$ and $\symRot\p^{\alpha}T=\p^{\alpha}\symRot T$.
\item[\bf(iv)]
If $S\in\H{k}{\S,\gat}(\divDiv,\om)$ resp.
$S\in\H{k+1,k}{\S,\gat}(\divDiv,\om)$ then
$\p^{\alpha}S\in\H{}{\S,\gat}(\divDiv,\om)$ resp.
$\p^{\alpha}S\in\H{1,0}{\S,\gat}(\divDiv,\om)$
and $\divDiv\p^{\alpha}S=\p^{\alpha}\divDiv S$.
\end{itemize}
\end{lem}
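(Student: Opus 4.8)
The plan is to isolate two elementary ingredients and then turn the crank four times. \textbf{Ingredient A:} all the operators in sight ($\Rot,\Div,\symRot,\divDiv$ and the dual operators $\Gradgrad,\devGrad$) are homogeneous constant-coefficient differential operators, hence they commute with $\p^{\alpha}$ in the sense of distributions; moreover $\skw(\cdot)=0$ and $\tr(\cdot)=0$ are pointwise linear constraints, hence preserved by $\p^{\alpha}$. \textbf{Ingredient B:} if $f\in\H{k}{\gat}(\om)$, $g\in\C{\infty}{\gan}(\om)$ and $|\alpha|\leq k$, then $\scp{\p^{\alpha}f}{g}_{\L{2}{}(\om)}=(-1)^{|\alpha|}\scp{f}{\p^{\alpha}g}_{\L{2}{}(\om)}$; this follows by approximating $f$ by fields in $\C{\infty}{\gat}(\om)$ in $\H{k}{}(\om)$ and integrating by parts $|\alpha|$ times, all boundary contributions over $\ga\subset\ol{\gat}\cup\ol{\gan}$ vanishing because near $\ol{\gat}$ the approximants together with all their derivatives vanish, while near $\ol{\gan}$ the field $g$ together with all its derivatives vanishes (the complementary support structure of the test spaces, cf.\ the conventions in \cite{PS2021b}).

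For (i) I would first record the regularity and the commutation formula: from $S\in\H{k}{\S,\gat}(\Rot,\om)$ the definition gives $S\in\H{k}{\gat}(\om)$ and $\Rot S\in\H{k}{\gat}(\om)$, so for $|\alpha|\leq k$ Ingredient A yields $\p^{\alpha}S\in\L{2}{\S}(\om)$ and $\Rot\p^{\alpha}S=\p^{\alpha}\Rot S\in\L{2}{}(\om)$; hence $\p^{\alpha}S\in\H{}{\S}(\Rot,\om)$ with the asserted identity. It then remains to check the boundary condition. By Corollary~\ref{bih:cor:weakstrongbih} weak and strong boundary conditions coincide, so it suffices to verify the weak identity $\scp{\Rot\p^{\alpha}S}{\Psi}=\scp{\p^{\alpha}S}{\symRot\Psi}$ for all $\Psi\in\C{\infty}{\T,\gan}(\om)$, and we may use the corresponding identity for $S$ itself. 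Since $\p^{\alpha}\Psi\in\C{\infty}{\T,\gan}(\om)$ and $\symRot\p^{\alpha}=\p^{\alpha}\symRot$, applying Ingredient A to rewrite $\Rot\p^{\alpha}S=\p^{\alpha}\Rot S$, then Ingredient B with $(f,g)=(\Rot S,\Psi)$, then the weak boundary condition of $S$ tested against $\p^{\alpha}\Psi$, then Ingredient B with $(f,g)=(S,\symRot\Psi)$, moves the $\alpha$-derivatives onto the test field and back; the two signs $(-1)^{|\alpha|}$ cancel and the desired identity drops out. Thus $\p^{\alpha}S\in\bH{}{\S,\gat}(\Rot,\om)=\H{}{\S,\gat}(\Rot,\om)$.

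Parts (ii), (iii) are proved identically, replacing the dual pair $(\Rot,\symRot)$ and the test space $\C{\infty}{\T,\gan}(\om)$ by $(\Div,-\devGrad)$ and $\C{\infty}{\gan}(\om)$, and by $(\symRot,\Rot)$ and $\C{\infty}{\S,\gan}(\om)$, respectively; in each case Corollary~\ref{bih:cor:weakstrongbih} provides the passage to weak boundary conditions. Part (iv) runs the same way with the second-order pair $(\divDiv,\Gradgrad)$ and test space $\C{\infty}{\gan}(\om)$: for $S\in\H{k}{\S,\gat}(\divDiv,\om)$ one has $S\in\H{k}{\gat}(\om)$, $\divDiv S\in\H{k}{\gat}(\om)$, hence $\p^{\alpha}S\in\H{}{\S,\gat}(\divDiv,\om)$ with $\divDiv\p^{\alpha}S=\p^{\alpha}\divDiv S$; and for $S\in\H{k+1,k}{\S,\gat}(\divDiv,\om)$ one only has $\divDiv S\in\H{k}{\gat}(\om)$ but $S\in\H{k+1}{\gat}(\om)$, so for $|\alpha|\leq k$ one lands in $\H{1,0}{\S,\gat}(\divDiv,\om)$.

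I do not expect a real obstacle: this is a bookkeeping lemma. The only point that needs care is Ingredient B (vanishing of the boundary integrals), which relies on the support conventions for $\C{\infty}{\Sigma}(\om)$ on a strong Lipschitz pair from \cite{PS2021b}, and -- essentially -- on the already established equality of weak and strong boundary conditions (Corollary~\ref{bih:cor:weakstrongbih}), without which one has no usable pointwise handle on ``strong boundary conditions on $\gat$''. In (iv) the derivative bookkeeping for the second-order operator $\Gradgrad$ is slightly heavier but nothing conceptually new happens.
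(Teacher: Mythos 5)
The paper states Lemma~\ref{bih:lem:scharzlemma} without proof, so there is nothing to compare against directly; but your argument is a correct and self-contained proof of it. The two ingredients work as claimed: for Ingredient~B, with $f_n\in\C{\infty}{\gat}(\om)$ approximating $f$ in $\H{k}{}(\om)$ and $g\in\C{\infty}{\gan}(\om)$, the product $f_n g$ (and every product of their partials) vanishes near the whole of $\ga=\ol{\gat}\cup\ol{\gan}$, so it has compact support in $\om$ and the $(-1)^{|\alpha|}$ integration-by-parts identity is just the distributional derivative formula, with no boundary contribution to worry about; passing to the limit gives the claim. The chain $\scp{\Rot\p^{\alpha}S}{\Psi}=\scp{\p^{\alpha}\Rot S}{\Psi}=(-1)^{|\alpha|}\scp{\Rot S}{\p^{\alpha}\Psi}=(-1)^{|\alpha|}\scp{S}{\p^{\alpha}\symRot\Psi}=\scp{\p^{\alpha}S}{\symRot\Psi}$ then establishes the weak boundary condition for $\p^{\alpha}S$, and Corollary~\ref{bih:cor:weakstrongbih} promotes it to the strong one; note that the weak identity for $S$ itself already follows from the trivial inclusion ``strong $\subset$ weak'', not from Corollary~\ref{bih:cor:weakstrongbih}, so your closing remark slightly overstates what the corollary is needed for. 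The same pattern works verbatim in (ii)--(iv), and for (iv) the regularity $S\in\H{k+1}{\gat}(\om)$ gives $\p^{\alpha}S\in\H{1}{\gat}(\om)$ as you need.

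An equally natural proof, arguably closer to the paper's machinery, runs through the regular decomposition of Lemma~\ref{bih:lem:highorderregdecobih}: write, e.g., $S=S_1+\Gradgrad u$ with $S_1\in\H{k+1}{\S,\gat}(\om)$ and $u\in\H{k+2}{\gat}(\om)$; both summands are closures of $\C{\infty}{\cdot,\gat}(\om)$ in the respective $\H{k+1}{}$, $\H{k+2}{}$ topologies, and $\p^{\alpha}$ maps these test spaces into themselves, so $\p^{\alpha}S=\p^{\alpha}S_1+\Gradgrad\p^{\alpha}u\in\H{1}{\S,\gat}(\om)+\Gradgrad\H{2}{\gat}(\om)\subset\H{}{\S,\gat}(\Rot,\om)$ directly. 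That route avoids the weak/strong detour, trading it for the regular decomposition; your route avoids the decomposition, trading it for the (already established) coincidence of weak and strong boundary conditions. Both are legitimate and rest on the same earlier results in the paper, with no circularity in either case.
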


\begin{theo}[compact embedding]
\label{bih:theo:cptembhigherorder}
Let $k\geq0$. Then the embeddings
\begin{align*}
\H{k}{\S,\gat}(\Rot,\om)\cap\H{k}{\S,\gan}(\divDiv,\om)
&\incl\H{k}{\S,\ga}(\om),\\
\H{k}{\T,\gat}(\Div,\om)\cap\H{k}{\T,\gan}(\symRot,\om)
&\incl\H{k}{\T,\ga}(\om)
\end{align*}
are compact.
\end{theo}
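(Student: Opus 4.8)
The plan is to reduce the higher order statements to the zero order compact embedding of Theorem~\ref{bih:theo:cptembzeroorder} (taken with $\eps=\mu=\id$) by differentiating and invoking Schwarz' lemma. I would carry out the argument only for the first embedding, since the second one is entirely analogous, using Lemma~\ref{bih:lem:scharzlemma}~(ii),(iii) in place of~(i),(iv). For $k=0$ there is nothing to prove, as the claim is precisely Theorem~\ref{bih:theo:cptembzeroorder}.

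For $k\geq1$, I would take a bounded sequence $(S_{\ell})_{\ell\in\nat}$ in $\H{k}{\S,\gat}(\Rot,\om)\cap\H{k}{\S,\gan}(\divDiv,\om)$ and aim to extract a subsequence converging in $\H{k}{}(\om)$. By definition $\Rot S_{\ell}\in\H{k}{\gat}(\om)$ and $\divDiv S_{\ell}\in\H{k}{\gan}(\om)$ are bounded. For each multi-index $\alpha$ with $0\leq|\alpha|\leq k$, Lemma~\ref{bih:lem:scharzlemma}~(i),(iv) gives $\p^{\alpha}S_{\ell}\in\H{}{\S,\gat}(\Rot,\om)\cap\H{}{\S,\gan}(\divDiv,\om)$ together with $\Rot\p^{\alpha}S_{\ell}=\p^{\alpha}\Rot S_{\ell}$ and $\divDiv\p^{\alpha}S_{\ell}=\p^{\alpha}\divDiv S_{\ell}$, whose right-hand sides are bounded in $\L{2}{}(\om)$ since $|\alpha|\leq k$. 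Hence $(\p^{\alpha}S_{\ell})_{\ell}$ is bounded in $\H{}{\S,\gat}(\Rot,\om)\cap\H{}{\S,\gan}(\divDiv,\om)$, so Theorem~\ref{bih:theo:cptembzeroorder} (with $\eps=\id$) yields a subsequence of $(\p^{\alpha}S_{\ell})_{\ell}$ convergent in $\L{2}{\S}(\om)$. Since only finitely many $\alpha$ satisfy $|\alpha|\leq k$, a diagonal argument then produces a single subsequence $(S_{\ell_{m}})_{m}$ with $(\p^{\alpha}S_{\ell_{m}})_{m}$ convergent in $\L{2}{}(\om)$ for all $|\alpha|\leq k$, i.e.\ $(S_{\ell_{m}})_{m}$ converges in $\H{k}{}(\om)$. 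As $\H{k}{\S,\gat}(\Rot,\om)\cap\H{k}{\S,\gan}(\divDiv,\om)$ carries the $\H{k}{}(\om)$-norm and the embedding maps into $\H{k}{\S,\ga}(\om)$ equipped with the $\H{k}{}(\om)$-topology, this establishes the claimed compactness.

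I do not expect a genuine obstacle here: all the substantial work sits in the zero order compact embedding (Theorem~\ref{bih:theo:cptembzeroorder}), which itself rests on the regular decompositions of Lemma~\ref{bih:lem:highorderregdecobih}. The one point requiring care is that differentiating $S_{\ell}$ must preserve the mixed boundary conditions on $\gat$ and on $\gan$ \emph{separately} — otherwise the mixed zero order embedding would not be applicable to $\p^{\alpha}S_{\ell}$ — and this is exactly what Schwarz' lemma (Lemma~\ref{bih:lem:scharzlemma}) is designed to supply. The higher order statement is thus a soft consequence of the zero order one.
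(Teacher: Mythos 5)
Your argument is correct and is essentially the one the paper gives. The only cosmetic difference is that the paper phrases the reduction to the zero-order embedding as an induction on $k$ — assuming w.l.o.g.\ that $(S_{\ell})$ is already Cauchy in $\H{k-1}{\S,\ga}(\om)$ and then treating only the top-order derivatives $|\alpha|=k$ — whereas you differentiate for all $0\leq|\alpha|\leq k$ at once and close by a diagonal argument over the finitely many multi-indices; both are valid and rest on the same two ingredients, namely Lemma~\ref{bih:lem:scharzlemma} to commute $\p^{\alpha}$ with the mixed boundary conditions and Theorem~\ref{bih:theo:cptembzeroorder} for the zero-order compactness.
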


A proof is given in Appendix \ref{sec:someproof}.

\begin{rem}[compact embedding]
\label{bih:rem:cptembhigherorder}
For $k\geq1$, cf.~\cite[Remark 4.12]{PZ2021a}, there is
another and slightly more general proof of the first compact embedding
using a variant of \cite[Lemma 2.22]{PS2021b},
cf.~\cite[Theorem 3.19, Remark 3.20]{PS2021d},
see Appendix \ref{sec:someproof} for a proof. 
It utilises the decomposition
$\H{k,k-1}{\S,\gan}(\divDiv,\om)
=\H{k+1}{\S,\gan}(\om)
+\symRot\H{k+1}{\T,\gan}(\om)$
from Lemma \ref{bih:lem:highorderregdecobih}
and leads immediately to the next (stronger) result.
\end{rem}

\begin{theo}[compact embedding]
\label{bih:theo:cptembhigherorderkkmo}
Let $k\geq1$. Then the embedding
$$\H{k}{\S,\gat}(\Rot,\om)\cap\H{k,k-1}{\S,\gan}(\divDiv,\om)
\incl\H{k}{\S,\ga}(\om)$$
is compact.
\end{theo}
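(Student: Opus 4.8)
The plan is to reduce the compactness of the embedding
$\H{k}{\S,\gat}(\Rot,\om)\cap\H{k,k-1}{\S,\gan}(\divDiv,\om)\incl\H{k}{\S,\ga}(\om)$
to the already established zero order compact embedding from Theorem \ref{bih:theo:cptembzeroorder}, using the non-standard regular decomposition of the $\divDiv$-part supplied by Lemma \ref{bih:lem:highorderregdecobih} together with a Schwarz-type permutation of derivatives and the general functional-analytic criterion behind \cite[Lemma 2.22]{PS2021b} as indicated in Remark \ref{bih:rem:cptembhigherorderkkmo}. Concretely, let $(S_n)$ be a bounded sequence in $X:=\H{k}{\S,\gat}(\Rot,\om)\cap\H{k,k-1}{\S,\gan}(\divDiv,\om)$; the goal is to extract a subsequence converging in $\H{k}{\S}(\om)$.

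First I would invoke the decomposition from Lemma \ref{bih:lem:highorderregdecobih},
$\H{k,k-1}{\S,\gan}(\divDiv,\om)=\H{k+1}{\S,\gan}(\om)+\symRot\H{k+1}{\T,\gan}(\om)$,
with bounded linear decomposition operators $\PotQ_{\divDivS,\gan}^{k,k-1,1}$ and $\PotQ_{\divDivS,\gan}^{k,k-1,0}$. Writing $S_n=\PotQ_{\divDivS,\gan}^{k,k-1,1}S_n+\symRot\,\PotQ_{\divDivS,\gan}^{k,k-1,0}S_n=:R_n+\symRot\,T_n$, the sequence $(R_n)$ is bounded in $\H{k+1}{\S,\gan}(\om)$, hence has a subsequence converging in $\H{k}{\S}(\om)$ by Rellich's theorem. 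It remains to treat $(\symRot\,T_n)$ with $(T_n)$ bounded in $\H{k+1}{\T,\gan}(\om)$. Since $S_n\in\H{k}{\S,\gat}(\Rot,\om)$ and $R_n\in\H{k+1}{\S,\gan}(\om)\subset\H{k}{\S}(\Rot,\om)$ (note $\Rot\symRot=0$ gives $\Rot S_n=\Rot R_n$, so $\symRot T_n=S_n-R_n$ actually lies in $\H{k}{\S,\gat}(\Rot,\om)$ with $\Rot(\symRot T_n)=\Rot S_n-\Rot R_n$ bounded in $\H{k}{}(\om)$). Then for each multi-index $\alpha$ with $|\alpha|\leq k$, Schwarz' Lemma \ref{bih:lem:scharzlemma} yields $\p^\alpha(\symRot T_n)=\symRot\,\p^\alpha T_n\in\H{}{\S}(\Rot,\om)$ with $\Rot\p^\alpha(\symRot T_n)=\p^\alpha\Rot(\symRot T_n)$; moreover $\divDiv\symRot=0$ shows $\symRot T_n$ lies in the kernel $\H{}{\S,\gan,0}(\divDiv,\om)$, so $\p^\alpha(\symRot T_n)$ lands in $\H{}{\S,\gat}(\Rot,\om)\cap\H{}{\S,\gan,0}(\divDiv,\om)$. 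By the zero order compact embedding (Theorem \ref{bih:theo:cptembzeroorder}, with $\eps=\id$), and since $\Rot\p^\alpha(\symRot T_n)=\p^\alpha(\Rot S_n-\Rot R_n)$ is bounded in $\L{2}{}(\om)$ while $\divDiv\p^\alpha(\symRot T_n)=0$, the sequence $(\p^\alpha(\symRot T_n))$ is bounded in $\H{}{\S,\gat}(\Rot,\om)\cap\H{}{\S,\gan}(\divDiv,\om)$; passing to a further subsequence (diagonal over the finitely many $\alpha$) we get convergence of each $(\p^\alpha(\symRot T_n))$ in $\L{2}{\S}(\om)$, hence convergence of $(\symRot T_n)$ in $\H{k}{\S}(\om)$. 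Combining with the convergent subsequence of $(R_n)$ gives a subsequence of $(S_n)$ convergent in $\H{k}{\S}(\om)$, which is the claim.

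The main obstacle is bookkeeping the boundary conditions correctly along this chain: one must check that the decomposition operators respect the mixed boundary parts ($\PotQ_{\divDivS,\gan}^{k,k-1,1}$ mapping into $\H{k+1}{\S,\gan}$ and $\PotQ_{\divDivS,\gan}^{k,k-1,0}$ into $\H{k+1}{\T,\gan}$, so that $\symRot T_n$ carries the $\gan$-condition in $\divDiv$ while inheriting the $\gat$-condition in $\Rot$ from $S_n$), and that applying $\p^\alpha$ to $\symRot T_n$ and then using $\H{k}{\S,\gat}(\Rot,\om)$-regularity genuinely places the derivatives in the zero order spaces with the correct $\gat$/$\gan$ split — this is precisely where Lemma \ref{bih:lem:scharzlemma}(i),(iv) and Corollary \ref{bih:cor:weakstrongbih} (weak $=$ strong) are needed. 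A cleaner alternative, as hinted in Remark \ref{bih:rem:cptembhigherorderkkmo}, is to package the above as a direct application of the abstract compactness criterion \cite[Lemma 2.22]{PS2021b} (in the variant of \cite[Theorem 3.19, Remark 3.20]{PS2021d}): the regular decomposition of the $\H{k,k-1}{\S,\gan}(\divDiv,\om)$-component plus the zero order compact embedding are exactly the hypotheses of that lemma, so the theorem follows formally; I would present the argument in that abstract form in the appendix and only spell out the verification of the hypotheses.
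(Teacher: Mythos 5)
Your concrete argument in the middle paragraph breaks down at two points, and the second one is fatal. First, the claimed identity $\Rot\symRot=0$ is false: the complex relations are $\Rot\Gradgrad=0$, $\Div\Rot=0$, $\symRot\devGrad=0$, and $\divDiv\symRot=0$, but $\Rot\symRot$ crosses from one biharmonic complex into the other and is generically nonzero (cf.\ the nontrivial formulas for $\Div\sym\Rot$ and $\Rot(\Rot\sym\,\cdot\,)^{\top}$ in Lemma~\ref{bih:lem:PZformulalem}); your own next line, $\Rot(\symRot T_n)=\Rot S_n-\Rot R_n$ bounded in $\H{k}{}(\om)$, only makes sense because this composition is \emph{not} zero. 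Second, and decisively, the boundary conditions do not match: $R_n=\PotQ_{\divDivS,\gan}^{k,k-1,1}S_n$ lies in $\H{k+1}{\S,\gan}(\om)$, i.e.\ carries the $\gan$-trace condition and \emph{not} the $\gat$-tangential one, so $\symRot T_n=S_n-R_n$ does not belong to $\H{k}{\S,\gat}(\Rot,\om)$. Consequently $\p^{\alpha}(\symRot T_n)$ does \emph{not} land in $\H{}{\S,\gat}(\Rot,\om)\cap\H{}{\S,\gan}(\divDiv,\om)$ and the zero order compact embedding (Theorem~\ref{bih:theo:cptembzeroorder}) cannot be invoked for it.

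The paper's proof (of Remark~\ref{bih:rem:cptembhigherorder}, which is exactly this statement) avoids placing $\symRot T_n$ into the intersection space at all. Decompose $S_{\ell}=\widetilde S_{\ell}+\symRot T_{\ell}$ as you did, use Rellich to get $\H{k}{\S}(\om)$- resp.\ $\H{k}{\T}(\om)$-Cauchy subsequences of $(\widetilde S_{\ell})$, $(T_{\ell})$, and then write
\begin{align*}
\norm{S_{\ell,l}}_{\H{k}{\S}(\om)}^{2}
&=\scp{S_{\ell,l}}{\widetilde S_{\ell,l}}_{\H{k}{\S}(\om)}
+\scp{S_{\ell,l}}{\symRot T_{\ell,l}}_{\H{k}{\S}(\om)}
=\scp{S_{\ell,l}}{\widetilde S_{\ell,l}}_{\H{k}{\S}(\om)}
+\scp{\Rot S_{\ell,l}}{T_{\ell,l}}_{\H{k}{\T}(\om)}.
\end{align*}
The integration-by-parts step is legitimate precisely because $S_{\ell,l}\in\H{k}{\S,\gat}(\Rot,\om)$ and $T_{\ell,l}\in\H{k+1}{\T,\gan}(\om)$ carry \emph{complementary} boundary conditions (use Lemma~\ref{bih:lem:scharzlemma} componentwise and the adjoint pair $(\TRotSgat,\SbRotTgan)$). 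Cauchy–Schwarz then bounds the right-hand side by $c\bigl(\norm{\widetilde S_{\ell,l}}_{\H{k}{\S}(\om)}+\norm{T_{\ell,l}}_{\H{k}{\T}(\om)}\bigr)\to0$. Your final-paragraph instinct to invoke the abstract compactness lemma is the right one — that lemma encodes exactly this pairing argument — but the concrete verification you sketched does not check its hypotheses (it requires a regular decomposition of one space plus a pairing against the other, not membership of the decomposed piece in the intersection), so the middle paragraph would have to be replaced by the dual-pairing computation above.
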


\begin{theo}[Friedrichs/Poincar\'e type estimate]
\label{bih:theo:friedrichspoincarehigherorder}
Let $k\geq0$. 
Then there exists $c>0$ such that for all 
\begin{align*}
S&\in\H{k}{\S,\gat}(\Rot,\om)
\cap\H{k}{\S,\gan}(\divDiv,\om)
\cap\Harm{}{\S,\gat,\gan,\id}(\om)^{\bot_{\L{2}{\S}(\om)}},\\
T&\in\H{k}{\T,\gat}(\symRot,\om)
\cap\H{k}{\T,\gan}(\Div,\om)
\cap\Harm{}{\T,\gat,\gan,\id}(\om)^{\bot_{\L{2}{\T}(\om)}}
\end{align*}
it holds
\begin{align*}
\norm{S}_{\H{k}{\S}(\om)}
&\leq c\big(\norm{\Rot S}_{\H{k}{\T}(\om)}
+\norm{\divDiv S}_{\H{k}{}(\om)}\big),\\
\norm{T}_{\H{k}{\T}(\om)}
&\leq c\big(\norm{\symRot T}_{\H{k}{\S}(\om)}
+\norm{\Div T}_{\H{k}{}(\om)}\big)
\end{align*}
respectively. The orthogonality condition $\Harm{}{\S,\gat,\gan,\id}(\om)^{\bot_{\L{2}{\S}(\om)}}$
and $\Harm{}{\T,\gat,\gan,\id}(\om)^{\bot_{\L{2}{\T}(\om)}}$
can be replaced by the weaker conditions 
$\Harm{k}{\S,\gat,\gan,\id}(\om)^{\bot_{\L{2}{\S}(\om)}}$ or
$\Harm{k}{\S,\gat,\gan,\id}(\om)^{\bot_{\H{k}{\S}(\om)}}$
and $\Harm{k}{\T,\gat,\gan,\id}(\om)^{\bot_{\L{2}{\T}(\om)}}$ or
$\Harm{k}{\T,\gat,\gan,\id}(\om)^{\bot_{\H{k}{\T}(\om)}}$,
respectively. In particular, 
\begin{align*}
\forall\,S&\in\H{k}{\S,\gat}(\Rot,\om)
\cap R(\SRotTgan^{k})
&\norm{S}_{\H{k}{\S}(\om)}
&\leq c\norm{\Rot S}_{\H{k}{\T}(\om)},\\
\forall\,S&\in\H{k}{\S,\gan}(\divDiv,\om)
\cap R(\SGradgradgat^{k})
&\norm{S}_{\H{k}{\S}(\om)}
&\leq c\norm{\divDiv S}_{\H{k}{}(\om)},\\
\forall\,T&\in\H{k}{\T,\gat}(\symRot,\om)
\cap R(\TRotSgan^{k})
&\norm{T}_{\H{k}{\T}(\om)}
&\leq c\norm{\symRot T}_{\H{k}{\S}(\om)},\\
\forall\,T&\in\H{k}{\T,\gan}(\Div,\om)
\cap R(\TGradgat^{k})
&\norm{T}_{\H{k}{\T}(\om)}
&\leq c\norm{\Div T}_{\H{k}{}(\om)}
\end{align*}
with
\begin{align*}
R(\SRotTgan^{k})
&=\H{k}{\S,\gan,0}(\divDiv,\om)
\cap\Harm{}{\S,\gat,\gan,\id}(\om)^{\bot_{\L{2}{\S}(\om)}},\\
R(\SGradgradgat^{k+1,k})=R(\SGradgradgat^{k})&=\H{k}{\S,\gat,0}(\Rot,\om)
\cap\Harm{}{\S,\gat,\gan,\id}(\om)^{\bot_{\L{2}{\S}(\om)}},\\
R(\TRotSgan^{k})
&=\H{k}{\T,\gan,0}(\Div,\om)
\cap\Harm{}{\T,\gat,\gan,\id}(\om)^{\bot_{\L{2}{\T}(\om)}},\\
R(\TGradgat^{k})&=\H{k}{\T,\gat,0}(\symRot,\om)
\cap\Harm{}{\T,\gat,\gan,\id}(\om)^{\bot_{\L{2}{\T}(\om)}}.
\end{align*}

Analogously, for $k\geq1$ there exists $c>0$ such that 
$$\norm{S}_{\H{k}{\S}(\om)}\leq 
c\big(\norm{\Rot S}_{\H{k}{\T}(\om)}
+\norm{\divDiv S}_{\H{k-1}{}(\om)}\big)$$
for all $S$ in $\H{k}{\S,\gat}(\Rot,\om)
\cap\H{k,k-1}{\S,\gan}(\divDiv,\om)
\cap\Harm{}{\S,\gat,\gan,\id}(\om)^{\bot_{\L{2}{\S}(\om)}}$.
Moreover, 
\begin{align*}
\forall\,S&\in\H{k,k-1}{\S,\gan}(\divDiv,\om)
\cap R(\SGradgradgat^{k})
&\norm{S}_{\H{k}{\S}(\om)}
&\leq c\norm{\divDiv S}_{\H{k-1}{}(\om)}.
\end{align*}
\end{theo}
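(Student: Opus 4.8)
The plan is to obtain all the claimed estimates from the higher order compact embeddings in Theorem~\ref{bih:theo:cptembhigherorder} and Theorem~\ref{bih:theo:cptembhigherorderkkmo} by a standard indirect (Peetre--Tartar type) argument, and then to read off the ``In particular'' inequalities by specialising to the kernels via the range characterisations of Theorem~\ref{bih:theo:rangeselawithoutbdpot}.

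First I would prove the main estimate for $S$. If no such constant $c$ existed, there would be a sequence $(S_n)$ in $\H{k}{\S,\gat}(\Rot,\om)\cap\H{k}{\S,\gan}(\divDiv,\om)\cap\Harm{}{\S,\gat,\gan,\id}(\om)^{\bot_{\L{2}{\S}(\om)}}$ with $\norm{S_n}_{\H{k}{\S}(\om)}=1$ and $\norm{\Rot S_n}_{\H{k}{\T}(\om)}+\norm{\divDiv S_n}_{\H{k}{}(\om)}\to0$. Then $(S_n)$ is bounded in the graph norm of $\H{k}{\S,\gat}(\Rot,\om)\cap\H{k}{\S,\gan}(\divDiv,\om)$, so Theorem~\ref{bih:theo:cptembhigherorder} provides a subsequence (not relabelled) with $S_n\to S$ in $\H{k}{\S}(\om)$, in particular in $\L{2}{\S}(\om)$. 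Since $\Rot S_n\to0$ in $\L{2}{\T}(\om)$, $\divDiv S_n\to0$ in $\L{2}{}(\om)$, and the strong Sobolev spaces $\H{}{\S,\gat}(\Rot,\om)$ and $\H{}{\S,\gan}(\divDiv,\om)$ are graph-norm closed, the limit satisfies $S\in\H{}{\S,\gat,0}(\Rot,\om)\cap\H{}{\S,\gan,0}(\divDiv,\om)=\Harm{}{\S,\gat,\gan,\id}(\om)$. On the other hand $\Harm{}{\S,\gat,\gan,\id}(\om)^{\bot_{\L{2}{\S}(\om)}}$ is closed in $\L{2}{\S}(\om)$ and contains every $S_n$, hence also $S$; thus $S=0$, contradicting $\norm{S}_{\H{k}{\S}(\om)}=\lim_n\norm{S_n}_{\H{k}{\S}(\om)}=1$. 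The estimate for $T$ follows verbatim from the second embedding in Theorem~\ref{bih:theo:cptembhigherorder} and the space $\Harm{}{\T,\gat,\gan,\id}(\om)$, and the $(k,k-1)$-variant follows the same way from Theorem~\ref{bih:theo:cptembhigherorderkkmo}, the sequence now being bounded in the graph norm of $\H{k}{\S,\gat}(\Rot,\om)\cap\H{k,k-1}{\S,\gan}(\divDiv,\om)$ with $\divDiv S_n\to0$ in $\H{k-1}{}(\om)\subset\L{2}{}(\om)$. For the weaker orthogonality conditions $\Harm{k}{\S,\gat,\gan,\id}(\om)^{\bot_{\L{2}{\S}(\om)}}$ or $\Harm{k}{\S,\gat,\gan,\id}(\om)^{\bot_{\H{k}{\S}(\om)}}$ (and their $\T$-counterparts) the same scheme applies: the compactness limit $S$ constructed above lies in $\H{k}{\S}(\om)$ and is a biharmonic Dirichlet/Neumann field, hence it belongs to $\Harm{k}{\S,\gat,\gan,\id}(\om)$, against which the $S_n$ are taken orthogonal; since the corresponding orthogonal complement is closed in the relevant topology ($\L{2}{\S}(\om)$, resp.\ $\H{k}{\S}(\om)$), the limit inherits the orthogonality, so $S\perp S$ and again $S=0$.

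Next, the ``In particular'' Friedrichs/Poincar\'e estimates are immediate consequences. By Theorem~\ref{bih:theo:rangeselawithoutbdpot} one has $R(\SRotTgan^{k})\subset\H{k}{\S,\gan,0}(\divDiv,\om)\cap\Harm{}{\S,\gat,\gan,\id}(\om)^{\bot_{\L{2}{\S}(\om)}}$, so every $S\in\H{k}{\S,\gat}(\Rot,\om)\cap R(\SRotTgan^{k})$ lies in the admissible set of the main estimate and satisfies $\divDiv S=0$; the main estimate then reads $\norm{S}_{\H{k}{\S}(\om)}\le c\,\norm{\Rot S}_{\H{k}{\T}(\om)}$. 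Likewise $R(\SGradgradgat^{k})=R(\SGradgradgat^{k+1,k})\subset\H{k}{\S,\gat,0}(\Rot,\om)\cap\Harm{}{\S,\gat,\gan,\id}(\om)^{\bot_{\L{2}{\S}(\om)}}$, so $\Rot S=0$ for $S$ in this range and the main estimate yields $\norm{S}_{\H{k}{\S}(\om)}\le c\,\norm{\divDiv S}_{\H{k}{}(\om)}$; the $(k,k-1)$-variant gives $\norm{S}_{\H{k}{\S}(\om)}\le c\,\norm{\divDiv S}_{\H{k-1}{}(\om)}$ in the same way. The $T$-analogues use $R(\TRotSgan^{k})$ and $R(\TGradgat^{k})$ from the same theorem in exactly the same manner.

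Finally, as for where the work sits: there is no genuine structural obstacle left, since the compact embeddings and the range/kernel representations have already been established. The two points that need care are that the partial boundary conditions pass to the compactness limit --- this is precisely the graph-norm closedness of $\H{}{\S,\gat}(\Rot,\om)$ and $\H{}{\S,\gan}(\divDiv,\om)$ (and their $\T$-versions) --- and keeping track, when treating the weaker hypotheses, of whether a given orthogonality is taken in $\L{2}{}(\om)$ or in $\H{k}{}(\om)$. The substantive difficulty has been delegated to Theorem~\ref{bih:theo:cptembhigherorder}, Theorem~\ref{bih:theo:cptembhigherorderkkmo}, and Theorem~\ref{bih:theo:rangeselawithoutbdpot}.
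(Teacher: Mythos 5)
Your proof is correct and is exactly the ``standard contradiction argument'' to which the paper defers: a Peetre--Tartar indirect argument using the compact embeddings of Theorem~\ref{bih:theo:cptembhigherorder} and Theorem~\ref{bih:theo:cptembhigherorderkkmo}, the closedness of the operators to identify the limit as a Dirichlet/Neumann field, and the range characterisations of Theorem~\ref{bih:theo:rangeselawithoutbdpot} for the ``In particular'' consequences. No discrepancies with the paper's (very terse) proof.
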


The proof follows by a standard contradiction argument.

\begin{rem}[Friedrichs/Poincar\'e/Korn type estimate]
\label{bih:rem:friedrichspoincarekornhigherorder}
Let $k\geq0$. 
Similar to Theorem \ref{bih:theo:friedrichspoincarehigherorder}
and by Rellich's selection theorem, cf.~the estimates in Theorem \ref{bih:theo:clrangebdinvbih},
there exists $c>0$ such that for all 
$v\in\H{k+1}{\gat}(\om)\cap(\RT_{\gat})^{\bot_{\L{2}{}(\om)}}$
and for all $u\in\H{k+2}{\gat}(\om)\cap(\Pone_{\gat})^{\bot_{\L{2}{}(\om)}}$
$$\norm{v}_{\H{k}{}(\om)}\leq c\norm{\devGrad v}_{\H{k}{\T}(\om)},\qquad
\norm{u}_{\H{k}{}(\om)}\leq\norm{u}_{\H{k+1}{}(\om)}\leq c\norm{\Gradgrad u}_{\H{k}{\S}(\om)}.$$
As in Theorem \ref{bih:theo:clrangebdinvbih},
$(\RT_{\gat})^{\bot_{\L{2}{}(\om)}}$ and $(\Pone_{\gat})^{\bot_{\L{2}{}(\om)}}$ can be replaced 
by $(\RT_{\gat})^{\bot_{\H{k}{\gat}(\om)}}$ and $(\Pone_{\gat})^{\bot_{\H{k}{\gat}(\om)}}$, respectively.
\end{rem}

\subsection{Regular Potentials and Decompositions II}
\label{bih:sec:regpotdeco2}%

Let $k\geq0$.
According to Theorem \ref{bih:theo:clrangebdinvbih} the inverses
of the reduced operators  
\begin{align*}
(\SGradgradgat^{k})_{\bot}^{-1}:R(\SGradgradgat^{k})&\to D(\SGradgradgat^{k})=\H{k+2}{\gat}(\om),\\
(\SGradgradgat^{k+1,k})_{\bot}^{-1}:R(\SGradgradgat^{k})&\to D(\SGradgradgat^{k+1,k})=\H{k+2}{\gat}(\om),\\
(\TRotSgat^{k})_{\bot}^{-1}:R(\TRotSgat^{k})&\to D(\TRotSgat^{k})=\H{k}{\S,\gat}(\Rot,\om),\\
(\DivTgat^{k})_{\bot}^{-1}:R(\DivTgat^{k})&\to D(\DivTgat^{k})=\H{k}{\T,\gat}(\Div,\om),\\
(\TGradgat^{k})_{\bot}^{-1}:R(\TGradgat^{k})&\to D(\TGradgat^{k})=\H{k+1}{\gat}(\om),\\
(\SRotTgat^{k})_{\bot}^{-1}:R(\SRotTgat^{k})&\to D(\SRotTgat^{k})=\H{k}{\T,\gat}(\symRot,\om),\\
(\divDivSgat^{k})_{\bot}^{-1}:R(\divDivSgat^{k})&\to D(\divDivSgat^{k})=\H{k}{\S,\gat}(\divDiv,\om),\\
(\divDivSgat^{k+1,k})_{\bot}^{-1}:R(\divDivSgat^{k})&\to D(\divDivSgat^{k+1,k})=\H{k+1,k}{\S,\gat}(\divDiv,\om)
\end{align*}
are bounded and we recall the bounded linear regular decomposition operators
\begin{align*}
\PotQ_{\TRotS,\gat}^{k,1}:
\H{k}{\S,\gat}(\Rot,\om)&\to\H{k+1}{\S,\gat}(\om),
&
\PotQ_{\TRotS,\gat}^{k,0}:
\H{k}{\S,\gat}(\Rot,\om)&\to\H{k+2}{\gat}(\om),\\
\PotQ_{\DivT,\gat}^{k,1}:
\H{k}{\T,\gat}(\Div,\om)&\to\H{k+1}{\T,\gat}(\om),
&
\PotQ_{\DivT,\gat}^{k,0}:
\H{k}{\T,\gat}(\Div,\om)&\to\H{k+1}{\S,\gat}(\om),\\
\PotQ_{\SRotT,\gat}^{k,1}:
\H{k}{\T,\gat}(\symRot,\om)&\to\H{k+1}{\T,\gat}(\om),
&
\PotQ_{\SRotT,\gat}^{k,0}:
\H{k}{\T,\gat}(\symRot,\om)&\to\H{k+1}{\gat}(\om),\\
\PotQ_{\divDivS,\gat}^{k,1}:
\H{k}{\S,\gat}(\divDiv,\om)&\to\H{k+2}{\S,\gat}(\om),
&
\PotQ_{\divDivS,\gat}^{k,0}:
\H{k}{\S,\gat}(\divDiv,\om)&\to\H{k+1}{\T,\gat}(\om),\\
\PotQ_{\divDivS,\gat}^{k+1,k,1}:
\H{k+1,k}{\S,\gat}(\divDiv,\om)&\to\H{k+2}{\S,\gat}(\om),
&
\PotQ_{\divDivS,\gat}^{k+1,k,0}:
\H{k+1,k}{\S,\gat}(\divDiv,\om)&\to\H{k+2}{\T,\gat}(\om)
\end{align*}
from Lemma \ref{bih:lem:highorderregdecobih}.
Similar to \cite[Theorem 4.18, Theorem 5.2]{PS2021b}
and \cite[Theorem 3.24, Theorem 3.25]{PS2021d},
cf.~\cite[Lemma 2.22, Theorem 2.23]{PS2021b},
we obtain the following sequence of results:

\begin{theo}[bounded regular potentials from bounded regular decompositions]
\label{bih:theo:regpothigherorder}
For $k\geq0$ there exist bounded linear regular potential operators
\begin{align*}
\PotP_{\SGradgrad,\gat}^{k}:=(\SGradgradgat^{k})_{\bot}^{-1}:
\H{k}{\S,\gat,0}(\Rot,\om)\cap\Harm{}{\S,\gat,\gan,\eps}(\om)^{\bot_{\L{2}{\S,\eps}(\om)}}
&\to\H{k+2}{\gat}(\om),\\
\PotP_{\SGradgrad,\gat}^{k+1,k}:=(\SGradgradgat^{k+1,k})_{\bot}^{-1}:
\H{k}{\S,\gat,0}(\Rot,\om)\cap\Harm{}{\S,\gat,\gan,\eps}(\om)^{\bot_{\L{2}{\S,\eps}(\om)}}
&\to\H{k+2}{\gat}(\om),\\
\PotP_{\TRotS,\gat}^{k}:=\PotQ_{\TRotS,\gat}^{k,1}(\TRotSgat^{k})_{\bot}^{-1}:
\H{k}{\T,\gat,0}(\Div,\om)\cap\Harm{}{\T,\gan,\gat,\mu}(\om)^{\bot_{\L{2}{\T}(\om)}}
&\to\H{k+1}{\S,\gat}(\om),\\
\PotP_{\DivT,\gat}^{k}:=\PotQ_{\DivT,\gat}^{k,1}(\DivTgat^{k})_{\bot}^{-1}:
\H{k}{\gat}(\om)\cap(\RT_{\gan})^{\bot_{\L{2}{}(\om)}}
&\to\H{k+1}{\T,\gat}(\om),\\
\PotP_{\TGrad,\gat}^{k}:=(\TGradgat^{k})_{\bot}^{-1}:
\H{k}{\T,\gat,0}(\symRot,\om)\cap\Harm{}{\T,\gat,\gan,\mu}(\om)^{\bot_{\L{2}{\T,\mu}(\om)}}
&\to\H{k+1}{\gat}(\om),\\
\PotP_{\SRotT,\gat}^{k}:=\PotQ_{\SRotT,\gat}^{k,1}(\SRotTgat^{k})_{\bot}^{-1}:
\H{k}{\S,\gat,0}(\divDiv,\om)\cap\Harm{}{\S,\gan,\gat,\eps}(\om)^{\bot_{\L{2}{\S}(\om)}}
&\to\H{k+1}{\T,\gat}(\om),\\
\PotP_{\divDivS,\gat}^{k}:=\PotQ_{\divDivS,\gat}^{k,1}(\divDivSgat^{k})_{\bot}^{-1}:
\H{k}{\gat}(\om)\cap(\Pone_{\gan})^{\bot_{\L{2}{}(\om)}}
&\to\H{k+2}{\S,\gat}(\om),\\
\PotP_{\divDivS,\gat}^{k+1,k}:=\PotQ_{\divDivS,\gat}^{k+1,k,1}(\divDivSgat^{k+1,k})_{\bot}^{-1}:
\H{k}{\gat}(\om)\cap(\Pone_{\gan})^{\bot_{\L{2}{}(\om)}}
&\to\H{k+2}{\S,\gat}(\om),
\end{align*}
such that 
\begin{align*}
\Gradgrad\PotP_{\SGradgrad,\gat}^{k+1,k}
=\Gradgrad\PotP_{\SGradgrad,\gat}^{k}
&=\id|_{\H{k}{\S,\gat,0}(\Rot,\om)\cap\Harm{}{\S,\gat,\gan,\eps}(\om)^{\bot_{\L{2}{\S,\eps}(\om)}}},\\
\Rot\PotP_{\TRotS,\gat}^{k}
&=\id|_{\H{k}{\T,\gat,0}(\Div,\om)\cap\Harm{}{\T,\gan,\gat,\mu}(\om)^{\bot_{\L{2}{\T}(\om)}}},\\
\Div\PotP_{\DivT,\gat}^{k}
&=\id|_{\H{k}{\gat}(\om)\cap(\RT_{\gan})^{\bot_{\L{2}{}(\om)}}},\\
\devGrad\PotP_{\TGrad,\gat}^{k}
&=\id|_{\H{k}{\T,\gat,0}(\symRot,\om)\cap\Harm{}{\T,\gat,\gan,\mu}(\om)^{\bot_{\L{2}{\T,\mu}(\om)}}},\\
\symRot\PotP_{\SRotT,\gat}^{k}
&=\id|_{\H{k}{\S,\gat,0}(\divDiv,\om)\cap\Harm{}{\S,\gan,\gat,\eps}(\om)^{\bot_{\L{2}{\S}(\om)}}},\\
\divDiv\PotP_{\divDivS,\gat}^{k+1,k}
=\divDiv\PotP_{\divDivS,\gat}^{k}
&=\id|_{\H{k}{\gat}(\om)\cap(\Pone_{\gan})^{\bot_{\L{2}{}(\om)}}}.
\end{align*}
In particular, all potentials in Theorem \ref{bih:theo:rangeselawithoutbdpot}
can be chosen such that they depend continuously on the data.
$\PotP_{\SGradgrad,\gat}^{k+1,k}$, $\PotP_{\SGradgrad,\gat}^{k}$, $\PotP_{\SRotT,\gat}^{k}$, $\PotP_{\DivT,\gat}^{k}$,
$\PotP_{\TGrad,\gat}^{k}$, $\PotP_{\TRotS,\gat}^{k}$, $\PotP_{\divDivS,\gat}^{k}$, $\PotP_{\divDivS,\gat}^{k+1,k}$
are right inverses of $\Gradgrad$, $\Rot$, $\Div$, $\devGrad$, $\symRot$, $\divDiv$, respectively.
\end{theo}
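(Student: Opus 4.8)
The plan is to treat this as the biharmonic instance of the abstract ``bounded regular decompositions yield bounded regular potentials'' mechanism already used for the de Rham and elasticity complexes, cf.~\cite[Lemma 2.22, Theorem 2.23]{PS2021b}, \cite[Theorem 4.18, Theorem 5.2]{PS2021b}, \cite[Theorem 3.24, Theorem 3.25]{PS2021d}. All the analysis has been done; what remains is to assemble compositions of bounded maps and check algebraic identities. The ingredients I would use are: the bounded inverses of the reduced operators from Theorem \ref{bih:theo:clrangebdinvbih}; the bounded regular decomposition operators $\PotQ_{\dots}^{\dots}$ from Lemma \ref{bih:lem:highorderregdecobih} with the intertwining identities recorded there, $\Rot\PotQ_{\TRotS,\gat}^{k,1}=\TbRotSgat^{k}$, $\Div\PotQ_{\DivT,\gat}^{k,1}=\bDivTgat^{k}$, $\symRot\PotQ_{\SRotT,\gat}^{k,1}=\SbRotTgat^{k}$, $\divDiv\PotQ_{\divDivS,\gat}^{k,1}=\bdivDivSgat^{k}$, $\divDiv\PotQ_{\divDivS,\gat}^{k+1,k,1}=\bdivDivSgat^{k+1,k}$; the identification of the ranges $R(\,\cdot\,)$ in Theorem \ref{bih:theo:rangeselawithoutbdpot}; and Corollary \ref{bih:cor:weakstrongbih}, which lets me read the intertwining identities with the strong operators, e.g.~$\divDiv\PotQ_{\divDivS,\gat}^{k,1}S=\divDiv S$ for all $S\in D(\divDivSgat^{k})$, and $\Rot\PotQ_{\TRotS,\gat}^{k,1}S=\Rot S$, $\Div\PotQ_{\DivT,\gat}^{k,1}T=\Div T$, $\symRot\PotQ_{\SRotT,\gat}^{k,1}T=\symRot T$ on the respective strong domains.

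For $\SGradgradgat^{k}$, $\SGradgradgat^{k+1,k}$, and $\TGradgat^{k}$ the domain of definition is already a regular Sobolev space, namely $\H{k+2}{\gat}(\om)$, $\H{k+2}{\gat}(\om)$ (the latter using \eqref{bih:Gradgradkkmoregformula} and Corollary \ref{bih:cor:weakstrongbih}), and $\H{k+1}{\gat}(\om)$, so I would simply set $\PotP:=(\,\cdot\,)_{\bot}^{-1}$, the bounded inverse of the reduced operator from Theorem \ref{bih:theo:clrangebdinvbih}. Its domain is the range $R(\,\cdot\,)$, which by Theorem \ref{bih:theo:rangeselawithoutbdpot} is exactly the space claimed in the assertion, and the right-inverse property is the tautology $\Gradgrad(\SGradgradgat^{k})_{\bot}^{-1}=(\SGradgradgat^{k})_{\bot}(\SGradgradgat^{k})_{\bot}^{-1}=\id$ (and likewise for the other two).

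For the remaining operators $\TRotSgat^{k}$, $\DivTgat^{k}$, $\SRotTgat^{k}$, $\divDivSgat^{k}$, $\divDivSgat^{k+1,k}$ the domain is the non-regular graph space, so I would precompose the decomposition operator whose target is the required regular space, e.g.~$\PotP_{\divDivS,\gat}^{k}:=\PotQ_{\divDivS,\gat}^{k,1}(\divDivSgat^{k})_{\bot}^{-1}$, $\PotP_{\TRotS,\gat}^{k}:=\PotQ_{\TRotS,\gat}^{k,1}(\TRotSgat^{k})_{\bot}^{-1}$, and similarly for $\DivTgat^{k}$, $\SRotTgat^{k}$, and the $k+1,k$-variant. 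Boundedness is then clear as a composition of two bounded maps. For the right-inverse property I would take $x\in R(\divDivSgat^{k})=\H{k}{\gat}(\om)\cap(\Pone_{\gan})^{\bot_{\L{2}{}(\om)}}$, put $S:=(\divDivSgat^{k})_{\bot}^{-1}x\in D(\divDivSgat^{k})$ so that $\divDiv S=x$, and observe $\PotQ_{\divDivS,\gat}^{k,1}S\in\H{k+2}{\S,\gat}(\om)\subset D(\divDivSgat^{k})$ with $\divDiv\PotQ_{\divDivS,\gat}^{k,1}S=\divDiv S=x$; hence $\divDiv\PotP_{\divDivS,\gat}^{k}x=x$. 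The other four cases are verbatim the same with $\divDiv$, $R(\divDivSgat^{k})$, $\H{k+2}{\S,\gat}(\om)$ replaced by the matching operator, the matching range from Theorem \ref{bih:theo:rangeselawithoutbdpot}, and the matching regular target space. Once all $\PotP$ are in place, the concluding ``in particular'' --- continuous dependence of the potentials of Theorem \ref{bih:theo:rangeselawithoutbdpot} on the data --- is immediate.

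I do not expect a genuine obstacle here: the hard analysis (closedness of ranges, the regular decompositions, and weak $=$ strong) lives entirely in the earlier results, and what is left is bookkeeping of compositions. The one place where I would be careful is domain matching --- verifying that each $R(\,\cdot\,)$ of Theorem \ref{bih:theo:rangeselawithoutbdpot} really coincides with the domain written for the corresponding $\PotP$, and invoking Corollary \ref{bih:cor:weakstrongbih} at the right moment so that the ``weak'' operators in the intertwining identities of Lemma \ref{bih:lem:highorderregdecobih} become the strong ones actually used above.
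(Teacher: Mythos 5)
Your proposal is correct and follows exactly the route the paper intends: the paper gives no explicit proof for this theorem but instead refers to the analogous arguments in \cite[Theorem 4.18, Theorem 5.2]{PS2021b} and \cite[Theorem 3.24, Theorem 3.25]{PS2021d}, which are precisely the composition-of-bounded-maps pattern you describe. The bookkeeping — identifying domains via Theorem \ref{bih:theo:rangeselawithoutbdpot}, invoking the intertwining identities of Lemma \ref{bih:lem:highorderregdecobih} together with Corollary \ref{bih:cor:weakstrongbih} to pass from weak to strong operators, and splitting into the three cases where the domain is already regular versus the five where a $\PotQ^{\cdot,1}$ must be precomposed — is carried out correctly.
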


\begin{theo}[bounded regular decompositions from bounded regular potentials]
\label{bih:theo:regdecohigherorder}
For $k\geq0$ the bounded regular decompositions
\begin{align*}
\H{k}{\S,\gat}(\Rot,\om)
&=\H{k+1}{\S,\gat}(\om)
+\H{k}{\S,\gat,0}(\Rot,\om)
=\H{k+1}{\S,\gat}(\om)
+\Gradgrad\H{k+2}{\gat}(\om)\\
&=R(\widetilde\PotQ_{\TRotS,\gat}^{k,1})
\dotplus\H{k}{\S,\gat,0}(\Rot,\om)\\
&=R(\widetilde\PotQ_{\TRotS,\gat}^{k,1})
\dotplus R(\widetilde\PotN_{\TRotS,\gat}^{k}),\\
\H{k}{\T,\gat}(\Div,\om)
&=\H{k+1}{\T,\gat}(\om)
+\H{k}{\T,\gat,0}(\Div,\om)
=\H{k+1}{\T,\gat}(\om)
+\Rot\H{k+1}{\S,\gat}(\om)\\
&=R(\widetilde\PotQ_{\DivT,\gat}^{k,1})
\dotplus\H{k}{\T,\gat,0}(\Div,\om)\\
&=R(\widetilde\PotQ_{\DivT,\gat}^{k,1})
\dotplus R(\widetilde\PotN_{\DivT,\gat}^{k}),\\
\H{k}{\T,\gat}(\symRot,\om)
&=\H{k+1}{\T,\gat}(\om)
+\H{k}{\T,\gat,0}(\symRot,\om)
=\H{k+1}{\T,\gat}(\om)
+\devGrad\H{k+1}{\gat}(\om)\\
&=R(\widetilde\PotQ_{\SRotT,\gat}^{k,1})
\dotplus\H{k}{\T,\gat,0}(\symRot,\om)\\
&=R(\widetilde\PotQ_{\SRotT,\gat}^{k,1})
\dotplus R(\widetilde\PotN_{\SRotT,\gat}^{k}),\\
\H{k}{\S,\gat}(\divDiv,\om)
&=\H{k+2}{\S,\gat}(\om)
+\H{k}{\S,\gat,0}(\divDiv,\om)
=\H{k+2}{\S,\gat}(\om)
+\symRot\H{k+1}{\T,\gat}(\om)\\
&=R(\widetilde\PotQ_{\divDivS,\gat}^{k,1})
\dotplus\H{k}{\S,\gat,0}(\divDiv,\om)\\
&=R(\widetilde\PotQ_{\divDivS,\gat}^{k,1})
\dotplus R(\widetilde\PotN_{\divDivS,\gat}^{k}),\\
\H{k+1,k}{\S,\gat}(\divDiv,\om)
&=\H{k+2}{\S,\gat}(\om)
+\H{k+1}{\S,\gat,0}(\divDiv,\om)
=\H{k+2}{\S,\gat}(\om)
+\symRot\H{k+2}{\T,\gat}(\om)\\
&=R(\widetilde\PotQ_{\divDivS,\gat}^{k+1,k,1})
\dotplus\H{k+1}{\S,\gat,0}(\divDiv,\om)\\
&=R(\widetilde\PotQ_{\divDivS,\gat}^{k+1,k,1})
\dotplus R(\widetilde\PotN_{\divDivS,\gat}^{k+1,k})
\end{align*}
hold with bounded linear regular decomposition operators
\begin{align*}
\widetilde\PotQ_{\TRotS,\gat}^{k,1}:=\PotP_{\TRotS,\gat}^{k}\TRotSgat^{k}:
\H{k}{\S,\gat}(\Rot,\om)&\to\H{k+1}{\S,\gat}(\om),\\
\widetilde\PotQ_{\DivT,\gat}^{k,1}:=\PotP_{\DivT,\gat}^{k}\DivTgat^{k}:
\H{k}{\T,\gat}(\Div,\om)&\to\H{k+1}{\T,\gat}(\om),\\
\widetilde\PotQ_{\SRotT,\gat}^{k,1}:=\PotP_{\SRotT,\gat}^{k}\SRotTgat^{k}:
\H{k}{\T,\gat}(\symRot,\om)&\to\H{k+1}{\T,\gat}(\om),\\
\widetilde\PotQ_{\divDivS,\gat}^{k,1}:=\PotP_{\divDivS,\gat}^{k}\divDivSgat^{k}:
\H{k}{\S,\gat}(\divDiv,\om)&\to\H{k+2}{\S,\gat}(\om),\\
\widetilde\PotQ_{\divDivS,\gat}^{k+1,k,1}:=\PotP_{\divDivS,\gat}^{k+1,k}\divDivSgat^{k+1,k}:
\H{k+1,k}{\S,\gat}(\divDiv,\om)&\to\H{k+2}{\S,\gat}(\om),\\
\widetilde\PotN_{\TRotS,\gat}^{k}:
\H{k}{\S,\gat}(\Rot,\om)&\to\H{k}{\S,\gat,0}(\Rot,\om),\\
\widetilde\PotN_{\DivT,\gat}^{k}:
\H{k}{\T,\gat}(\Div,\om)&\to\H{k}{\T,\gat,0}(\Div,\om),\\
\widetilde\PotN_{\SRotT,\gat}^{k}:
\H{k}{\T,\gat}(\symRot,\om)&\to\H{k}{\T,\gat,0}(\symRot,\om),\\
\widetilde\PotN_{\divDivS,\gat}^{k}:
\H{k}{\S,\gat}(\divDiv,\om)&\to\H{k}{\S,\gat,0}(\divDiv,\om),\\
\widetilde\PotN_{\divDivS,\gat}^{k+1,k}:
\H{k+1,k}{\S,\gat}(\divDiv,\om)&\to\H{k+1}{\S,\gat,0}(\divDiv,\om)
\end{align*}
satisfying
\begin{align*}
\id_{\H{k}{\S,\gat}(\Rot,\om)}
&=\widetilde\PotQ_{\TRotS,\gat}^{k,1}
+\widetilde\PotN_{\TRotS,\gat}^{k},\\
\id_{\H{k}{\T,\gat}(\Div,\om)}
&=\widetilde\PotQ_{\DivT,\gat}^{k,1}
+\widetilde\PotN_{\DivT,\gat}^{k},\\
\id_{\H{k}{\T,\gat}(\symRot,\om)}
&=\widetilde\PotQ_{\SRotT,\gat}^{k,1}
+\widetilde\PotN_{\SRotT,\gat}^{k},\\
\id_{\H{k}{\S,\gat}(\divDiv,\om)}
&=\widetilde\PotQ_{\divDivS,\gat}^{k,1}
+\widetilde\PotN_{\divDivS,\gat}^{k},\\
\id_{\H{k+1,k}{\S,\gat}(\divDiv,\om)}
&=\widetilde\PotQ_{\divDivS,\gat}^{k+1,k,1}
+\widetilde\PotN_{\divDivS,\gat}^{k+1,k}.
\end{align*}
\end{theo}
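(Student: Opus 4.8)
The plan is to deduce all the asserted decompositions from the abstract principle that bounded regular potentials give rise to bounded regular decompositions, exactly as in \cite[Lemma 2.22, Theorem 2.23]{PS2021b} (cf.\ also \cite[Theorem 4.18, Theorem 5.2]{PS2021b} and \cite[Theorem 3.24, Theorem 3.25]{PS2021d}), applied to each of the six dual pairs of the two biharmonic complexes and, separately, to the non-standard $\divDiv$-space. The inputs are the bounded regular potential operators $\PotP_{\dots}^{\dots}$ from Theorem~\ref{bih:theo:regpothigherorder}, the description of their domains as ranges of the higher Sobolev order operators from Theorem~\ref{bih:theo:rangeselawithoutbdpot}, the ``direct'' regular decompositions from Lemma~\ref{bih:lem:highorderregdecobih}, and ``strong\,$=$\,weak'' from Corollary~\ref{bih:cor:weakstrongbih}.

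I would spell out the $\Rot$-case; the remaining ones are verbatim the same up to notation. Put $\widetilde\PotQ_{\TRotS,\gat}^{k,1}:=\PotP_{\TRotS,\gat}^{k}\TRotSgat^{k}$. By Theorem~\ref{bih:theo:rangeselawithoutbdpot} the operator $\TRotSgat^{k}$ maps $\H{k}{\S,\gat}(\Rot,\om)$ boundedly into $R(\TRotSgat^{k})=\H{k}{\T,\gat,0}(\Div,\om)\cap\Harm{}{\T,\gan,\gat,\mu}(\om)^{\bot_{\L{2}{\T}(\om)}}$, and by Theorem~\ref{bih:theo:regpothigherorder} this is precisely the domain of the bounded operator $\PotP_{\TRotS,\gat}^{k}$, so $\widetilde\PotQ_{\TRotS,\gat}^{k,1}\colon\H{k}{\S,\gat}(\Rot,\om)\to\H{k+1}{\S,\gat}(\om)$ is well defined and bounded. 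Since $\PotP_{\TRotS,\gat}^{k}$ is a right inverse of $\Rot$ we obtain $\Rot\widetilde\PotQ_{\TRotS,\gat}^{k,1}S=\Rot S$ for all $S\in\H{k}{\S,\gat}(\Rot,\om)$; hence $\widetilde\PotQ_{\TRotS,\gat}^{k,1}\widetilde\PotQ_{\TRotS,\gat}^{k,1}=\PotP_{\TRotS,\gat}^{k}\,\Rot\,\widetilde\PotQ_{\TRotS,\gat}^{k,1}=\widetilde\PotQ_{\TRotS,\gat}^{k,1}$, i.e.\ $\widetilde\PotQ_{\TRotS,\gat}^{k,1}$ is a bounded projection, and $\widetilde\PotN_{\TRotS,\gat}^{k}:=\id-\widetilde\PotQ_{\TRotS,\gat}^{k,1}$ is the complementary bounded projection, which gives $\id_{\H{k}{\S,\gat}(\Rot,\om)}=\widetilde\PotQ_{\TRotS,\gat}^{k,1}+\widetilde\PotN_{\TRotS,\gat}^{k}$.

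Next I would identify $R(\widetilde\PotN_{\TRotS,\gat}^{k})$. From $\Rot\widetilde\PotN_{\TRotS,\gat}^{k}S=\Rot S-\Rot S=0$ and $\widetilde\PotN_{\TRotS,\gat}^{k}S\in\H{k}{\S,\gat}(\Rot,\om)$ (using the routine inclusion $\H{k+1}{\S,\gat}(\om)\subset\H{k}{\S,\gat}(\Rot,\om)$) it follows that $R(\widetilde\PotN_{\TRotS,\gat}^{k})\subset\H{k}{\S,\gat,0}(\Rot,\om)$; conversely, $S\in\H{k}{\S,\gat,0}(\Rot,\om)$ forces $\widetilde\PotQ_{\TRotS,\gat}^{k,1}S=\PotP_{\TRotS,\gat}^{k}0=0$, hence $\widetilde\PotN_{\TRotS,\gat}^{k}S=S$ and $R(\widetilde\PotN_{\TRotS,\gat}^{k})=\H{k}{\S,\gat,0}(\Rot,\om)$. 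Complementarity of the two projections now produces the topological direct sum $\H{k}{\S,\gat}(\Rot,\om)=R(\widetilde\PotQ_{\TRotS,\gat}^{k,1})\dotplus R(\widetilde\PotN_{\TRotS,\gat}^{k})=R(\widetilde\PotQ_{\TRotS,\gat}^{k,1})\dotplus\H{k}{\S,\gat,0}(\Rot,\om)$. Finally, since $\Gradgrad\H{k+2}{\gat}(\om)\subset\H{k}{\S,\gat,0}(\Rot,\om)\subset\H{k}{\S,\gat}(\Rot,\om)$ and $\H{k+1}{\S,\gat}(\om)\subset\H{k}{\S,\gat}(\Rot,\om)$, the chain $\H{k+1}{\S,\gat}(\om)+\Gradgrad\H{k+2}{\gat}(\om)\subseteq\H{k+1}{\S,\gat}(\om)+\H{k}{\S,\gat,0}(\Rot,\om)\subseteq\H{k}{\S,\gat}(\Rot,\om)$ together with $\H{k}{\S,\gat}(\Rot,\om)=\bH{k}{\S,\gat}(\Rot,\om)=\H{k+1}{\S,\gat}(\om)+\Gradgrad\H{k+2}{\gat}(\om)$ (Corollary~\ref{bih:cor:weakstrongbih} and Lemma~\ref{bih:lem:highorderregdecobih}) forces equality throughout, which is the first line of the $\Rot$-decomposition.

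The $\Div$-, $\symRot$- and $\divDiv$-cases follow by running the same argument with the pairs $(\PotP_{\DivT,\gat}^{k},\DivTgat^{k})$, $(\PotP_{\SRotT,\gat}^{k},\SRotTgat^{k})$ and $(\PotP_{\divDivS,\gat}^{k},\divDivSgat^{k})$, reading off the domains $R(\DivTgat^{k})$, $R(\SRotTgat^{k})$, $R(\divDivSgat^{k})$ from Theorem~\ref{bih:theo:rangeselawithoutbdpot}, using that each potential is a right inverse of its differential operator, and taking the ``$+$'' forms from Lemma~\ref{bih:lem:highorderregdecobih} together with Corollary~\ref{bih:cor:weakstrongbih}. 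The non-standard space $\H{k+1,k}{\S,\gat}(\divDiv,\om)$ is treated identically with the pair $(\PotP_{\divDivS,\gat}^{k+1,k},\divDivSgat^{k+1,k})$, invoking $R(\divDivSgat^{k+1,k})=R(\divDivSgat^{k})$ from Theorem~\ref{bih:theo:rangeselawithoutbdpot}, the identity $\H{k+1,k}{\S,\gat,0}(\divDiv,\om)=\H{k+1}{\S,\gat,0}(\divDiv,\om)$ from Section~\ref{bih:sec:sobolevmore}, and the non-standard decomposition $\bH{k+1,k}{\S,\gat}(\divDiv,\om)=\H{k+2}{\S,\gat}(\om)+\symRot\H{k+2}{\T,\gat}(\om)$ of Lemma~\ref{bih:lem:highorderregdecobih} for the ``$+\symRot\H{k+2}{\T,\gat}(\om)$'' form. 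I do not expect a genuine obstacle: once Theorem~\ref{bih:theo:regpothigherorder} and Theorem~\ref{bih:theo:rangeselawithoutbdpot} are available, the construction $\widetilde\PotQ:=\PotP\,\A$, $\widetilde\PotN:=\id-\widetilde\PotQ$ from a bounded regular potential $\PotP$ and its differential operator $\A$ is the purely formal abstract argument of \cite[Lemma 2.22, Theorem 2.23]{PS2021b}; the only mild nuisances are the bookkeeping of Sobolev indices and operator names across the various decompositions and the routine inclusions $\H{k+1}{\cdots}(\om)\subset\H{k}{\cdots}(\cdots,\om)$ and complex properties $\Rot\Gradgrad=0$, $\Div\Rot=0$, $\symRot\devGrad=0$, $\divDiv\symRot=0$ that make the projections act on the correct spaces.
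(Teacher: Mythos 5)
Your proposal is correct and follows exactly the route the paper intends: it states Theorem~\ref{bih:theo:regdecohigherorder} with no separate proof, preceded by the remark that the result is obtained ``similar to \cite[Theorem 4.18, Theorem 5.2]{PS2021b}\ldots cf.~\cite[Lemma 2.22, Theorem 2.23]{PS2021b},'' i.e.\ precisely the abstract construction $\widetilde\PotQ:=\PotP\A$, $\widetilde\PotN:=\id-\widetilde\PotQ$ from bounded regular potentials, with the domains of the $\PotP$'s read off from Theorem~\ref{bih:theo:rangeselawithoutbdpot}, boundedness from Theorem~\ref{bih:theo:regpothigherorder} and closedness of the ranges, the projection identities $\widetilde\PotQ^2=\widetilde\PotQ$ and $R(\widetilde\PotN)=N(\A^k)$ from the right-inverse property, and the ``$+$'' forms from Lemma~\ref{bih:lem:highorderregdecobih} together with Corollary~\ref{bih:cor:weakstrongbih}; this is likewise what Remark~\ref{bih:rem:regdecohigherorder} and Remark~\ref{bih:rem:regdecohigherorderproj} record after the fact. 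Your verification of the $\Rot$-case in detail, including the routine inclusion $\H{k+1}{\S,\gat}(\om)\subset\H{k}{\S,\gat}(\Rot,\om)$ and the sandwich argument recovering the sum decomposition, is sound, and the remaining four cases go through by the same bookkeeping as you indicate.
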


\begin{cor}[bounded regular kernel decompositions]
\label{bih:cor:regdecokernelhigherorder}
For $k\geq0$ the bounded regular kernel decompositions
\begin{align*}
\H{k}{\S,\gat,0}(\Rot,\om)
&=\H{k+1}{\S,\gat,0}(\Rot,\om)
+\Gradgrad\H{k+2}{\gat}(\om),\\
\H{k}{\T,\gat,0}(\Div,\om)
&=\H{k+1}{\T,\gat,0}(\Div,\om)
+\Rot\H{k+1}{\S,\gat}(\om),\\
\H{k}{\T,\gat,0}(\symRot\om)
&=\H{k+1}{\T,\gat,0}(\symRot,\om)
+\devGrad\H{k+1}{\gat}(\om),\\
\H{k}{\S,\gat,0}(\divDiv,\om)
&=\H{k+2}{\S,\gat,0}(\divDiv,\om)
+\symRot\H{k+1}{\T,\gat}(\om)
\end{align*}
hold.
\end{cor}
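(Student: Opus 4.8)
The plan is to deduce the four kernel decompositions directly from the bounded regular decompositions of the \emph{full} domain spaces established in Lemma~\ref{bih:lem:highorderregdecobih} (equivalently Theorem~\ref{bih:theo:regdecohigherorder}): one restricts those decompositions to the respective kernels and checks that the higher--regularity component automatically inherits the vanishing of the relevant differential operator. I shall carry out the first identity in detail; the other three are literally the same argument with the obvious substitutions.

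The inclusions ``$\supset$'' are routine. On the one hand $\H{k+1}{\S,\gat,0}(\Rot,\om)\subset\H{k}{\S,\gat,0}(\Rot,\om)$, and likewise $\H{k+1}{\T,\gat,0}(\Div,\om)\subset\H{k}{\T,\gat,0}(\Div,\om)$, $\H{k+1}{\T,\gat,0}(\symRot,\om)\subset\H{k}{\T,\gat,0}(\symRot,\om)$, and $\H{k+2}{\S,\gat,0}(\divDiv,\om)\subset\H{k}{\S,\gat,0}(\divDiv,\om)$. On the other hand, by the complex properties together with the ``strong $=$ weak'' identifications of Corollary~\ref{bih:cor:weakstrongbih} one has $\Gradgrad\H{k+2}{\gat}(\om)\subset\H{k}{\S,\gat,0}(\Rot,\om)$, $\Rot\H{k+1}{\S,\gat}(\om)\subset\H{k}{\T,\gat,0}(\Div,\om)$, $\devGrad\H{k+1}{\gat}(\om)\subset\H{k}{\T,\gat,0}(\symRot,\om)$, and $\symRot\H{k+1}{\T,\gat}(\om)\subset\H{k}{\S,\gat,0}(\divDiv,\om)$. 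For ``$\subset$'', let $S\in\H{k}{\S,\gat,0}(\Rot,\om)\subset\H{k}{\S,\gat}(\Rot,\om)$ and apply the bounded regular decomposition of Lemma~\ref{bih:lem:highorderregdecobih}:
\[
S=\PotQ_{\TRotS,\gat}^{k,1}S+\Gradgrad\PotQ_{\TRotS,\gat}^{k,0}S,\qquad
\PotQ_{\TRotS,\gat}^{k,1}S\in\H{k+1}{\S,\gat}(\om),\qquad
\PotQ_{\TRotS,\gat}^{k,0}S\in\H{k+2}{\gat}(\om).
\]
Since $\Rot\PotQ_{\TRotS,\gat}^{k,1}=\TbRotSgat^{k}=\TRotSgat^{k}$ (Lemma~\ref{bih:lem:highorderregdecobih} and Corollary~\ref{bih:cor:weakstrongbih}) and $\TRotSgat^{k}S=\Rot S=0$, the field $\PotQ_{\TRotS,\gat}^{k,1}S$ has vanishing $\Rot$; in particular $\Rot\PotQ_{\TRotS,\gat}^{k,1}S\in\H{k+1}{}(\om)$, so Corollary~\ref{bih:cor:weakstrongbih} gives $\PotQ_{\TRotS,\gat}^{k,1}S\in\H{k+1}{\S,\gat}(\Rot,\om)$ and hence $\PotQ_{\TRotS,\gat}^{k,1}S\in\H{k+1}{\S,\gat,0}(\Rot,\om)$, while $\Gradgrad\PotQ_{\TRotS,\gat}^{k,0}S\in\Gradgrad\H{k+2}{\gat}(\om)$. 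This yields $S\in\H{k+1}{\S,\gat,0}(\Rot,\om)+\Gradgrad\H{k+2}{\gat}(\om)$. The three remaining identities follow in the very same manner, using instead the relations $\Div\PotQ_{\DivT,\gat}^{k,1}=\DivTgat^{k}$, $\symRot\PotQ_{\SRotT,\gat}^{k,1}=\SRotTgat^{k}$, $\divDiv\PotQ_{\divDivS,\gat}^{k,1}=\divDivSgat^{k}$ and the mapping properties of $\PotQ_{\DivT,\gat}^{k,1}$, $\PotQ_{\SRotT,\gat}^{k,1}$, $\PotQ_{\divDivS,\gat}^{k,1}$ recorded in Lemma~\ref{bih:lem:highorderregdecobih} (together with Corollary~\ref{bih:cor:weakstrongbih}).

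Boundedness comes for free: the kernel decomposition operators are simply the restrictions of the bounded operators $\PotQ_{\dots}^{k,\cdot}$ of Lemma~\ref{bih:lem:highorderregdecobih} to the respective kernels, so each kernel decomposition is again realised by bounded linear operators. I do not expect a genuine obstacle here --- the statement is a corollary in the spirit of the corresponding kernel decompositions for the de Rham and elasticity complexes --- but the single point that needs care is the ``strong $=$ weak'' bookkeeping: Corollary~\ref{bih:cor:weakstrongbih} must be invoked at the correct Sobolev order, both to guarantee that the summands on the right-hand sides actually carry \emph{strong} partial boundary conditions at level $k$ and to ensure that the differential--free component (in $\H{k+1}{}(\om)$, resp.\ $\H{k+2}{}(\om)$) of the decomposition genuinely lands in the \emph{strong} kernel space one regularity level higher.
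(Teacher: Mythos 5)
Your proposal is correct and follows essentially the route the paper intends: the corollary is stated without an explicit proof immediately after Theorem~\ref{bih:theo:regdecohigherorder}, and the intended derivation is precisely to restrict the bounded regular decomposition $\H{k}{\S,\gat}(\Rot,\om)=\H{k+1}{\S,\gat}(\om)+\Gradgrad\H{k+2}{\gat}(\om)$ (and its three analogues from Lemma~\ref{bih:lem:highorderregdecobih}) to the kernel and observe that, since $\Rot\PotQ_{\TRotS,\gat}^{k,1}=\TRotSgat^{k}$ and the complement is in the range of $\Gradgrad$, the regular piece $\PotQ_{\TRotS,\gat}^{k,1}S$ inherits the vanishing of $\Rot$ and lies in $\H{k+1}{\S,\gat,0}(\Rot,\om)$. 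Your explicit appeal to Corollary~\ref{bih:cor:weakstrongbih} is a slight overkill at the one step where it appears (membership in $\H{k+1}{\S,\gat}(\om)$ already implies membership in $\H{}{\S,\gat}(\Rot,\om)$ by density of $\C{\infty}{\S,\gat}(\om)$ in $\H{k+1}{\S,\gat}(\om)$, since $k+1\geq1$), but the citation is harmless and the overall argument, including the reverse inclusions and the boundedness via restriction of the decomposition operators, is sound.
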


As in \cite[Remark 3.27, Theorem 3.28]{PS2021d}
and \cite[Theorem 4.18, Remark 4.19, Theorem 5.2, Remark 5.3]{PS2021b},
cf.~\cite[Sections 2.3 and 2.4]{PS2021b},
there is a collection of results about the 
bounded regular decomposition operators, 
see Remark \ref{bih:rem:regdecohigherorder} and Remark \ref{bih:rem:regdecohigherorderproj}
of Appendix \ref{sec:someremarks}.

Corollary \ref{bih:cor:regdecokernelhigherorder} shows:

\begin{cor}[bounded regular higher order kernel decompositions]
\label{bih:cor:regdecokernelhigherorder2}
For $k,\ell\geq0$ the bounded regular kernel decompositions
\begin{align*}
N(\TRotSgat^{k})
=\H{k}{\S,\gat,0}(\Rot,\om)
&=\H{\ell}{\S,\gat,0}(\Rot,\om)
+\Gradgrad\H{k+2}{\gat}(\om),\\
N(\DivTgat^{k})
=\H{k}{\T,\gat,0}(\Div,\om)
&=\H{\ell}{\T,\gat,0}(\Div,\om)
+\Rot\H{k+1}{\S,\gat}(\om),\\
N(\SRotTgat^{k})
=\H{k}{\T,\gat,0}(\symRot,\om)
&=\H{\ell}{\T,\gat,0}(\symRot,\om)
+\devGrad\H{k+1}{\gat}(\om),\\
N(\divDivSgat^{k})
=\H{k}{\S,\gat,0}(\divDiv,\om)
&=\H{\ell}{\S,\gat,0}(\divDiv,\om)
+\symRot\H{k+1}{\T,\gat}(\om)
\end{align*}
hold. In particular, for $k=0$ and all $\ell\geq0$
\begin{align*}
N(\TRotSgat)
=\H{}{\S,\gat,0}(\Rot,\om)
&=\H{\ell}{\S,\gat,0}(\Rot,\om)
+\Gradgrad\H{2}{\gat}(\om),\\
N(\DivTgat)
=\H{}{\T,\gat,0}(\Div,\om)
&=\H{\ell}{\T,\gat,0}(\Div,\om)
+\Rot\H{1}{\S,\gat}(\om),\\
N(\SRotTgat)
=\H{}{\T,\gat,0}(\symRot,\om)
&=\H{\ell}{\T,\gat,0}(\symRot,\om)
+\devGrad\H{1}{\gat}(\om),\\
N(\divDivSgat)
=\H{}{\S,\gat,0}(\divDiv,\om)
&=\H{\ell}{\S,\gat,0}(\divDiv,\om)
+\symRot\H{1}{\T,\gat}(\om).
\end{align*}
\end{cor}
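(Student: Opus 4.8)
The plan is to derive these higher order kernel decompositions from Corollary~\ref{bih:cor:regdecokernelhigherorder} by a finite iteration, exploiting only the monotonicity $\H{j+1}{\gat}(\om)\subset\H{j}{\gat}(\om)$ of the Sobolev scale together with the complex properties, for instance $\Gradgrad\H{k+2}{\gat}(\om)\subset\H{k}{\S,\gat,0}(\Rot,\om)$ and its $\Rot$-, $\devGrad$-, $\symRot$-analogues. The identities $N(\TRotSgat^{k})=\H{k}{\S,\gat,0}(\Rot,\om)$, $N(\DivTgat^{k})=\H{k}{\T,\gat,0}(\Div,\om)$, and so on, are immediate from the definition of the kernel spaces, so it remains to prove the four displayed decompositions; we carry this out for $\ell\geq k$, which in particular covers the case $k=0$ with arbitrary $\ell$ singled out at the end of the statement.

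The first step is to rewrite the $\divDivS$-line of Corollary~\ref{bih:cor:regdecokernelhigherorder} in step-by-one form. There it occurs only as $\H{k}{\S,\gat,0}(\divDiv,\om)=\H{k+2}{\S,\gat,0}(\divDiv,\om)+\symRot\H{k+1}{\T,\gat}(\om)$, jumping two Sobolev orders. Since $\H{k+2}{\S,\gat,0}(\divDiv,\om)\subset\H{k+1}{\S,\gat,0}(\divDiv,\om)\subset\H{k}{\S,\gat,0}(\divDiv,\om)$ and $\symRot\H{k+1}{\T,\gat}(\om)\subset\H{k}{\S,\gat,0}(\divDiv,\om)$, inserting $\H{k+1}{\S,\gat,0}(\divDiv,\om)$ into that identity forces
$$\H{k}{\S,\gat,0}(\divDiv,\om)=\H{k+1}{\S,\gat,0}(\divDiv,\om)+\symRot\H{k+1}{\T,\gat}(\om),\qquad k\geq0,$$
and composing the decomposition operator from Corollary~\ref{bih:cor:regdecokernelhigherorder} with the bounded inclusion $\H{k+2}{\S,\gat,0}(\divDiv,\om)\incl\H{k+1}{\S,\gat,0}(\divDiv,\om)$ shows this refined splitting is again realised by bounded regular operators. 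For $\TRotS$, $\DivT$, and $\SRotT$ the step-by-one decompositions are exactly the first three lines of Corollary~\ref{bih:cor:regdecokernelhigherorder}.

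It then remains to iterate. Fix $\ell\geq k$ and induct on $m:=\ell-k$: the case $m=0$ is trivial because the second summand is contained in the kernel space on the left, and for the inductive step one substitutes the step-by-one decomposition of the level-$\ell$ kernel term into the level-$k$ splitting and absorbs the surplus regularity. Concretely,
\begin{align*}
\H{k}{\S,\gat,0}(\Rot,\om)
&=\H{\ell}{\S,\gat,0}(\Rot,\om)+\Gradgrad\H{k+2}{\gat}(\om)\\
&=\H{\ell+1}{\S,\gat,0}(\Rot,\om)+\Gradgrad\H{\ell+2}{\gat}(\om)+\Gradgrad\H{k+2}{\gat}(\om)\\
&=\H{\ell+1}{\S,\gat,0}(\Rot,\om)+\Gradgrad\H{k+2}{\gat}(\om),
\end{align*}
where the last equality uses $\Gradgrad\H{\ell+2}{\gat}(\om)\subset\Gradgrad\H{k+2}{\gat}(\om)$ (valid since $\ell\geq k$); the $\DivT$-, $\SRotT$-, and $\divDivS$-lines are handled in the same way with $\Gradgrad$ and $\H{k+2}{\gat}(\om)$ replaced by $\Rot$ and $\H{k+1}{\S,\gat}(\om)$, by $\devGrad$ and $\H{k+1}{\gat}(\om)$, and by $\symRot$ and $\H{k+1}{\T,\gat}(\om)$, respectively. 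After $m$ such steps one arrives at all four asserted identities, and setting $k=0$ gives the displayed ``in particular'' statement. Boundedness is inherited throughout, since each step only composes and adds finitely many of the bounded regular decomposition operators from Corollary~\ref{bih:cor:regdecokernelhigherorder} together with the bounded inclusions $\H{k+j+2}{\gat}(\om)\incl\H{k+2}{\gat}(\om)$ and their $\S$- and $\T$-analogues. The only point needing a little care --- and the natural place for a slip --- is the $\divDivS$-line, where the second order nature of $\divDiv$ makes Corollary~\ref{bih:cor:regdecokernelhigherorder} jump two Sobolev orders at a time; the sandwiching step above is precisely what reinstates an order-by-order iteration there.
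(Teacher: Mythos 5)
Your proof is correct and follows essentially the route the paper intends: iterating Corollary~\ref{bih:cor:regdecokernelhigherorder}, which the paper leaves to the reader. The two points you make explicit are worth keeping: the sandwiching argument that turns the two-order jump in the $\divDivS$-line of Corollary~\ref{bih:cor:regdecokernelhigherorder} into a step-by-one decomposition, and the restriction to $\ell\geq k$; indeed for $\ell<k$ the first summand $\H{\ell}{\cdots,\gat,0}(\cdots,\om)$ strictly contains the left-hand side (since, e.g., $\Gradgrad\H{k+2}{\gat}(\om)\not\subset\H{k+1}{\S}(\om)$), so the corollary's ``$k,\ell\geq0$'' is to be read with the tacit convention $\ell\geq k$, and the ``in particular'' case $k=0$ then covers every $\ell$.
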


\subsection{Dirichlet/Neumann Fields}
\label{bih:sec:dirneuB}%

From Theorem \ref{bih:theo:miniFATzeroorder} (iv) we recall the 
slightly modified orthonormal Helmholtz type decompositions
\begin{align}
\begin{aligned}
\label{bih:helmcoho1}
\L{2}{\S,\eps}(\om)
&=R(\SGradgradgat)
\oplus_{\L{2}{\S,\eps}(\om)}N(\divDivSgan\eps)\\
&=N(\TRotSgat)
\oplus_{\L{2}{\S,\eps}(\om)}R(\eps^{-1}\SRotTgan)\\
&=R(\SGradgradgat)
\oplus_{\L{2}{\S,\eps}(\om)}\Harm{}{\S,\gat,\gan,\eps}(\om)
\oplus_{\L{2}{\S,\eps}(\om)}R(\eps^{-1}\SRotTgan),\\
N(\TRotSgat)
&=R(\SGradgradgat)
\oplus_{\L{2}{\S,\eps}(\om)}\Harm{}{\S,\gat,\gan,\eps}(\om),\\
N(\divDivSgan\eps)
&=\Harm{}{\S,\gat,\gan,\eps}(\om)
\oplus_{\L{2}{\S,\eps}(\om)}R(\eps^{-1}\SRotTgan),\\
\L{2}{\T,\mu}(\om)
&=R(\TGradgat)
\oplus_{\L{2}{\T,\mu}(\om)}N(\DivTgan\mu)\\
&=N(\SRotTgat)
\oplus_{\L{2}{\T,\mu}(\om)}R(\mu^{-1}\TRotSgan)\\
&=R(\TGradgat)
\oplus_{\L{2}{\T,\mu}(\om)}\Harm{}{\T,\gat,\gan,\mu}(\om)
\oplus_{\L{2}{\T,\mu}(\om)}R(\mu^{-1}\TRotSgan),\\
N(\SRotTgat)
&=R(\TGradgat)
\oplus_{\L{2}{\T,\mu}(\om)}\Harm{}{\T,\gat,\gan,\mu}(\om),\\
N(\DivTgan\mu)
&=\Harm{}{\T,\gat,\gan,\mu}(\om)
\oplus_{\L{2}{\T,\mu}(\om)}R(\mu^{-1}\TRotSgan).
\end{aligned}
\end{align}
Let us denote the $\L{2}{\S,\eps}(\om)$- and $\L{2}{\T,\mu}(\om)$-orthonormal projectors onto 
$N(\divDivSgan\eps)$, $N(\TRotSgat)$
and $N(\DivTgan\mu)$, $N(\SRotTgat)$ by
\begin{align*}
\pi_{N(\divDivSgan\eps)}:\L{2}{\S,\eps}(\om)&\to N(\divDivSgan\eps),
&
\pi_{N(\DivTgan\mu)}:\L{2}{\T,\mu}(\om)&\to N(\DivTgan\mu),\\
\pi_{N(\TRotSgat)}:\L{2}{\S,\eps}(\om)&\to N(\TRotSgat),
&
\pi_{N(\SRotTgat)}:\L{2}{\T,\mu}(\om)&\to N(\SRotTgat),
\end{align*}
respectively. Then
\begin{align*}
\pi_{N(\divDivSgan\eps)}|_{N(\TRotSgat)}:N(\TRotSgat)&\to\Harm{}{\S,\gat,\gan,\eps}(\om),\\
\pi_{N(\TRotSgat)}|_{N(\divDivSgan\eps)}:N(\divDivSgan\eps)&\to\Harm{}{\S,\gat,\gan,\eps}(\om),\\
\pi_{N(\DivTgan\mu)}|_{N(\SRotTgat)}:N(\SRotTgat)&\to\Harm{}{\T,\gat,\gan,\mu}(\om),\\
\pi_{N(\SRotTgat)}|_{N(\DivTgan\mu)}:N(\DivTgan\mu)&\to\Harm{}{\T,\gat,\gan,\mu}(\om)
\end{align*}
are onto. Moreover, 
\begin{align*}
\pi_{N(\divDivSgan\eps)}|_{R(\SGradgradgat)}&=0,
&
\pi_{N(\DivTgan\mu)}|_{R(\TGradgat)}&=0,\\
\pi_{N(\TRotSgat)}|_{R(\eps^{-1}\SRotTgan)}&=0,
&
\pi_{N(\SRotTgat)}|_{R(\mu^{-1}\TRotSgan)}&=0,\\
\pi_{N(\divDivSgan\eps)}|_{\Harm{}{\S,\gat,\gan,\eps}(\om)}&=\id_{\Harm{}{\S,\gat,\gan,\eps}(\om)},
&
\pi_{N(\DivTgan\mu)}|_{\Harm{}{\T,\gat,\gan,\mu}(\om)}&=\id_{\Harm{}{\T,\gat,\gan,\mu}(\om)},\\
\pi_{N(\TRotSgat)}|_{\Harm{}{\S,\gat,\gan,\eps}(\om)}&=\id_{\Harm{}{\S,\gat,\gan,\eps}(\om)},
&
\pi_{N(\SRotTgat)}|_{\Harm{}{\T,\gat,\gan,\mu}(\om)}&=\id_{\Harm{}{\T,\gat,\gan,\mu}(\om)}.
\end{align*}
Therefore, by Corollary \ref{bih:cor:regdecokernelhigherorder2} and for all $\ell\geq0$
\begin{align*}
\Harm{}{\S,\gat,\gan,\eps}(\om)
&=\pi_{N(\divDivSgan\eps)}N(\TRotSgat)
=\pi_{N(\divDivSgan\eps)}\H{\ell}{\S,\gat,0}(\Rot,\om),\\
\Harm{}{\S,\gat,\gan,\eps}(\om)
&=\pi_{N(\TRotSgat)}N(\divDivSgan\eps)
=\pi_{N(\TRotSgat)}\eps^{-1}\H{\ell}{\S,\gan,0}(\divDiv,\om),\\
\Harm{}{\T,\gat,\gan,\mu}(\om)
&=\pi_{N(\DivTgan\mu)}N(\SRotTgat)
=\pi_{N(\DivTgan\mu)}\H{\ell}{\T,\gat,0}(\symRot,\om),\\
\Harm{}{\T,\gat,\gan,\mu}(\om)
&=\pi_{N(\SRotTgat)}N(\DivTgan\mu)
=\pi_{N(\SRotTgat)}\mu^{-1}\H{\ell}{\T,\gan,0}(\Div,\om),
\end{align*}
where we have used 
$$N(\divDivSgan\eps)=\eps^{-1}\H{}{\S,\gan,0}(\divDiv,\om),\qquad
N(\DivTgan\mu)=\mu^{-1}\H{}{\T,\gan,0}(\Div,\om).$$

Hence with
\begin{align*}
\H{\infty}{\S,\gat,0}(\Rot,\om)&:=\bigcap_{k\geq0}\H{k}{\S,\gat,0}(\Rot,\om),
&
\H{\infty}{\S,\gan,0}(\divDiv,\om)&:=\bigcap_{k\geq0}\H{k}{\S,\gan,0}(\divDiv,\om),\\
\H{\infty}{\T,\gat,0}(\symRot,\om)&:=\bigcap_{k\geq0}\H{k}{\T,\gat,0}(\symRot,\om),
&
\H{\infty}{\T,\gan,0}(\Div,\om)&:=\bigcap_{k\geq0}\H{k}{\T,\gan,0}(\Div,\om),
\end{align*}
and with the finite numbers
$$d_{\om,\S,\gat}:=\dim\Harm{}{\S,\gat,\gan,\eps}(\om),\qquad
d_{\om,\T,\gat}:=\dim\Harm{}{\T,\gat,\gan,\mu}(\om)$$
we get the following result:

\begin{theo}[smooth pre-bases of Dirichlet/Neumann fields]
\label{bih:theo:cohomologyinfty}
It holds 
\begin{align*}
\pi_{N(\divDivSgan\eps)}\H{\infty}{\S,\gat,0}(\Rot,\om)
&=\Harm{}{\S,\gat,\gan,\eps}(\om)
=\pi_{N(\TRotSgat)}\eps^{-1}\H{\infty}{\S,\gan,0}(\divDiv,\om),\\
\pi_{N(\DivTgan\mu)}\H{\infty}{\T,\gat,0}(\symRot,\om)
&=\Harm{}{\T,\gat,\gan,\mu}(\om)
=\pi_{N(\SRotTgat)}\mu^{-1}\H{\infty}{\T,\gan,0}(\Div,\om).
\end{align*}
Moreover, there exist smooth $\TRotSgat$ and $\divDivSgan$ \emph{pre-bases}
of $\Harm{}{\S,\gat,\gan,\eps}(\om)$
and smooth $\SRotTgat$ and $\DivTgan$ \emph{pre-bases}
of $\Harm{}{\T,\gat,\gan,\mu}(\om)$, i.e.,
there are linear independent smooth fields 
\begin{align*}
\B{\TRotSgat}(\om)
&:=\{\vB{\TRotSgat}{\ell}\}_{\ell=1}^{d_{\om,\S,\gat}}
\subset\H{\infty}{\S,\gat,0}(\Rot,\om),\\
\B{\divDivSgan}(\om)
&:=\{\vB{\divDivSgan}{\ell}\}_{\ell=1}^{d_{\om,\S,\gat}}
\subset\H{\infty}{\S,\gan,0}(\divDiv,\om),\\
\B{\SRotTgat}(\om)
&:=\{\vB{\SRotTgat}{\ell}\}_{\ell=1}^{d_{\om,\T,\gat}}
\subset\H{\infty}{\T,\gat,0}(\symRot,\om),\\
\B{\DivTgan}(\om)
&:=\{\vB{\DivTgan}{\ell}\}_{\ell=1}^{d_{\om,\T,\gat}}
\subset\H{\infty}{\T,\gan,0}(\Div,\om),
\end{align*}
such that $\pi_{N(\divDivSgan\eps)}\B{\TRotSgat}(\om)$ 
and $\pi_{N(\TRotSgat)}\eps^{-1}\B{\divDivSgan}(\om)$
are both bases of $\Harm{}{\S,\gat,\gan,\eps}(\om)$,
and $\pi_{N(\DivTgan\mu)}\B{\SRotTgat}(\om)$ 
and $\pi_{N(\SRotTgat)}\mu^{-1}\B{\DivTgan}(\om)$
are both bases of $\Harm{}{\T,\gat,\gan,\mu}(\om)$. 
In particular,
\begin{align*}
\Lin\pi_{N(\divDivSgan\eps)}\B{\TRotSgat}(\om)
&=\Harm{}{\S,\gat,\gan,\eps}(\om)
=\Lin\pi_{N(\TRotSgat)}\eps^{-1}\B{\divDivSgan}(\om),\\
\Lin\pi_{N(\DivTgan\mu)}\B{\SRotTgat}(\om)
&=\Harm{}{\T,\gat,\gan,\mu}(\om)
=\Lin\pi_{N(\SRotTgat)}\mu^{-1}\B{\DivTgan}(\om).
\end{align*}
\end{theo}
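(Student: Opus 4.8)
The plan is to derive everything from the single surjectivity statement
$$\pi_{N(\divDivSgan\eps)}\H{\infty}{\S,\gat,0}(\Rot,\om)=\Harm{}{\S,\gat,\gan,\eps}(\om),$$
its ``dual'' analogue $\pi_{N(\TRotSgat)}\eps^{-1}\H{\infty}{\S,\gan,0}(\divDiv,\om)=\Harm{}{\S,\gat,\gan,\eps}(\om)$, and the two companions on the second (formally dual) biharmonic complex obtained verbatim via the symmetries $\S\leftrightarrow\T$, $\Rot\leftrightarrow\symRot$, $\divDiv\leftrightarrow\Div$, $\gat\leftrightarrow\gan$. Once these four identities hold, the pre-bases are produced by hand: $\Harm{}{\S,\gat,\gan,\eps}(\om)$ is finite dimensional of dimension $d:=d_{\om,\S,\gat}$ by Theorem \ref{bih:theo:miniFATzeroorder}(iii); I fix a basis $h_{1},\dots,h_{d}$ of it and use surjectivity to choose $\vB{\TRotSgat}{\ell}\in\H{\infty}{\S,\gat,0}(\Rot,\om)$ and $\vB{\divDivSgan}{\ell}\in\H{\infty}{\S,\gan,0}(\divDiv,\om)$ with $\pi_{N(\divDivSgan\eps)}\vB{\TRotSgat}{\ell}=h_{\ell}$ and $\pi_{N(\TRotSgat)}\eps^{-1}\vB{\divDivSgan}{\ell}=h_{\ell}$. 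The fields $\vB{\TRotSgat}{\ell}$ resp. $\vB{\divDivSgan}{\ell}$ are linearly independent because their projections $h_{\ell}$ are, and $\pi_{N(\divDivSgan\eps)}\B{\TRotSgat}(\om)=\pi_{N(\TRotSgat)}\eps^{-1}\B{\divDivSgan}(\om)=\{h_{1},\dots,h_{d}\}$ is by construction a basis of $\Harm{}{\S,\gat,\gan,\eps}(\om)$, which also yields the asserted $\Lin$-identities; the $\B{\SRotTgat}(\om)$, $\B{\DivTgan}(\om)$ pre-bases of $\Harm{}{\T,\gat,\gan,\mu}(\om)$ arise in exactly the same way.

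For the surjectivity itself the inclusion ``$\subseteq$'' is immediate, since $\H{\infty}{\S,\gat,0}(\Rot,\om)\subset\H{}{\S,\gat,0}(\Rot,\om)=N(\TRotSgat)$ and $\pi_{N(\divDivSgan\eps)}N(\TRotSgat)=\Harm{}{\S,\gat,\gan,\eps}(\om)$ by the Helmholtz decompositions \eqref{bih:helmcoho1}. For ``$\supseteq$'' I would run a Mittag-Leffler type correction. The inputs are: (a) $\pi_{N(\divDivSgan\eps)}\H{\ell}{\S,\gat,0}(\Rot,\om)=\Harm{}{\S,\gat,\gan,\eps}(\om)$ for \emph{every} finite $\ell\geq0$, already noted above from Corollary \ref{bih:cor:regdecokernelhigherorder2} and \eqref{bih:helmcoho1}; (b) $\ker\big(\pi_{N(\divDivSgan\eps)}|_{\H{\ell}{\S,\gat,0}(\Rot,\om)}\big)=\H{\ell}{\S,\gat,0}(\Rot,\om)\cap R(\SGradgradgat)=R(\SGradgradgat^{\ell})=\Gradgrad\H{\ell+2}{\gat}(\om)$ by Theorem \ref{bih:theo:rangeselawithoutbdpot}; and (c) $\Gradgrad\H{\ell+3}{\gat}(\om)$ is dense in $\Gradgrad\H{\ell+2}{\gat}(\om)$ in the $\H{\ell}{\S}(\om)$-norm, which follows from the density of $\C{\infty}{\gat}(\om)$ in $\H{\ell+2}{\gat}(\om)$ together with the boundedness of $\Gradgrad\colon\H{\ell+2}{\gat}(\om)\to\H{\ell}{\S,\gat}(\om)$. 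Given $h\in\Harm{}{\S,\gat,\gan,\eps}(\om)$ I construct $T_{\ell}\in\H{\ell}{\S,\gat,0}(\Rot,\om)$ with $\pi_{N(\divDivSgan\eps)}T_{\ell}=h$ and $\norm{T_{\ell+1}-T_{\ell}}_{\H{\ell}{\S}(\om)}\leq 2^{-\ell}$: put $T_{0}:=h$; given $T_{\ell}$, pick $S\in\H{\ell+1}{\S,\gat,0}(\Rot,\om)$ with $\pi_{N(\divDivSgan\eps)}S=h$ by (a), so $S-T_{\ell}\in\Gradgrad\H{\ell+2}{\gat}(\om)$ by (b); approximate $S-T_{\ell}$ within $2^{-\ell}$ in $\H{\ell}{\S}(\om)$ by $w\in\Gradgrad\H{\ell+3}{\gat}(\om)\subset\H{\ell+1}{\S,\gat,0}(\Rot,\om)$ by (c), and set $T_{\ell+1}:=S-w$; since $w\in R(\SGradgradgat)=\ker\big(\pi_{N(\divDivSgan\eps)}|_{N(\TRotSgat)}\big)$ this keeps $\pi_{N(\divDivSgan\eps)}T_{\ell+1}=h$. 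Then $(T_{\ell})$ is Cauchy in every $\H{j}{\S}(\om)$, its limit $S_{h}$ lies in $\bigcap_{j}\H{j}{\S,\gat,0}(\Rot,\om)=\H{\infty}{\S,\gat,0}(\Rot,\om)$ (each $\H{j}{\S,\gat,0}(\Rot,\om)$ being closed in $\H{j}{\S}(\om)$), and $\pi_{N(\divDivSgan\eps)}S_{h}=h$ by continuity of the orthogonal projector on $\L{2}{\S,\eps}(\om)$. The ``dual'' identity is obtained by the same scheme using the $\divDiv$-kernel decomposition of Corollary \ref{bih:cor:regdecokernelhigherorder2}, $N(\divDivSgan\eps)=\Harm{}{\S,\gat,\gan,\eps}(\om)\oplus R(\eps^{-1}\SRotTgan)$, and $\pi_{N(\TRotSgat)}|_{R(\eps^{-1}\SRotTgan)}=0$; the two $\T$-identities follow by symmetry.

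The hard part is exactly the passage from (a) to ``$\supseteq$'': Corollary \ref{bih:cor:regdecokernelhigherorder2} only supplies $\H{\ell}$-regular representatives for each \emph{finite} $\ell$, with regularity that genuinely depends on $\ell$, and upgrading to a single $\H{\infty}$-representative is not formal, since a decreasing sequence of non-empty closed affine subspaces of a Hilbert space can have empty intersection. What makes the Mittag-Leffler correction go through is that the gradient part $R(\SGradgradgat)$ is simultaneously the kernel of $\pi_{N(\divDivSgan\eps)}|_{N(\TRotSgat)}$ and --- by Theorem \ref{bih:theo:rangeselawithoutbdpot} --- an explicitly identified, smoothly approximable subspace. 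The remaining steps are bookkeeping with \eqref{bih:helmcoho1}, Corollary \ref{bih:cor:regdecokernelhigherorder2}, and the finiteness of $d_{\om,\S,\gat}$ and $d_{\om,\T,\gat}$.
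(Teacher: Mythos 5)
Your proof is correct, and it supplies a genuine argument where the paper gives none: the text derives $\pi_{N(\divDivSgan\eps)}\H{\ell}{\S,\gat,0}(\Rot,\om)=\Harm{}{\S,\gat,\gan,\eps}(\om)$ for every \emph{finite} $\ell$ from Corollary~\ref{bih:cor:regdecokernelhigherorder2} and \eqref{bih:helmcoho1}, and then passes to $\H{\infty}{}$ with a bare ``Hence''. You are right that this passage is not automatic (a decreasing chain of nonempty closed affine sets in Hilbert spaces can have empty intersection), and the Mittag--Leffler correction is exactly what closes it. Your bookkeeping checks out: $\ker\big(\pi_{N(\divDivSgan\eps)}|_{\H{\ell}{\S,\gat,0}(\Rot,\om)}\big)=\H{\ell}{\S,\gat,0}(\Rot,\om)\cap R(\SGradgradgat)=R(\SGradgradgat^{\ell})=\Gradgrad\H{\ell+2}{\gat}(\om)$ is indeed what Theorem~\ref{bih:theo:rangeselawithoutbdpot} gives; the density of $\Gradgrad\H{\ell+3}{\gat}(\om)$ in $\Gradgrad\H{\ell+2}{\gat}(\om)$ in the $\H{\ell}{\S}(\om)$--norm follows as you say from $\H{\ell+2}{\gat}(\om)=\ol{\C{\infty}{\gat}(\om)}^{\H{\ell+2}{}(\om)}$, $\C{\infty}{\gat}(\om)\subset\H{\ell+3}{\gat}(\om)$, and the boundedness of $\Gradgrad$; each $\H{j}{\S,\gat,0}(\Rot,\om)=N(\TRotSgat^{j})$ is closed in $\H{j}{\S}(\om)$ as the kernel of a closed operator, so the limit $S_{h}$ lands in $\H{\infty}{\S,\gat,0}(\Rot,\om)$; and $\pi_{N(\divDivSgan\eps)}$ is $\L{2}{\S,\eps}(\om)$--continuous, so $\pi_{N(\divDivSgan\eps)}S_{h}=h$. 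The reduction of the pre-basis construction and linear independence to this surjectivity is routine and correct, as is the treatment of the ``dual'' identity via $N(\divDivSgan\eps)=\Harm{}{\S,\gat,\gan,\eps}(\om)\oplus R(\eps^{-1}\SRotTgan)$ and $\pi_{N(\TRotSgat)}|_{R(\eps^{-1}\SRotTgan)}=0$ (run the correction in the unweighted space $\H{\ell}{\S,\gan,0}(\divDiv,\om)$ against the map $S\mapsto\pi_{N(\TRotSgat)}\eps^{-1}S$, with kernel $R(\SRotTgan^{\ell})=\symRot\H{\ell+1}{\T,\gan}(\om)$), and the two $\T$-statements by the same scheme.

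One remark on framing: your correction is equivalent to showing that $\H{\infty}{\S,\gat,0}(\Rot,\om)$ is a \emph{dense} subspace of $N(\TRotSgat)$ in the $\L{2}{\S}(\om)$--norm, via the dense tower $V_{\ell+1}\subset V_{\ell}$ coming from $V_{\ell}=V_{\ell+1}+K_{\ell}$ and $K_{\ell}=\ol{\Gradgrad\C{\infty}{\gat}(\om)}^{\H{\ell}{\S}(\om)}\subset\ol{V_{\ell+1}}^{\H{\ell}{\S}(\om)}$. Once that density is in hand, surjectivity of $\pi_{N(\divDivSgan\eps)}|_{\H{\infty}{}}$ is immediate because $\pi_{N(\divDivSgan\eps)}$ is continuous and its image is then a dense subspace of the \emph{finite-dimensional} $\Harm{}{\S,\gat,\gan,\eps}(\om)$, hence all of it. This packages the $\dim\Harm<\infty$ ingredient more transparently but uses the same Mittag--Leffler step underneath; the two formulations are interchangeable, and your version has the merit of producing the lift $S_{h}$ explicitly.
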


Note that, e.g., $(1-\pi_{N(\divDivSgan\eps)})$ is the $\L{2}{\S,\eps}(\om)$-orthonormal projector 
onto $R(\SGradgradgat)$.
By \eqref{bih:helmcoho1}, Theorem \ref{bih:theo:rangeselawithoutbdpot},
and Theorem \ref{bih:theo:cohomologyinfty} we compute
\begin{align}
\begin{aligned}
\label{bih:helmcoho2}
\H{}{\S,\gat,0}(\Rot,\om)
&=R(\SGradgradgat)
\oplus_{\L{2}{\S,\eps}(\om)}
\Harm{}{\S,\gat,\gan,\eps}(\om)\\
&=R(\SGradgradgat)
\oplus_{\L{2}{\S,\eps}(\om)}
\Lin\pi_{N(\divDivSgan\eps)}\B{\TRotSgat}(\om)\\
&=R(\SGradgradgat)
+(\pi_{N(\divDivSgan\eps)}-1)\Lin\B{\TRotSgat}(\om)
+\Lin\B{\TRotSgat}(\om)\\
&=R(\SGradgradgat)
+\Lin\B{\TRotSgat}(\om),\\
\H{k}{\S,\gat,0}(\Rot,\om)
&=R(\SGradgradgat)\cap\H{k}{\S,\gat,0}(\Rot,\om)
+\Lin\B{\TRotSgat}(\om),\\
&=R(\SGradgradgat^{k})
+\Lin\B{\TRotSgat}(\om).
\end{aligned}
\end{align}

Similarly, we obtain decompositions of 
$\H{k}{\S,\gan,0}(\divDiv,\om)$, $\H{}{\T,\gat,0}(\symRot,\om)$,
and $\H{k}{\T,\gan,0}(\Div,\om)$ 
using $\B{\divDivSgan}(\om)$, $\B{\SRotTgat}(\om)$, and $\B{\DivTgan}(\om)$, 
respectively. We conclude:

\begin{theo}[bounded regular direct decompositions]
\label{bih:theo:highorderregdecoinfty}
Let $k\geq0$. 
Then the bounded regular direct decompositions
\begin{align*}
\H{k}{\S,\gat}(\Rot,\om)
&=R(\widetilde\PotQ_{\TRotS,\gat}^{k,1})
\dotplus\H{k}{\S,\gat,0}(\Rot,\om),\\
\H{k}{\S,\gat,0}(\Rot,\om)
&=\Gradgrad\H{k+2}{\gat}(\om)
\dotplus\Lin\B{\TRotSgat}(\om),\\
\H{k}{\T,\gan}(\Div,\om)
&=R(\widetilde\PotQ_{\DivT,\gan}^{k,1})
\dotplus\H{k}{\T,\gan,0}(\Div,\om),\\
\H{k}{\T,\gan,0}(\Div,\om)
&=\Rot\H{k+1}{\S,\gan}(\om)
\dotplus\Lin\B{\DivTgan}(\om),\\
\H{k}{\T,\gat}(\symRot,\om)
&=R(\widetilde\PotQ_{\SRotT,\gat}^{k,1})
\dotplus\H{k}{\T,\gat,0}(\symRot,\om),\\
\H{k}{\T,\gat,0}(\symRot,\om)
&=\devGrad\H{k+1}{\gat}(\om)
\dotplus\Lin\B{\SRotTgat}(\om),\\
\H{k}{\S,\gan}(\divDiv,\om)
&=R(\widetilde\PotQ_{\divDivS,\gan}^{k,1})
\dotplus\H{k}{\S,\gan,0}(\divDiv,\om),\\
\H{k+1,k}{\S,\gan}(\divDiv,\om)
&=R(\widetilde\PotQ_{\divDivS,\gan}^{k+1,k,1})
\dotplus\H{k+1}{\S,\gan,0}(\divDiv,\om),\\
\H{k}{\S,\gan,0}(\divDiv,\om)
&=\symRot\H{k+1}{\T,\gan}(\om)
\dotplus\Lin\B{\divDivSgan}(\om)
\end{align*}
hold. Note that 
$R(\widetilde\PotQ_{\TRotS,\gat}^{k,1})\subset\H{k+1}{\S,\gat}(\om)$,
$R(\widetilde\PotQ_{\DivT,\gan}^{k,1})\subset\H{k+1}{\T,\gan}(\om)$,
$R(\widetilde\PotQ_{\SRotT,\gat}^{k,1})\subset\H{k+1}{\T,\gat}(\om)$,
and $R(\widetilde\PotQ_{\divDivS,\gan}^{k,1}),R(\widetilde\PotQ_{\divDivS,\gan}^{k+1,k,1})\subset\H{k+2}{\S,\gan}(\om)$.
\end{theo}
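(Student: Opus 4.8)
The plan is to assemble the nine displayed splittings from results already in hand; the analytically substantial work --- compact embeddings, bounded regular decompositions, and the smooth pre-bases of the cohomology --- has already been carried out in Theorems~\ref{bih:theo:regdecohigherorder}, \ref{bih:theo:rangeselawithoutbdpot}, \ref{bih:theo:regpothigherorder}, and \ref{bih:theo:cohomologyinfty}. First, the five decompositions
\[
\H{k}{\S,\gat}(\Rot,\om)=R(\widetilde\PotQ_{\TRotS,\gat}^{k,1})\dotplus\H{k}{\S,\gat,0}(\Rot,\om)
\]
and its four analogues --- for $\DivTgan^{k}$, $\SRotTgat^{k}$, $\divDivSgan^{k}$, and the non-standard $\H{k+1,k}{\S,\gan}(\divDiv,\om)$ --- together with the inclusions $R(\widetilde\PotQ_{\TRotS,\gat}^{k,1})\subset\H{k+1}{\S,\gat}(\om)$ and the like, are precisely Theorem~\ref{bih:theo:regdecohigherorder} (applied with $\gat$ and, after the interchange $\gat\leftrightarrow\gan$, with $\gan$); for these nothing remains to be shown.

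It therefore remains to establish the four cohomological kernel splittings, of which I carry out
\[
\H{k}{\S,\gat,0}(\Rot,\om)=\Gradgrad\H{k+2}{\gat}(\om)\dotplus\Lin\B{\TRotSgat}(\om)
\]
in detail, the other three being structurally identical after relabelling. That the right-hand side exhausts the space is the last line of the computation \eqref{bih:helmcoho2}, which gives $\H{k}{\S,\gat,0}(\Rot,\om)=R(\SGradgradgat^{k})+\Lin\B{\TRotSgat}(\om)$, together with $R(\SGradgradgat^{k})=\Gradgrad\H{k+2}{\gat}(\om)$ from Theorem~\ref{bih:theo:rangeselawithoutbdpot}. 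For directness I would take $\sum_{\ell}c_{\ell}\vB{\TRotSgat}{\ell}\in\Gradgrad\H{k+2}{\gat}(\om)\subset R(\SGradgradgat)$ and apply the $\L{2}{\S,\eps}(\om)$-orthonormal projector $\pi_{N(\divDivSgan\eps)}$: since $\pi_{N(\divDivSgan\eps)}$ annihilates $R(\SGradgradgat)$ by \eqref{bih:helmcoho1} and maps $\B{\TRotSgat}(\om)$ onto a basis of $\Harm{}{\S,\gat,\gan,\eps}(\om)$ by Theorem~\ref{bih:theo:cohomologyinfty}, all coefficients $c_{\ell}$ vanish.

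For boundedness of the decomposition operators I would argue as follows. For $S\in\H{k}{\S,\gat,0}(\Rot,\om)\subset N(\TRotSgat)$ the field $\pi_{N(\divDivSgan\eps)}S$ lies in the finite-dimensional space $\Harm{}{\S,\gat,\gan,\eps}(\om)$, so expanding it in the basis $\{\pi_{N(\divDivSgan\eps)}\vB{\TRotSgat}{\ell}\}_{\ell}$ produces coefficients $c_{\ell}(S)$ that depend linearly and boundedly on $\norm{S}_{\L{2}{\S}(\om)}$, hence on $\norm{S}_{\H{k}{\S}(\om)}$. Since the $\vB{\TRotSgat}{\ell}$ are smooth, $S\mapsto\sum_{\ell}c_{\ell}(S)\vB{\TRotSgat}{\ell}$ is a bounded projection of $\H{k}{\S,\gat,0}(\Rot,\om)$ onto $\Lin\B{\TRotSgat}(\om)$, and its complement $S\mapsto S-\sum_{\ell}c_{\ell}(S)\vB{\TRotSgat}{\ell}$ has range contained in $N(\TRotSgat)\cap N(\pi_{N(\divDivSgan\eps)})\cap\H{k}{\S,\gat}(\om)=R(\SGradgradgat)\cap\H{k}{\S,\gat}(\om)=R(\SGradgradgat^{k})=\Gradgrad\H{k+2}{\gat}(\om)$ by \eqref{bih:helmcoho1} and Theorem~\ref{bih:theo:rangeselawithoutbdpot}; composing this with the bounded potential $\PotP_{\SGradgrad,\gat}^{k}$ of Theorem~\ref{bih:theo:regpothigherorder} yields the $\H{k+2}{\gat}(\om)$-valued decomposition operator. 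The remaining three cohomological splittings go through verbatim with $\B{\DivTgan}(\om)$, $\B{\SRotTgat}(\om)$, $\B{\divDivSgan}(\om)$ replacing $\B{\TRotSgat}(\om)$, the Helmholtz projectors $\pi_{N(\SRotTgat)}$, $\pi_{N(\DivTgan\mu)}$, $\pi_{N(\TRotSgat)}$ from \eqref{bih:helmcoho1} replacing $\pi_{N(\divDivSgan\eps)}$, and the matching range identities of Theorem~\ref{bih:theo:rangeselawithoutbdpot}. The one point requiring genuine care --- and the step I expect to be the main, if mild, obstacle --- is verifying that removing the finite-dimensional cohomological tail preserves both the partial boundary condition and the Sobolev order, so that the higher-order bounded potentials really apply; this is exactly what the identities $R(\SGradgradgat)\cap\H{k}{\S,\gat}(\om)=R(\SGradgradgat^{k})$ and their analogues in Theorem~\ref{bih:theo:rangeselawithoutbdpot} are there to guarantee.
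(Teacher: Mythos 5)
Your proof is correct and follows essentially the same route as the paper's: the first five splittings are read off from Theorem~\ref{bih:theo:regdecohigherorder}, exhaustion of the kernel splittings comes from \eqref{bih:helmcoho2} together with the range identities of Theorem~\ref{bih:theo:rangeselawithoutbdpot}, directness follows by applying $\pi_{N(\divDivSgan\eps)}$ (which kills $R(\SGradgradgat)$ and is injective on $\Lin\B{\TRotSgat}(\om)$ by Theorem~\ref{bih:theo:cohomologyinfty}), and boundedness uses the finite-dimensional isomorphism onto $\Harm{}{\S,\gat,\gan,\eps}(\om)$ plus the bounded potential $\PotP_{\SGradgrad,\gat}^{k}$. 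The only cosmetic difference is that you build the decomposition operator explicitly from the coefficient functionals $c_{\ell}(S)$, whereas the paper starts from an arbitrary algebraic decomposition $S=\Gradgrad u+B$ and then replaces $u$ by $\PotP_{\SGradgrad,\gat}^{k}\Gradgrad u$ and bounds $B$ via $I_{\pi_{N(\divDivSgan\eps)}}$ — the estimates are identical.
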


See Appendix \ref{sec:someproof} for a proof.

\begin{rem}[bounded regular direct decompositions]
\label{bih:rem:highorderregdecoinfty}
In particular, for $k=0$
\begin{align*}
\H{}{\S,\gat}(\Rot,\om)
&=R(\widetilde\PotQ_{\TRotS,\gat}^{0,1})
\dotplus\H{}{\S,\gat,0}(\Rot,\om),\\
\H{}{\S,\gat,0}(\Rot,\om)
&=\Gradgrad\H{2}{\gat}(\om)
\dotplus\Lin\B{\TRotSgat}(\om)\\
&=\Gradgrad\H{2}{\gat}(\om)
\oplus_{\L{2}{\S,\eps}(\om)}
\Harm{}{\S,\gat,\gan,\eps}(\om),\\
\H{}{\T,\gan}(\Div,\om)
&=R(\widetilde\PotQ_{\DivT,\gan}^{0,1})
\dotplus\H{}{\T,\gan,0}(\Div,\om),\\
\mu^{-1}\H{0}{\T,\gan,0}(\Div,\om)
&=\mu^{-1}\Rot\H{1}{\S,\gan}(\om)
\dotplus\mu^{-1}\Lin\B{\DivTgan}(\om)\\
&=\mu^{-1}\Rot\H{1}{\S,\gan}(\om)
\oplus_{\L{2}{\T,\mu}(\om)}
\Harm{}{\T,\gat,\gan,\mu}(\om),\\
\H{}{\T,\gat}(\symRot,\om)
&=R(\widetilde\PotQ_{\SRotT,\gat}^{0,1})
\dotplus\H{}{\T,\gat,0}(\symRot,\om),\\
\H{}{\T,\gat,0}(\symRot,\om)
&=\devGrad\H{1}{\gat}(\om)
\dotplus\Lin\B{\SRotTgat}(\om)\\
&=\devGrad\H{1}{\gat}(\om)
\oplus_{\L{2}{\T,\mu}(\om)}
\Harm{}{\T,\gat,\gan,\mu}(\om),\\
\H{}{\S,\gan}(\divDiv,\om)
&=R(\widetilde\PotQ_{\divDivS,\gan}^{0,1})
\dotplus\H{}{\S,\gan,0}(\divDiv,\om),\\
\eps^{-1}\H{}{\S,\gan,0}(\divDiv,\om)
&=\eps^{-1}\symRot\H{1}{\T,\gan}(\om)
\dotplus\eps^{-1}\Lin\B{\divDivSgan}(\om)\\
&=\eps^{-1}\symRot\H{1}{\T,\gan}(\om)
\oplus_{\L{2}{\S,\eps}(\om)}
\Harm{}{\S,\gat,\gan,\eps}(\om),
\intertext{and}
\L{2}{\S,\eps}(\om)
&=\H{}{\S,\gat,0}(\Rot,\om)
\oplus_{\L{2}{\S,\eps}(\om)}
\eps^{-1}\symRot\H{1}{\T,\gan}(\om)\\
&=\Gradgrad\H{2}{\gat}(\om)
\oplus_{\L{2}{\S,\eps}(\om)}
\eps^{-1}\H{}{\S,\gan,0}(\divDiv,\om),\\
\L{2}{\T,\mu}(\om)
&=\H{}{\T,\gat,0}(\symRot,\om)
\oplus_{\L{2}{\T,\mu}(\om)}
\mu^{-1}\Rot\H{1}{\S,\gan}(\om)\\
&=\devGrad\H{1}{\gat}(\om)
\oplus_{\L{2}{\T,\mu}(\om)}
\mu^{-1}\H{}{\T,\gan,0}(\Div,\om).
\end{align*}
\end{rem}

By the latter theorem we have bounded linear regular (direct) decompositions
\begin{align}
\label{bih:rem:highorderregdecoinfty2beforerem}
\begin{aligned}
\H{k}{\S,\gat}(\Rot,\om)
&=R(\widetilde\PotQ_{\TRotS,\gat}^{k,1})
\dotplus\Lin\B{\TRotSgat}(\om)
\dotplus\Gradgrad\H{k+2}{\gat}(\om)\\
&=\H{k+1}{\S,\gat}(\om)
+\Gradgrad\H{k+2}{\gat}(\om),\\
\H{k}{\T,\gan}(\Div,\om)
&=R(\widetilde\PotQ_{\DivT,\gan}^{k,1})
\dotplus\Lin\B{\DivTgan}(\om)
\dotplus\Rot\H{k+1}{\S,\gan}(\om)\\
&=\H{k+1}{\T,\gan}(\om)
+\Rot\H{k+1}{\S,\gan}(\om),\\
\H{k}{\T,\gat}(\symRot,\om)
&=R(\widetilde\PotQ_{\SRotT,\gat}^{k,1})
\dotplus\Lin\B{\SRotTgat}(\om)
\dotplus\devGrad\H{k+1}{\gat}(\om)\\
&=\H{k+1}{\T,\gat}(\om)
+\devGrad\H{k+1}{\gat}(\om),\\
\H{k}{\S,\gan}(\divDiv,\om)
&=R(\widetilde\PotQ_{\divDivS,\gan}^{k,1})
\dotplus\Lin\B{\divDivSgan}(\om)
\dotplus\symRot\H{k+1}{\T,\gan}(\om)\\
&=\H{k+2}{\S,\gan}(\om)
+\symRot\H{k+1}{\T,\gan}(\om),\\
\H{k+1,k}{\S,\gan}(\divDiv,\om)
&=R(\widetilde\PotQ_{\divDivS,\gan}^{k+1,k,1})
\dotplus\Lin\B{\divDivSgan}(\om)
\dotplus\symRot\H{k+2}{\T,\gan}(\om)\\
&=\H{k+2}{\S,\gan}(\om)
+\symRot\H{k+2}{\T,\gan}(\om),
\end{aligned}
\end{align}
see Remark \ref{bih:rem:highorderregdecoinfty2} for more details
on these decompositions and the corresponding 
bounded linear regular direct decomposition operators.
Noting
\begin{align}
\begin{aligned}
\label{bih:Bortho}
R(\eps^{-1}\SRotTgan)&\bot_{\L{2}{\S,\eps}(\om)}\B{\TRotSgat}(\om),
&
R(\SGradgradgat)&\bot_{\L{2}{\S}(\om)}\B{\divDivSgan}(\om),\\
R(\mu^{-1}\TRotSgan)&\bot_{\L{2}{\T,\mu}(\om)}\B{\SRotTgat}(\om),
&
R(\TGradgat)&\bot_{\L{2}{\T}(\om)}\B{\DivTgan}(\om)
\end{aligned}
\end{align}
we see:

\begin{theo}[alternative Dirichlet/Neumann projections]
\label{bih:theo:Bcoho1}
It holds
\begin{align*}
\Harm{}{\S,\gat,\gan,\eps}(\om)\cap\B{\TRotSgat}(\om)^{\bot_{\L{2}{\S,\eps}(\om)}}
&=\{0\},\\
N(\divDivSgan\eps)\cap\B{\TRotSgat}(\om)^{\bot_{\L{2}{\S,\eps}(\om)}}
&=R(\eps^{-1}\SRotTgan),\\
\Harm{}{\S,\gat,\gan,\eps}(\om)\cap\B{\divDivSgan}(\om)^{\bot_{\L{2}{\S}(\om)}}
&=\{0\},\\
N(\TRotSgat)\cap\B{\divDivSgan}(\om)^{\bot_{\L{2}{\S}(\om)}}
&=R(\SGradgradgat),\\
\Harm{}{\T,\gat,\gan,\mu}(\om)\cap\B{\SRotTgat}(\om)^{\bot_{\L{2}{\T,\mu}(\om)}}
&=\{0\},\\
N(\DivTgan\eps)\cap\B{\SRotTgat}(\om)^{\bot_{\L{2}{\T,\mu}(\om)}}
&=R(\mu^{-1}\TRotSgan),\\
\Harm{}{\T,\gat,\gan,\mu}(\om)\cap\B{\DivTgan}(\om)^{\bot_{\L{2}{\T}(\om)}}
&=\{0\},\\
N(\SRotTgat)\cap\B{\DivTgan}(\om)^{\bot_{\L{2}{\T}(\om)}}
&=R(\TGradgat).
\end{align*}
Moreover, for all $k\geq0$
\begin{align*}
N(\divDivSgan^{k}\eps)\cap\B{\TRotSgat}(\om)^{\bot_{\L{2}{\S,\eps}(\om)}}
&=R(\eps^{-1}\SRotTgan^{k})
=\eps^{-1}\symRot\H{k+1}{\T,\gan}(\om),\\
N(\TRotSgat^{k})\cap\B{\divDivSgan}(\om)^{\bot_{\L{2}{\S}(\om)}}
&=R(\SGradgradgat^{k})
=\Gradgrad\H{k+2}{\gat}(\om),\\
N(\DivTgan^{k}\eps)\cap\B{\SRotTgat}(\om)^{\bot_{\L{2}{\T,\mu}(\om)}}
&=R(\mu^{-1}\TRotSgan^{k})
=\mu^{-1}\Rot\H{k+1}{\S,\gan}(\om),\\
N(\SRotTgat^{k})\cap\B{\DivTgan}(\om)^{\bot_{\L{2}{\T}(\om)}}
&=R(\TGradgat^{k})
=\devGrad\H{k+1}{\gat}(\om).
\end{align*}
\end{theo}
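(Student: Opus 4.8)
The plan is to prove the four \emph{triviality} identities first, then read off the four kernel characterisations from the Helmholtz decompositions \eqref{bih:helmcoho1}, and finally bootstrap to the higher Sobolev order statements using the range identities of Theorem \ref{bih:theo:rangeselawithoutbdpot}.

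\emph{Triviality.} For the first identity, let $S\in\Harm{}{\S,\gat,\gan,\eps}(\om)\cap\B{\TRotSgat}(\om)^{\bot_{\L{2}{\S,\eps}(\om)}}$. Since $S\in N(\divDivSgan\eps)$ and $\pi_{N(\divDivSgan\eps)}$ is the $\L{2}{\S,\eps}(\om)$-orthogonal (hence self-adjoint) projector onto $N(\divDivSgan\eps)$, for every $b\in\B{\TRotSgat}(\om)$ we get $\scp{S}{\pi_{N(\divDivSgan\eps)}b}_{\L{2}{\S,\eps}(\om)}=\scp{\pi_{N(\divDivSgan\eps)}S}{b}_{\L{2}{\S,\eps}(\om)}=\scp{S}{b}_{\L{2}{\S,\eps}(\om)}=0$. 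As $\pi_{N(\divDivSgan\eps)}\B{\TRotSgat}(\om)$ is a basis of $\Harm{}{\S,\gat,\gan,\eps}(\om)$ by Theorem \ref{bih:theo:cohomologyinfty} and $S$ lies in $\Harm{}{\S,\gat,\gan,\eps}(\om)$, this forces $S=0$. The identity $\Harm{}{\S,\gat,\gan,\eps}(\om)\cap\B{\divDivSgan}(\om)^{\bot_{\L{2}{\S}(\om)}}=\{0\}$ follows by the same argument applied to $\pi_{N(\TRotSgat)}$, after rewriting the unweighted orthogonality as $\scp{S}{b}_{\L{2}{\S}(\om)}=\scp{S}{\eps^{-1}b}_{\L{2}{\S,\eps}(\om)}$ (using $\eps=\eps^{\top}$) and using that $\pi_{N(\TRotSgat)}\eps^{-1}\B{\divDivSgan}(\om)$ is a basis of $\Harm{}{\S,\gat,\gan,\eps}(\om)$; the two $\T$-versions are verbatim dual.

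\emph{Kernel characterisations.} For $N(\divDivSgan\eps)\cap\B{\TRotSgat}(\om)^{\bot_{\L{2}{\S,\eps}(\om)}}=R(\eps^{-1}\SRotTgan)$, the inclusion ``$\supseteq$'' holds because $R(\eps^{-1}\SRotTgan)\subseteq N(\divDivSgan\eps)$ by the complex property and $R(\eps^{-1}\SRotTgan)\bot_{\L{2}{\S,\eps}(\om)}\B{\TRotSgat}(\om)$ by \eqref{bih:Bortho}. For ``$\subseteq$'', take $S$ in the left-hand side and split it along $N(\divDivSgan\eps)=\Harm{}{\S,\gat,\gan,\eps}(\om)\oplus_{\L{2}{\S,\eps}(\om)}R(\eps^{-1}\SRotTgan)$ from \eqref{bih:helmcoho1} as $S=h+r$; then $h=S-r$ is again $\L{2}{\S,\eps}(\om)$-orthogonal to $\B{\TRotSgat}(\om)$, so $h=0$ by the triviality just proved, i.e. $S=r\in R(\eps^{-1}\SRotTgan)$. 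The remaining three kernel identities follow the same scheme, using the matching line of \eqref{bih:helmcoho1}, the matching orthogonality in \eqref{bih:Bortho}, and the matching triviality (for the unweighted identities $N(\TRotSgat)\cap\B{\divDivSgan}(\om)^{\bot_{\L{2}{\S}(\om)}}=R(\SGradgradgat)$ and $N(\SRotTgat)\cap\B{\DivTgan}(\om)^{\bot_{\L{2}{\T}(\om)}}=R(\TGradgat)$ one uses the $\eps=\id$ resp. $\mu=\id$ specialisation).

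\emph{Higher order.} Given $S\in N(\divDivSgan^{k}\eps)\cap\B{\TRotSgat}(\om)^{\bot_{\L{2}{\S,\eps}(\om)}}$, the zero order case already gives $\eps S\in R(\SRotTgan)=\symRot\H{1}{\T,\gan}(\om)$; since in addition $\eps S\in\H{k}{\S,\gan}(\om)$, the identity $R(\SRotTgan^{k})=\H{k}{\S,\gan}(\om)\cap R(\SRotTgan)=\symRot\H{k+1}{\T,\gan}(\om)$ from Theorem \ref{bih:theo:rangeselawithoutbdpot} yields $\eps S\in\symRot\H{k+1}{\T,\gan}(\om)$, hence $S\in R(\eps^{-1}\SRotTgan^{k})$; the reverse inclusion is again the complex property together with \eqref{bih:Bortho}. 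The unweighted higher order identities are even more direct: for $S\in N(\TRotSgat^{k})\cap\B{\divDivSgan}(\om)^{\bot_{\L{2}{\S}(\om)}}$ the zero order case gives $S\in R(\SGradgradgat)$, and since $S\in\H{k}{\S,\gat}(\om)$ we conclude $S\in R(\SGradgradgat)\cap\H{k}{\S,\gat}(\om)=R(\SGradgradgat^{k})=\Gradgrad\H{k+2}{\gat}(\om)$ by Theorem \ref{bih:theo:rangeselawithoutbdpot}, and analogously for the remaining two cases. The only slightly delicate point is the bookkeeping with the weights in $N(\divDivSgan^{k}\eps)$ and $N(\DivTgan^{k}\mu)$: for $k\geq1$ one must know that multiplication by $\eps^{\pm1}$ (resp. $\mu^{\pm1}$) preserves $\H{k}{\S,\gan}(\om)$ (resp. $\H{k}{\T,\gan}(\om)$), so that $N(\divDivSgan^{k}\eps)=\eps^{-1}\H{k}{\S,\gan,0}(\divDiv,\om)$ etc., which is where the admissibility/smoothness assumptions on the weights enter; everything else is the formal manipulation of the orthogonal decompositions above.
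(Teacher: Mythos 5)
Your proof is correct and follows the same route the paper takes: triviality via self-adjointness of the orthogonal projectors and the basis property from Theorem \ref{bih:theo:cohomologyinfty}, the kernel identities via the Helmholtz decompositions \eqref{bih:helmcoho1} together with the orthogonality \eqref{bih:Bortho}, and the higher order statements via the range characterisations $\H{k}{\S,\gat}(\om)\cap R(\SGradgradgat)=R(\SGradgradgat^{k})$ etc. from Theorem \ref{bih:theo:rangeselawithoutbdpot}. Your closing remark on weight bookkeeping is a reasonable caution but in fact unnecessary: the paper never needs $\eps^{\pm1}$ to preserve $\H{k}{\S,\gan}(\om)$, since $N(\divDivSgan^{k}\eps)=\eps^{-1}\H{k}{\S,\gan}(\om)\cap N(\divDivSgan\eps)$ holds by definition and the factor $\eps^{-1}$ is simply transported through the intersections, i.e. $\eps^{-1}\H{k}{\S,\gan}(\om)\cap\eps^{-1}R(\SRotTgan)=\eps^{-1}\bigl(\H{k}{\S,\gan}(\om)\cap R(\SRotTgan)\bigr)=\eps^{-1}R(\SRotTgan^{k})$.
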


See Appendix \ref{sec:someproof} for a proof.
Theorem \ref{bih:theo:highorderregdecoinfty} implies:

\begin{theo}[cohomology groups]
\label{bih:theo:Bcoho2}
It holds
\begin{align*}
\frac{N(\TRotSgat^{k})}{R(\SGradgradgat^{k})}
\cong\Lin\B{\TRotSgat}(\om)
&\cong\Harm{}{\S,\gat,\gan,\eps}(\om)
\cong\Lin\B{\divDivSgan}(\om)
\cong\frac{N(\divDivSgan^{k})}{R(\SRotTgan^{k})},\\
\frac{N(\SRotTgat^{k})}{R(\TGradgat^{k})}
\cong\Lin\B{\SRotTgat}(\om)
&\cong\Harm{}{\T,\gat,\gan,\mu}(\om)
\cong\Lin\B{\DivTgan}(\om)
\cong\frac{N(\DivTgan^{k})}{R(\TRotSgan^{k})}.
\end{align*}
In particular, the dimensions of the cohomology groups
(Dirichlet/Neumann fields) are independent of $k$ and $\eps$, $\mu$ and it holds
\begin{align*}
d_{\om,\S,\gat}
&=\dim\big(N(\TRotSgat^{k})/R(\SGradgradgat^{k})\big)
=\dim\big(N(\divDivSgan^{k})/R(\SRotTgan^{k})\big),\\
d_{\om,\T,\gat}
&=\dim\big(N(\SRotTgat^{k})/R(\TGradgat^{k})\big)
=\dim\big(N(\DivTgan^{k})/R(\TRotSgan^{k})\big).
\end{align*}
\end{theo}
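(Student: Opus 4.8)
The plan is to derive every isomorphism in the two chains as a purely formal consequence of three results already established above: the identification of the kernels and ranges of the higher Sobolev order operators (Theorem~\ref{bih:theo:rangeselawithoutbdpot} and Corollary~\ref{bih:cor:regdecokernelhigherorder2}), the bounded regular \emph{direct} decompositions of these kernels (Theorem~\ref{bih:theo:highorderregdecoinfty}), and the pre-basis property of the smooth fields $\B{\TRotSgat}(\om)$, $\B{\divDivSgan}(\om)$, $\B{\SRotTgat}(\om)$, $\B{\DivTgan}(\om)$ (Theorem~\ref{bih:theo:cohomologyinfty}). No further analysis is needed; the statement is a bookkeeping corollary.

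First I would handle the outer isomorphisms of the first chain. Since $N(\TRotSgat^{k})=\H{k}{\S,\gat,0}(\Rot,\om)$ and $R(\SGradgradgat^{k})=\Gradgrad\H{k+2}{\gat}(\om)$, Theorem~\ref{bih:theo:highorderregdecoinfty} supplies the direct sum
$$\H{k}{\S,\gat,0}(\Rot,\om)=\Gradgrad\H{k+2}{\gat}(\om)\dotplus\Lin\B{\TRotSgat}(\om),$$
so the projection along $\Gradgrad\H{k+2}{\gat}(\om)$ onto $\Lin\B{\TRotSgat}(\om)$ induces an isomorphism $N(\TRotSgat^{k})/R(\SGradgradgat^{k})\cong\Lin\B{\TRotSgat}(\om)$. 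Likewise, using $N(\divDivSgan^{k})=\H{k}{\S,\gan,0}(\divDiv,\om)$, $R(\SRotTgan^{k})=\symRot\H{k+1}{\T,\gan}(\om)$, and the decomposition $\H{k}{\S,\gan,0}(\divDiv,\om)=\symRot\H{k+1}{\T,\gan}(\om)\dotplus\Lin\B{\divDivSgan}(\om)$, one obtains $N(\divDivSgan^{k})/R(\SRotTgan^{k})\cong\Lin\B{\divDivSgan}(\om)$.

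Next I would link $\Lin\B{\TRotSgat}(\om)$ and $\Lin\B{\divDivSgan}(\om)$ to $\Harm{}{\S,\gat,\gan,\eps}(\om)$. By Theorem~\ref{bih:theo:cohomologyinfty} the images $\pi_{N(\divDivSgan\eps)}\B{\TRotSgat}(\om)$ and $\pi_{N(\TRotSgat)}\eps^{-1}\B{\divDivSgan}(\om)$ are both bases of $\Harm{}{\S,\gat,\gan,\eps}(\om)$; since $\B{\TRotSgat}(\om)$ and $\B{\divDivSgan}(\om)$ are linearly independent, the restrictions $\pi_{N(\divDivSgan\eps)}|_{\Lin\B{\TRotSgat}(\om)}$ and $\pi_{N(\TRotSgat)}\eps^{-1}|_{\Lin\B{\divDivSgan}(\om)}$ carry a basis to a basis and hence are linear isomorphisms onto $\Harm{}{\S,\gat,\gan,\eps}(\om)$. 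Concatenating the four maps yields the first chain; the second chain follows verbatim after replacing $(\Gradgrad,\Rot,\divDiv,\S,\eps)$ by $(\devGrad,\symRot,\Div,\T,\mu)$ and invoking the corresponding direct decompositions of $\H{k}{\T,\gat,0}(\symRot,\om)$ and $\H{k}{\T,\gan,0}(\Div,\om)$ from Theorem~\ref{bih:theo:highorderregdecoinfty} together with the pre-bases $\B{\SRotTgat}(\om)$ and $\B{\DivTgan}(\om)$.

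For the dimension statement I would simply observe that all spaces in a chain are finite-dimensional and pairwise isomorphic, so their common dimension equals $\dim\Harm{}{\S,\gat,\gan,\eps}(\om)=d_{\om,\S,\gat}$ (respectively $\dim\Harm{}{\T,\gat,\gan,\mu}(\om)=d_{\om,\T,\gat}$), which is independent of $\eps$ (respectively $\mu$) by Theorem~\ref{bih:theo:miniFATzeroorder}~(iii); independence of $k$ is then automatic since $\Harm{}{\S,\gat,\gan,\eps}(\om)$ does not involve $k$. The only delicate point — the ``hard part'', such as there is one — is the bookkeeping: checking that in each quotient the denominator is \emph{exactly} the subspace occurring in the matching direct decomposition of Theorem~\ref{bih:theo:highorderregdecoinfty}. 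This is guaranteed precisely by the range and kernel identities $R(\SGradgradgat^{k})=\Gradgrad\H{k+2}{\gat}(\om)$, $R(\SRotTgan^{k})=\symRot\H{k+1}{\T,\gan}(\om)$, $R(\TGradgat^{k})=\devGrad\H{k+1}{\gat}(\om)$, $R(\TRotSgan^{k})=\Rot\H{k+1}{\S,\gan}(\om)$ from Theorem~\ref{bih:theo:rangeselawithoutbdpot} together with $N(\TRotSgat^{k})=\H{k}{\S,\gat,0}(\Rot,\om)$, $N(\divDivSgan^{k})=\H{k}{\S,\gan,0}(\divDiv,\om)$, $N(\SRotTgat^{k})=\H{k}{\T,\gat,0}(\symRot,\om)$, and $N(\DivTgan^{k})=\H{k}{\T,\gan,0}(\Div,\om)$ from Corollary~\ref{bih:cor:regdecokernelhigherorder2}.
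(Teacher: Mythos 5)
Your proof is correct and takes essentially the same route as the paper, which derives the theorem as a direct consequence of the regular direct decompositions in Theorem~\ref{bih:theo:highorderregdecoinfty} (together with the pre-basis property from Theorem~\ref{bih:theo:cohomologyinfty} and the kernel/range identifications from Theorem~\ref{bih:theo:rangeselawithoutbdpot}). You simply spell out the bookkeeping that the paper leaves implicit in the phrase ``Theorem~\ref{bih:theo:highorderregdecoinfty} implies''.
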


%%%%%%%%%%%%%%%%%%%%%%%%%%%%%%%%%%%%%%%%%%%%%%%%%%%%%%%%%%%%%%%%%%%%%%%%%%%%%%%%%%%%%%%%%%%%%%%%%%%%%

\bibliographystyle{plain} 
\bibliography{/Users/paule/Documents/TeX/input/bibTex/psz}

%%%%%%%%%%%%%%%%%%%%%%%%%%%%%%%%%%%%%%%%%%%%%%%%%%%%%%%%%%%%%%%%%%%%%%%%%%%%%%%%%%%%%%%%%%%%%%%%%%%%%

%\section*{Acknowledgements}
%
%We thank the anonymous referee for careful reading 
%and helpful suggestions to improve the paper.

\appendix

\section{Elementary Formulas}
\label{bih:app:sec:formulas}

From \cite{PZ2020a,PZ2021a} and \cite{PW2021a} we 
have the following collection of formulas related to the elasticity and 
the biharmonic complex.

\begin{lem}[{\cite[Lemma 12.10]{PW2021a}}]
\label{bih:lem:PZformulalem}
Let $u$, $v$, $w$, and $S$ belong to $\C{\infty}{}(\reals^{3})$.
\begin{itemize}
\item
$(\spn v)\,w=v\times w=-(\spn w)\,v$ 
\quad and  
\quad
$(\spn v)(\spn^{-1}S)=-Sv$, if $\sym S=0$
\item
$\sym\spn v=0$
\quad and \quad
$\dev(u\id)=0$
\item
$\tr\Grad v=\div v$
\quad and \quad
$2\skw\Grad v=\spn\rot v$
\item
$\Div(u\id)=\grad u$
\quad and \quad
$\Rot(u\id)=-\spn\grad u$,\\
in particular, \quad $\rot\Div(u\id)=0$ 
\quad and \quad $\rot\spn^{-1}\Rot(u\id)=0$\\
and \quad $\sym\Rot(u\id)=0$
\item
$\Div\spn v=-\rot v$
\quad and \quad
$\Div\skw S=-\rot\spn^{-1}\skw S$,\\
in particular, \quad $\div\Div\skw S=0$
\item
$\Rot\spn v=(\div v)\id-(\Grad v)^{\top}$\\
and \quad $\Rot\skw S=(\div\spn^{-1}\skw S)\id-(\Grad\spn^{-1}\skw S)^{\top}$
\item
$\dev\Rot\spn v=-(\dev\Grad v)^{\top}$
\item
$-2\Rot\sym\Grad v=2\Rot\skw\Grad v=-(\Grad\rot v)^{\top}$
\item
$2\spn^{-1}\skw\Rot S=\Div S^{\top}-\grad\tr S=\Div\big(S-(\tr S)\id\big)^{\top}$,\\
in particular, \quad $\rot\Div S^{\top}=2\rot\spn^{-1}\skw\Rot S$\\
and \quad $2\skw\Rot S=\spn\Div S^{\top}$, if $\tr S=0$
\item
$\tr\Rot S=2\div\spn^{-1}\skw S$,
\quad in particular, \quad $\tr\Rot S=0$, if $\skw S=0$,\\
and \quad $\tr\Rot\sym S=0$ \quad and \quad $\tr\Rot\skw S=\tr\Rot S$
\item
$2(\Grad\spn^{-1}\skw S)^{\top}=(\tr\Rot\skw S)\id-2\Rot\skw S$
\item
$3\Div(\dev\Grad v)^{\top}=2\grad\div v$
\item
$2\Rot\sym\Grad v=-2\Rot\skw\Grad v=-\Rot\spn\rot v=(\Grad\rot v)^{\top}$
\item
$2\Div\sym\Rot S=-2\Div\skw\Rot S=\rot\Div S^{\top}$
\item
$\Rot(\Rot\sym S)^{\top}=\sym\Rot(\Rot S)^{\top}$
\item
$\Rot(\Rot\skw S)^{\top}=\skw\Rot(\Rot S)^{\top}$
\end{itemize}
All formulas extend also to distributions.
\end{lem}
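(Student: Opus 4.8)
The plan is to verify each of the listed relations by one uniform device. Every identity is a \emph{pointwise}, translation-invariant statement assembled from the constant-coefficient first-order operators $\grad$, $\rot$, $\div$, $\Grad$, $\Rot$, $\Div$ and the algebraic operations $\spn$, $\spn^{-1}$, $\sym$, $\skw$, $\dev$, $\tr$, so it suffices to fix an orthonormal frame of $\rt$, write both sides in components --- using $(\spn v)_{ij}=-\varepsilon_{ijk}v_{k}$, $(\spn^{-1}A)_{i}=-\frac{1}{2}\varepsilon_{ijk}A_{jk}$, $(\Grad v)_{ij}=\p_{j}v_{i}$, $(\Rot S)_{ij}=\varepsilon_{jkl}\p_{k}S_{il}$, $(\Div S)_{i}=\p_{j}S_{ij}$ in the summation convention --- and collapse everything with the contraction identity $\varepsilon_{ijk}\varepsilon_{lmk}=\delta_{il}\delta_{jm}-\delta_{im}\delta_{jl}$ together with the Leibniz rule. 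The concluding remark that the formulas hold for distributions follows by a standard density and continuity argument: each side is the composition of a continuous algebraic map with a constant-coefficient differential operator, hence continuous on the relevant distribution spaces in which $\C{\infty}{}(\rt)$ is dense (equivalently, one tests against $\C{\infty}{}$-functions and integrates by parts).

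I would organize the verification in three layers. \emph{First}, the elementary scalar and vector calculus facts: $a\times(b\times c)=b\,(a\cdot c)-c\,(a\cdot b)$, $\rot\grad=0$, $\div\rot=0$, $\Rot\Grad=0$, $\Div\Rot=0$, the Leibniz rules, $\Div(u\id)=\grad u$, and $\Div(\Grad v)^{\top}=\grad\div v$. These at once dispatch the lines $\Div(u\id)=\grad u$, $\tr\Grad v=\div v$, $3\Div(\dev\Grad v)^{\top}=2\grad\div v$, and \emph{all} the ``in particular'' clauses of the form $\rot\Div(u\id)=0$, $\sym\Rot(u\id)=0$, $\div\Div\skw S=0$, by composing an already-established identity with $\rot$ or $\div$. \emph{Second}, the $\spn$-algebra from the $\varepsilon$-contraction: $\sym\spn v=0$, $\tr\spn v=0$, $(\spn v)w=-(\spn w)v$, $(\spn v)(\spn a)=a\,v^{\top}-(v\cdot a)\id$, hence $(\spn v)(\spn^{-1}S)=-Sv$ for skew $S$, $\dev(u\id)=0$, and $2\skw\Grad v=\spn\rot v$. \emph{Third}, the row-wise operator identities: a single $\varepsilon$-contraction applied to $\spn v$ yields the two master formulas $\Rot\spn v=(\div v)\id-(\Grad v)^{\top}$ and $\Div\spn v=-\rot v$, and every remaining line is obtained from these by (a) taking $\sym$, $\skw$, $\dev$, or $\tr$ of both sides; (b) substituting $v\mapsto\rot v$ and using $2\skw\Grad v=\spn\rot v$ together with $\Grad v=\sym\Grad v+\skw\Grad v$ (this produces $\Rot\sym\Grad v$, $\Rot\skw\Grad v$, $\dev\Rot\spn v$, and the $(\Grad\rot v)^{\top}$-identities); or (c) replacing $S$ by $S^{\top}$, $\sym S$, or $\skw S=\spn(\spn^{-1}\skw S)$ and re-substituting (this produces $\Rot\skw S$, $\Div\skw S$, $\tr\Rot S=2\div\spn^{-1}\skw S$, $2\spn^{-1}\skw\Rot S=\Div S^{\top}-\grad\tr S$, $2(\Grad\spn^{-1}\skw S)^{\top}=(\tr\Rot\skw S)\id-2\Rot\skw S$, and $2\Div\sym\Rot S=\rot\Div S^{\top}$).

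Since the lemma is pure bookkeeping there is no genuine obstacle; the only place where sign and transpose control demands care is in the two second-order identities $\Rot(\Rot\sym S)^{\top}=\sym\Rot(\Rot S)^{\top}$ and $\Rot(\Rot\skw S)^{\top}=\skw\Rot(\Rot S)^{\top}$. Here the clean route is to compute the operator $N(S):=\Rot\big((\Rot S)^{\top}\big)$ once by a double $\varepsilon$-contraction, obtaining $N(S)_{ij}=\varepsilon_{ikl}\varepsilon_{jmn}\p_{k}\p_{m}S_{nl}$, and to observe that interchanging $i\leftrightarrow j$ (after relabelling the dummy indices) merely transposes the two entries of $S$; hence $N$ sends symmetric fields to symmetric fields and skew fields to skew fields, and since $N$ is linear and $S=\sym S+\skw S$, this is exactly $N(\sym S)=\sym N(S)$ and $N(\skw S)=\skw N(S)$. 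Finally I would remark that each individual formula, in the smooth case, already appears with proof in \cite{PZ2020a,PZ2021a,PW2021a}, so for the present paper it suffices to cite those references and append the one-line density argument for the distributional version.
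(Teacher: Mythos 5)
Your proposal is correct, but be aware that the paper itself does not prove this lemma: the statement is given purely by citation to \cite[Lemma 12.10]{PW2021a}, and the sentence immediately preceding it defers to \cite{PZ2020a,PZ2021a,PW2021a} for the proofs. So there is no argument inside the paper for your computation to diverge from; you have supplied the verification that the paper relegates to its references, and your closing remark that a citation plus a density argument would suffice is in fact precisely what the paper does.

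As a stand-alone verification, your route is the canonical one and it works. The component dictionary $(\spn v)_{ij}=-\varepsilon_{ijk}v_{k}$, $(\spn^{-1}A)_{i}=-\tfrac{1}{2}\varepsilon_{ijk}A_{jk}$, $(\Grad v)_{ij}=\p_{j}v_{i}$, $(\Rot S)_{ij}=\varepsilon_{jkl}\p_{k}S_{il}$, $(\Div S)_{i}=\p_{j}S_{ij}$ matches the paper's conventions (row-wise $\Grad$, $\Rot$, $\Div$, set up via $\top\grad\top$ etc.\ in Appendix \ref{bih:app:sec:revop}), and the single contraction $\varepsilon_{ijk}\varepsilon_{lmk}=\delta_{il}\delta_{jm}-\delta_{im}\delta_{jl}$ does dispatch every first-order line once you have the master identities $\Rot\spn v=(\div v)\id-(\Grad v)^{\top}$ and $\Div\spn v=-\rot v$. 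Your treatment of the two second-order identities is the one point where something slightly nontrivial must be said, and your observation is the cleanest available: with $N(S):=\Rot\bigl((\Rot S)^{\top}\bigr)$ one has $N(S)_{ij}=\varepsilon_{ikl}\varepsilon_{jmn}\p_{k}\p_{m}S_{nl}$, hence $N(S)^{\top}=N(S^{\top})$ after relabelling dummies, so $N$ commutes with $\sym$ and $\skw$; this gives both identities simultaneously. The distributional extension by density is standard. In short: nothing to fix, and you have correctly identified that for the purposes of this paper the citation alone is the intended ``proof.''
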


\section{Biharmonic Complex Operators Revisited}
\label{bih:app:sec:revop}

Let $\top$ denote the formal operator of matrix transposition, i.e.,
$$\top S:=S^{\top},$$
and define
$$\spn:\reals^{3}\to\reals^{3\times3}_{\skw};\,
\begin{bmatrix}a_{1}\\a_{2}\\a_{3}\end{bmatrix}
\mapsto\begin{bmatrix}0&-a_{3}&a_{2}\\a_{3}&0&-a_{1}\\-a_{2}&a_{1}&0\end{bmatrix}.$$

We recall the operators forming the de Rham complex (classical vector analysis)
$\grad$, $\rot$, and $\div$ acting on functions and vector fields, respectively,
as formal matrix operators
\begin{align*}
\grad&:=\begin{bmatrix}\p_{1}\\\p_{2}\\\p_{3}\end{bmatrix},
&
\rot&:=\spn\grad=\begin{bmatrix}0&-\p_{3}&\p_{2}\\\p_{3}&0&-\p_{1}\\-\p_{2}&\p_{1}&0\end{bmatrix},
&
\div&:=\top\grad=\begin{bmatrix}\p_{1}&\p_{2}&\p_{3}\end{bmatrix}.
\end{align*}
Moreover, we introduce their relatives from the vector de Rham complex
acting on vector and tensor fields, respectively, as formal matrix operators
\begin{align*}
\Grad&:=\top\grad\top,
&
\Rot&:=\top\rot\top,
&
\Div&:=\top\div\top.
\end{align*}
In words, $\Grad$, $\Rot$, and $\Div$ act row-wise as the operators 
$\grad$, $\rot$, and $\div$ from the classical de Rham complex.
Note that $\Grad v$ is just the Jacobian for a vector field $v$.

Let
$$\iota_{\S}:\reals^{3\times3}_{\sym}\to\reals^{3\times3},\qquad
\iota_{\T}:\reals^{3\times3}_{\dev}\to\reals^{3\times3}$$
denote the canonical embedding of symmetric and deviatoric (trace free) $(3\times3)$-matrices 
into the arbitrary $(3\times3)$-matrices, respectively.
Then the adjoints 
$$\iota_{\S}^{*}:\reals^{3\times3}\to\reals^{3\times3}_{\sym},\qquad
\iota_{\T}^{*}:\reals^{3\times3}\to\reals^{3\times3}_{\dev}$$
are almost the projectors onto symmetric and deviatoric $(3\times3)$-matrices, respectively, i.e.,
the actual projectors are given by 
$$\sym:=\iota_{\S}\iota_{\S}^{*}:\reals^{3\times3}\to\reals^{3\times3};\,S\mapsto\frac{1}{2}(S+S^{\top}),\quad
\dev:=\iota_{\T}\iota_{\T}^{*}:\reals^{3\times3}\to\reals^{3\times3};\,T\mapsto T-\frac{1}{3}(\tr T)\id.$$
We extend all the latter formal operators to $\L{2}{}(\om)$-tensor fields.

In the light of this, in the biharmonic complexes we are dealing with the operators
\begin{align*}
\SGradgrad
&:=\iota_{\S}^{*}\Grad\grad,
&
\TRotS
&:=\iota_{\T}^{*}\Rot\iota_{\S},
&
\DivT
&:=\Div\iota_{\T},\\
\divDivS
&:=\div\Div\iota_{\S},
&
\SRotT
&:=\iota_{\S}^{*}\Rot\iota_{\T},
&
\TGrad
&:=\iota_{\T}^{*}\Grad.
\end{align*}
Note that
\begin{align*}
\iota_{\S}\SGradgrad
&=\sym\Gradgrad
=\Gradgrad
=\top\grad\top\grad,\\
\iota_{\T}\TRotS
&=\dev\RotS
=\Rot\iota_{\S}
=\top\rot\top\iota_{\S}
=:\RotS,\\
\DivT
&=\top\div\top\iota_{\T},\\
\iota_{\T}\prescript{}{\T}{\Grad}
&=\devGrad
=\dev\top\grad\top,\\
\SRotT
&=\symRotT
=\sym\top\rot\top\iota_{\T},\\
\divDivS
&=\div\top\div\top\iota_{\S},
\end{align*}
in particular, on symmetric tensor fields we have
$\TRotS=\devRot=\Rot$, cf.~\cite[Lemma A.1]{PZ2021a}.
Using these formal operators we introduce 
their maximal $\L{2}{}(\om)$-realisations, i.e., 
\begin{align*}
\SGradgrad:D(\SGradgrad)\subset\L{2}{}(\om)
&\to\L{2}{\S}(\om),
&
u&\mapsto\Gradgrad u,\\
\TRotS:D(\TRotS)\subset\L{2}{\S}(\om)
&\to\L{2}{\T}(\om),
&
S&\mapsto\Rot S,\\
\DivT:D(\DivT)\subset\L{2}{\T}(\om)
&\to\L{2}{}(\om),
&
T&\mapsto\Div T,\\
\TGrad:D(\TGrad)\subset\L{2}{}(\om)
&\to\L{2}{\T}(\om),
&
v&\mapsto\devGrad v,\\
\SRotT:D(\SRotT)\subset\L{2}{\T}(\om)
&\to\L{2}{\S}(\om),
&
T&\mapsto\symRot T,\\
\divDivS:D(\divDivS)\subset\L{2}{\S}(\om)
&\to\L{2}{}(\om),
&
S&\mapsto\divDiv S,
\end{align*}
which are densely defined and closed (unbounded) linear operators 
and form the two (formally primal and dual) biharmonic complexes
\begin{equation*}
\def\arrowlength{5ex}
\def\arrowdistance{0}
\begin{tikzcd}[column sep=\arrowlength]
\cdots 
\arrow[r, rightarrow, shift left=\arrowdistance, "\cdots"] 
& 
\L{2}{}(\om) 
\ar[r, rightarrow, shift left=\arrowdistance, "\prescript{}{\S}{\Gradgrad}"] 
& 
[2.5em]
\L{2}{\S}(\om)
\arrow[r, rightarrow, shift left=\arrowdistance, "\prescript{}{\T}{\Rot}_{\S}"] 
& 
[2.5em]
\L{2}{\T}(\om)
\arrow[r, rightarrow, shift left=\arrowdistance, "\Div_{\T}"] 
& 
[1em]
\L{2}{}(\om)
\arrow[r, rightarrow, shift left=\arrowdistance, "\cdots"] 
&
\cdots,
\end{tikzcd}
\end{equation*}
\begin{equation*}
\def\arrowlength{5ex}
\def\arrowdistance{0}
\begin{tikzcd}[column sep=\arrowlength]
\cdots 
\arrow[r, leftarrow, shift left=\arrowdistance, "\cdots"] 
& 
\L{2}{}(\om) 
\ar[r, leftarrow, shift left=\arrowdistance, "\divDiv_{\S}"] 
& 
[2.5em]
\L{2}{\S}(\om)
\arrow[r, leftarrow, shift left=\arrowdistance, "\prescript{}{\S}{\Rot}_{\T}"] 
& 
[2.5em]
\L{2}{\T}(\om)
\arrow[r, leftarrow, shift left=\arrowdistance, "\prescript{}{\T}{\Grad}"] 
& 
[1em]
\L{2}{}(\om)
\arrow[r, leftarrow, shift left=\arrowdistance, "\cdots"] 
&
\cdots,
\end{tikzcd}
\end{equation*}
cf. \cite{PZ2020a} for the complex properties.

Finally, the operators 
$$\SGradgradgat,\quad
\TRotSgat,\quad
\DivTgat,\quad
\TGradgat,\quad
\SRotTgat,\quad
\divDivSgat$$
from Section \ref{bih:sec:bihop} are the restrictions of 
$$\SGradgrad,\quad
\TRotS,\quad
\DivT,\quad
\TGrad,\quad
\SRotT,\quad
\divDivS$$
to their domains of defintion 
$$D(\SGradgradgat),\quad
D(\TRotSgat),\quad
D(\DivTgat),\quad
D(\TGradgat),\quad
D(\SRotTgat),\quad
D(\divDivSgat)$$
which are the closures of $\C{\infty}{\gat}(\om)$, $\C{\infty}{\S,\gat}(\om)$, 
and $\C{\infty}{\T,\gat}(\om)$ in the corresponding graph norms, respectively.

\section{Some Proofs}
\label{sec:someproof}%

\begin{proof}[Proof of Theorem \ref{bih:highorderregpotextdombih}]
In \cite[Theorem 3.10]{PZ2020a} we have shown the stated results for $\gat=\ga$ and $\gat=\emptyset$,
which is also a crucial ingredient of this proof.
Note that in these two special cases always ``\emph{strong $=$ weak}'' holds
as $\A_{n}^{**}=\ol{\A_{n}}=\A_{n}$, and that this argument fails in the remaining cases
of mixed boundary conditions. Therefore, let $\emptyset\varsubsetneq\gat\varsubsetneq\ga$.
Moreover, recall the notion of an extendable domain from \cite[Section 3]{PS2021b}.
In particular, $\widehat{\om}$ and the extended domain 
$\widetilde{\om}$ are topologically trivial.
\begin{itemize}
\item
Let $S\in\bH{k}{\S,\gat,0}(\Rot,\om)$. 
By definition, $S$ can be extended through $\gat$
by zero to the larger domain $\widetilde{\om}$ yielding
$$\widetilde{S}\in\bH{k}{\S,\emptyset,0}(\Rot,\widetilde{\om})
=\bH{k}{\S,0}(\Rot,\widetilde{\om})
=\H{k}{\S,0}(\Rot,\widetilde{\om}).$$
By \cite[Theorem 3.10, Remark 3.11]{PZ2020a} 
and Stein's or Calderon's extension theorem
-- see also \cite[Lemma 4.3, Lemma 4.4]{PS2021b} for the fact that 
the respective potentials are already defined on the whole of $\rt$ --
there exists $\widetilde{u}\in\H{k+2}{}(\rt)$
such that $\Gradgrad\widetilde{u}=\widetilde{S}$ in $\widetilde{\om}$.
Since $\widetilde{S}=0$ in $\widehat{\om}$, $\widetilde{u}$ must be a polynomial
$p\in\Pone$ in $\widehat{\om}$. Far outside of $\widetilde{\om}$ we modify $p$
by a cut-off function such that the resulting function $\widetilde{p}$ 
is compactly supported and $\widetilde{p}|_{\widetilde{\om}}=p$.
Note that $\widetilde{p}$ depends continuously on $S$
by Poincar\'e's estimate.
Then $u:=\widetilde{u}-\widetilde{p}\in\H{k+2}{}(\rt)$
with $u|_{\widehat{\om}}=0$. Hence $u$ belongs to $\H{k+2}{\gat}(\om)$
and depends continuously on $S$. Moreover, $u$ satisfies
$\Gradgrad u=\Gradgrad\widetilde{u}=\widetilde{S}$ in $\widetilde{\om}$,
in particular $\Gradgrad u=S$ in $\om$.
We put $\PotP_{\SGradgrad,\gat}^{k}S:=u\in\H{k+2}{\gat}(\om)$.
\item
Let $T\in\bH{k}{\T,\gat,0}(\Div,\om)$. By definition, $T$ can be extended through $\gat$
by zero to $\widetilde{\om}$ giving
$$\widetilde{T}\in\bH{k}{\T,\emptyset,0}(\Div,\widetilde{\om})
=\bH{k}{\T,0}(\Div,\widetilde{\om})
=\H{k}{\T,0}(\Div,\widetilde{\om}).$$
By \cite[Theorem 3.10]{PZ2020a} there exists $\widetilde{S}\in\H{k+1}{\S}(\rt)$
such that $\Rot\widetilde{S}=\widetilde{T}$ in $\widetilde{\om}$.
Since $\widetilde{T}=0$ in $\widehat{\om}$, i.e.,
$\widetilde{S}|_{\widehat{\om}}\in\H{k+1}{\S,0}(\Rot,\widehat{\om})$,
we get again by \cite[Theorem 3.10]{PZ2020a}
(or the first part of this proof) $\widetilde{u}\in\H{k+3}{}(\rt)$
such that $\Gradgrad\widetilde{u}=\widetilde{S}$ in $\widehat{\om}$.
Then $S:=\widetilde{S}-\Gradgrad\widetilde{u}$
belongs to $\H{k+1}{\S}(\rt)$ and satisfies $S|_{\widehat{\om}}=0$.
Thus $S\in\H{k+1}{\S,\gat}(\om)$ and depends continuously on $T$.
Furthermore, $\Rot S=\Rot\widetilde{S}=\widetilde{T}$ in $\widetilde{\om}$,
in particular $\Rot S=T$ in $\om$.
We set $\PotP_{\TRotS,\gat}^{k}T:=S\in\H{k+1}{\S,\gat}(\om)$.
\item
Let $v\in\H{k}{\gat}(\om)$. By definition, $v$ can be extended through $\gat$
by zero to $\widetilde{\om}$ defining $\widetilde{v}\in\H{k}{}(\widetilde{\om})$.
\cite[Theorem 3.10]{PZ2020a} yields $\widetilde{T}\in\H{k+1}{\T}(\rt)$
such that $\Div\widetilde{T}=\widetilde{v}$ in $\widetilde{\om}$.
As $\widetilde{v}=0$ in $\widehat{\om}$, i.e.,
$\widetilde{T}|_{\widehat{\om}}\in\H{k+1}{\T,0}(\Div,\widehat{\om})$,
we get again by \cite[Theorem 3.10]{PZ2020a}
(or the second part of this proof) $\widetilde{S}\in\H{k+2}{\S}(\rt)$
such that $\Rot\widetilde{S}=\widetilde{T}$ holds in $\widehat{\om}$.
Then $T:=\widetilde{T}-\Rot\widetilde{S}$
belongs to $\H{k+1}{\T}(\rt)$ with $T|_{\widehat{\om}}=0$.
Hence $T$ belongs to $\H{k+1}{\T,\gat}(\om)$ and depends continuously on $v$.
Furthermore, $\Div T=\Div\widetilde{T}=\widetilde{v}$ in $\widetilde{\om}$,
in particular $\Div T=v$ in $\om$.
Finally, we define $\PotP_{\DivT,\gat}^{k}v:=T\in\H{k+1}{\T,\gat}(\om)$.
\item
Let $T\in\bH{k}{\T,\gat,0}(\symRot,\om)$. 
By definition, $T$ can be extended through $\gat$
by zero to $\widetilde{\om}$ yielding
$$\widetilde{T}\in\bH{k}{\T,\emptyset,0}(\symRot,\widetilde{\om})
=\bH{k}{\T,0}(\symRot,\widetilde{\om})
=\H{k}{\T,0}(\symRot,\widetilde{\om}).$$
By \cite[Theorem 3.10]{PZ2020a} there exists $\widetilde{v}\in\H{k+1}{}(\rt)$
such that $\devGrad\widetilde{v}=\widetilde{T}$ in $\widetilde{\om}$.
Since $\widetilde{T}=0$ in $\widehat{\om}$, $\widetilde{v}$ must be a Raviart-Thomas field
$r\in\RT$ in $\widehat{\om}$. Far outside of $\widetilde{\om}$ we modify $r$
by a cut-off function such that the resulting vector  field $\widetilde{r}$ 
is compactly supported and $\widetilde{r}|_{\widetilde{\om}}=r$.
Then $v:=\widetilde{v}-\widetilde{r}\in\H{k+1}{}(\rt)$
with $v|_{\widehat{\om}}=0$. Hence $v$ belongs to $\H{k+1}{\gat}(\om)$
and depends continuously on $T$. Moreover, $v$ satisfies
$\devGrad v=\devGrad\widetilde{v}=\widetilde{T}$ in $\widetilde{\om}$,
in particular $\devGrad v=T$ in $\om$.
We put $\PotP_{\TGrad,\gat}^{k}T:=v\in\H{k+1}{\gat}(\om)$.
\item
Let $S\in\bH{k}{\S,\gat,0}(\divDiv,\om)$. By definition, $S$ can be extended through $\gat$
by zero to $\widetilde{\om}$ giving
$$\widetilde{S}\in\bH{k}{\S,\emptyset,0}(\divDiv,\widetilde{\om})
=\bH{k}{\S,0}(\divDiv,\widetilde{\om})
=\H{k}{\S,0}(\divDiv,\widetilde{\om}).$$
By \cite[Theorem 3.10]{PZ2020a} there exists $\widetilde{T}\in\H{k+1}{\T}(\rt)$
such that $\symRot\widetilde{T}=\widetilde{S}$ in $\widetilde{\om}$.
Since $\widetilde{S}=0$ in $\widehat{\om}$, i.e.,
$\widetilde{T}|_{\widehat{\om}}\in\H{k+1}{\T,0}(\symRot,\widehat{\om})$,
we get again by \cite[Theorem 3.10]{PZ2020a}
(or the fourth part of this proof) $\widetilde{v}\in\H{k+2}{}(\rt)$
such that $\devGrad\widetilde{v}=\widetilde{T}$ in $\widehat{\om}$.
Then $T:=\widetilde{T}-\devGrad\widetilde{v}$
belongs to $\H{k+1}{\T}(\rt)$ and satisfies $T|_{\widehat{\om}}=0$.
Thus $T\in\H{k+1}{\T,\gat}(\om)$ and depends continuously on $S$.
Furthermore, $\symRot T=\symRot\widetilde{T}=\widetilde{S}$ in $\widetilde{\om}$,
in particular $\symRot T=S$ in $\om$.
We set $\PotP_{\SRotT,\gat}^{k}S:=T\in\H{k+1}{\T,\gat}(\om)$.
\item
Let $u\in\H{k}{\gat}(\om)$. By definition, $u$ can be extended through $\gat$
by zero to $\widetilde{\om}$ defining $\widetilde{u}\in\H{k}{}(\widetilde{\om})$.
\cite[Theorem 3.10]{PZ2020a} yields $\widetilde{S}\in\H{k+2}{\S}(\rt)$
such that $\divDiv\widetilde{S}=\widetilde{u}$ in $\widetilde{\om}$.
As $\widetilde{u}=0$ in $\widehat{\om}$, i.e.,
$\widetilde{S}|_{\widehat{\om}}\in\H{k+2}{\S,0}(\divDiv,\widehat{\om})$,
we get again by \cite[Theorem 3.10]{PZ2020a}
(or the fifth part of this proof) $\widetilde{T}\in\H{k+3}{\T}(\rt)$
such that $\symRot\widetilde{T}=\widetilde{S}$ holds in $\widehat{\om}$.
Then $S:=\widetilde{S}-\symRot\widetilde{T}$
belongs to $\H{k+2}{\S}(\rt)$ with $S|_{\widehat{\om}}=0$.
Hence $S$ belongs to $\H{k+2}{\S,\gat}(\om)$ and depends continuously on $u$.
Furthermore, $\divDiv S=\divDiv\widetilde{S}=\widetilde{u}$ in $\widetilde{\om}$,
in particular $\divDiv S=u$ in $\om$.
Finally, we define $\PotP_{\divDivS,\gat}^{k}u:=S\in\H{k+2}{\S,\gat}(\om)$.
\end{itemize}
The assertion about the compact supports is trivial.
\end{proof}

\begin{proof}[Proof of Lemma \ref{bih:lem:highorderregdecobih} and Corollary \ref{bih:cor:weakstrongbih}.]
According to \cite[Section 4.2]{BPS2016a},
cf.~\cite[Section 4.2]{BPS2019a}, \cite[Lemma 3.1]{PS2021b}, \cite{PS2021d}, or \cite{PZ2021a},
let $(U_{\ell},\varphi_{\ell})$ be a partition of unity for $\om$, such that
$$\om=\bigcup_{\ell=-L}^{L}\om_{\ell},\qquad
\om_{\ell}:=\om\cap U_{\ell},\qquad
\varphi_{\ell}\in\C{\infty}{\p U_{\ell}}(U_{\ell}),$$
and such that $(\om_{\ell},\widehat\ga_{t,\ell})$ are extendable bounded strong Lipschitz pairs.
Recall 
$$\Sigma_{\ell}:=\p\om_{\ell}\setminus\ga,\qquad
\ga_{t,\ell}:=\gat\cap U_{\ell},\qquad
\widehat\ga_{t,\ell}:=\mathrm{int}(\ga_{t,\ell}\cup\ol{\Sigma}_{\ell}).$$
\begin{itemize}
\item
Let $k\geq0$ and let $S\in\bH{k}{\S,\gat}(\Rot,\om)$. Then by definition
$S|_{\om_{\ell}}\in\bH{k}{\S,\ga_{t,\ell}}(\Rot,\om_{\ell})$ 
and we decompose by Corollary \ref{bih:highorderregdecoextdombihcor}
$$S|_{\om_{\ell}}=S_{\ell,1}
+\Gradgrad u_{\ell,0}$$
with 
$S_{\ell,1}:=\PotQ_{\TRotS,\ga_{t,\ell}}^{k,1}S|_{\om_{\ell}}\in\H{k+1}{\S,\ga_{t,\ell}}(\om_{\ell})$
and 
$u_{\ell,0}:=\PotQ_{\TRotS,\ga_{t,\ell}}^{k,0}S|_{\om_{\ell}}\in\H{k+2}{\ga_{t,\ell}}(\om_{\ell})$.
Lemma \ref{bih:lem:cutlem} yields
\begin{align*}
\varphi_{\ell}S|_{\om_{\ell}}
&=\varphi_{\ell}S_{\ell,1}
+\varphi_{\ell}\Gradgrad u_{\ell,0}\\
&=\overbrace{\varphi_{\ell}S_{\ell,1}
-2\sym\big((\grad\varphi_{\ell})(\grad u_{\ell,0})^{\top}\big)
-u_{\ell,0}\Gradgrad\varphi_{\ell}}^{=:S_{\ell}}\\
&\qquad+\Gradgrad(\underbrace{\varphi_{\ell}u_{\ell,0})}_{=:u_{\ell}}
\end{align*}
with $S_{\ell}\in\H{k+1}{\S,\widehat\ga_{t,\ell}}(\om_{\ell})$
and $u_{\ell}\in\H{k+2}{\widehat\ga_{t,\ell}}(\om_{\ell})$.
Extending $S_{\ell}$ and $u_{\ell}$ by zero to $\om$
gives tensor fields $\widetilde{S}_{\ell}\in\H{k+1}{\S,\gat}(\om)$ and 
$\widetilde{u}_{\ell}\in\H{k+2}{\gat}(\om)$ as well as 
\begin{align*}
S=\sum_{\ell=-L}^{L}\varphi_{\ell}S|_{\om_{\ell}}
&=\sum_{\ell=-L}^{L}\widetilde{S}_{\ell}+\Gradgrad\sum_{\ell=-L}^{L}\widetilde{u}_{\ell}\\
&\in\H{k+1}{\S,\gat}(\om)+\Gradgrad\H{k+2}{\gat}(\om)
\subset\H{k}{\S,\gat}(\Rot,\om).
\end{align*}
As all operations have been linear and continuous we set
$$\PotQ_{\TRotS,\gat}^{k,1}S:=\sum_{\ell=-L}^{L}\widetilde{S}_{\ell}\in\H{k+1}{\S,\gat}(\om),\qquad
\PotQ_{\TRotS,\gat}^{k,0}S:=\sum_{\ell=-L}^{L}\widetilde{u}_{\ell}\in\H{k+2}{\gat}(\om).$$
\item
Let $k\geq0$ and let $T\in\bH{k}{\T,\gat}(\Div,\om)$. Then by definition
$T|_{\om_{\ell}}\in\bH{k}{\T,\ga_{t,\ell}}(\Div,\om_{\ell})$ 
and we decompose by Corollary \ref{bih:highorderregdecoextdombihcor}
$$T|_{\om_{\ell}}=T_{\ell,1}
+\Rot S_{\ell,0}$$
with 
$T_{\ell,1}:=\PotQ_{\DivT,\ga_{t,\ell}}^{k,1}T|_{\om_{\ell}}\in\H{k+1}{\T,\ga_{t,\ell}}(\om_{\ell})$
and 
$S_{\ell,0}:=\PotQ_{\DivT,\ga_{t,\ell}}^{k,0}T|_{\om_{\ell}}\in\H{k+1}{\S,\ga_{t,\ell}}(\om_{\ell})$.
Lemma \ref{bih:lem:cutlem} yields
\begin{align*}
\varphi_{\ell}T|_{\om_{\ell}}
&=\varphi_{\ell}T_{\ell,1}
+\varphi_{\ell}\Rot S_{\ell,0}
=\underbrace{\varphi_{\ell}T_{\ell,1}
+S_{\ell,0}\spn\grad\varphi_{\ell}}_{=:T_{\ell}}
+\Rot(\underbrace{\varphi_{\ell}S_{\ell,0})}_{=:S_{\ell}}
\end{align*}
with $T_{\ell}\in\H{k+1}{\T,\widehat\ga_{t,\ell}}(\om_{\ell})$
and $S_{\ell}\in\H{k+1}{\S,\widehat\ga_{t,\ell}}(\om_{\ell})$.
Extending $T_{\ell}$ and $S_{\ell}$ by zero to $\om$
gives tensor fields $\widetilde{T}_{\ell}\in\H{k+1}{\T,\gat}(\om)$ and 
$\widetilde{S}_{\ell}\in\H{k+1}{\S,\gat}(\om)$ as well as 
\begin{align*}
T=\sum_{\ell=-L}^{L}\varphi_{\ell}T|_{\om_{\ell}}
&=\sum_{\ell=-L}^{L}\widetilde{T}_{\ell}+\Rot\sum_{\ell=-L}^{L}\widetilde{S}_{\ell}\\
&\in\H{k+1}{\T,\gat}(\om)+\Rot\H{k+1}{\S,\gat}(\om)
\subset\H{k}{\T,\gat}(\Div,\om).
\end{align*}
As all operations have been linear and continuous we set
$$\PotQ_{\DivT,\gat}^{k,1}T:=\sum_{\ell=-L}^{L}\widetilde{T}_{\ell}\in\H{k+1}{\T,\gat}(\om),\qquad
\PotQ_{\DivT,\gat}^{k,0}T:=\sum_{\ell=-L}^{L}\widetilde{S}_{\ell}\in\H{k+1}{\S,\gat}(\om).$$
\item
Let $k\geq0$ and let $T\in\bH{k}{\T,\gat}(\symRot,\om)$. Then by definition
$T|_{\om_{\ell}}\in\bH{k}{\T,\ga_{t,\ell}}(\symRot,\om_{\ell})$ 
and we decompose by Corollary \ref{bih:highorderregdecoextdombihcor}
$$T|_{\om_{\ell}}=T_{\ell,1}
+\devGrad v_{\ell,0}$$
with 
$T_{\ell,1}:=\PotQ_{\SRotT,\ga_{t,\ell}}^{k,1}T|_{\om_{\ell}}\in\H{k+1}{\T,\ga_{t,\ell}}(\om_{\ell})$
and 
$v_{\ell,0}:=\PotQ_{\SRotT,\ga_{t,\ell}}^{k,0}T|_{\om_{\ell}}\in\H{k+1}{\ga_{t,\ell}}(\om_{\ell})$.
Lemma \ref{bih:lem:cutlem} yields
\begin{align*}
\varphi_{\ell}T|_{\om_{\ell}}
&=\varphi_{\ell}T_{\ell,1}
+\varphi_{\ell}\devGrad v_{\ell,0}\\
&=\underbrace{\varphi_{\ell}T_{\ell,1}
+\dev\big(v_{\ell,0}(\grad\varphi_{\ell})^{\top}\big)}_{=:T_{\ell}}
+\devGrad(\underbrace{\varphi_{\ell}v_{\ell,0})}_{=:v_{\ell}}
\end{align*}
with $T_{\ell}\in\H{k+1}{\T,\widehat\ga_{t,\ell}}(\om_{\ell})$
and $v_{\ell}\in\H{k+1}{\widehat\ga_{t,\ell}}(\om_{\ell})$.
Extending $T_{\ell}$ and $v_{\ell}$ by zero to $\om$
gives tensor fields $\widetilde{T}_{\ell}\in\H{k+1}{\T,\gat}(\om)$ and 
$\widetilde{v}_{\ell}\in\H{k+1}{\gat}(\om)$ as well as 
\begin{align*}
T=\sum_{\ell=-L}^{L}\varphi_{\ell}T|_{\om_{\ell}}
&=\sum_{\ell=-L}^{L}\widetilde{T}_{\ell}+\devGrad\sum_{\ell=-L}^{L}\widetilde{v}_{\ell}\\
&\in\H{k+1}{\T,\gat}(\om)+\devGrad\H{k+1}{\gat}(\om)
\subset\H{k}{\T,\gat}(\symRot,\om).
\end{align*}
As all operations have been linear and continuous we set
$$\PotQ_{\SRotT,\gat}^{k,1}T:=\sum_{\ell=-L}^{L}\widetilde{T}_{\ell}\in\H{k+1}{\T,\gat}(\om),\qquad
\PotQ_{\SRotT,\gat}^{k,0}T:=\sum_{\ell=-L}^{L}\widetilde{v}_{\ell}\in\H{k+1}{\gat}(\om).$$
\item
Let $k\geq1$ and let  $S\in\bH{k,k-1}{\S,\gat}(\divDiv,\om)$. Then by definition
$S|_{\om_{\ell}}\in\bH{k,k-1}{\S,\ga_{t,\ell}}(\divDiv,\om_{\ell})$ 
and we decompose by Corollary \ref{bih:highorderregdecoextdombihcornonstandard}
$$S|_{\om_{\ell}}=S_{\ell,1}
+\symRot T_{\ell,0}$$
with 
$S_{\ell,1}:=\PotQ_{\divDivS,\ga_{t,\ell}}^{k,k-1,1}S|_{\om_{\ell}}\in\H{k+1}{\S,\ga_{t,\ell}}(\om_{\ell})$
and 
$T_{\ell,0}:=\PotQ_{\divDivS,\ga_{t,\ell}}^{k,k-1,0}S|_{\om_{\ell}}\in\H{k+1}{\T,\ga_{t,\ell}}(\om_{\ell})$.
Thus
\begin{align}
\begin{aligned}
\label{bih:decovarphiS}
\varphi_{\ell}S|_{\om_{\ell}}
&=\varphi_{\ell}S_{\ell,1}
+\varphi_{\ell}\symRot T_{\ell,0}\\
&=\underbrace{\varphi_{\ell}S_{\ell,1}
+\sym(T_{\ell,0}\spn\grad\varphi_{\ell})}_{=:S_{\ell}}
+\symRot(\underbrace{\varphi_{\ell}T_{\ell,0})}_{=:T_{\ell}}
\end{aligned}
\end{align}
with $S_{\ell}\in\H{k+1}{\S,\widehat\ga_{t,\ell}}(\om_{\ell})$
and $T_{\ell}\in\H{k+1}{\T,\widehat\ga_{t,\ell}}(\om_{\ell})$.
Extending $S_{\ell}$ and $T_{\ell}$ by zero to $\om$
gives fields $\widetilde{S}_{\ell}\in\H{k+1}{\S,\gat}(\om)$ and 
$\widetilde{T}_{\ell}\in\H{k+1}{\T,\gat}(\om)$ as well as 
\begin{align*}
S=\sum_{\ell=-L}^{L}\varphi_{\ell}S|_{\om_{\ell}}
&=\sum_{\ell=-L}^{L}\widetilde{S}_{\ell}+\symRot\sum_{\ell=-L}^{L}\widetilde{T}_{\ell}\\
&\in\H{k+1}{\S,\gat}(\om)+\symRot\H{k+1}{\T,\gat}(\om)
\subset\H{k,k-1}{\S,\gat}(\divDiv,\om).
\end{align*}
As all operations have been linear and continuous we set
$$\PotQ_{\divDivS,\gat}^{k,k-1,1}S:=\sum_{\ell=-L}^{L}\widetilde{S}_{\ell}\in\H{k+1}{\S,\gat}(\om),\qquad
\PotQ_{\divDivS,\gat}^{k,k-1,0}S:=\sum_{\ell=-L}^{L}\widetilde{T}_{\ell}\in\H{k+1}{\T,\gat}(\om).$$
\item
Let $k\geq0$ and let  $S\in\bH{k}{\S,\gat}(\divDiv,\om)$. Then by definition
$S|_{\om_{\ell}}\in\bH{k}{\S,\ga_{t,\ell}}(\divDiv,\om_{\ell})$ 
and we decompose by Corollary \ref{bih:highorderregdecoextdombihcor}
$$S|_{\om_{\ell}}=S_{\ell,1}
+\symRot T_{\ell,0}$$
with 
$S_{\ell,1}:=\PotQ_{\divDivS,\ga_{t,\ell}}^{k,k-1,1}S|_{\om_{\ell}}\in\H{k+2}{\S,\ga_{t,\ell}}(\om_{\ell})$
and 
$T_{\ell,0}:=\PotQ_{\divDivS,\ga_{t,\ell}}^{k,k-1,0}S|_{\om_{\ell}}\in\H{k+1}{\T,\ga_{t,\ell}}(\om_{\ell})$.
Now we follow the arguments from \eqref{bih:decovarphiS} on.
Note that still only $S_{\ell}\in\H{k+1}{\S,\widehat\ga_{t,\ell}}(\om_{\ell})$ holds,
i.e., we have lost one order of regularity for $S_{\ell}$.
Nevertheless, we get 
$$S\in\H{k+1}{\S,\gat}(\om)+\symRot\H{k+1}{\T,\gat}(\om),$$
and all operations have been linear and continuous.
But this implies by the previous step
$$S\in\H{k+1,k}{\S,\gat}(\divDiv,\om)+\symRot\H{k+1}{\T,\gat}(\om).$$
Again by the previous step we obtain 
\begin{align*}
S&\in\H{k+2}{\S,\gat}(\om)+\symRot\H{k+2}{\T,\gat}(\om)+\symRot\H{k+1}{\T,\gat}(\om)\\
&=\H{k+2}{\S,\gat}(\om)+\symRot\H{k+1}{\T,\gat}(\om)
\subset\H{k}{\S,\gat}(\divDiv,\om),
\end{align*}
and all operations have been linear and continuous.
\end{itemize}
It remains to prove the assertions on the operators $\devGrad$ and $\Gradgrad$.
\begin{itemize}
\item
Let $v\in\bH{k}{\gat}(\devGrad,\om)$. Then by Corollary \ref{bih:weakstrongextdombihcor}
$$\varphi_{\ell}v\in\bH{k}{\widehat\ga_{t,\ell}}(\devGrad,\om_{\ell})
=\H{k}{\widehat\ga_{t,\ell}}(\devGrad,\om_{\ell})
=\H{k+1}{\widehat\ga_{t,\ell}}(\om_{\ell}).$$
Extending $\varphi_{\ell}v$ by zero to $\om$
yields $v_{\ell}\in\H{k+1}{\gat}(\om)$
and $v=\sum_{\ell}\varphi_{\ell}v=\sum_{\ell}v_{\ell}\in\H{k+1}{\gat}(\om)$.
\item
Let $u\in\bH{k}{\gat}(\Gradgrad,\om)$. Then by Corollary \ref{bih:weakstrongextdombihcor}
$$\varphi_{\ell}u\in\bH{k}{\widehat\ga_{t,\ell}}(\Gradgrad,\om_{\ell})
=\H{k}{\widehat\ga_{t,\ell}}(\Gradgrad,\om_{\ell})
=\H{k+2}{\widehat\ga_{t,\ell}}(\om_{\ell}).$$
Extending $\varphi_{\ell}u$ by zero to $\om$
yields $u_{\ell}\in\H{k+2}{\gat}(\om)$
and $u=\sum_{\ell}\varphi_{\ell}u=\sum_{\ell}u_{\ell}\in\H{k+2}{\gat}(\om)$.
\item
Let $u\in\bH{k,k-1}{\gat}(\Gradgrad,\om)$. Then 
$\varphi_{\ell}u\in\bH{k,k-1}{\widehat\ga_{t,\ell}}(\Gradgrad,\om_{\ell})
=\H{k+1}{\widehat\ga_{t,\ell}}(\om_{\ell})$
by \eqref{bih:Gradgradkkmoregformula}.
Extending $\varphi_{\ell}u$ by zero to $\om$
yields $u_{\ell}\in\H{k+1}{\gat}(\om)$
and $u=\sum_{\ell}\varphi_{\ell}u=\sum_{\ell}u_{\ell}\in\H{k+1}{\gat}(\om)$.
\end{itemize}
The proof is finished.
\end{proof}

\begin{proof}[Proof of Theorem \ref{bih:theo:cptembzeroorder}.]
Note that these types of compact embeddings are independent of $\eps$ and $\mu$,
cf.~\cite[Lemma 5.1]{PW2021a}. So, let $\eps=\mu=\id$.
Lemma \ref{bih:lem:highorderregdecobih} (for $k=0$)
yields, e.g., the bounded regular decomposition
$$D(\A_{1})=\H{}{\S,\gat}(\Rot,\om)
=\H{1}{\S,\gat}(\om)
+\Gradgrad\H{2}{\gat}(\om)$$
with $\H{+}{1}=\H{1}{\S,\gat}(\om)$ and $\H{+}{0}=\H{2}{\gat}(\om)$ and 
$\H{}{1}=\L{2}{\S}(\om)$, $\H{}{0}=\L{2}{}(\om)$.
Rellich's selection theorem and 
\cite[Corollary 2.12]{PZ2021a}, cf.~\cite[Lemma 2.22]{PS2021b},
yield that $D(\A_{1})\cap D(\A_{0}^{*})\incl\H{}{1}$ is compact.
Analogously, we show the compactness of $D(\A_{2})\cap D(\A_{1}^{*})\incl\H{}{2}$
using, e.g., the bounded regular decomposition
$D(\A_{2})=\H{}{\T,\gat}(\Div,\om)
=\H{1}{\T,\gat}(\om)
+\Rot\H{1}{\S,\gat}(\om)$.
\end{proof}

\begin{proof}[Proof of Theorem \ref{bih:theo:rangeselawithoutbdpot}.]
We only show the representations for $R(\TRotSgat^{k})$ and $R(\divDivSgat^{k})$.
The others follow analogously.
\begin{itemize}
\item
By Lemma \ref{bih:lem:highorderregdecobih} and Corollary \ref{bih:cor:weakstrongbih} we have
\begin{align}
\label{bih:rangeRotkp2}
R(\TRotSgat^{k})
&=\Rot\H{k}{\S,\gat}(\Rot,\om)
=\Rot\H{k+1}{\S,\gat}(\om).
\end{align}
Moreover,
\begin{align*}
R(\TRotSgat^{k})
&\subset\H{k}{\T,\gat,0}(\Div,\om)\cap\Harm{}{\T,\gan,\gat,\mu}(\om)^{\bot_{\L{2}{\T}(\om)}}\\
&=\H{k}{\T,\gat}(\om)\cap\H{}{\T,\gat,0}(\Div,\om)\cap\Harm{}{\T,\gan,\gat,\mu}(\om)^{\bot_{\L{2}{\T}(\om)}}
=\H{k}{\T,\gat}(\om)\cap R(\TRotSgat),
\end{align*}
since by Theorem \ref{bih:theo:miniFATzeroorder} (iv)
\begin{align}
\label{bih:helmdecobih1}
R(\TRotSgat)
=\H{}{\T,\gat,0}(\Div,\om)\cap\Harm{}{\T,\gan,\gat,\mu}(\om)^{\bot_{\L{2}{\T}(\om)}}.
\end{align}
Thus it remains to show
$$\H{k}{\T,\gat,0}(\Div,\om)\cap\Harm{}{\T,\gan,\gat,\mu}(\om)^{\bot_{\L{2}{\T}(\om)}}
\subset\Rot\H{k}{\S,\gat}(\Rot,\om),\qquad
k\geq1.$$
For this, let $k\geq1$ and
$T\in\H{k}{\T,\gat,0}(\Div,\om)\cap\Harm{}{\T,\gan,\gat,\mu}(\om)^{\bot_{\L{2}{\T}(\om)}}$.
By \eqref{bih:helmdecobih1} and \eqref{bih:rangeRotkp2} we have 
$$T\in R(\TRotSgat)=\Rot\H{1}{\S,\gat}(\om)$$
and hence there is $S_{1}\in\H{1}{\S,\gat}(\om)$ such that $\Rot S_{1}=T$.
We see $S_{1}\in\H{1}{\S,\gat}(\Rot,\om)$.
Hence we are done for $k=1$.
For $k\geq2$ we have 
$T\in\Rot\H{1}{\S,\gat}(\Rot,\om)=\Rot\H{2}{\S,\gat}(\om)$ 
by \eqref{bih:rangeRotkp2}.
Thus there is $S_{2}\in\H{2}{\S,\gat}(\om)$ such that $\Rot S_{2}=T$.
Then $S_{2}\in\H{2}{\S,\gat}(\Rot,\om)$, and we are done for $k=2$.
After finitely many steps, we observe that
$T$ belongs to $\Rot\H{k}{\S,\gat}(\Rot,\om)$.
\item
By Lemma \ref{bih:lem:highorderregdecobih} and Corollary \ref{bih:cor:weakstrongbih} we have
\begin{align}
\nonumber
\divDiv\H{k+2}{\S,\gat}(\om)
&\subset\divDiv\H{k+1,k}{\S,\gat}(\divDiv,\om)
=R(\divDivSgat^{k+1,k})\\
\nonumber
&\subset\divDiv\H{k}{\S,\gat}(\divDiv,\om)
=R(\divDivSgat^{k})
=\divDiv\H{k+2}{\S,\gat}(\om).
\intertext{In particular,}
\label{bih:rangedivDivkp2}
R(\divDivSgat^{k})
&=\divDiv\H{k}{\S,\gat}(\divDiv,\om)
=\divDiv\H{k+2}{\S,\gat}(\om).
\intertext{Moreover,}
\nonumber
R(\divDivSgat^{k})
&\subset\H{k}{\gat}(\om)\cap(\Pone_{\gan})^{\bot_{\L{2}{}(\om)}}
=\H{k}{\gat}(\om)\cap R(\divDivSgat),
\end{align}
since
\begin{align}
\label{bih:helmdecobih2}
R(\divDivSgat)
=\L{2}{}(\om)\cap(\Pone_{\gan})^{\bot_{\L{2}{}(\om)}}.
\end{align}
Thus it remains to show
$$\H{k}{\gat}(\om)\cap(\Pone_{\gan})^{\bot_{\L{2}{}(\om)}}
\subset\divDiv\H{k}{\S,\gat}(\divDiv,\om),\qquad
k\geq1.$$
For this, let $k\geq1$ and
$u\in\H{k}{\gat}(\om)\cap(\Pone_{\gan})^{\bot_{\L{2}{}(\om)}}$.
By \eqref{bih:helmdecobih2} and \eqref{bih:rangedivDivkp2} we have 
$$u\in R(\divDivSgat)=\divDiv\H{2}{\S,\gat}(\om)$$
and hence there is $S_{1}\in\H{2}{\S,\gat}(\om)$ such that $\divDiv S_{1}=u$.
We see $S_{1}\in\H{2}{\S,\gat}(\divDiv,\om)$ resp. $S_{1}\in\H{1}{\S,\gat}(\divDiv,\om)$ if $k=1$.
Hence we are done for $k=1$ and $k=2$.
For $k\geq2$ we have 
$u\in\divDiv\H{2}{\S,\gat}(\divDiv,\om)=\divDiv\H{4}{\S,\gat}(\om)$ 
by \eqref{bih:rangedivDivkp2}.
Thus there is $S_{2}\in\H{4}{\S,\gat}(\om)$ such that $\divDiv S_{2}=u$.
Then $S_{2}\in\H{4}{\S,\gat}(\divDiv,\om)$ resp. $S_{2}\in\H{3}{\S,\gat}(\divDiv,\om)$ if $k=3$,
and we are done for $k=3$ and $k=4$.
After finitely many steps, we observe that
$u$ belongs to $\divDiv\H{k}{\S,\gat}(\divDiv,\om)$,
finishing the proof.
\end{itemize}
\end{proof}

\begin{proof}[Proof of Theorem \ref{bih:theo:cptembhigherorder}.]
We follow in close lines the proof of \cite[Theorem 4.11]{PZ2021a},
cf.~\cite[Theorem 4.16]{PS2021b} and \cite[Theorem 3.19]{PS2021d}, using induction.
The case $k=0$ is given by Theorem \ref{bih:theo:cptembzeroorder}.
Let $k\geq1$ and let $(S_{\ell})$ be a bounded sequence in 
$\H{k}{\S,\gat}(\Rot,\om)\cap\H{k}{\S,\gan}(\divDiv,\om)$.
Note that 
$$\H{k}{\S,\gat}(\Rot,\om)\cap\H{k}{\S,\gan}(\divDiv,\om)
\subset\H{k}{\S,\gat}(\om)\cap\H{k}{\S,\gan}(\om)
=\H{k}{\S,\ga}(\om).$$
By assumption and w.l.o.g. we have that $(S_{\ell})$ 
is a Cauchy sequence in $\H{k-1}{\S,\ga}(\om)$.
Moreover, for all $|\alpha|=k$ we have
$\p^{\alpha}S_{\ell}\in\H{}{\S,\gat}(\Rot,\om)\cap\H{}{\S,\gan}(\divDiv,\om)$
with $\Rot\p^{\alpha}S_{\ell}=\p^{\alpha}\Rot S_{\ell}$
and $\divDiv\p^{\alpha}S_{\ell}=\p^{\alpha}\divDiv S_{\ell}$
by Lemma \ref{bih:lem:scharzlemma}.
Hence $(\p^{\alpha}S_{\ell})$ is a bounded sequence in the zero order space
$\H{}{\S,\gat}(\Rot,\om)\cap\H{}{\S,\gan}(\divDiv,\om)$.
Thus, w.l.o.g. $(\p^{\alpha}S_{\ell})$ is a Cauchy sequence in $\L{2}{\S}(\om)$
by Theorem \ref{bih:theo:cptembzeroorder}.
Finally, $(S_{\ell})$ is a Cauchy sequence in $\H{k}{\S,\ga}(\om)$.
Analogously, we show the assertion for the second compact embedding.
\end{proof}

\begin{proof}[Proof of Remark \ref{bih:rem:cptembhigherorder}.]
Let $(S_{\ell})$ be a bounded sequence in 
$\H{k}{\S,\gat}(\Rot,\om)\cap\H{k}{\S,\gan}(\divDiv,\om)$.
In particular, $(S_{\ell})$ is bounded in 
$\H{k}{\S,\gat}(\Rot,\om)\cap\H{k,k-1}{\S,\gan}(\divDiv,\om)$.
According to Lemma \ref{bih:lem:highorderregdecobih}, i.e.,
$$\H{k,k-1}{\S,\gat}(\divDiv,\om)
=\H{k+1}{\S,\gat}(\om)
+\symRot\H{k+1}{\T,\gat}(\om)$$
we decompose 
$S_{\ell}=\widetilde{S}_{\ell}+\symRot T_{\ell}$
with $\widetilde{S}_{\ell}\in\H{k+1}{\S,\gat}(\om)$ and $T_{\ell}\in\H{k+1}{\T,\gat}(\om)$.
By the boundedness of the regular decomposition operators,
$(\widetilde{S}_{\ell})$ and $(T_{\ell})$ are bounded in 
$\H{k+1}{\S,\gat}(\om)$ and $\H{k+1}{\T,\gat}(\om)$, respectively.
W.l.o.g. $(\widetilde{S}_{\ell})$ and $(T_{\ell})$ converge in 
$\H{k}{\S,\gat}(\om)$ and $\H{k}{\T,\gat}(\om)$, respectively.
For all $0\leq|\alpha|\leq k$ Lemma \ref{bih:lem:scharzlemma} yields
$(\p^{\alpha}S_{\ell})\subset\H{}{\S,\gat}(\Rot,\om)$
and $\Rot\p^{\alpha}S_{\ell}=\p^{\alpha}\Rot S_{\ell}$. With the notations
$S_{\ell,l}:=S_{\ell}-S_{l}$, $\widetilde{S}_{\ell,l}:=\widetilde{S}_{\ell}-\widetilde{S}_{l}$,
and $T_{\ell,l}:=T_{\ell}-T_{l}$, we get
\begin{align*}
\norm{S_{\ell,l}}_{\H{k}{\S}(\om)}^2
&=\scp{S_{\ell,l}}{\widetilde{S}_{\ell,l}}_{\H{k}{\S}(\om)}
+\scp{S_{\ell,l}}{\symRot T_{\ell,l}}_{\H{k}{\S}(\om)}\\
&=\scp{S_{\ell,l}}{\widetilde{S}_{\ell,l}}_{\H{k}{\S}(\om)}
+\scp{\Rot S_{\ell,l}}{T_{\ell,l}}_{\H{k}{\T}(\om)}
\leq c\big(\norm{\widetilde{S}_{\ell,l}}_{\H{k}{\S}(\om)}+\norm{T_{\ell,l}}_{\H{k}{\T}(\om)}\big)
\to0,
\end{align*}
completing the proof.
\end{proof}

\begin{proof}[Proof of Theorem \ref{bih:theo:highorderregdecoinfty}.]
Theorem \ref{bih:theo:regdecohigherorder} and \eqref{bih:helmcoho2} show
\begin{align*}
\H{k}{\S,\gat}(\Rot,\om)
&=R(\widetilde\PotQ_{\TRotS,\gat}^{k,1})
\dotplus\H{k}{\S,\gat,0}(\Rot,\om),\\
\H{k}{\S,\gat,0}(\Rot,\om)
&=\Gradgrad\H{k+2}{\gat}(\om)
+\Lin\B{\TRotSgat}(\om)
\end{align*}
To prove the directness of the second sum, let
$$\sum_{\ell=1}^{d_{\om,\S,\gat}}\lambda_{\ell}\vB{\TRotSgat}{\ell}
\in\Gradgrad\H{k+2}{\gat}(\om)
\cap\Lin\B{\TRotSgat}(\om).$$
Then $0=\sum_{\ell}\lambda_{\ell}\pi_{N(\divDivSgan\eps)}\vB{\TRotSgat}{\ell}
\in\Lin\pi_{N(\divDivSgan\eps)}\B{\TRotSgat}$
and therefore $\lambda_{\ell}=0$ for all $\ell$
as $\pi_{N(\divDivSgan\eps)}\B{\TRotSgat}$ is a basis of 
$\Harm{}{\S,\gat,\gan,\eps}(\om)$
by Theorem \ref{bih:theo:cohomologyinfty}.
Concerning the boundedness of the decompositions, let 
$$\H{k}{\S,\gat,0}(\Rot,\om)\ni S=\Gradgrad u+B,\qquad
u\in\H{k+2}{\gat}(\om),\quad
B\in\Lin\B{\TRotSgat}(\om).$$
By Theorem \ref{bih:theo:regpothigherorder}
$\Gradgrad u\in R(\SGradgradgat^{k})$ and 
$\widetilde{u}:=\PotP_{\SGradgrad,\gat}^{k}\Gradgrad u\in\H{k+2}{\gat}(\om)$ solves
$\Gradgrad\widetilde{u}=\Gradgrad u$ with 
$\norm{\widetilde{u}}_{\H{k+2}{}(\om)}\leq c\norm{\Gradgrad u}_{\H{k}{\S}(\om)}$.
Therefore, 
$$\norm{\widetilde{u}}_{\H{k+2}{}(\om)}
+\norm{B}_{\H{k}{\S}(\om)}
\leq c\big(\norm{\Gradgrad u}_{\H{k}{\S}(\om)}
+\norm{B}_{\H{k}{\S}(\om)}\big)
\leq c\big(\norm{S}_{\H{k}{\S}(\om)}
+\norm{B}_{\H{k}{\S}(\om)}\big).$$
Note that the mapping
$$\Abb{I_{\pi_{N(\divDivSgan\eps)}}}
{\Lin\B{\TRotSgat}(\om)}
{\Lin\pi_{N(\divDivSgan\eps)}\B{\TRotSgat}(\om)=\Harm{}{\S,\gat,\gan,\eps}(\om)}
{\vB{\TRotSgat}{\ell}}
{\pi_{N(\divDivSgan\eps)}\vB{\TRotSgat}{\ell}}$$
is a topological isomorphism 
(between finite dimensional spaces and with arbitrary norms). Thus 
$$\norm{B}_{\H{k}{\S}(\om)}
\leq c\norm{B}_{\L{2}{\S}(\om)}
\leq c\norm{\pi_{N(\divDivSgan\eps)}B}_{\L{2}{\S}(\om)}
=c\norm{\pi_{N(\divDivSgan\eps)}S}_{\L{2}{\S}(\om)}
\leq c\norm{S}_{\L{2}{\S}(\om)}
\leq c\norm{S}_{\H{k}{\S}(\om)}.$$
Finally, we see 
$S=\Gradgrad\widetilde{u}+B
\in\Gradgrad\H{k+2}{\gat}(\om)
\dotplus\Lin\B{\TRotSgat}(\om)$
and 
$$\norm{\widetilde{u}}_{\H{k+2}{}(\om)}
+\norm{B}_{\H{k}{\S}(\om)}
\leq c\norm{S}_{\H{k}{\S}(\om)}.$$
The other assertions for 
$\H{k}{\T,\gan}(\Div,\om)$, $\H{k}{\T,\gat}(\symRot,\om)$, $\H{k}{\S,\gan}(\divDiv,\om)$, 
and $\H{k+1,k}{\S,\gan}(\divDiv,\om)$
follow analogously.
\end{proof}

\begin{proof}[Proof of Theorem \ref{bih:theo:Bcoho1}.]
For $k=0$ and 
$S\in\Harm{}{\S,\gat,\gan,\eps}(\om)\cap\B{\TRotSgat}(\om)^{\bot_{\L{2}{\S,\eps}(\om)}}$
we have
\begin{align*}
0=\scp{S}{\vB{\TRotSgat}{\ell}}_{\L{2}{\S,\eps}(\om)}
&=\scp{\pi_{N(\divDivSgan\eps)}S}{\vB{\TRotSgat}{\ell}}_{\L{2}{\S,\eps}(\om)}\\
&=\scp{S}{\pi_{N(\divDivSgan\eps)}\vB{\TRotSgat}{\ell}}_{\L{2}{\S,\eps}(\om)}
\end{align*}
and hence $S=0$ by Theorem \ref{bih:theo:cohomologyinfty}.
Analogously, we see for 
$S\in\Harm{}{\S,\gat,\gan,\eps}(\om)\cap\B{\divDivSgan}(\om)^{\bot_{\L{2}{\S}(\om)}}$
\begin{align*}
0=\scp{S}{\vB{\divDivSgan}{\ell}}_{\L{2}{\S}(\om)}
&=\scp{\pi_{N(\TRotSgat)}S}{\eps^{-1}\vB{\divDivSgan}{\ell}}_{\L{2}{\S,\eps}(\om)}\\
&=\scp{S}{\pi_{N(\TRotSgat)}\eps^{-1}\vB{\divDivSgan}{\ell}}_{\L{2}{\S,\eps}(\om)}
\end{align*}
and thus $S=0$ again by Theorem \ref{bih:theo:cohomologyinfty}.
According to \eqref{bih:helmcoho1} we can decompose
\begin{align*}
N(\divDivSgan\eps)
&=R(\eps^{-1}\SRotTgan)
\oplus_{\L{2}{\S,\eps}(\om)}
\Harm{}{\S,\gat,\gan,\eps}(\om),\\
N(\TRotSgat)
&=R(\SGradgradgat)
\oplus_{\L{2}{\S,\eps}(\om)}
\Harm{}{\S,\gat,\gan,\eps}(\om),
\end{align*}
which shows by \eqref{bih:Bortho} the other two assertions.
Let $k\geq0$. The case $k=0$ and Theorem \ref{bih:theo:rangeselawithoutbdpot} show
\begin{align*}
N(\divDivSgan^{k}\eps)\cap\B{\TRotSgat}(\om)^{\bot_{\L{2}{\S,\eps}(\om)}}
&=\eps^{-1}\H{k}{\S,\gan}(\om)\cap N(\divDivSgan\eps)\cap\B{\TRotSgat}(\om)^{\bot_{\L{2}{\S,\eps}(\om)}}\\
&=\eps^{-1}\H{k}{\S,\gan}(\om)\cap R(\eps^{-1}\SRotTgan)\\
&=R(\eps^{-1}\SRotTgan^{k})
=\eps^{-1}\symRot\H{k+1}{\T,\gan}(\om),\\
N(\TRotSgat^{k})\cap\B{\divDivSgan}(\om)^{\bot_{\L{2}{\S}(\om)}}
&=\H{k}{\S,\gat}(\om)\cap N(\TRotSgat)\cap\B{\divDivSgan}(\om)^{\bot_{\L{2}{\S}(\om)}}\\
&=\H{k}{\S,\gat}(\om)\cap R(\SGradgradgat)\\
&=R(\SGradgradgat^{k})
=\Gradgrad\H{k+2}{\gat}(\om).
\end{align*}
Analogously we prove the assertions for the remaining 
$\L{2}{\T,\mu}(\om)$-related spaces.
\end{proof}

\section{Some Technical Remarks}
\label{sec:someremarks}%

\begin{rem}[bounded regular decompositions from bounded regular potentials]
\label{bih:rem:regdecohigherorder}
It holds 
\begin{align*}
\Rot\widetilde\PotQ_{\TRotS,\gat}^{k,1}
=\Rot\PotQ_{\TRotS,\gat}^{k,1}
&=\TRotSgat^{k},\\
\Div\widetilde\PotQ_{\DivT,\gat}^{k,1}
=\Div\PotQ_{\DivT,\gat}^{k,1}
&=\DivTgat^{k},\\
\symRot\widetilde\PotQ_{\SRotT,\gat}^{k,1}
=\symRot\PotQ_{\SRotT,\gat}^{k,1}
&=\SRotTgat^{k},\\
\divDiv\widetilde\PotQ_{\divDivS,\gat}^{k,1}
=\divDiv\PotQ_{\divDivS,\gat}^{k,1}
&=\divDivSgat^{k},\\
\divDiv\widetilde\PotQ_{\divDivS,\gat}^{k+1,k,1}
=\divDiv\PotQ_{\divDivS,\gat}^{k+1,k,1}
&=\divDivSgat^{k+1,k}.
\end{align*}
Therefore, the kernels $\H{k}{\S,\gat,0}(\Rot,\om)$, $\H{k}{\T,\gat,0}(\Div,\om)$,
$\H{k}{\T,\gat,0}(\symRot,\om)$, 
and $\H{k}{\S,\gat,0}(\divDiv,\om)$, $\H{k+1}{\S,\gat,0}(\divDiv,\om)$ 
are invariant under 
$\PotQ_{\TRotS,\gat}^{k,1}$, $\widetilde\PotQ_{\TRotS,\gat}^{k,1}$,
$\PotQ_{\DivT,\gat}^{k,1}$, $\widetilde\PotQ_{\DivT,\gat}^{k,1}$,
$\PotQ_{\SRotT,\gat}^{k,1}$, $\widetilde\PotQ_{\SRotT,\gat}^{k,1}$,
$\PotQ_{\divDivS,\gat}^{k,1}$, $\widetilde\PotQ_{\divDivS,\gat}^{k,1}$,
and $\PotQ_{\divDivS,\gat}^{k+1,k,1}$, $\widetilde\PotQ_{\divDivS,\gat}^{k+1,k,1}$, 
respectively. Moreover,
\begin{align*}
R(\widetilde\PotQ_{\TRotS,\gat}^{k,1})
&=R(\PotP_{\TRotS,\gat}^{k}),
&
\widetilde\PotQ_{\TRotS,\gat}^{k,1}
&=\PotQ_{\TRotS,\gat}^{k,1}(\TRotSgat^{k})_{\bot}^{-1}\TRotSgat^{k},\\
R(\widetilde\PotQ_{\DivT,\gat}^{k,1})
&=R(\PotP_{\DivT,\gat}^{k}),
&
\widetilde\PotQ_{\DivT,\gat}^{k,1}
&=\PotQ_{\DivT,\gat}^{k,1}(\DivTgat^{k})_{\bot}^{-1}\DivTgat^{k},\\
R(\widetilde\PotQ_{\SRotT,\gat}^{k,1})
&=R(\PotP_{\SRotT,\gat}^{k}),
&
\widetilde\PotQ_{\SRotT,\gat}^{k,1}
&=\PotQ_{\SRotT,\gat}^{k,1}(\SRotTgat^{k})_{\bot}^{-1}\SRotTgat^{k},\\
R(\widetilde\PotQ_{\divDivS,\gat}^{k,1})
&=R(\PotP_{\divDivS,\gat}^{k}),
&
\widetilde\PotQ_{\divDivS,\gat}^{k,1}
&=\PotQ_{\divDivS,\gat}^{k,1}(\divDivSgat^{k})_{\bot}^{-1}\divDivSgat^{k},\\
R(\widetilde\PotQ_{\divDivS,\gat}^{k+1,k,1})
&=R(\PotP_{\divDivS,\gat}^{k+1,k}),
&
\widetilde\PotQ_{\divDivS,\gat}^{k+1,k,1}
&=\PotQ_{\divDivS,\gat}^{k+1,k,1}(\divDivSgat^{k+1,k})_{\bot}^{-1}\divDivSgat^{k+1,k}.
\end{align*}
Hence, $\widetilde\PotQ_{\TRotS,\gat}^{k,1}$, $\widetilde\PotQ_{\DivT,\gat}^{k,1}$, 
$\widetilde\PotQ_{\SRotT,\gat}^{k,1}$, $\widetilde\PotQ_{\divDivS,\gat}^{k,1}$, 
$\widetilde\PotQ_{\divDivS,\gat}^{k+1,k,1}$
coincide with $\PotQ_{\TRotS,\gat}^{k,1}$, $\PotQ_{\DivT,\gat}^{k,1}$, 
$\PotQ_{\SRotT,\gat}^{k,1}$, $\PotQ_{\divDivS,\gat}^{k,1}$, 
$\PotQ_{\divDivS,\gat}^{k+1,k,1}$
on the reduced domains of definition 
$$D((\TRotSgat^{k})_{\bot}),\quad
D((\DivTgat^{k})_{\bot}),\quad
D((\SRotTgat^{k})_{\bot}),\quad
D((\divDivSgat^{k})_{\bot}),\quad
D((\divDivSgat^{k+1,k})_{\bot}),$$
respectively. Thus $\widetilde\PotQ_{\TRotS,\gat}^{k,1}$, $\widetilde\PotQ_{\DivT,\gat}^{k,1}$, 
$\widetilde\PotQ_{\SRotT,\gat}^{k,1}$, $\widetilde\PotQ_{\divDivS,\gat}^{k,1}$, 
$\widetilde\PotQ_{\divDivS,\gat}^{k+1,k,1}$
may differ from $\PotQ_{\TRotS,\gat}^{k,1}$, $\PotQ_{\DivT,\gat}^{k,1}$, 
$\PotQ_{\SRotT,\gat}^{k,1}$, $\PotQ_{\divDivS,\gat}^{k,1}$, 
$\PotQ_{\divDivS,\gat}^{k+1,k,1}$
only on the kernels 
$$N(\TRotSgat^{k})=\H{k}{\S,\gat,0}(\Rot,\om),\;
N(\DivTgat^{k})=\H{k}{\T,\gat,0}(\Div,\om),\;
N(\SRotTgat^{k})=\H{k}{\T,\gat,0}(\symRot,\om),$$
and $N(\divDivSgat^{k})=\H{k}{\S,\gat,0}(\divDiv,\om)$, 
$N(\divDivSgat^{k+1,k})=\H{k+1}{\S,\gat,0}(\divDiv,\om)$, 
respectively.
\end{rem}

\begin{rem}[projections]
\label{bih:rem:regdecohigherorderproj}
Recall Theorem \ref{bih:theo:regdecohigherorder}, e.g.,
for $\divDivSgat^{k}$
$$\H{k}{\S,\gat}(\divDiv,\om)
=R(\widetilde\PotQ_{\divDivS,\gat}^{k,1})
\dotplus R(\widetilde\PotN_{\divDivS,\gat}^{k}).$$
\begin{itemize}
\item[\bf(i)]
$\widetilde\PotQ_{\divDivS,\gat}^{k,1}$ and
$\widetilde\PotN_{\divDivS,\gat}^{k}=1-\widetilde\PotQ_{\divDivS,\gat}^{k,1}$
are projections.
\item[\bf(i')] 
$\widetilde\PotQ_{\divDivS,\gat}^{k,1}\widetilde\PotN_{\divDivS,\gat}^{k}
=\widetilde\PotN_{\divDivS,\gat}^{k}\widetilde\PotQ_{\divDivS,\gat}^{k,1}=0$.
\item[\bf(ii)]
For $I_{\pm}:=\widetilde\PotQ_{\divDivS,\gat}^{k,1}\pm\widetilde\PotN_{\divDivS,\gat}^{k}$
it holds $I_{+}=I_{-}^{2}=\id_{\H{k}{\S,\gat}(\divDiv,\om)}$.
Therefore, $I_{+}$, $I_{-}^{2}$, as well as
$I_{-}=2\widetilde\PotQ_{\divDivS,\gat}^{k,1}-\id_{\H{k}{\S,\gat}(\divDiv,\om)}$
are topological isomorphisms on $\H{k}{\S,\gat}(\divDiv,\om)$.
\item[\bf(iii)]
There exists $c>0$ such that for all $S\in\H{k}{\S,\gat}(\divDiv,\om)$
\begin{align*}
c\norm{\widetilde\PotQ_{\divDivS,\gat}^{k,1}S}_{\H{k+2}{\S}(\om)}
&\leq\norm{\divDiv S}_{\H{k}{}(\om)}
\leq\norm{S}_{\H{k}{\S}(\divDiv,\om)},\\
\norm{\widetilde\PotN_{\divDivS,\gat}^{k}S}_{\H{k}{\S}(\om)}
&\leq\norm{S}_{\H{k}{\S}(\om)}
+\norm{\widetilde\PotQ_{\divDivS,\gat}^{k,1}S}_{\H{k}{\S}(\om)}.
\end{align*}
\item[\bf(iii')]
For $S\in\H{k}{\S,\gat,0}(\divDiv,\om)$ we have $\widetilde\PotQ_{\divDivS,\gat}^{k,1}S=0$
and $\widetilde\PotN_{\divDivS,\gat}^{k}S=S$.
In particular, $\widetilde\PotN_{\divDivS,\gat}^{k}$ is onto.
\end{itemize}
Similar results to (i)-(iii') hold also for 
$\TRotSgat^{k}$, $\DivTgat^{k}$, $\SRotTgat^{k}$, and $\divDivSgat^{k+1,k}$.
In particular, $\widetilde\PotQ_{\TRotS,\gat}^{k,1}$, $\widetilde\PotQ_{\DivT,\gat}^{k,1}$, 
$\widetilde\PotQ_{\SRotT,\gat}^{k,1}$, $\widetilde\PotQ_{\divDivS,\gat}^{k+1,k,1}$, 
and $\widetilde\PotN_{\TRotS,\gat}^{k,1}$, $\widetilde\PotN_{\DivT,\gat}^{k,1}$, 
$\widetilde\PotN_{\SRotT,\gat}^{k,1}$, $\widetilde\PotN_{\divDivS,\gat}^{k+1,k,1}$
are projections and there exists $c>0$ such that for all 
$S\in\H{k}{\S,\gat}(\Rot,\om)$, $T\in\H{k}{\T,\gat}(\Div,\om)$,
$\widehat{T}\in\H{k}{\T,\gat}(\symRot,\om)$, 
and $\widehat{S}\in\H{k+1,k}{\S,\gat}(\divDiv,\om)$
\begin{align*}
\norm{\widetilde\PotQ_{\TRotS,\gat}^{k,1}S}_{\H{k+1}{\S}(\om)}
&\leq c\norm{\Rot S}_{\H{k}{\T}(\om)},
&
\norm{\widetilde\PotQ_{\SRotT,\gat}^{k,1}\widehat{T}}_{\H{k+1}{\T}(\om)}
&\leq c\norm{\symRot\widehat{T}}_{\H{k}{\S}(\om)},\\
\norm{\widetilde\PotQ_{\DivT,\gat}^{k,1}T}_{\H{k+1}{\T}(\om)}
&\leq c\norm{\Div T}_{\H{k}{}(\om)},
&
\norm{\widetilde\PotQ_{\divDivS,\gat}^{k+1,k,1}\widehat{S}}_{\H{k+2}{\S}(\om)}
&\leq c\norm{\divDiv\widehat{S}}_{\H{k}{}(\om)}.
\end{align*}
\end{rem}

\begin{rem}[bounded regular direct decompositions]
\label{bih:rem:highorderregdecoinfty2}
By Theorem \ref{bih:theo:highorderregdecoinfty} we have, e.g.,
\begin{align*}
\H{k}{\S,\gat}(\Rot,\om)
&=R(\widetilde\PotQ_{\TRotS,\gat}^{k,1})
\dotplus\Lin\B{\TRotSgat}(\om)
\dotplus\Gradgrad\H{k+2}{\gat}(\om)\\
&=\H{k+1}{\S,\gat}(\om)
+\Gradgrad\H{k+2}{\gat}(\om)
\end{align*}
with bounded linear regular direct decomposition operators
\begin{align*}
\widehat\PotQ_{\TRotS,\gat}^{k,1}:
\H{k}{\S,\gat}(\Rot,\om)&\to R(\widetilde\PotQ_{\TRotS,\gat}^{k,1})
\subset\H{k+1}{\S,\gat}(\om),\\
\widehat\PotQ_{\TRotS,\gat}^{k,\infty}:
\H{k}{\S,\gat}(\Rot,\om)&\to\Lin\B{\TRotSgat}(\om)
\subset\H{\infty}{\S,\gat,0}(\Rot,\om)
\subset\H{k+1}{\S,\gat}(\om),\\
\widehat\PotQ_{\TRotS,\gat}^{k,0}:
\H{k}{\S,\gat}(\Rot,\om)&\to
\H{k+2}{\gat}(\om)
\end{align*}
satisfying 
$\widehat\PotQ_{\TRotS,\gat}^{k,1}
+\widehat\PotQ_{\TRotS,\gat}^{k,\infty}
+\Gradgrad\widehat\PotQ_{\TRotS,\gat}^{k,0}
=\id_{\H{k}{\S,\gat}(\Rot,\om)}$.

A closer inspection of the proof 
allows for a more precise description of these bounded decomposition operators.
For this, let $S\in\H{k}{\S,\gat}(\Rot,\om)$. 
According to Theorem \ref{bih:theo:regdecohigherorder} and 
Remark \ref{bih:rem:regdecohigherorderproj} we decompose 
$$S=S_{R}+S_{N}
:=\widetilde\PotQ_{\TRotS,\gat}^{k,1}S+\widetilde\PotN_{\TRotS,\gat}^{k}S
\in 
R(\widetilde\PotQ_{\TRotS,\gat}^{k,1})
\dotplus 
R(\widetilde\PotN_{\TRotS,\gat}^{k})$$
with 
$R(\widetilde\PotN_{\TRotS,\gat}^{k})
=\H{k}{\S,\gat,0}(\Rot,\om)
=N(\TRotSgat^{k})$.
By Theorem \ref{bih:theo:highorderregdecoinfty} we further decompose
\begin{align*}
\H{k}{\S,\gat,0}(\Rot,\om)\ni 
S_{N}
=\Gradgrad\widetilde{u}+B
\in\Gradgrad\H{k+2}{\gat}(\om)
\dotplus\Lin\B{\TRotSgat}(\om).
\end{align*}
Then 
$\pi_{N(\divDivSgan\eps)}S_{N}
=\pi_{N(\divDivSgan\eps)}B\in\Harm{}{\S,\gat,\gan,\eps}(\om)$
and thus 
$$B=I_{\pi_{N(\divDivSgan\eps)}}^{-1}\pi_{N(\divDivSgan\eps)}S_{N}\in\Lin\B{\TRotSgat}(\om).$$
Therefore, 
\begin{align*}
\widetilde{u}
=\PotP_{\SGradgrad,\gat}^{k}\Gradgrad\widetilde{u}
&=\PotP_{\SGradgrad,\gat}^{k}(S_{N}-B)\\
&=\PotP_{\SGradgrad,\gat}^{k}(1-I_{\pi_{N(\divDivSgan\eps)}}^{-1}\pi_{N(\divDivSgan\eps)})S_{N}.
\end{align*}
Finally we see
\begin{align*}
\widehat\PotQ_{\TRotS,\gat}^{k,1}
&=\widetilde\PotQ_{\TRotS,\gat}^{k,1}
=\PotP_{\TRotS,\gat}^{k}\TRotSgat^{k}
=\PotQ_{\TRotS,\gat}^{k,1}(\TRotSgat^{k})_{\bot}^{-1}\TRotSgat^{k},\\
\widehat\PotQ_{\TRotS,\gat}^{k,\infty}
&=I_{\pi_{N(\divDivSgan\eps)}}^{-1}\pi_{N(\divDivSgan\eps)}\widetilde\PotN_{\TRotS,\gat}^{k},\\
\widehat\PotQ_{\TRotS,\gat}^{k,0}
&=\PotP_{\SGradgrad,\gat}^{k}(1-I_{\pi_{N(\divDivSgan\eps)}}^{-1}\pi_{N(\divDivSgan\eps)})\widetilde\PotN_{\TRotS,\gat}^{k}
\end{align*}
with 
$\widetilde\PotN_{\TRotS,\gat}^{k}
=1-\widetilde\PotQ_{\TRotS,\gat}^{k,1}$.
Analogously, we have for the other spaces
\begin{align*}
\H{k}{\T,\gan}(\Div,\om)
&=R(\widetilde\PotQ_{\DivT,\gan}^{k,1})
\dotplus\Lin\B{\DivTgan}(\om)
\dotplus\Rot\H{k+1}{\S,\gan}(\om)\\
&=\H{k+1}{\T,\gan}(\om)
+\Rot\H{k+1}{\S,\gan}(\om),\\
\H{k}{\T,\gat}(\symRot,\om)
&=R(\widetilde\PotQ_{\SRotT,\gat}^{k,1})
\dotplus\Lin\B{\SRotTgat}(\om)
\dotplus\devGrad\H{k+1}{\gat}(\om)\\
&=\H{k+1}{\T,\gat}(\om)
+\devGrad\H{k+1}{\gat}(\om),\\
\H{k}{\S,\gan}(\divDiv,\om)
&=R(\widetilde\PotQ_{\divDivS,\gan}^{k,1})
\dotplus\Lin\B{\divDivSgan}(\om)
\dotplus\symRot\H{k+1}{\T,\gan}(\om)\\
&=\H{k+2}{\S,\gan}(\om)
+\symRot\H{k+1}{\T,\gan}(\om),\\
\H{k+1,k}{\S,\gan}(\divDiv,\om)
&=R(\widetilde\PotQ_{\divDivS,\gan}^{k+1,k,1})
\dotplus\Lin\B{\divDivSgan}(\om)
\dotplus\symRot\H{k+2}{\T,\gan}(\om)\\
&=\H{k+2}{\S,\gan}(\om)
+\symRot\H{k+2}{\T,\gan}(\om)
\end{align*}
with bounded linear regular direct decomposition operators
\begin{align*}
\widehat\PotQ_{\DivT,\gan}^{k,1}:
\H{k}{\T,\gan}(\Div,\om)&\to R(\widetilde\PotQ_{\DivT,\gan}^{k,1})
\subset\H{k+1}{\T,\gan}(\om),\\
\widehat\PotQ_{\DivT,\gan}^{k,\infty}:
\H{k}{\T,\gan}(\Div,\om)&\to\Lin\B{\DivTgan}(\om)
\subset\H{\infty}{\T,\gan,0}(\Div,\om)
\subset\H{k+1}{\T,\gan}(\om),\\
\widehat\PotQ_{\DivT,\gan}^{k,0}:
\H{k}{\T,\gan}(\Div,\om)&\to
\H{k+1}{\S,\gan}(\om),\\
\widehat\PotQ_{\SRotT,\gat}^{k,1}:
\H{k}{\T,\gat}(\symRot,\om)&\to R(\widetilde\PotQ_{\SRotT,\gat}^{k,1})
\subset\H{k+1}{\T,\gat}(\om),\\
\widehat\PotQ_{\SRotT,\gat}^{k,\infty}:
\H{k}{\T,\gat}(\symRot,\om)&\to\Lin\B{\SRotTgat}(\om)
\subset\H{\infty}{\T,\gat,0}(\symRot,\om)
\subset\H{k+1}{\T,\gat}(\om),\\
\widehat\PotQ_{\SRotT,\gat}^{k,0}:
\H{k}{\T,\gat}(\symRot,\om)&\to
\H{k+1}{\gat}(\om),\\
\widehat\PotQ_{\divDivS,\gan}^{k,1}:
\H{k}{\S,\gan}(\divDiv,\om)&\to R(\widetilde\PotQ_{\divDivS,\gan}^{k,1})
\subset\H{k+2}{\S,\gan}(\om),\\
\widehat\PotQ_{\divDivS,\gan}^{k,\infty}:
\H{k}{\S,\gan}(\divDiv,\om)&\to\Lin\B{\divDivSgan}(\om)
\subset\H{\infty}{\S,\gan,0}(\divDiv,\om)
\subset\H{k+2}{\S,\gan}(\om),\\
\widehat\PotQ_{\divDivS,\gan}^{k,0}:
\H{k}{\S,\gan}(\divDiv,\om)&\to
\H{k+1}{\T,\gan}(\om),\\
\widehat\PotQ_{\divDivS,\gan}^{k+1,k,1}:
\H{k+1,k}{\S,\gan}(\divDiv,\om)&\to R(\widetilde\PotQ_{\divDivS,\gan}^{k+1,k,1})
\subset\H{k+2}{\S,\gan}(\om),\\
\widehat\PotQ_{\divDivS,\gan}^{k+1,k,\infty}:
\H{k+1,k}{\S,\gan}(\divDiv,\om)&\to\Lin\B{\divDivSgan}(\om)
\subset\H{\infty}{\S,\gan,0}(\divDiv,\om)
\subset\H{k+2}{\S,\gan}(\om),\\
\widehat\PotQ_{\divDivS,\gan}^{k+1,k,0}:
\H{k+1,k}{\S,\gan}(\divDiv,\om)&\to
\H{k+2}{\T,\gan}(\om)
\end{align*}
satisfying 
\begin{align*}
\widehat\PotQ_{\DivT,\gan}^{k,1}
+\widehat\PotQ_{\DivT,\gan}^{k,\infty}
+\Rot\widehat\PotQ_{\DivT,\gan}^{k,0}
&=\id_{\H{k}{\T,\gan}(\Div,\om)},\\
\widehat\PotQ_{\SRotT,\gat}^{k,1}
+\widehat\PotQ_{\SRotT,\gat}^{k,\infty}
+\devGrad\widehat\PotQ_{\SRotT,\gat}^{k,0}
&=\id_{\H{k}{\T,\gat}(\symRot,\om)},\\
\widehat\PotQ_{\divDivS,\gan}^{k,1}
+\widehat\PotQ_{\divDivS,\gan}^{k,\infty}
+\symRot\widehat\PotQ_{\divDivS,\gan}^{k,0}
&=\id_{\H{k}{\S,\gan}(\divDiv,\om)},\\
\widehat\PotQ_{\divDivS,\gan}^{k+1,k,1}
+\widehat\PotQ_{\divDivS,\gan}^{k+1,k,\infty}
+\symRot\widehat\PotQ_{\divDivS,\gan}^{k+1,k,0}
&=\id_{\H{k+1,k}{\S,\gan}(\divDiv,\om)}
\end{align*}
and 
\begin{align*}
\widehat\PotQ_{\DivT,\gan}^{k,1}
&=\widetilde\PotQ_{\DivT,\gan}^{k,1}
=\PotP_{\DivT,\gan}^{k}\DivTgan^{k}
=\PotQ_{\DivT,\gan}^{k,1}(\DivTgan^{k})_{\bot}^{-1}\DivTgan^{k},\\
\widehat\PotQ_{\DivT,\gan}^{k,\infty}
&=I_{\pi_{N(\SRotTgat)}}^{-1}\pi_{N(\SRotTgat)}\widetilde\PotN_{\DivT,\gan}^{k},\\
\widehat\PotQ_{\DivT,\gan}^{k,0}
&=\PotP_{\TRotS,\gan}^{k}(1-I_{\pi_{N(\SRotTgat)}}^{-1}\pi_{N(\SRotTgat)})\widetilde\PotN_{\DivT,\gan}^{k},\\
\widehat\PotQ_{\SRotT,\gat}^{k,1}
&=\widetilde\PotQ_{\SRotT,\gat}^{k,1}
=\PotP_{\SRotT,\gat}^{k}\SRotTgat^{k}
=\PotQ_{\SRotT,\gat}^{k,1}(\SRotTgat^{k})_{\bot}^{-1}\SRotTgat^{k},\\
\widehat\PotQ_{\SRotT,\gat}^{k,\infty}
&=I_{\pi_{N(\DivTgan\mu)}}^{-1}\pi_{N(\DivTgan\mu)}\widetilde\PotN_{\SRotT,\gat}^{k},\\
\widehat\PotQ_{\SRotT,\gat}^{k,0}
&=\PotP_{\TGrad,\gat}^{k}(1-I_{\pi_{N(\DivTgan\mu)}}^{-1}\pi_{N(\DivTgan\mu)})\widetilde\PotN_{\SRotT,\gat}^{k},\\
\widehat\PotQ_{\divDivS,\gan}^{k,1}
&=\widetilde\PotQ_{\divDivS,\gan}^{k,1}
=\PotP_{\divDivS,\gan}^{k}\divDivSgan^{k}
=\PotQ_{\divDivS,\gan}^{k,1}(\divDivSgan^{k})_{\bot}^{-1}\divDivSgan^{k},\\
\widehat\PotQ_{\divDivS,\gan}^{k,\infty}
&=I_{\pi_{N(\TRotSgat)}}^{-1}\pi_{N(\TRotSgat)}\widetilde\PotN_{\divDivS,\gan}^{k},\\
\widehat\PotQ_{\divDivS,\gan}^{k,0}
&=\PotP_{\SRotT,\gan}^{k}(1-I_{\pi_{N(\TRotSgat)}}^{-1}\pi_{N(\TRotSgat)})\widetilde\PotN_{\divDivS,\gan}^{k},\\
\widehat\PotQ_{\divDivS,\gan}^{k+1,k,1}
&=\widetilde\PotQ_{\divDivS,\gan}^{k+1,k,1}
=\PotP_{\divDivS,\gan}^{k+1,k}\divDivSgan^{k+1,k}
=\PotQ_{\divDivS,\gan}^{k+1,k,1}(\divDivSgan^{k+1,k})_{\bot}^{-1}\divDivSgan^{k+1,k},\\
\widehat\PotQ_{\divDivS,\gan}^{k+1,k,\infty}
&=I_{\pi_{N(\TRotSgat)}}^{-1}\pi_{N(\TRotSgat)}\widetilde\PotN_{\divDivS,\gan}^{k+1,k},\\
\widehat\PotQ_{\divDivS,\gan}^{k+1,k,0}
&=\PotP_{\SRotT,\gan}^{k+1}(1-I_{\pi_{N(\TRotSgat)}}^{-1}\pi_{N(\TRotSgat)})\widetilde\PotN_{\divDivS,\gan}^{k+1,k}
\end{align*}
with 
\begin{align*}
\widetilde\PotN_{\DivT,\gan}^{k}
&=1-\widetilde\PotQ_{\DivT,\gan}^{k,1},
&
\widetilde\PotN_{\divDivS,\gan}^{k}
&=1-\widetilde\PotQ_{\divDivS,\gan}^{k,1},\\
\widetilde\PotN_{\SRotT,\gat}^{k}
&=1-\widetilde\PotQ_{\SRotT,\gat}^{k,1},
&
\widetilde\PotN_{\divDivS,\gan}^{k+1,k}
&=1-\widetilde\PotQ_{\divDivS,\gan}^{k+1,k,1},\\
\end{align*}
and
$$\begin{array}{ccccc}
I_{\pi_{N(\divDivSgan\eps)}}
&:
&\Lin\B{\TRotSgat}(\om)
&\longrightarrow&\Lin\pi_{N(\divDivSgan\eps)}\B{\TRotSgat}(\om)=\Harm{}{\S,\gat,\gan,\eps}(\om)\\
{}
&{}
&\vB{\TRotSgat}{\ell}
&\longmapsto
&\pi_{N(\divDivSgan\eps)}\vB{\TRotSgat}{\ell},\\[3ex]
I_{\pi_{N(\TRotSgat)}}
&:
&\Lin\B{\divDivSgan}(\om)
&\longrightarrow
&\Lin\pi_{N(\TRotSgat)}\eps^{-1}\B{\divDivSgan}(\om)=\Harm{}{\S,\gat,\gan,\eps}(\om)\\
{}
&{}
&\vB{\divDivSgan}{\ell}
&\longmapsto
&\pi_{N(\TRotSgat)}\eps^{-1}\vB{\divDivSgan}{\ell},\\[3ex]
I_{\pi_{N(\DivTgan\mu)}}
&:
&\Lin\B{\SRotTgat}(\om)
&\longrightarrow
&\Lin\pi_{N(\DivTgan\mu)}\B{\SRotTgat}(\om)=\Harm{}{\T,\gat,\gan,\mu}(\om)\\
{}
&{}
&\vB{\SRotTgat}{\ell}
&\longmapsto
&\pi_{N(\DivTgan\mu)}\vB{\SRotTgat}{\ell},\\[3ex]
I_{\pi_{N(\SRotTgat)}}
&:
&\Lin\B{\DivTgan}(\om)
&\longrightarrow
&\Lin\pi_{N(\SRotTgat)}\mu^{-1}\B{\DivTgan}(\om)=\Harm{}{\T,\gat,\gan,\mu}(\om)\\
{}
&{}
&\vB{\DivTgan}{\ell}
&\longmapsto
&\pi_{N(\SRotTgat)}\mu^{-1}\vB{\DivTgan}{\ell}.
\end{array}$$
\end{rem}

%%%%%%%%%%%%%%%%%%%%%%%%%%%%%%%%%%%%%%%%%%%%%%%%%%%%%%%%%%%%%%%%%%%%%%%%%%%%%%%%%%%%%%%%%%%%%%%%%%%%%

\vspace*{5mm}
\hrule
\vspace*{3mm}

%%%%%%%%%%%%%%%%%%%%%%%%%%%%%%%%%%%%%%%%%%%%%%%%%%%%%%%%%%%%%%%%%%%%%%%%%%%%%%%%%%%%%%%%%%%%%%%%%%%%%

\end{document}